
\makeatletter


\documentclass[
    11pt,
    a4paper,
    oneside,
    openright,
    center,
    chapterbib,
    crosshair,
    reqno,
    headcount,
    headline,
    indent,
    indentfirst=false,
    portrait,
    phonetic,
    oldernstyle,
    onecolumn,
    sfbold,
    upper,
]{amsart}



\let\th@plain\relax

\PassOptionsToPackage{T2A}{fontenc} 
\PassOptionsToPackage{utf8}{inputenc} 
\PassOptionsToPackage{cyrpart}{cyrillic}
\PassOptionsToPackage{british,ngerman,russian}{babel}
\PassOptionsToPackage{
    english,
    ngerman,
    russian,
    capitalise,
}{cleveref}
\PassOptionsToPackage{
    margin=10pt,
    position=bottom,
    justification=centering,
    font=small,
    labelformat=simple,
    labelfont={sc},
    labelsep={period},
    textfont={it},
}{caption}
\PassOptionsToPackage{
    margin=10pt,
    position=bottom,
    justification=centering,
    font=small,
    labelformat=parens,
    labelfont={bf},
    labelsep={space},
    textfont={it},
}{subcaption}
\PassOptionsToPackage{framemethod=TikZ}{mdframed}
\PassOptionsToPackage{
    bookmarks=true,
    bookmarksopen=false,
    bookmarksopenlevel=0,
    bookmarkstype=toc,
    raiselinks=true,
    colorlinks=true,
    hyperfigures=true,
    anchorcolor=red,
    citecolor=green,
    linkcolor=blue,
    urlcolor=blue,
}{hyperref} 
\PassOptionsToPackage{normalem}{ulem}
\PassOptionsToPackage{
    amsmath,
    thmmarks,
}{ntheorem}
\PassOptionsToPackage{
    table,
}{xcolor}
\PassOptionsToPackage{
    all,
    color,
    curve,
    frame,
    import,
    knot,
    line,
    movie,
    rotate,
    textures,
    tile,
    tips,
    web,
    xdvi,
}{xy}
\PassOptionsToPackage{
    reset,
    left=1in,
    right=1in,
    top=20mm,
    bottom=20mm,
    heightrounded,
}{geometry}
\PassOptionsToPackage{
    symbol*, 
    multiple, 
}{footmisc}
\PassOptionsToPackage{overload}{textcase}
\PassOptionsToPackage{indentafter}{titlesec}
\PassOptionsToPackage{
    shortcuts, 
}{extdash}


\usepackage{amsfonts}
\usepackage{amsmath}
\usepackage{amssymb}
\usepackage{ntheorem} 
\usepackage{array}
\usepackage{babel}
\usepackage{bbding}
\usepackage{bm}
\usepackage{bbm}
\usepackage{bibentry}
\usepackage{booktabs}
\usepackage{bold-extra} 
\usepackage{calc}
\usepackage{cancel}
\usepackage{caption} 
\usepackage{changepage}
\usepackage{cjhebrew}
\usepackage{cmlgc}
\usepackage{colonequals}
\usepackage{color}
\usepackage{comment}
\usepackage{datetime}
\usepackage{dsfont}
\usepackage{etex}
\usepackage{etoolbox}
\usepackage{eurosym}
\usepackage{extdash}
\usepackage{fancybox}
\usepackage{fancyhdr}
\usepackage{float}
\usepackage{fontenc}
\usepackage{footmisc}
\usepackage{fp}
\usepackage{geometry}
\usepackage{graphicx}
\usepackage{ifpdf}
\usepackage{ifthen}
\usepackage{ifoddpage}
\usepackage{ifnextok}  
\usepackage{inputenc}
\usepackage{latexsym}
\usepackage{lineno}
\usepackage{listings}
\usepackage{longtable}
\usepackage{lscape}
\usepackage{mathtools} 
\usepackage{mathrsfs}
\usepackage{multicol}
\usepackage{multirow}
\usepackage{nameref}
\usepackage{nowtoaux}
\usepackage{paralist}
\usepackage{enumerate} 
\usepackage{pgf}
\usepackage{pgfplots}
\usepackage{phonetic}
\usepackage{proof}
\usepackage{qtree}
\usepackage{refcount}
\usepackage{savesym}
\usepackage{stmaryrd}
\usepackage{synttree}
\usepackage{subcaption}
\usepackage{suffix}
\usepackage{yfonts} 
\usepackage{textcase} 
\usepackage{tikz}
\usepackage{xy}
\usepackage{wrapfig}
\usepackage{xcolor}
\usepackage{xspace}
\usepackage{xstring}
\usepackage{hyperref}
\usepackage{arydshln}
\usepackage{cleveref} 

\pgfplotsset{compat=newest}
\usetikzlibrary{math}

\usetikzlibrary{
    angles,
    arrows,
    automata,
    calc,
    decorations,
    decorations.pathmorphing,
    decorations.pathreplacing,
    positioning,
    patterns,
    quotes,
}

\savesymbol{corresponds}
\savesymbol{Diamond}
\savesymbol{emptyset}
\savesymbol{ggg}
\savesymbol{int}
\savesymbol{lll}
\savesymbol{RectangleBold}
\savesymbol{langle}
\savesymbol{rangle}
\savesymbol{hookrightarrow}
\savesymbol{hookleftarrow}
\savesymbol{Asterisk}
\usepackage{mathabx}
\usepackage{wasysym}

\restoresymbol{x}{corresponds}
\restoresymbol{x}{Diamond}
\restoresymbol{x}{emptyset}
\restoresymbol{x}{ggg}
\restoresymbol{x}{int}
\restoresymbol{x}{lll}
\restoresymbol{x}{RectangleBold}
\restoresymbol{x}{langle}
\restoresymbol{x}{rangle}
\restoresymbol{x}{hookrightarrow}
\restoresymbol{x}{hookleftarrow}
\restoresymbol{x}{Asterisk}

\ifpdf
    \usepackage{pdfcolmk}
\fi

\usepackage{mdframed}

\DeclareFontFamily{U}{MnSymbolA}{}
\DeclareFontShape{U}{MnSymbolA}{m}{n}{
    <-6> MnSymbolA5
    <6-7> MnSymbolA6
    <7-8> MnSymbolA7
    <8-9> MnSymbolA8
    <9-10> MnSymbolA9
    <10-12> MnSymbolA10
    <12-> MnSymbolA12
}{}
\DeclareFontShape{U}{MnSymbolA}{b}{n}{
    <-6> MnSymbolA-Bold5
    <6-7> MnSymbolA-Bold6
    <7-8> MnSymbolA-Bold7
    <8-9> MnSymbolA-Bold8
    <9-10> MnSymbolA-Bold9
    <10-12> MnSymbolA-Bold10
    <12-> MnSymbolA-Bold12
}{}
\DeclareSymbolFont{MnSymA}{U}{MnSymbolA}{m}{n}
\DeclareMathSymbol{\lcirclearrowright}{\mathrel}{MnSymA}{252}
\DeclareMathSymbol{\lcirclearrowdown}{\mathrel}{MnSymA}{255}
\DeclareMathSymbol{\rcirclearrowleft}{\mathrel}{MnSymA}{250}
\DeclareMathSymbol{\rcirclearrowdown}{\mathrel}{MnSymA}{251}

\DeclareFontFamily{U}{MnSymbolC}{}
\DeclareSymbolFont{MnSyC}{U}{MnSymbolC}{m}{n}
\DeclareFontShape{U}{MnSymbolC}{m}{n}{
    <-6>  MnSymbolC5
    <6-7>  MnSymbolC6
    <7-8>  MnSymbolC7
    <8-9>  MnSymbolC8
    <9-10> MnSymbolC9
    <10-12> MnSymbolC10
    <12->   MnSymbolC12%
}{}
\DeclareMathSymbol{\powerset}{\mathord}{MnSyC}{180}
\DeclareMathSymbol{\righthalfcap}{\mathbin}{MnSyC}{186}

\DeclareMathAlphabet{\mathpzc}{OT1}{pzc}{m}{it}



\def\boolwahr{true}
\def\boolfalsch{false}
\def\boolleer{}

\let\boolinappendix\boolfalsch
\let\boolinmdframed\boolfalsch

\newcount\bufferctr
\newcount\bufferreplace

\newlength\rtab
\newlength\gesamtlinkerRand
\newlength\gesamtrechterRand
\newlength\ownspaceabovethm
\newlength\ownspacebelowthm
\setlength{\rtab}{0.025\textwidth}
\setlength{\gesamtlinkerRand}{0pt}
\setlength{\gesamtrechterRand}{0pt}

\def\secnumberingpt{.}
\def\secnumberingseppt{.}
\def\subsecnumberingseppt{}
\def\thmnumberingpt{.}
\def\thmnumberingseppt{}
\def\thmForceSepPt{.}

\definecolor{leer}{gray}{1}
\definecolor{boxgrau}{gray}{0.85}
\definecolor{dunkelgrau}{gray}{0.5}
\definecolor{maroon}{rgb}{0.6901961,0.1882353,0.3764706}
\definecolor{dunkelgruen}{rgb}{0.015625,0.363281,0.109375}
\definecolor{dunkelrot}{rgb}{0.5450980392,0,0}
\definecolor{dunkelblau}{rgb}{0,0,0.5450980392}
\definecolor{blau}{rgb}{0,0,1}
\definecolor{newresult}{rgb}{0.6,0.6,0.6}
\definecolor{improvedresult}{rgb}{0.9,0.9,0.9}
\definecolor{hervorheben}{rgb}{0,0.9,0.7}
\definecolor{starkesblau}{rgb}{0.1019607843,0.3176470588,0.8156862745}
\definecolor{achtung}{rgb}{1,0.5,0.5}
\definecolor{frage}{rgb}{0.5,1,0.5}
\definecolor{schreibweise}{rgb}{0,0.7,0.9}
\definecolor{axiom}{rgb}{0,0.3,0.3}

\definecolor{drawing_light_grey}{gray}{0.85}




\def\let@name#1#2{
    \expandafter\let\csname #1\expandafter\endcsname\csname #2\endcsname\relax
}
\DeclareRobustCommand\crfamily{\fontfamily{ccr}\selectfont}
\DeclareTextFontCommand{\textcr}{\crfamily}


\def\ifthenelseleer#1#2#3{\ifthenelse{\equal{#1}{}}{#2}{#1#3}}
\def\bedingtesspaceexpand#1#2#3{\ifthenelseleer{\csname #1\endcsname}{#3}{#2#3}}

\def\noparskip{\vspace{-3\parskip}}
\def\nvraum{\@ifnextchar\bgroup{\nvraum@c}{\nvraum@bes}}
    \def\nvraum@c#1{\vspace*{-#1\baselineskip}}
    \def\nvraum@bes{\vspace*{-\baselineskip}}
\def\forceaddspace{\relax\ifmmode\else\@\xspace\fi}
\def\forceremovespace{\relax\ifmmode\else\expandafter\@gobble\fi}


\def\send@toaux#1{\@bsphack\protected@write\@auxout{}{\string#1}\@esphack}

\def\rlabel#1[#2]#3#4#5{#5\rlabel@aux{#1}[#2]{#3}{#4}{#5}}
    \def\rlabel@aux#1[#2]#3#4#5{%
        \send@toaux{\newlabel{#1}{{\@currentlabel}{\thepage}{{\unexpanded{#5}}}{#2.\csname the#2\endcsname}{}}}\relax%
    }

\def\tag@rawscheme#1#2[#3]#4#5{\@ifnextchar[{\tag@rawscheme@{#1}{#2}[#3]{#4}{#5}}{\tag@rawscheme@{#1}{#2}[#3]{#4}{#5}[*]}}
    \def\tag@rawscheme@#1#2[#3]#4#5[#6]{\@ifnextchar\bgroup{\tag@rawscheme@@{#1}{#2}[#3]{#4}{#5}[#6]}{\tag@rawscheme@@{#1}{#2}[#3]{#4}{#5}[#6]{}}}
    \def\tag@rawscheme@@#1#2[#3]#4#5[#6]#7{%
        \ifthenelse{\equal{#6}{*}}{%
            \ifthenelse{\equal{#7}{\boolleer}}{\refstepcounter{#3}#4\csname the#3\endcsname#5}{#4#7#5}%
        }{%
            \refstepcounter{#3}#4%
            \ifthenelse{\equal{#7}{\boolleer}}{\rlabel{#6}[#3]{#1}{#2}{\csname the#3\endcsname}}{\rlabel{#6}[#3]{#1}{#2}{#7}}%
            #5%
        }%
    }
\def\tag@scheme#1#2[#3]{\tag@rawscheme{#1}{#2}[#3]{\upshape(}{\upshape)}}

\def\eqtag@post#1{\makebox[0pt][r]{#1}}
\def\eqtag@pre{\tag@scheme{Eq}{Equation}[equation]}
\def\eqtag{\@ifnextchar[{\eqtag@}{\eqtag@[*]}}
    \def\eqtag@[#1]{\@ifnextchar\bgroup{\eqtag@@[#1]}{\eqtag@@[#1]{}}}
    \def\eqtag@@[#1]#2{\eqtag@post{\eqtag@pre[#1]{#2}}}

\def\eqcref#1{\text{(\ref{#1})}}
\def\punktlabel#1{\label{it:#1:\beweislabel}}
\def\punktcref#1{\eqcref{it:#1:\beweislabel}}
\def\punktref#1{\ref{it:#1:\beweislabel}}

\def\Crefit#1#2{\Cref{#1}~\eqcref{it:#2:#1}}

\def\opfromto[#1]_#2^#3{\underset{#2}{\overset{#3}{#1}}}
\def\textoverset#1#2{\overset{\text{#1}}{#2}}

\def\eqcrefoverset#1#2{\textoverset{\eqcref{#1}}{#2}}

\def\mathclap#1{#1}
\def\oberunterset#1{\@ifnextchar^{\oberunterset@oben{#1}}{\oberunterset@unten{#1}}}
    \def\oberunterset@oben#1^#2_#3{\underset{\mathclap{#3}}{\overset{\mathclap{#2}}{#1}}}
    \def\oberunterset@unten#1_#2^#3{\underset{\mathclap{#2}}{\overset{\mathclap{#3}}{#1}}}
    \def\breitunderbrace#1_#2{\underbrace{#1}_{\mathclap{#2}}}
    \def\breitoverbrace#1^#2{\overbrace{#1}^{\mathclap{#2}}}
    \def\breitunderbracket#1_#2{\underbracket{#1}_{\mathclap{#2}}}
    \def\breitoverbracket#1^#2{\overbracket{#1}^{\mathclap{#2}}}

\def\generatenestedsecnumbering#1#2#3{%
    \expandafter\gdef\csname thelong#3\endcsname{%
        \expandafter\csname the#2\endcsname%
        \secnumberingpt%
        \expandafter\csname #1\endcsname{#3}%
    }%
    \expandafter\gdef\csname theshort#3\endcsname{%
        \expandafter\csname #1\endcsname{#3}%
    }%
}
\def\generatenestedthmnumbering#1#2#3{%
    \expandafter\gdef\csname the#3\endcsname{%
        \expandafter\csname the#2\endcsname%
        \thmnumberingpt%
        \expandafter\csname #1\endcsname{#3}%
    }%
    \expandafter\gdef\csname theshort#3\endcsname{%
        \expandafter\csname #1\endcsname{#3}%
    }%
}


\providecommand{\setcounternach}{}
\renewcommand{\setcounternach}[2]{\setcounter{#1}{#2}\addtocounter{#1}{-1}}

\def\forcepunkt#1{#1\IfEndWith{#1}{.}{}{.}}


\def\matrix#1{\left(\begin{array}{#1}}
    \def\endmatrix{\end{array}\right)}
\def\smatrix{\left(\begin{smallmatrix}}
    \def\endsmatrix{\end{smallmatrix}\right)}

\def\multiargrekursiverbefehl#1#2#3#4#5#6#7#8{%
    \expandafter\gdef\csname#1\endcsname #2##1#4{\csname #1@anfang\endcsname##1#3\egroup}
    \expandafter\def\csname #1@anfang\endcsname##1#3{#5##1\@ifnextchar\egroup{\csname #1@ende\endcsname}{#7\csname #1@mitte\endcsname}}
    \expandafter\def\csname #1@mitte\endcsname##1#3{#6##1\@ifnextchar\egroup{\csname #1@ende\endcsname}{#7\csname #1@mitte\endcsname}}
    \expandafter\def\csname #1@ende\endcsname##1{#8}
}
\multiargrekursiverbefehl{svektor}{[}{;}{]}{\begin{smatrix}}{}{\\}{\\\end{smatrix}}
\multiargrekursiverbefehl{vektor}{[}{;}{]}{\begin{matrix}{c}}{}{\\}{\\\end{matrix}}
\multiargrekursiverbefehl{vektorzeile}{}{,}{;}{}{&}{}{}
\multiargrekursiverbefehl{matlabmatrix}{[}{;}{]}{\begin{smatrix}\vektorzeile}{\vektorzeile}{;\\}{;\end{smatrix}}

\def\BeweisRichtung[#1]{\@ifnextchar\bgroup{\@BeweisRichtung@c[#1]}{\@BeweisRichtung@bes[#1]}}
    \def\@BeweisRichtung@bes[#1]{{\bfseries (#1)}}
    \def\@BeweisRichtung@c[#1]#2#3{#2~#1~#3}
\def\erzeugeBeweisRichtungBefehle#1#2{
    \expandafter\gdef\csname #1text\endcsname##1##2{\BeweisRichtung[#2]{##1}{##2}}
    \expandafter\gdef\csname #1\endcsname{%
        \@ifnextchar\bgroup{\csname #1@\endcsname}{\csname #1text\endcsname{}{}}%
    }
    \expandafter\gdef\csname #1@\endcsname##1##2{%
        \csname #1text\endcsname{\punktcref{##1}}{\punktcref{##2}}%
    }
}
\erzeugeBeweisRichtungBefehle{hinRichtung}{$\Rightarrow$}
\erzeugeBeweisRichtungBefehle{herRichtung}{$\Leftarrow$}
\erzeugeBeweisRichtungBefehle{hinherRichtung}{$\Leftrightarrow$}

\def\brkt#1{\langle{}#1{}\rangle}
\def\mathfrak#1{\mbox{\usefont{U}{euf}{m}{n}#1}}

\def\rectangleblack{\text{\RectangleBold}}

\def\squareblack{\blacksquare}


\def\create@abbreviation#1#2{
    \expandafter\gdef\csname #1\endcsname{%
        #2\@ifnextchar.{%
            \relax\ifmmode\else\expandafter\@gobble\fi%
        }{%
            \relax\ifmmode\else\@\xspace\fi%
        }%
    }
}

\create@abbreviation{cf}{\emph{cf.}}
\create@abbreviation{Cf}{\emph{Cf.}}
\create@abbreviation{idest}{\emph{i.e.}}
\create@abbreviation{Idest}{\emph{I.e.}}
\create@abbreviation{exempli}{\emph{e.g.}}
\create@abbreviation{Exempli}{\emph{E.g.}}
\create@abbreviation{etcetera}{\emph{etc.}}
\create@abbreviation{etAlia}{\emph{et al.}}
\create@abbreviation{viz}{\emph{viz.}}

\create@abbreviation{Tfae}{T.\,f.\,a.\,e.}
\create@abbreviation{tfae}{t.\,f.\,a.\,e.}
\create@abbreviation{wrt}{wrt.}
\create@abbreviation{randomvar}{r.\,v.}
\create@abbreviation{iid}{i.\,i.\,d.}
\create@abbreviation{almostEverywhere}{a.\,e.}
\create@abbreviation{almostSurely}{a.\,s.}
\create@abbreviation{withoutlog}{w.\,l.\,o.\,g.}
\create@abbreviation{Withoutlog}{W.\,l.\,o.\,g.}
\create@abbreviation{respectively}{resp.}




\def\crefname@full#1#2#3#4#5{%
    \crefname{#1}{#2}{#3}
    \Crefname{#1}{#4}{#5}
}
\def\crefname@fullmod#1#2#3#4#5{%
    \crefname@full{#1}{#2}{#3}{#4}{#5}
    \crefname@full{#1@basic}{#2}{#3}{#4}{#5}
    \crefname@full{#1@withName}{#2}{#3}{#4}{#5}
}
\crefname@full{chapter}{chapter}{chapters}{Chapter}{Chapters}
\crefname@full{appendix}{appendix}{appendices}{Appendix}{Appendices}
\crefname@full{section}{section}{sections}{Section}{Sections}
\crefname@full{subsection}{section}{sections}{Section}{Sections}
\crefname@full{subsubsection}{section}{sections}{Section}{Sections}
\crefname@full{subsubsubsection}{section}{sections}{Section}{Sections}
\crefname@full{table}{table}{tables}{Table}{Tables}
\crefname@full{figure}{figure}{figures}{Figure}{Figures}
\crefname@full{subfigure}{figure}{figures}{Figure}{Figures}

\crefname@fullmod{thm}{theorem}{theorems}{Theorem}{Theorems}
\crefname@fullmod{thmStar}{theorem}{theorems}{Theorem}{Theorems}
\crefname@fullmod{conj}{conjecture}{conjectures}{Conjecture}{Conjectures}
\crefname@fullmod{cor}{corollary}{corollaries}{Corollary}{Corollaries}
\crefname@fullmod{defn}{definition}{definitions}{Definition}{Definitions}
\crefname@fullmod{conv}{convention}{conventions}{Convention}{Conventions}
\crefname@fullmod{e.g.}{example}{examples}{Example}{Examples}
\crefname@fullmod{prop}{proposition}{propositions}{Proposition}{Propositions}
\crefname@fullmod{proof}{proof}{proofs}{Proof}{Proofs}
\crefname@fullmod{lemm}{lemma}{lemmata}{Lemma}{Lemmata}
\crefname@fullmod{problem}{problem}{problems}{Problem}{Problems}
\crefname@fullmod{qstn}{question}{questions}{Question}{Questions}
\crefname@fullmod{rem}{remark}{remarks}{Remark}{Remarks}


\def\qedEIGEN#1{\@ifnextchar[{\qedEIGEN@c{#1}}{\qedEIGEN@bes{#1}}}
\def\qedEIGEN@bes#1{%
    \parfillskip=0pt
    \widowpenalty=10000
    \displaywidowpenalty=10000
    \finalhyphendemerits=0
    \leavevmode
    \unskip
    \nobreak
    \hfil
    \penalty50
    \hskip.2em
    \null
    \hfill
    #1
    \par%
}
\def\qedEIGEN@c#1[#2]{%
    \parfillskip=0pt
    \widowpenalty=10000
    \displaywidowpenalty=10000
    \finalhyphendemerits=0
    \leavevmode
    \unskip
    \nobreak
    \hfil
    \penalty50
    \hskip.2em
    \null
    \hfill
    {#1~{\small\bfseries\upshape (#2)}}%
    \par%
}
\def\qedVARIANT#1#2{
    \expandafter\def\csname ennde#1Sign\endcsname{#2}
    \expandafter\def\csname ennde#1\endcsname{\@ifnextchar[{\qedEIGEN@c{#2}}{\qedEIGEN@bes{#2}}} 
}
\qedVARIANT{OfProof}{$\squareblack$}
\qedVARIANT{OfWork}{\rectangleblack}
\qedVARIANT{OfSomething}{$\dashv$}
\qedVARIANT{OnNeutral}{} 

\def\ra@pretheoremwork{
    \setlength{\theorempreskipamount}{\ownspaceabovethm}
    \setlength{\theorempostskipamount}{\ownspacebelowthm}
}
\def\rathmtransfer#1#2{
    \expandafter\def\csname #2\endcsname{\csname #1\endcsname}
    \expandafter\def\csname end#2\endcsname{\csname end#1\endcsname}
}

\def\ranewthm#1#2#3[#4]{
    \theoremstyle{\current@theoremstyle}
    \theoremseparator{\current@theoremseparator}
    \theoremprework{\ra@pretheoremwork}
    \@ifundefined{#1@basic}{\newtheorem{#1@basic}[#4]{#2}}{\renewtheorem{#1@basic}[#4]{#2}}
    \theoremstyle{\current@theoremstyle}
    \theoremseparator{\thmForceSepPt}
    \theoremprework{\ra@pretheoremwork}
    \@ifundefined{#1@withName}{\newtheorem{#1@withName}[#4]{#2}}{\renewtheorem{#1@withName}[#4]{#2}}
    \theoremstyle{nonumberplain}
    \theoremseparator{\thmForceSepPt}
    \theoremprework{\ra@pretheoremwork}
    \@ifundefined{#1@star@basic}{\newtheorem{#1@star@basic}[#4]{#2}}{\renewtheorem{#1@star@basic}[#4]{#2}}
    \theoremstyle{nonumberplain}
    \theoremseparator{\thmForceSepPt}
    \theoremprework{\ra@pretheoremwork}
    \@ifundefined{#1@star@withName}{\newtheorem{#1@star@withName}[#4]{#2}}{\renewtheorem{#1@star@withName}[#4]{#2}}
    \umbauenenv{#1}{#3}[#4]
    \umbauenenv{#1@star}{#3}[#4]
    \rathmtransfer{#1@star}{#1*}
}

\def\umbauenenv#1#2[#3]{%
    \expandafter\def\csname #1\endcsname{\relax%
        \@ifnextchar[{\csname #1@\endcsname}{\csname #1@\endcsname[*]}%
    }
    \expandafter\def\csname #1@\endcsname[##1]{\relax%
        \@ifnextchar[{\csname #1@@\endcsname[##1]}{\csname #1@@\endcsname[##1][*]}%
    }
    \expandafter\def\csname #1@@\endcsname[##1][##2]{%
        \ifx*##1%
            \def\enndeOfBlock{\csname end#1@basic\endcsname}
            \csname #1@basic\endcsname%
        \else%
            \def\enndeOfBlock{\csname end#1@withName\endcsname}
            \csname #1@withName\endcsname[##1]%
        \fi%
        \def\makelabel####1{%
            \gdef\beweislabel{####1}%
            \label{\beweislabel}%
        }%
        \ifx*##2%
            \def\enndeSymbol{\qedEIGEN{#2}}
        \else%
            \def\enndeSymbol{\qedEIGEN{#2}[##2]}
        \fi
    }
    \expandafter\gdef\csname end#1\endcsname{\enndeSymbol\enndeOfBlock}
}


    \def\current@theoremstyle{plain}
    \def\current@theoremseparator{\thmnumberingseppt}
    \theoremstyle{\current@theoremstyle}
    \theoremseparator{\current@theoremseparator}
    \theoremsymbol{}

\generatenestedthmnumbering{arabic}{section}{equation}

\generatenestedthmnumbering{arabic}{section}{X}
\generatenestedthmnumbering{Roman}{section}{Xsp}

    \theoremheaderfont{\upshape\bfseries}
    \theorembodyfont{\slshape}

\ranewthm{thm}{Theorem}{\enndeOnNeutralSign}[X]
\ranewthm{lemm}{Lemma}{\enndeOnNeutralSign}[X]
\ranewthm{cor}{Corollary}{\enndeOnNeutralSign}[X]
\ranewthm{prop}{Proposition}{\enndeOnNeutralSign}[X]

    \theorembodyfont{\upshape}

\ranewthm{defn}{Definition}{\enndeOnNeutralSign}[X]
\ranewthm{conv}{Convention}{\enndeOnNeutralSign}[X]
\ranewthm{e.g.}{Example}{\enndeOnNeutralSign}[X]
\ranewthm{fact}{Fact}{\enndeOnNeutralSign}[X]
\ranewthm{problem}{Problem}{\enndeOnNeutralSign}[X]
\ranewthm{qstn}{Question}{\enndeOnNeutralSign}[X]
\ranewthm{rem}{Remark}{\enndeOnNeutralSign}[X]

    \theoremheaderfont{\itshape}
    \theorembodyfont{\upshape}

\ranewthm{proof@tmp}{Proof}{\enndeOfProofSign}[Xdisplaynone]
\rathmtransfer{proof@tmp*}{proof}

\def\shortclaim@claim{%
    \iflanguage{british}{Claim}{%
    \iflanguage{english}{Claim}{%
    \iflanguage{ngerman}{Behauptung}{%
    \iflanguage{russian}{Утверждение}{%
    Claim%
    }}}}%
}
\def\shortclaim@pf@kurz{%
    \iflanguage{british}{Pf}{%
    \iflanguage{english}{Pf}{%
    \iflanguage{ngerman}{Bew}{%
    \iflanguage{russian}{Доказательство}{%
    Pf%
    }}}}%
}
\def\shortclaim{\@ifnextchar\bgroup{\shortclaim@c}{\shortclaim@bes}}
    \def\shortclaim@c#1{\item[{\bfseries \shortclaim@claim\forceaddspace #1.}]}
    \def\shortclaim@bes{\item[{\bfseries \shortclaim@claim.}]}
\def\proofofshortclaim{\item[{\bfseries\itshape\shortclaim@pf@kurz.}]}


\newdateformat{standardshort}{\oldstylenums{\THEYEAR}.\oldstylenums{\THEMONTH}.\oldstylenums{\THEDAY}}
\newdateformat{standardcompact}{\THEYEAR\twodigit{\THEMONTH}\twodigit{\THEDAY}}
\newdateformat{standardlong}{\THEYEAR\ \monthname\ \THEDAY}
\newcolumntype{\RECHTS}[1]{>{\raggedleft}p{#1}}
\newcolumntype{\LINKS}[1]{>{\raggedright}p{#1}}
\newcolumntype{m}{>{$}l<{$}}
\newcolumntype{C}{>{$}c<{$}}
\newcolumntype{L}{>{$}l<{$}}
\newcolumntype{R}{>{$}r<{$}}
\newcolumntype{0}{@{\hspace{0pt}}}
\newcolumntype{\LINKSRAND}{@{\hspace{\@totalleftmargin}}}
\newcolumntype{h}{@{\extracolsep{\fill}}}
\newcolumntype{i}{>{\itshape}}
\newcolumntype{t}{@{\hspace{\tabcolsep}}}
\newcolumntype{q}{@{\hspace{1em}}}
\newcolumntype{n}{@{\hspace{-\tabcolsep}}}
\newcolumntype{M}[2]{%
    >{\begin{minipage}{#2}\begin{math}}%
    {#1}%
    <{\end{math}\end{minipage}}%
}
\newcolumntype{T}[2]{%
    >{\begin{minipage}{#2}}%
    {#1}%
    <{\end{minipage}}%
}
\setlength{\LTpre}{\baselineskip}
\setlength{\LTpost}{0pt}

\def\punkteumgebung@genbefehl#1#2#3{
    \punkteumgebung@genbefehl@{#1}{#2}{#3}{}{}
    \punkteumgebung@genbefehl@{multi#1}{#2}{#3}{
        \setlength{\columnsep}{10pt}%
        \setlength{\columnseprule}{0pt}%
        \begin{multicols}{\thecolumnanzahl}%
    }{\end{multicols}\nvraum{1}}
}
\def\punkteumgebung@genbefehl@#1#2#3#4#5{
    \expandafter\gdef\csname #1\endcsname{
        \@ifnextchar\bgroup{\csname #1@c\endcsname}{\csname #1@bes\endcsname}
    }
        \expandafter\def\csname #1@c\endcsname##1{
            \@ifnextchar[{\csname #1@c@\endcsname{##1}}{\csname #1@c@\endcsname{##1}[\z@]}
        }
        \expandafter\def\csname #1@c@\endcsname##1[##2]{
            \@ifnextchar[{\csname #1@c@@\endcsname{##1}[##2]}{\csname #1@c@@\endcsname{##1}[##2][\z@]}
        }
        \expandafter\def\csname #1@c@@\endcsname##1[##2][##3]{
            \let\alterlinkerRand\gesamtlinkerRand
            \let\alterrechterRand\gesamtrechterRand
            \addtolength{\gesamtlinkerRand}{##2}
            \addtolength{\gesamtrechterRand}{##3}
            \advance\linewidth -##2%
            \advance\linewidth -##3%
            \advance\@totalleftmargin ##2%
            \parshape\@ne \@totalleftmargin\linewidth%
            #4
            \begin{#2}[\upshape ##1]%
                \setlength{\parskip}{0.5\baselineskip}\relax%
                \setlength{\topsep}{\z@}\relax%
                \setlength{\partopsep}{\z@}\relax%
                \setlength{\parsep}{\parskip}\relax%
                \setlength{\itemsep}{#3}\relax%
                \setlength{\listparindent}{\z@}\relax%
                \setlength{\itemindent}{\z@}\relax%
        }
        \expandafter\def\csname #1@bes\endcsname{
            \@ifnextchar[{\csname #1@bes@\endcsname}{\csname #1@bes@\endcsname[\z@]}
        }
        \expandafter\def\csname #1@bes@\endcsname[##1]{
            \@ifnextchar[{\csname #1@bes@@\endcsname[##1]}{\csname #1@bes@@\endcsname[##1][\z@]}
        }
        \expandafter\def\csname #1@bes@@\endcsname[##1][##2]{
            \let\alterlinkerRand\gesamtlinkerRand
            \let\alterrechterRand\gesamtrechterRand
            \addtolength{\gesamtlinkerRand}{##1}
            \addtolength{\gesamtrechterRand}{##2}
            \advance\linewidth -##1%
            \advance\linewidth -##2%
            \advance\@totalleftmargin ##1%
            \parshape\@ne \@totalleftmargin\linewidth%
            #4
            \begin{#2}%
                \setlength{\parskip}{0.5\baselineskip}\relax%
                \setlength{\topsep}{\z@}\relax%
                \setlength{\partopsep}{\z@}\relax%
                \setlength{\parsep}{\parskip}\relax%
                \setlength{\itemsep}{#3}\relax%
                \setlength{\listparindent}{\z@}\relax%
                \setlength{\itemindent}{\z@}\relax%
        }
    \expandafter\gdef\csname end#1\endcsname{%
        \end{#2}#5
        \setlength{\gesamtlinkerRand}{\alterlinkerRand}
        \setlength{\gesamtlinkerRand}{\alterrechterRand}
    }
}

\def\ritempunkt{{\Large \textbullet}} 
\setdefaultitem{\ritempunkt}{\ritempunkt}{\ritempunkt}{\ritempunkt}
\punkteumgebung@genbefehl{itemise}{compactitem}{\parskip}{}{}
\punkteumgebung@genbefehl{kompaktitem}{compactitem}{\z@}{}{}
\punkteumgebung@genbefehl{enumerate}{compactenum}{\parskip}{}{}
\punkteumgebung@genbefehl{kompaktenum}{compactenum}{\z@}{}{}


\def\matrix#1{\left(\begin{array}[mc]{#1}}
    \def\endmatrix{\end{array}\right)}
\def\smatrix{\left(\begin{smallmatrix}}
    \def\endsmatrix{\end{smallmatrix}\right)}

\def\multiargrekursiverbefehl#1#2#3#4#5#6#7#8{%
    \expandafter\gdef\csname#1\endcsname #2##1#4{\csname #1@anfang\endcsname##1#3\egroup}
    \expandafter\def\csname #1@anfang\endcsname##1#3{#5##1\@ifnextchar\egroup{\csname #1@ende\endcsname}{#7\csname #1@mitte\endcsname}}
    \expandafter\def\csname #1@mitte\endcsname##1#3{#6##1\@ifnextchar\egroup{\csname #1@ende\endcsname}{#7\csname #1@mitte\endcsname}}
    \expandafter\def\csname #1@ende\endcsname##1{#8}
}
\multiargrekursiverbefehl{svektor}{[}{;}{]}{\begin{smatrix}}{}{\\}{\\\end{smatrix}}
\multiargrekursiverbefehl{vektor}{[}{;}{]}{\begin{matrix}{c}}{}{\\}{\\\end{matrix}}
\multiargrekursiverbefehl{vektorzeile}{}{,}{;}{}{&}{}{}
\multiargrekursiverbefehl{matlabmatrix}{[}{;}{]}{\begin{smatrix}\vektorzeile}{\vektorzeile}{;\\}{;\end{smatrix}}

\def\underbracenodisplay#1{%
    \mathop{\vtop{\m@th\ialign{##\crcr
    $\hfil\displaystyle{#1}\hfil$\crcr
    \noalign{\kern3\p@\nointerlineskip}%
    \upbracefill\crcr\noalign{\kern3\p@}}}}\limits%
}

\def\einzug{\@ifnextchar[{\indents@}{\indents@[\z@]}}
    \def\indents@[#1]{\@ifnextchar[{\indents@@[#1]}{\indents@@[#1][\z@]}}
    \def\indents@@[#1][#2]{%
        \begin{list}{}{\relax
            \setlength{\topsep}{\z@}\relax
            \setlength{\partopsep}{\z@}\relax
            \setlength{\parsep}{\parskip}\relax
            \setlength{\listparindent}{\z@}\relax
            \setlength{\itemindent}{\z@}\relax
            \setlength{\leftmargin}{#1}\relax
            \setlength{\rightmargin}{#2}\relax
            \let\alterlinkerRand\gesamtlinkerRand
            \let\alterrechterRand\gesamtrechterRand
            \addtolength{\gesamtlinkerRand}{#1}
            \addtolength{\gesamtrechterRand}{#2}
        }\relax
            \item[]\relax
    }
        \def\endeinzug{%
            \setlength{\gesamtlinkerRand}{\alterlinkerRand}
            \setlength{\gesamtlinkerRand}{\alterrechterRand}
            \end{list}%
        }

\def\escapeeinzug{\begin{einzug}[-\gesamtlinkerRand][-\gesamtrechterRand]}
    \def\endescapeeinzug{\end{einzug}}

\def\programmiercode{
    \modulolinenumbers[1]
    \begin{einzug}[\rtab][\rtab]%
    \begin{linenumbers}%
        \fontfamily{cmtt}\fontseries{m}\fontshape{u}\selectfont%
        \setlength{\parskip}{1\baselineskip}%
        \setlength{\parindent}{0pt}%
}
    \def\endprogrammiercode{
        \end{linenumbers}
        \end{einzug}
    }

\def\schattiertebox@genbefehl#1#2#3{
    \expandafter\gdef\csname #1\endcsname{%
        \@ifnextchar[{\csname #1@args\endcsname}{\csname #1@args\endcsname[#3]}
    }
        \expandafter\def\csname #1@args\endcsname[##1]{%
            \@ifnextchar[{\csname #1@args@l\endcsname[##1]}{\csname #1@args@n\endcsname[##1]}
        }
        \expandafter\def\csname #1@args@l\endcsname[##1][##2]{%
            \@ifnextchar[{\csname #1@args@l@r\endcsname[##1][##2]}{\csname #1@args@l@n\endcsname[##1][##2]}
        }
        \expandafter\def\csname #1@args@n\endcsname[##1]{%
            \let\boolinmdframed\boolwahr
            \begin{mdframed}[#2leftmargin=0,rightmargin=0,outermargin=0,innermargin=0,##1]
        }
        \expandafter\def\csname #1@args@l@n\endcsname[##1][##2]{%
            \let\boolinmdframed\boolwahr
            \begin{mdframed}[#2leftmargin=##2/2,rightmargin=##2/2,outermargin=##2/2,innermargin=##2/2,##1]
        }
        \expandafter\def\csname #1@args@l@r\endcsname[##1][##2][##3]{%
            \let\boolinmdframed\boolwahr
            \begin{mdframed}[#2leftmargin=##2,rightmargin=##3,outermargin=##2,innermargin=##3,##1]
        }
    \expandafter\gdef\csname end#1\endcsname{%
        \end{mdframed}
        \let\boolinmdframed\boolfalsch
    }
}
    \schattiertebox@genbefehl{schattiertebox}{
        splittopskip=0,%
        splitbottomskip=0,%
        frametitleaboveskip=0,%
        frametitlebelowskip=0,%
        skipabove=1\baselineskip,%
        skipbelow=1\baselineskip,%
        linewidth=2pt,%
        linecolor=black,%
        roundcorner=4pt,%
    }{
        backgroundcolor=leer,%
        nobreak=true,%
    }

    \schattiertebox@genbefehl{schattierteboxdunn}{
        splittopskip=0,%
        splitbottomskip=0,%
        frametitleaboveskip=0,%
        frametitlebelowskip=0,%
        skipabove=1\baselineskip,%
        skipbelow=1\baselineskip,%
        linewidth=1pt,%
        linecolor=black,%
        roundcorner=2pt,%
    }{
        backgroundcolor=leer,%
        nobreak=true,%
    }

\def\tikzsetzepfeil#1{%
    \begin{tikzpicture}[remember picture,overlay,>=latex]%
        \draw #1;%
    \end{tikzpicture}%
}

\def\tikzsetzekreise[#1]#2#3{%
    \tikzsetzepfeil{%
    [rounded corners,#1]%
        ([shift={(-\tabcolsep,0.75\baselineskip)}]#2)%
        rectangle%
        ([shift={(\tabcolsep,-0.5\baselineskip)}]#3)
    }%
}

\tikzset{
    >=stealth,
    auto,
    node distance=1cm,
    thick,
    main node/.style={
        circle,draw,font=\sffamily\Large\bfseries,minimum size=0pt
    },
    state/.style={minimum size=0pt}
    loop above right/.style={loop,out=30,in=60,distance=0.5cm},
    loop above left/.style={above left,out=150,in=120,loop},
    loop below right/.style={below right,out=330,in=300,loop},
    loop below left/.style={below left,out=240,in=210,loop},
    itria/.style={
        draw,dashed,shape border uses incircle,
        isosceles triangle,shape border rotate=90,yshift=-1.45cm
    },
    rtria/.style={
        draw,dashed,shape border uses incircle,
        isosceles triangle,isosceles triangle apex angle=90,
        shape border rotate=-45,yshift=0.2cm,xshift=0.5cm
    },
    ritria/.style={
        draw,dashed,shape border uses incircle,
        isosceles triangle,isosceles triangle apex angle=110,
        shape border rotate=-55,yshift=0.1cm
    },
    litria/.style={
        draw,dashed,shape border uses incircle,
        isosceles triangle,isosceles triangle apex angle=110,
        shape border rotate=235,yshift=0.1cm
    }
}



\renewenvironment{cases}[0]{\left\{\begin{array}[c]{0lcl}}{\end{array}\right.}



\providecommand{\usesinglequotes}{}
\renewcommand{\usesinglequotes}[1]{`#1'}

\providecommand{\onetoone}{}
\renewcommand{\onetoone}[0]{\ensuremath{1\!\!:\!\!1}\forceaddspace}
\providecommand{\envPreMathsLong}{}
\renewcommand{\envPreMathsLong}[0]{%
    \bgroup\relax%
    \let\old@arraystretch\arraystretch\relax%
    \renewcommand\arraystretch{1.2}\relax\relax%
}
\providecommand{\envPostMathsLong}{}
\renewcommand{\envPostMathsLong}[0]{%
    \renewcommand\arraystretch{\old@arraystretch}\relax%
    \egroup\relax%
}
\providecommand{\complex}{}
\renewcommand{\complex}[0]{\mathbb{C}}
\providecommand{\Torus}{}
\renewcommand{\Torus}[0]{\mathbb{T}}

\providecommand{\reals}{}
\renewcommand{\reals}[0]{\mathbb{R}}
\providecommand{\realsNonNeg}{}
\renewcommand{\realsNonNeg}[0]{\reals_{\geq 0}}
\providecommand{\realsPos}{}
\renewcommand{\realsPos}[0]{\reals_{> 0}}
\providecommand{\rationals}{}
\renewcommand{\rationals}[0]{\mathbb{Q}}

\providecommand{\integers}{}
\renewcommand{\integers}[0]{\mathbb{Z}}
\providecommand{\naturals}{}
\renewcommand{\naturals}[0]{\mathbb{N}}
\providecommand{\naturalsZero}{}
\renewcommand{\naturalsZero}[0]{\mathbb{N}_{0}}
\providecommand{\naturalsPos}{}
\renewcommand{\naturalsPos}[0]{\mathbb{N}}
\providecommand{\HilbertRaum}{}
\renewcommand{\HilbertRaum}[0]{\mathcal{H}}
\providecommand{\BanachRaum}{}
\renewcommand{\BanachRaum}[0]{\mathcal{E}}
\providecommand{\Heisenberg}{}
\renewcommand{\Heisenberg}[0]{\mathcal{H}}

\providecommand{\topInterior}{}
\renewcommand{\topInterior}[1]{\textup{int}(#1)}

\providecommand{\topSOT}{}
\renewcommand{\topSOT}[0]{\text{\upshape\scshape sot}}
\providecommand{\topWOT}{}
\renewcommand{\topWOT}[0]{\text{\upshape\scshape wot}}

\providecommand{\card}{}
\renewcommand{\card}[1]{\lvert #1 \rvert}

\providecommand{\einser}{}
\renewcommand{\einser}[0]{\mathbf{1}}

\providecommand{\iunit}{}
\renewcommand{\iunit}[0]{\imath}
\providecommand{\abs}{}
\renewcommand{\abs}[1]{\lvert #1 \rvert}
\providecommand{\absLong}{}
\renewcommand{\absLong}[1]{\Big| #1 \Big|}

\providecommand{\linspann}{}
\renewcommand{\linspann}[0]{\textup{lin}}

\providecommand{\onematrix}{}
\renewcommand{\onematrix}[0]{\text{\upshape\bfseries I}}
\providecommand{\zeromatrix}{}
\renewcommand{\zeromatrix}[0]{\mathbf{0}}

\providecommand{\zerovector}{}
\renewcommand{\zerovector}[0]{\mathbf{0}}

\providecommand{\brkt}{}
\renewcommand{\brkt}[2]{\langle{}#1,\:#2{}\rangle}

\providecommand{\brktLong}{}
\renewcommand{\brktLong}[2]{\Big\langle{}#1,\:#2{}\Big\rangle}

\providecommand{\norm}{}
\renewcommand{\norm}[1]{\lVert #1 \rVert}
\providecommand{\normLong}{}
\renewcommand{\normLong}[1]{\Big\| #1 \Big\|}

\providecommand{\normCb}{}
\renewcommand{\normCb}[1]{\lVert #1 \rVert_{\textup{cb}}}

\providecommand{\opDomain}{}
\renewcommand{\opDomain}[1]{\mathcal{D}(#1)}
\providecommand{\opSpectrum}{}
\renewcommand{\opSpectrum}[1]{\sigma(#1)}

\providecommand{\opResolvent}{}
\renewcommand{\opResolvent}[2]{R(#2, #1)}
\providecommand{\BoundedOpsSymbol}{}
\renewcommand{\BoundedOpsSymbol}[0]{\mathfrak{L}}
\providecommand{\Cnought}{}
\renewcommand{\Cnought}[0]{\ensuremath{C_{0}}}

\providecommand{\isPartition}{}
\renewcommand{\isPartition}[2]{#1 \in \mathrm{Part}(#2)}

\providecommand{\sotInt}{}
\renewcommand{\sotInt}[0]{\text{\topSOT-}{}\int}
\providecommand{\dee}{}
\renewcommand{\dee}[0]{\mathop{\textup{d}}\!}

\providecommand{\funcCalcCts}{}
\renewcommand{\funcCalcCts}[0]{\Psi_{\textup{P-lM}}}
\providecommand{\funcCalcDiscr}{}
\renewcommand{\funcCalcDiscr}[0]{\Psi_{\textup{disc}}}
\providecommand{\Prob}{}
\renewcommand{\Prob}[0]{\mathbb{P}}
\providecommand{\Expected}{}
\renewcommand{\Expected}[0]{\mathbb{E}}
\providecommand{\Var}{}
\renewcommand{\Var}[0]{\mathbf{\mathrm{Var}}}
\providecommand{\distributedAs}{}
\renewcommand{\distributedAs}[0]{\mathrel{\sim}}
\providecommand{\PushForward}{}
\renewcommand{\PushForward}[2]{f_{\ast}{}#2}
\providecommand{\DistExp}{}
\renewcommand{\DistExp}[1]{\mathrm{Exp}(#1)}

\providecommand{\DistPoissScale}{}
\renewcommand{\DistPoissScale}[2]{\mathrm{Poiss}(#1, #2)}
\providecommand{\DistPoissAux}{}
\renewcommand{\DistPoissAux}[2]{\mathrm{Poiss}^{2}_{#1}(#2)}
\providecommand{\Fourier}{}
\renewcommand{\Fourier}[0]{\mathcal{F}}

\providecommand{\LpPlusCpct}{}
\renewcommand{\LpPlusCpct}[3]{L^{#1}_{\textup{c},#2}(#3)}

\def\Cts{\@ifnextchar_{\Cts@tief}{\Cts@tief_{}}}
    \def\Cts@tief_#1#2{\@ifnextchar\bgroup{\Cts@two_{#1}{#2}}{\Cts@one_{#1}{#2}}}
    \def\Cts@one_#1#2{C_{#1}\big(#2\big)}
    \def\Cts@two_#1#2#3{C_{#1}\big(#2,~#3\big)}

\def\BoundedOps#1{\@ifnextchar\bgroup{\BoundedOps@two{#1}}{\mathop{\BoundedOpsSymbol}(#1)}}
    \def\BoundedOps@two#1#2{\mathop{\BoundedOpsSymbol}(#1,#2)}
\def\BoundedOpsInv#1{\@ifnextchar\bgroup{\BoundedOps@two{#1}}{\mathop{\BoundedOpsSymbol}(#1)^{\times}}}
    \def\BoundedOpsInv@two#1#2{\mathop{\BoundedOpsSymbol}(#1,#2)^{\times}}

\def\id{\mathrm{\textit{id}}}

\def\restr#1{\vert_{#1}}
\def\without{\mathbin{\setminus}}

\def\eps{\varepsilon}
\let\altphi\phi
\let\altvarphi\varphi
    \def\phi{\altvarphi}
    \def\varphi{\altphi}

\def\quer#1{\overline{#1}}

\def\lim{\mathop{\ell\mathrm{im}}}
\def\supp{\mathop{\textup{supp}}}
\def\dim{\mathop{\textup{dim}}}
\def\dom{\mathop{\textup{dom}}}

\def\ran{\mathop{\textup{ran}}}

\def\Re{\mathop{\mathfrak{R}\mathrm{e}}}



\def\tinytopSOT{\text{\scriptsize\upshape \scshape sot}}



\pagestyle{fancyplain}

\@ifundefined{setcitestyle}{%
}{%
    \setcitestyle{numeric-comp,open={[},close={]}}
}

\allowdisplaybreaks 
\raggedbottom 
\sloppy

\setlength{\columnsep}{20pt}
\setlength{\columnseprule}{1pt}
\setlength{\headheight}{11pt}
\setlength{\partopsep}{0pt}
\setlength{\topsep}{\baselineskip}
\setlength{\topskip}{0.5\baselineskip}
\setlength{\footskip}{-1\baselineskip}
\setlength{\maxdepth}{0pt}

\renewcommand{\arraystretch}{1}
\setcounter{LTchunksize}{\infty}
\setlength{\abovedisplayskip}{0pt}
\setlength{\belowdisplayskip}{0pt}
\setlength{\parskip}{7pt}
\setlength{\ownspaceabovethm}{0.5\baselineskip}
\setlength{\ownspacebelowthm}{0.5\baselineskip}
\def\firstparagraph{\noindent}
\def\continueparagraph{\noindent}

    \generatenestedsecnumbering{arabic}{section}{subsection}
    \generatenestedsecnumbering{arabic}{subsection}{subsubsection}
    \def\theunitnamesection{\thesection}

    \def\sectionname{}

    \let\appendix@orig\appendix
    \def\appendix{%
        \appendix@orig%
        \let\boolinappendix\boolwahr
        \addcontentsline{toc}{part}{\appendixname}%
        \addtocontents{toc}{\protect\setcounter{tocdepth}{0}}
        \def\sectionname{Appendix}%
        \def\theunitnamesection{\Alph{section}}%
    }
    \def\notappendix{%
        \let\boolinappendix\boolfalse
        \addtocontents{toc}{\protect\setcounter{tocdepth}{1 }}
        \def\sectionname{}%
        \def\theunitnamesection{\arabic{section}}%
    }

    \def\@seccntformat#1{%
        \protect\textup{%
            \protect\@secnumfont
            \expandafter\protect\csname format#1\endcsname%
            \csname the#1\endcsname
            \expandafter\protect\csname format#1@pt\endcsname%
            \space
        }%
    }

    \def\formatsection@text{\centering\Large\scshape}
    \def\formatsection@pt{\secnumberingseppt}
    \def\section{\@startsection{section}{1}{\z@}{.7\linespacing\@plus\linespacing}{.5\linespacing}{\formatsection@text}}

    \def\formatsubsection@text{\flushleft\bfseries\scshape}
    \def\formatsubsection@pt{\subsecnumberingseppt}
    \def\subsection{\@startsection{subsection}{2}{\z@}{\z@}{\z@\hspace{1em}}{\formatsubsection@text}}

    \renewcommand{\paragraph}[1]{%
        {\bfseries #1}\:%
    }

\DefineFNsymbols*{customAlphabet}{abcdefghijklmnopqrstuvwxyz}
\setfnsymbol{customAlphabet}

\def\footnotemark[#1]{\text{\textsuperscript{\getrefnumber{#1}}}}

\def\rafootnotectr{20}
\providecommand{\incrftnotectr}{}
\renewcommand{\incrftnotectr}[1]{%
    \addtocounter{#1}{1}%
    \ifnum\value{#1}>\rafootnotectr\relax
        \setcounter{#1}{0}%
    \fi%
}
\providecommand{\footnoteref}{}
\renewcommand{\footnoteref}[1]{\protected@xdef\@thefnmark{\ref{#1}}\@footnotemark}
\let\@old@footnotetext\footnotetext
\def\footnotetext[#1]#2{%
    \incrftnotectr{footnote}%
    \@old@footnotetext[\value{footnote}]{\label{#1}#2}%
}

\def\kopfzeiledefault{
    \lhead[]{}
    \lhead[]{}
    \chead[]{}
    \rhead[]{}
    \lfoot[]{}
    \cfoot{\footnotesize\thepage}
    \rfoot[]{}
}

\def\aktuellesfont{\csname lmodern\endcsname}
\def\documentfont{%
    \gdef\aktuellesfont{\csname lmodern\endcsname}%
    \fontfamily{lmr}\fontseries{m}\selectfont%
    \renewcommand{\sfdefault}{phv}%
    \renewcommand{\ttdefault}{pcr}%
    \renewcommand{\rmdefault}{cmr}
    \renewcommand{\bfdefault}{bx}%
    \renewcommand{\itdefault}{it}%
    \renewcommand{\sldefault}{sl}%
    \renewcommand{\scdefault}{sc}%
    \renewcommand{\updefault}{n}%
}

\allowdisplaybreaks

\def\startdocumentlayoutoptions{
    \selectlanguage{british}
    \setlength{\parskip}{0.25\baselineskip}
    \setlength{\parindent}{2em}
    \kopfzeiledefault
    \documentfont
    \normalsize
}

\providecommand{\highlightTerm}{}
\renewcommand{\highlightTerm}[1]{\emph{#1}}
\providecommand{\highlightForReview}{}
\renewcommand{\highlightForReview}[1]{%
    \bgroup\relax%
    \color{blue}\relax%
    #1\relax%
    \egroup\relax%
}



\def\@adminfootnotes{%
    \let\@makefnmark\relax
    \let\@thefnmark\relax
    \ifx\@empty\@date\else%
        \@footnotetext{\@setdate}%
    \fi%
    \ifx\@empty\@subjclass\else%
        \@footnotetext{\@setsubjclass}%
    \fi
    \ifx\@empty\@keywords\else%
        \@footnotetext{\@setkeywords}%
    \fi
    \ifx\@empty\thankses\else%
        \@footnotetext{\def\par{\let\par\@par}\@setthanks}%
    \fi
}

\def\@settitle{\Large\bfseries\scshape\@title}

\def\@maketitle{%
  \normalfont\normalsize
  \@adminfootnotes
  \@mkboth{\@nx\shortauthors}{\@nx\shorttitle}%
  \global\topskip42\p@\relax
  {\centering\@settitle}
  \ifx\@empty\authors\else{\centering\small\@setauthors}\fi
  \ifx\@empty\@date\else{\vtop{\centering\small\@date\@@par}}\fi
  \ifx\@empty\@dedicatory%
  \else%
    \baselineskip\p@
    \vtop{\centering{\footnotesize\itshape\@dedicatory\@@par}%
    \global\dimen@i\prevdepth}\prevdepth\dimen@i%
  \fi
  \@setabstract
  \normalsize
  \if@titlepage
    \newpage
  \else
    \dimen@34\p@\advance\dimen@-\baselineskip
  \fi
}

\def\addresseshere{%
  \bgroup
  \setlength{\parindent}{0pt}
  \enddoc@text
  \egroup
  \let\enddoc@text\relax
}

\makeatother



\begin{document}
\startdocumentlayoutoptions

\thispagestyle{plain}



\def\abstractname{Abstract}
\begin{abstract}
    We consider characterisations of unitary dilations and approximations
    of irreversible classical dynamical systems on a Hilbert space.
    In the commutative case,
    building on the work in \cite{Chung1962exp}, one can express well known approximants
    (\exempli Hille- and Yosida-approximants)
    via expectations over certain stochastic processes.
    Using this, our first result characterises
    the simultaneous regular unitary dilatability of commuting families of $\Cnought$-semigroups
    via the dilatability of such approximants as well as via regular polynomial bounds.
    This extends the results in \cite{Dahya2023dilation} to the unbounded setting.
    We secondly consider characterisations of unitary and regular unitary dilations
    via two distinct functional calculi.
    Applying these tools to a large class of classical dynamical systems,
    these two notions of dilation exactly characterise
    when a system admits unitary approximations
    under certain distinct notions of weak convergence.
    This establishes a sharp topological distinction between the two notions of unitary dilations.
    Our results are applicable to commutative systems as well as
    non-commutative systems satisfying the \emph{canonical commutation relations} (CCR) in the Weyl form.
\end{abstract}



\subjclass[2020]{47A20, 47D06, 60G55}
\keywords{Semigroups of operators; dilations; approximations; point processes; functional calculus; group $C^{\ast}$-algebras.}
\title[Characterisations of dilations via approximants, expectations, and functional calculi]{Characterisations of dilations via approximants, expectations, and functional calculi}
\author{Raj Dahya}
\email{raj.dahya@web.de}
\address{Fakult\"at f\"ur Mathematik und Informatik\newline
Universit\"at Leipzig, Augustusplatz 10, D-04109 Leipzig, Germany}

\maketitle



\setcounternach{section}{1}



\section[Introduction]{Introduction}
\label{sec:introduction:sig:article-stochastic-raj-dahya}


\firstparagraph
Classical dynamical systems on Hilbert or Banach spaces,
which are in general irreversible,
can be studied in at least two natural ways
in terms of more ideal systems:
    via approximations
    and
    via embeddings into (or: \usesinglequotes{dilations} to) larger reversible systems.
Towards the former, see \exempli \cite{Hillephillips1957faAndSg,Butzer1967semiGrApproximationsBook,Chung1962exp,Krol2009}.
The study of the latter was in part inspired by a result from Halmos \cite{Halmos1950dilation},
and properly initiated by Sz.-Nagy and Foias in \cite{Nagy1953,Nagy1970}
with their work on unitary (power) dilations of contractions
and of $1$-parameter contractive $\Cnought$-semigroups
over Hilbert spaces.
Remaining in the commutative setting,
various results have been achieved for systems consisting of multiple operators
as well as multi-parameter $\Cnought$-semigroups
(see \exempli
    \cite{Ando1963pairContractions,Slocinski1974,Slocinski1982,Ptak1985,LeMerdy1996DilMultiParam,Shamovich2017dilationsMultiParam}).
For a good overview, see \exempli \cite{Averson2010DilationOverview,Shalit2021DilationBook}.

In the \textbf{first part} of this paper
(\S{}\ref{sec:stochastic:sig:article-stochastic-raj-dahya}--\ref{sec:first-results:sig:article-stochastic-raj-dahya}),
we shall first consider commutative systems of $\Cnought$-semigroups.
Note that a commuting family
    $\{U_{i}\}_{i=1}^{d}$
of unitary $\Cnought$-semigroups on a Hilbert space $\HilbertRaum$
can be uniquely extended to an \topSOT-continuous unitary representation
$U$ of $(\reals^{d},+,\zerovector)$ on $\HilbertRaum$
defined via
    $
        U(\mathbf{t})
        \colonequals
        (
            \prod_{\mathclap{i\in\supp(\mathbf{t}^{-})}}
            U_{i}(t_{i}^{-})
        )^{\ast}
        (
            \prod_{\mathclap{i\in\supp(\mathbf{t}^{+})}}
            U_{i}(t_{i}^{+})
        )
        $
for all $\mathbf{t} = (t_{i})_{i=1}^{d} \in \reals^{d}$,
where
    $t^{-}=\max\{-t,\,0\}$
    and
    $t^{+}=\max\{t,\,0\}$
for $t\in\reals$
(\cf Stone's theorem \cite[Theorem~I.4.7]{Goldstein1985semigroups}).
Bearing this in mind, a commuting family
    $\{T_{i}\}_{i=1}^{d}$
of $\Cnought$-semigroups on $\HilbertRaum$
is said to have a
    \highlightTerm{simultaneous regular unitary dilation}
if

    \noparskip
    $$
        \Big(
            \prod_{i=1}^{d}T(t_{i}^{-})
        \Big)^{\ast}
        \Big(
            \prod_{i=1}^{d}T(t_{i}^{+})
        \Big)
        =
        r^{\ast}\,U(\mathbf{t})\,r
    $$

\continueparagraph
holds for all $\mathbf{t} = (t_{i})_{i=1}^{d} \in \reals^{d}$,
for
    some $\topSOT$-continuous unitary representation $U$ of $(\reals^{d},+,0)$ on a Hilbert space $\HilbertRaum_{1}$
    and
    some isometry ${r\in\BoundedOps{\HilbertRaum}{\HilbertRaum_{1}}}$
(and in this case we shall refer to the data
    $(\HilbertRaum_{1},U,r)$
as the simultaneous regular unitary dilation).%
\footnote{%
    For Hilbert spaces, $\HilbertRaum,\HilbertRaum_{1},\HilbertRaum_{2}$
    the sets
        $\BoundedOps{\HilbertRaum}$
        and
        $\BoundedOps{\HilbertRaum_{1}}{\HilbertRaum_{2}}$
    denote the set of bounded linear operators on $\HilbertRaum$
    and the set of bounded linear operators from $\HilbertRaum_{1}$ to $\HilbertRaum_{2}$
    respectively.
}
A \highlightTerm{simultaneous unitary dilation}
is defined by the above condition restricted to $\mathbf{t} \in \realsNonNeg^{d}$.

For the cases $d\in\{1,2\}$ it was proved in
    \cite[Theorem~I.8.1]{Nagy1970},
    \cite{Slocinski1974},
    \cite[Theorem~2]{Slocinski1982},
    and
    \cite[Theorem~2.3]{Ptak1985},
that all contractive commuting families $\{T_{i}\}_{i=1}^{d}$
have a simultaneous unitary dilation.
In the case of $d=1$, these are in fact regular unitary dilations.
In the general case of $d \geq 1$,
the existence of simultaneous regular unitary dilations was fully characterised
in \cite[Theorem~3.2]{Ptak1985}
via a general condition which we may refer to as \emph{Brehmer positivity}.
Le~Merdy fully classified the existence of simultaneous unitary dilations
as well as dilations \emph{up to similarity}
via the complete boundedness of a certain functional calculus map
(%
    see \cite[Theorems~2.2~and~3.1]{LeMerdy1996DilMultiParam},
    which builds on \cite[Corollaries~4.9~and~4.13]{Pisier2001bookCBmaps}%
),
and successfully applied the latter to commuting families of bounded analytic $\Cnought$-semigroups.
Moreover, Shamovich and Vinnivok provided
sufficient embedding conditions on the generators
for the existence of simultaneous unitary dilations
(see \cite{Shamovich2017dilationsMultiParam}).
Recent results contribute to this history by providing
two further complete characterisations of the existence of simultaneous regular unitary dilations
for commuting families of $\Cnought$-semigroups
under the assumption of bounded generators
(see \cite[Theorems~1.1~and~1.4]{Dahya2023dilation}).
The first characterisation was achieved
via the notion of \emph{complete dissipativity}
(see \cite[Definition~2.8]{Dahya2023dilation}),
which is defined by the positivity of
certain combinatorial expressions involving the generators.
The second characterisation builds on the first
and characterises the existence of regular unitary dilations
via \emph{regular polynomial bounds}
(see \cite[Definition 6.2]{Dahya2023dilation}).
Furthermore, analogue to \cite{LeMerdy1996DilMultiParam},
it was shown that all commuting families of $\Cnought$-semigroups
with bounded generators have regular unitary dilations
up to certain natural modification
(see \cite[Corollary~1.2]{Dahya2023dilation}).

The latter reference left open the question,
whether the characterisation via regular polynomial bounds
could be extended to the unbounded setting
(see \cite[Remark~6.6]{Dahya2023dilation}).
Moreover, the characterisation via complete dissipativity involves a characterisation via approximants
which raises the question, whether for certain \emph{natural choices of approximants},
a commuting family of $\Cnought$-semigroups has a simultaneous regular unitary dilation
if and only if families of their approximants do.
The current paper shall answer both these questions positively
in the general setting without the boundedness assumption
(see \S{}\ref{sec:first-results:sig:article-stochastic-raj-dahya}).

In the \textbf{second part} of this paper
(\S{}\ref{sec:functional-calculus:sig:article-stochastic-raj-dahya}--\ref{sec:second-results:sig:article-stochastic-raj-dahya}),
we consider classical dynamical systems more broadly described by
\emph{homomorphisms} defined over topological monoids.
To motivate this, observe that there is
a natural \onetoone-correspondence between
commuting families
    $\{T_{i}\}_{i=1}^{d}$
of (bounded/contractive/unitary) $\Cnought$-semigroups
and (bounded/contractive/unitary) \topSOT-continuous homomorphisms
    $T$
between $(\realsNonNeg^{d},+,\zerovector)$ and spaces of operators
(\cf \cite[\S{}1]{Dahya2023dilation})
via
    $T(\mathbf{t}) = \prod_{i=1}^{d}T_{i}(t_{i})$
    and
    $T_{i}(t) = T_{i}(t) = T(0,0,\ldots,\underset{i}{t},\ldots,0)$
for $\mathbf{t}\in\realsNonNeg^{d}$, $t\in\realsNonNeg$,
$i\in\{1,2,\ldots,d\}$.
It is thus natural to consider homomorphisms defined over topological monoids.
If $G$ is a topological group and $M \subseteq G$ is a submonoid
and ${T \colon M\to\BoundedOps{\HilbertRaum}}$ is an \topSOT-continuous homomorphism from $M$
to bounded operators on a Hilbert space $\HilbertRaum$,
we define a \highlightTerm{unitary dilation} of $T$
to be a tuple $(\HilbertRaum_{1},U,r)$,
where
    $\HilbertRaum_{1}$ is a Hilbert space,
    $U$ is an $\topSOT$-continuous unitary representation $G$ on $\HilbertRaum_{1}$,
    and
    ${r\in\BoundedOps{\HilbertRaum}{\HilbertRaum_{1}}}$ is an isometry,
such that

    \noparskip
    $$
        T(x) = r^{\ast}\,U(x)\,r
    $$

\continueparagraph
holds for all $x \in M$.
We shall also consider monoids for which one can define
a \emph{positivity structure},
which consists of continuous maps ${\cdot^{+},\cdot^{-} \colon G \to M}$
(see \Cref{defn:positivity-structure-monoids:sig:article-stochastic-raj-dahya}).
Using this concept, we say that $(\HilbertRaum_{1},U,r)$
is a \highlightTerm{regular unitary dilation} of $T$,
if

    \noparskip
    $$
        (T(x^{-}))^{\ast}T(x^{+})
        =
        r^{\ast}\,U(x)\,r
    $$

\continueparagraph
holds for all $x \in G$.
Considering $(G,M) = (\reals^{d},\realsNonNeg^{d})$, $d\in\naturals$,
by the above mentioned \onetoone-correspondence
one can see that these concepts
agree with the definitions of
simultaneous (regular) unitary dilations of commuting families.

We present in \S{}\ref{sec:functional-calculus:sig:article-stochastic-raj-dahya}
characterisations
of unitary and regular unitary dilations
via the means of \emph{functional calculi},
inspired by Sz.-Nagy and Phillips/le~Merdy.
These tools enable us to characterise the existence of unitary approximations
for a broad class of classical systems
(see \S{}\ref{sec:second-results:sig:article-stochastic-raj-dahya}).

Before proceeding,
we recall the afore mentioned results from the bounded setting,
define our terminology
and state the main results of this paper.



\subsection[Characterisation via complete dissipativity]{Characterisation via complete dissipativity}
\label{sec:introduction:dissipativity:sig:article-stochastic-raj-dahya}

\firstparagraph
For a $\Cnought$-semigroup $T$ on a Hilbert space $\HilbertRaum$ with generator $A$,
if $T$ is contractive, then it has a regular unitary dilation
(\cf \cite[Theorem~I.8.1]{Nagy1970}).
And clearly, the latter necessarily requires $T$ to be contractive.
On the other hand, by the Lumer-Phillips form of the Hille-Yosida theorem,
$T$ is contractive if and only if $A$ is dissipative
(see \cite[Theorem~I.3.3]{Goldstein1985semigroups}).
Thus the dissipativity of the generator characterises
the regular unitary dilatability of a $\Cnought$-semigroup.

In the setting of commuting families of semigroups,
dissipativity can be generalised as follows:
For each $k\in\naturalsZero$,
the \highlightTerm{$k$\textsuperscript{th}-order dissipation operators}
are defined by

    \noparskip
    $$
        (-\tfrac{1}{2})^{\card{K}}
        \sum_{\isPartition{(C_{1},C_{2})}{K}}
            \Big(
                \prod_{i\in C_{1}}
                    A_{i}
            \Big)^{\ast}
            \prod_{j\in C_{2}}
                A_{j}
    $$

\continueparagraph
for $K \subseteq \{1,2,\ldots,d\}$ with $\card{K} = k$,
where $\isPartition{(C_{1},C_{2})}{K}$ denotes
that $C_{1},C_{2} \subseteq K$ form a partition of $K$.
We say that the generators $\{A_{i}\}_{i=1}^{d}$ are \highlightTerm{completely dissipative},
if the dissipation operators of all finite orders are positive
(\cf \cite[Definition~2.8]{Dahya2023dilation}).
This notion leads to a characterisation result obtained in
    \cite{Dahya2023dilation},
which for the purposes of this paper we restate as follows:

\begin{thm}[Characterisation via complete dissipativity]
\makelabel{thm:classification:dissipativity:sig:article-stochastic-raj-dahya}
    Let
        $d\in\naturals$,
        $\HilbertRaum$ be a Hilbert space,
        and
        $\{T_{i}\}_{i=1}^{d}$
        be a commuting family of $\Cnought$-semigroups on $\HilbertRaum$,
        with generators $\{A_{i}\}_{i=1}^{d}$.
    If the semigroups have bounded generators,
    then the following are equivalent:

    \begin{kompaktenum}{\bfseries (a)}
        \item\punktlabel{1}
            The family $\{T_{i}\}_{i=1}^{d}$ has a simultaneous regular unitary dilation.
        \item\punktlabel{2}
            The generators $\{A_{i}\}_{i=1}^{d}$ are completely dissipative.
        \item\punktlabel{3}
            There exists a net $(\{T^{(\alpha)}_{i}\}_{i=1}^{d})_{\alpha \in \Lambda}$
            consisting of
            a commuting families of $\Cnought$-semigroups on $\HilbertRaum$,
            which each have simultaneous regular unitary dilations,
            such that
                $$
                    \sup_{\mathbf{t} \in L}
                        \normLong{
                            \Big(
                                \prod_{i=1}^{d}T^{(\alpha)}_{i}(t_{i})
                                -
                                \prod_{i=1}^{d}T_{i}(t_{i})
                            \Big)
                            \xi
                        }
                    \underset{\alpha}{\longrightarrow} 0
                $$
            for all
                $\xi \in \HilbertRaum$
                and
                compact $L \subseteq \realsNonNeg^{d}$.
    \end{kompaktenum}

    \continueparagraph
    Furthermore, the notion of convergence in \punktcref{3}
    can be replaced by pointwise \topSOT-convergence.
    The implication
        \punktcref{3}{}\ensuremath{\implies}{}\punktcref{1}
    also holds without the boundedness assumption.
\end{thm}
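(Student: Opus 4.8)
The plan is to establish the cycle (a)$\Leftrightarrow$(b) and (a)$\Leftrightarrow$(c), and to note that the argument for (c)$\Rightarrow$(a) is insensitive both to the boundedness hypothesis and to the precise mode of convergence appearing in (c). The equivalence (a)$\Leftrightarrow$(b) under bounded generators is exactly the characterisation via complete dissipativity obtained in \cite[Theorems~1.1~and~1.4]{Dahya2023dilation}, which we simply quote. The implication (a)$\Rightarrow$(c) is trivial: under (a) one takes the constant net $T^{(\alpha)}_{i}\colonequals T_{i}$, each member of which has a simultaneous regular unitary dilation by hypothesis, and the supremum in (c) vanishes identically; this realises (c) already in the strong (uniform on compacta) form, so it will suffice to prove (c)$\Rightarrow$(a) assuming only pointwise \topSOT-convergence of the products $\prod_{i}T^{(\alpha)}_{i}(t_{i})$, which simultaneously yields the equivalence of the two convergence notions and the final sentence of the theorem.

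The heart of the matter is (c)$\Rightarrow$(a), for which I would pass through the classical reformulation of regular unitary dilatability in terms of operator-valued positive-definite functions. Writing $T(\mathbf{t})=\prod_{i=1}^{d}T_{i}(t_{i})$ for the \topSOT-continuous homomorphism on $(\realsNonNeg^{d},+,\zerovector)$ associated with the family and, for $\mathbf{t}=(t_{i})_{i=1}^{d}\in\reals^{d}$, putting $\mathbf{t}^{\pm}=(t_{i}^{\pm})_{i=1}^{d}$ and $\phi_{T}(\mathbf{t})\colonequals(T(\mathbf{t}^{-}))^{\ast}T(\mathbf{t}^{+})\in\BoundedOps{\HilbertRaum}$, one has: $\{T_{i}\}_{i=1}^{d}$ admits a simultaneous regular unitary dilation if and only if $\phi_{T}$ is a positive-definite $\BoundedOps{\HilbertRaum}$-valued function on $(\reals^{d},+,\zerovector)$, \idest $\sum_{j,k}\brkt{\phi_{T}(\mathbf{t}_{j}-\mathbf{t}_{k})\xi_{k}}{\xi_{j}}\geq 0$ for all finite tuples $(\mathbf{t}_{j})_{j}$ in $\reals^{d}$ and $(\xi_{j})_{j}$ in $\HilbertRaum$. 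The forward direction is immediate from $\phi_{T}(\mathbf{t})=r^{\ast}U(\mathbf{t})r$ and the unitarity of $U$; the converse is Sz.-Nagy's dilation theorem for weakly continuous positive-definite operator functions on locally compact abelian groups (\cf \cite[Appendix]{Nagy1970}; it is implicit in the commuting multi-parameter setting underlying \cite[Theorem~3.2]{Ptak1985}), whose output $(\HilbertRaum_{1},U,r)$ has $U$ automatically \topSOT-continuous, while the normalisation $\phi_{T}(\zerovector)=\onematrix$ forces $r^{\ast}r=\onematrix$, \idest $r$ isometric. Here one uses that $\phi_{T}$ is \topWOT-continuous, which holds because $\mathbf{t}\mapsto T(\mathbf{t}^{+})\xi$ and $\mathbf{t}\mapsto T(\mathbf{t}^{-})\eta$ are norm-continuous, being the composition of the continuous map $\mathbf{t}\mapsto\mathbf{t}^{\pm}$ with an \topSOT-continuous homomorphism.

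Granting this reformulation, the argument closes quickly. The existence of a simultaneous regular unitary dilation of $\{T^{(\alpha)}_{i}\}_{i=1}^{d}$ forces each $T^{(\alpha)}_{i}$ to be contractive (restrict the dilation identity to $\mathbf{t}=t\mathbf{e}_{i}$ with $t\geq 0$), hence $\norm{T^{(\alpha)}(\mathbf{t}^{\pm})}\leq 1$ for every $\mathbf{t}$, and the same bounds hold for $T$ upon passing to the pointwise limit. A telescoping estimate using these uniform bounds upgrades the pointwise \topSOT-convergence $T^{(\alpha)}_{i}(t)\to T_{i}(t)$ to $T^{(\alpha)}(\mathbf{t}^{\pm})\xi\to T(\mathbf{t}^{\pm})\xi$ in norm for all $\mathbf{t}\in\reals^{d}$ and $\xi\in\HilbertRaum$, whence $\brkt{\phi_{T^{(\alpha)}}(\mathbf{t})\xi}{\eta}=\brkt{T^{(\alpha)}(\mathbf{t}^{+})\xi}{T^{(\alpha)}(\mathbf{t}^{-})\eta}\to\brkt{\phi_{T}(\mathbf{t})\xi}{\eta}$, \idest $\phi_{T^{(\alpha)}}\to\phi_{T}$ pointwise in \topWOT. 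Since each $\phi_{T^{(\alpha)}}$ is positive-definite (the reformulation applied to the $\alpha$-th member of the net) and each of the defining inequalities involves only finitely many evaluations of the function, positive-definiteness passes to the \topWOT-limit $\phi_{T}$; a final application of the reformulation then produces a simultaneous regular unitary dilation of $\{T_{i}\}_{i=1}^{d}$. I expect the only genuinely delicate point — the main obstacle — to be pinning down the precise statement of the positive-definite-function dilation theorem in the multi-parameter continuous setting, in particular that weak continuity of $\phi_{T}$ suffices to produce an \topSOT-continuous unitary representation; the remaining estimates (contractivity, telescoping, and closedness of the positivity conditions) are routine.
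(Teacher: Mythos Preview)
Your proposal is correct and follows the expected route; the paper itself does not supply a proof of this theorem but simply refers to \cite[Theorem~1.1 and Remark~4.4]{Dahya2023dilation}, and your argument for (c)$\Rightarrow$(a) via positive-definiteness of $\phi_{T}$ (a condition closed under pointwise \topWOT-limits) is precisely the mechanism underlying that reference and \cite[Theorem~3.2]{Ptak1985}. One unneeded detour: the telescoping step is superfluous, since the hypothesis in (c) already delivers \topSOT-convergence of the full products $\prod_{i}T^{(\alpha)}_{i}(t_{i})$ at every $\mathbf{t}\in\realsNonNeg^{d}$, and $\mathbf{t}^{+},\mathbf{t}^{-}\in\realsNonNeg^{d}$ for any $\mathbf{t}\in\reals^{d}$.
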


See \cite[Theorem~1.1 and Remark~4.4]{Dahya2023dilation} for a proof.
Note that \punktcref{1}{}\ensuremath{\implies}{}\punktcref{3}
is trivial since one can choose a constant net.
However, if one choses particular approximants in advance,
it is not immediate that this implication holds.
This leads to a natural question (in the general setting),
which motivated the research in the present paper:

\begin{qstn}
\makelabel{qstn:classical-approximants:dilation:sig:article-stochastic-raj-dahya}
    Let $\{T_{i}\}_{i=1}^{d}$ be a commuting family of
    $\Cnought$-semigroups on a Hilbert space $\HilbertRaum$.
    For which choices of approximants
        $(\{T^{(\alpha)}_{i}\}_{i=1}^{d})_{\alpha\in\Lambda}$
    does it hold that
    the simultaneous regular unitary dilatability of $\{T_{i}\}_{i=1}^{d}$
    implies that of each family
        $\{T^{(\alpha)}_{i}\}_{i=1}^{d}$?
\end{qstn}



\subsection[Characterisation via polynomial bounds]{Characterisation via polynomial bounds}
\label{sec:introduction:polynomial:sig:article-stochastic-raj-dahya}

\firstparagraph
For commuting operators
    $\{S_{i}\}_{i=1}^{d} \subseteq \BoundedOps{\HilbertRaum}$,
\highlightTerm{regular polynomial evaluation}
is defined as the unique linear map

    \noparskip
    $$
        \complex[X_{1},X_{1}^{-1},X_{2},X_{2}^{-1},\ldots,X_{d},X_{d}^{-1}] \ni p
        \mapsto p(S_{1},S_{2},\ldots,S_{d}) \in \BoundedOps{\HilbertRaum}
    $$

\continueparagraph
satisfying

    \noparskip
    $$
        p(S_{1},S_{2},\ldots,S_{d})
        = \Big(\prod_{\mathclap{i\in\supp(\mathbf{n}^{-})}}S_{i}^{-n_{i}}\Big)^{\ast}
        \Big(\prod_{\mathclap{i\in\supp(\mathbf{n}^{+})}}S_{i}^{n_{i}}\Big)
    $$

\continueparagraph
for all monomials of the form
    $p = \prod_{i=1}^{d}X_{i}^{n_{i}}$
with $\mathbf{n} \in \integers^{d}$
(\cf \cite[Definition~6.1]{Dahya2023dilation}).
We say that $\{T_{i}\}_{i=1}^{d}$ satisfies
\highlightTerm{regular polynomial bounds}
if

    \noparskip
    $$
        \norm{
            p(T_{1}(t_{1}),T_{2}(t_{2}),\ldots,T_{d}(t_{d}))
        } \leq \sup_{\boldsymbol{\lambda} \in \Torus^{d}}
        \abs{
            p(\lambda_{1},\lambda_{2},\ldots,\lambda_{d})
        }
    $$

\continueparagraph
holds for all $\mathbf{t} = (t_{i})_{i=1}^{d} \in \realsNonNeg^{d}$,
where $\Torus$ is the unit circle in the complex plane.
Using these notions, a second characterisation is obtained
in \cite{Dahya2023dilation},
which for the purposes of this paper, we restate as follows:

\begin{thm}[Characterisation via polynomial bounds]
\makelabel{thm:classification-bounded:poly:sig:article-stochastic-raj-dahya}
    Let
        $d\in\naturals$,
        $\HilbertRaum$ be a Hilbert space,
        and
        $\{T_{i}\}_{i=1}^{d}$
        be a commuting family of $\Cnought$-semigroups on $\HilbertRaum$
        with generators $\{A_{i}\}_{i=1}^{d}$.
    If the semigroups have bounded generators,
    then the following are equivalent:

    \begin{kompaktenum}{\bfseries (a)}
        \item\punktlabel{1}
            The family $\{T_{i}\}_{i=1}^{d}$ has a simultaneous regular unitary dilation.
        \item\punktlabel{2}
            The family $\{T_{i}\}_{i=1}^{d}$ satisfies regular polynomial bounds.
        \item\punktlabel{3}
            For each ${K \subseteq \{1,2,\ldots,d\}}$
            and all ${\mathbf{t}=(t_{i})_{i=1}^{d} \in \realsNonNeg^{d}}$
            the operator
                $p_{K}(T_{1}(t_{1}),T_{2}(t_{2}),\ldots,T_{d}(t_{d}))$
            is positive, where

            \noparskip
            \begin{eqnarray*}
                p_{K}
                    \colonequals
                        \displaystyle
                        \sum_{\mathclap{
                            \isPartition{(C_{1},C_{2})}{K}
                        }}
                            \displaystyle
                            \prod_{i \in C_{1}}
                                (1 - X_{i}^{-1})
                            \cdot
                            \displaystyle
                            \prod_{j \in C_{2}}
                                (1 - X_{j}).\\
            \end{eqnarray*}

        \item\punktlabel{4}
            The generators $\{A_{i}\}_{i=1}^{d}$ are completely dissipative.
    \end{kompaktenum}

    \continueparagraph
    Furthermore, the implications
        \punktcref{1}{}\ensuremath{\implies}{}\punktcref{2}{}\ensuremath{\implies}{}\punktcref{3}
    hold without the boundedness assumption.
\end{thm}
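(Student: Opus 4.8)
The plan is to invoke \Cref{thm:classification:dissipativity:sig:article-stochastic-raj-dahya} — whose complete-dissipativity condition coincides with \punktcref{4} above — which already gives \punktcref{1} $\Leftrightarrow$ \punktcref{4} under the boundedness hypothesis, and then to close the circle by proving \punktcref{1} $\Rightarrow$ \punktcref{2} $\Rightarrow$ \punktcref{3} $\Rightarrow$ \punktcref{4}, the first two implications without any assumption on the generators (which simultaneously yields the final sentence of the theorem). For \punktcref{1} $\Rightarrow$ \punktcref{2} I would take a simultaneous regular unitary dilation $(\HilbertRaum_{1},U,r)$, write $\{U_{i}\}_{i=1}^{d}$ for the associated commuting unitary $\Cnought$-groups, and note that for every $\mathbf{t} = (t_{i})_{i=1}^{d}\in\realsNonNeg^{d}$ and every Laurent monomial $p$ the semigroup law $T_{i}(t_{i})^{n} = T_{i}(nt_{i})$ together with the definition of regular polynomial evaluation gives
    $$
        p(T_{1}(t_{1}),\ldots,T_{d}(t_{d}))
        =
        r^{\ast}\,p(U_{1}(t_{1}),\ldots,U_{d}(t_{d}))\,r,
    $$
where on the unitaries $\{U_{i}(t_{i})\}_{i}$ regular polynomial evaluation is just the ordinary Laurent functional calculus; by linearity this identity holds for all Laurent polynomials $p$. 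Since $\{U_{i}(t_{i})\}_{i=1}^{d}$ is a commuting family of unitaries, its joint spectrum lies in $\Torus^{d}$, whence $\norm{p(U_{1}(t_{1}),\ldots,U_{d}(t_{d}))}\leq\sup_{\boldsymbol{\lambda}\in\Torus^{d}}\abs{p(\boldsymbol{\lambda})}$, and compression by the isometry $r$ does not increase norms; this is \punktcref{2}.

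For \punktcref{2} $\Rightarrow$ \punktcref{3} I would first collapse the partition sum defining $p_{K}$ to $p_{K} = \prod_{i\in K}(2 - X_{i} - X_{i}^{-1})$, so that $p_{K}$ takes values in $[0,4^{\card{K}}]$ on $\Torus^{d}$ and is a self-adjoint Laurent polynomial (invariant under $X_{i}\mapsto X_{i}^{-1}$ combined with conjugation of coefficients). Regular polynomial evaluation sends self-adjoint Laurent polynomials to self-adjoint operators, so $S\colonequals p_{K}(T_{1}(t_{1}),\ldots,T_{d}(t_{d}))$ is self-adjoint for every $\mathbf{t}\in\realsNonNeg^{d}$; applying the regular polynomial bound of \punktcref{2} to the Laurent polynomial $4^{\card{K}} - p_{K}$, whose supremum norm over $\Torus^{d}$ is $4^{\card{K}}$, yields $\norm{4^{\card{K}}I - S}\leq 4^{\card{K}}$, and for self-adjoint $S$ this forces $S\geq 0$. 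The conceptual point is that one should \emph{not} attempt to write $p_{K}(T(\cdot))$ as a sum or product of squares — regular polynomial evaluation is not multiplicative — but rather exploit self-adjointness together with the scalar bound. Neither implication so far uses boundedness, which establishes the concluding sentence.

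For \punktcref{3} $\Rightarrow$ \punktcref{4} I would expand $p_{K} = \prod_{i\in K}(2 - X_{i} - X_{i}^{-1})$ into monomials and apply regular polynomial evaluation. Two observations drive the argument: first, $p_{K}(T(\mathbf{t}))$ depends only on the coordinates $t_{i}$ with $i\in K$ and vanishes whenever $t_{j} = 0$ for some $j\in K$, because setting $t_{j}=0$ makes the three groups of terms coming from the factor $2 - X_{j} - X_{j}^{-1}$ add up to $(2 - 1 - 1)\,p_{K\setminus\{j\}}(T(\cdot)) = 0$; second, under boundedness $t\mapsto T_{i}(t)$ is norm-analytic, so $(t_{i})_{i\in K}\mapsto p_{K}(T(\mathbf{t}))$ is $C^{\infty}$ in norm. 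Vanishing on every coordinate hyperplane then forces the lowest-order Taylor term of $p_{K}(T(\mathbf{t}))$ at the origin to be $\big(\prod_{i\in K}t_{i}\big)$ times the mixed partial $\partial_{t_{i_{1}}}\cdots\partial_{t_{i_{k}}}\big|_{\mathbf{0}}$; differentiating the monomial expansion once in each $t_{i}$ ($i\in K$) kills every term containing a constant factor and leaves, with all the relevant $t_{i}$ set equal to $s>0$,
    $$
        \tfrac{1}{s^{\card{K}}}\,p_{K}(T(\mathbf{t}))
        \ \underset{s\to 0^{+}}{\longrightarrow}\
        2^{\card{K}}\cdot(-\tfrac{1}{2})^{\card{K}}\!\!\sum_{\isPartition{(C_{1},C_{2})}{K}}\!\!\Big(\prod_{i\in C_{1}}A_{i}\Big)^{\ast}\prod_{j\in C_{2}}A_{j},
    $$
i.e.\ $2^{\card{K}}$ times the $\card{K}$\textsuperscript{th}-order dissipation operator of $K$. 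By \punktcref{3} the left-hand side is a limit of positive operators, hence positive; as $K$ was arbitrary, all dissipation operators are positive, which is \punktcref{4}. Combining, \punktcref{1} $\Leftrightarrow$ \punktcref{2} $\Leftrightarrow$ \punktcref{3} $\Leftrightarrow$ \punktcref{4} under boundedness, and \punktcref{1} $\Rightarrow$ \punktcref{2} $\Rightarrow$ \punktcref{3} in general.

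I expect the main obstacle to lie in \punktcref{3} $\Rightarrow$ \punktcref{4}: verifying the hyperplane-vanishing of $p_{K}(T(\mathbf{t}))$ and, above all, matching the leading Taylor coefficient to the dissipation operator with the correct sign and scalar. Because regular polynomial evaluation places all negative-exponent factors on the left, both operators are assembled from the $A_{i}$'s and $A_{i}^{\ast}$'s in a prescribed order, so the combinatorial accounting — over ordered partitions $C^{+}\sqcup C^{-}\sqcup C^{0} = K$ on one side and ordered partitions $(C_{1},C_{2})$ of $K$ on the other — must be carried out carefully. The implication \punktcref{2} $\Rightarrow$ \punktcref{3} is the other place where a small idea (self-adjointness plus the scalar bound, rather than multiplicativity) is needed, but it is short once that is seen.
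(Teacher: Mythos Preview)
Your proposal is correct and follows essentially the same route as the paper, which largely defers to \cite{Dahya2023dilation}: you invoke \Cref{thm:classification:dissipativity:sig:article-stochastic-raj-dahya} for \punktcref{1}$\Leftrightarrow$\punktcref{4}, prove \punktcref{1}$\Rightarrow$\punktcref{2} by compressing the Laurent calculus on the dilating unitaries, handle \punktcref{2}$\Rightarrow$\punktcref{3} via self-adjointness and the scalar bound (the paper uses $1-\alpha p_K$ for small $\alpha$ rather than your $4^{\card{K}}-p_K$, but this is the same mechanism), and obtain \punktcref{3}$\Rightarrow$\punktcref{4} by the scaled limit---your hyperplane-vanishing/Taylor argument is exactly what the paper means by ``taking limits of these positive operators appropriately scaled.'' Your combinatorics for the mixed partial is correct: only terms with every $\epsilon_i\in\{+,-\}$ survive differentiation, each carries coefficient $(-1)^{\card{K}}=2^{\card{K}}(-\tfrac12)^{\card{K}}$, and the limit is the dissipation operator as claimed.
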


Due to the inclusion of the intermediate step \punktcref{3},
we sketch the proof for the reader's convenience.

    \begin{proof}[of \Cref{thm:classification-bounded:poly:sig:article-stochastic-raj-dahya}, sketch]
        The equivalence of
            \punktcref{1}, \punktcref{2}, and \punktcref{4}
        is proved directly in \cite[Theorem~1.4]{Dahya2023dilation} in reliance upon \Cref{thm:classification:dissipativity:sig:article-stochastic-raj-dahya}.
        For \punktcref{1}{}\ensuremath{\implies}{}\punktcref{2}
        the assumption of bounded generators is not required
        (see \cite[Remark~6.6]{Dahya2023dilation}).

        Towards \punktcref{2}{}\ensuremath{\implies}{}\punktcref{3}:
        Suppose (without the boundedness assumption!)
        that the family $\{T_{i}\}_{i=1}^{d}$ satisfies regular polynomial bounds.
        Let
            $K \subseteq \{1,2,\ldots,d\}$
            and
            $\mathbf{t} = (t_{i})_{i=1}^{d} \in \realsNonNeg^{d}$
        be arbitrary.
        Using binomial expansions, one can see that the regular polynomial $p_{K}$
        can be expressed as
            $
                p_{K}
                = \prod_{i=1}^{d}(2 - X_{i} - X_{i}^{-1})
            $,
        and thus
            $
                p_{K}(\lambda_{1},\lambda_{2},\ldots,\lambda_{d})
                = \prod_{i=1}^{d}(2 - \lambda_{i} - \lambda_{i}^{-1})
                = \prod_{i=1}^{d}(2 - 2\Re\lambda_{i})
                \in [0, 4^{d}]
            $
        for all $\boldsymbol{\lambda} = (\lambda_{1},\lambda_{2},\ldots,\lambda_{d}) \in \Torus^{d}$.
        It follows that
            $
                \sup_{\boldsymbol{\lambda}}
                    \abs{1 - \alpha p_{K}(\lambda_{1},\lambda_{2},\ldots,\lambda_{d})}
                \leq 1
            $
        for sufficiently small $\alpha\in\realsPos$.
        Since the family of semigroups satisfies regular polynomial bounds,
        it follows that
            $
                \norm{\onematrix - \alpha p_{K}(T_{1}(t_{1}), T_{2}(t_{2}), \ldots, T_{d}(t_{d}))}
                \leq 1
            $
        for sufficiently small $\alpha\in\realsPos$.
        As argued in the proof of \cite[Theorem~1.4]{Dahya2023dilation}, this implies that
            $p_{K}(T_{1}(t_{1}), T_{2}(t_{2}), \ldots, T_{d}(t_{d}))$
        is a positive operator.

        Finally, the implication \punktcref{3}{}\ensuremath{\implies}{}\punktcref{4}
        (under the assumption of bounded generators)
        is proved exactly as in \cite[Theorem~1.4]{Dahya2023dilation}.
        By taking limits of these positive operators appropriately scaled,
        one obtains that the generators are completely dissipative.
    \end{proof}

This result raises the following question
(\cf \cite[Remark~6.6]{Dahya2023dilation}),
which along with \Cref{qstn:classical-approximants:dilation:sig:article-stochastic-raj-dahya}
also motivated the research in the current paper.

\begin{qstn}
\makelabel{qstn:polynomial:sig:article-stochastic-raj-dahya}
    Does the equivalence of
        \eqcref{it:1:thm:classification-bounded:poly:sig:article-stochastic-raj-dahya}
        and
        \eqcref{it:2:thm:classification-bounded:poly:sig:article-stochastic-raj-dahya}
        in
        \Cref{thm:classification-bounded:poly:sig:article-stochastic-raj-dahya}
    continue to hold without the assumption of bounded generators?
\end{qstn}



\subsection[Characterisation via expectation-approximants]{Characterisation via expectation-approximants}
\label{sec:introduction:results:sig:article-stochastic-raj-dahya}

\firstparagraph
We shall demonstrate a further characterisation related
to the above two results
without the assumption of bounded generators.
The key idea is to make the implication
    \eqcref{it:1:thm:classification:dissipativity:sig:article-stochastic-raj-dahya}{}\ensuremath{\implies}{}\eqcref{it:3:thm:classification:dissipativity:sig:article-stochastic-raj-dahya}
    of
    \Cref{thm:classification:dissipativity:sig:article-stochastic-raj-dahya}
work by considering suitable canonical choices for approximants
(\cf \Cref{qstn:classical-approximants:dilation:sig:article-stochastic-raj-dahya}).

Let $T$ be an arbitrary $\Cnought$-semigroup on a Banach space $\BanachRaum$
with generator
    ${A \colon \opDomain{A} \subseteq \BanachRaum \to \BanachRaum}$.
We now consider two concrete nets of approximants
of the form
    $(
        T^{(\lambda)}
        = (
            e^{tA^{(\lambda)}}
        )_{\mathbf{t} \in \realsNonNeg^{d}}
    )_{\lambda \in I}$
for some $I \subseteq \realsPos$ directed by increasing values of $\lambda$.

If $\omega_{0}(T)\in[-\infty,\:\infty)$ is the \highlightTerm{growth bound} for $T$
(\cf \cite[Proposition~I.5.5 and Definition~I.5.6]{EngelNagel2000semigroupTextBook},
    \cite[Lemma~I.2.12]{Goldstein1985semigroups}%
),
the \highlightTerm{$\lambda$\textsuperscript{th}-Yosida-approximant}
is defined by
    $
        T^{(\lambda)} = (e^{tA^{(\lambda)}})_{t\in\realsNonNeg}
    $
for each $\lambda \in (\omega_{0}(T),\infty)$,
where

    \noparskip
    \begin{equation}
    \label{eq:yosida-approx-generator:sig:article-stochastic-raj-dahya}
        A^{(\lambda)}
        \colonequals
            \lambda A \opResolvent{A}{\lambda}
        = \lambda^{2} \opResolvent{A}{\lambda}
            - \lambda\onematrix
    \end{equation}

\continueparagraph
is a bounded operator,
where
    $
        \opResolvent{A}{\lambda}
        = (\lambda \onematrix - A)^{-1}
    $
denotes the resolvent operator.
The Yosida-approximants satisfy

    \noparskip
    $$
        \sup_{t \in L}\norm{(T^{(\lambda)}(t) - T(t))\xi} \longrightarrow 0
    $$

\continueparagraph
as ${(\omega_{0}(T),\infty) \ni \lambda \longrightarrow \infty}$
for all
    $\xi \in \BanachRaum$
    and
    compact $L \subseteq \realsNonNeg$.
Furthermore, if $T$ is contractive, then each of the Yosida-approximants are contractive.
(For a proof of these classical results, see \exempli
    \cite[Theorem~G.4.3]{HytNeervanMarkLutz2016bookVol2},
    \cite[Theorem~II.3.5, pp.~73--74]{EngelNagel2000semigroupTextBook},
    \cite[(12.3.4), p.~361]{Hillephillips1957faAndSg}.%
)

Now, if a family
    $\{T_{i}\}_{i=1}^{d}$
of commuting $\Cnought$-semigroups
has a simultaneous regular unitary dilation,
then in particular each of the $T_{i}$ must be contractive,
and thus the family
    $\{T^{(\lambda_{i})}_{i}\}_{i=1}^{d}$
of Yosida-approximants consists of contractive $\Cnought$-semigroups
for each $\boldsymbol{\lambda} = (\lambda_{i})_{i=1}^{d} \in \realsPos^{d}$
(\cf the subsequent paragraph below (3.8) in \cite{EngelNagel2000semigroupTextBook}).
Furthermore, the commutativity of the $T_{i}$ implies the commutativity of the resolvents,%
\footnote{%
    see \exempli \cite[Theorem~1]{Abdelaziz1983commutingSemigroups},
    where this is proved under slightly more general assumptions.
    One can also directly verify this
    by relying on the Laplace integral representation of resolvents.
}
which by \eqref{eq:yosida-approx-generator:sig:article-stochastic-raj-dahya}
implies the commutativity of the generators
    $\{A^{(\lambda_{i})}_{i}\}_{i=1}^{d}$,
which in turn implies that
    $\{T^{(\lambda_{i})}\}_{i=1}^{d}$
is a commuting family.

Another appropriate approximation is constructed in \emph{Hille's first exponential formula}.
For $\lambda\in\realsPos$ call $T^{(\lambda)} = (e^{tA^{(\lambda)}})_{t\in\realsNonNeg}$
the \highlightTerm{$\lambda$\textsuperscript{th}-Hille-approximant} for $T$,
where

    \noparskip
    \begin{equation}
    \label{eq:hille-approx-generator:sig:article-stochastic-raj-dahya}
        A^{(\lambda)}
            \colonequals
                \lambda (T(\tfrac{1}{\lambda}) - \onematrix).
    \end{equation}

\continueparagraph
is a bounded operator.
Then again it holds that
    ${T^{(\lambda)}(t) \longrightarrow T(t)}$
for ${\realsPos \ni \lambda \longrightarrow \infty}$
    \wrt the \topSOT-topology
    uniformly in $t$
    on compact subsets of $\realsNonNeg$
(see \cite[\S{}1.2 and Theorem~1.2.2]{Butzer1967semiGrApproximationsBook}).
It is easy to verify that
    $\norm{T^{(\lambda)}(t)} \leq e^{-\lambda t}e^{\lambda t \norm{T(\tfrac{1}{\lambda})}}$.
Thus the Hille-approximants of contractive $\Cnought$-semigroups
are themselves contractive $\Cnought$-semigroups
with bounded generators.
Moreover, if $\{T_{i}\}_{i=1}^{d}$ is a commuting family of (contractive) $\Cnought$-semigroups,
then for each $\boldsymbol{\lambda} = (\lambda_{i})_{i=1}^{d} \in \realsPos^{d}$,
by \eqcref{eq:hille-approx-generator:sig:article-stochastic-raj-dahya},
the generators
    $\{A^{(\lambda_{i})}\}_{i=1}^{d}$
clearly commute,
so that the family of Hille-approximants
    $\{T^{(\lambda_{i})}\}_{i=1}^{d}$
is a commuting family of (contractive) $\Cnought$-semigroups.

It turns out that these classically defined approximants in semigroup theory
can be naturally generalised to a class of approximants,
which we shall call \highlightTerm{expectation-approximants}
(see \Cref{defn:expectation-approximants:sig:article-stochastic-raj-dahya} below).
We now state the first main result of this paper:

\begin{schattierteboxdunn}[backgroundcolor=leer,nobreak=false]
\begin{thm}
\makelabel{thm:classification-unbounded:poly:sig:article-stochastic-raj-dahya}
    Let
        $d\in\naturals$,
        $\HilbertRaum$ be a Hilbert space,
        and
        $\{T_{i}\}_{i=1}^{d}$
        be a commuting family of contractive $\Cnought$-semigroups on $\HilbertRaum$.
    Further let
        $(T^{(\alpha)}_{i})_{\alpha \in \Lambda_{i}}$
    be a net of expectation-approximants for $T_{i}$
    (\exempli Hille- or Yosida-approximants)
    for each $i\in\{1,2,\ldots,d\}$.
    Then the following are equivalent:

    \begin{kompaktenum}{\bfseries (a)}
        \item\punktlabel{1}
            The family $\{T_{i}\}_{i=1}^{d}$ has a simultaneous regular unitary dilation.
        \item\punktlabel{2}
            The family $\{T_{i}\}_{i=1}^{d}$ satisfies regular polynomial bounds.
        \item\punktlabel{3}
            For each ${K \subseteq \{1,2,\ldots,d\}}$
            and all ${\mathbf{t}=(t_{i})_{i=1}^{d} \in \realsNonNeg^{d}}$
            the operator
                $p_{K}(T_{1}(t_{1}),T_{2}(t_{2}),\ldots,T_{d}(t_{d}))$
            is positive, where

            \noparskip
            \begin{eqnarray*}
                p_{K}(X_{1},X_{2},\ldots,X_{d})
                    \colonequals
                        \displaystyle
                        \sum_{\mathclap{
                            \isPartition{(C_{1},C_{2})}{K}
                        }}
                            \displaystyle
                            \prod_{i \in C_{1}}
                                (1 - X_{i}^{-1})
                            \cdot
                            \displaystyle
                            \prod_{j \in C_{2}}
                                (1 - X_{j}).\\
            \end{eqnarray*}

        \item\punktlabel{4}
            The family
                $\{T^{(\alpha_{i})}_{i}\}_{i=1}^{d}$
            of approximants
            has a simultaneous regular unitary dilation
            for each $\boldsymbol{\alpha} = (\alpha_{i})_{i=1}^{d} \in \prod_{i=1}^{d}\Lambda_{i}$.
    \end{kompaktenum}

    \nvraum{1}

\end{thm}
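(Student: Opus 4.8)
The plan is to establish the cycle $\punktcref{1}\implies\punktcref{2}\implies\punktcref{3}\implies\punktcref{4}\implies\punktcref{1}$. Two of these need no new work: $\punktcref{1}\implies\punktcref{2}$ holds with no hypothesis on the generators by \cite[Remark~6.6]{Dahya2023dilation}, and $\punktcref{2}\implies\punktcref{3}$ is exactly the computation carried out in the sketch of \Cref{thm:classification-bounded:poly:sig:article-stochastic-raj-dahya} above, which used only the identity $p_{K} = \prod_{i\in K}(2 - X_{i} - X_{i}^{-1})$ together with regular polynomial bounds and never the boundedness of the generators. The genuinely new content is thus $\punktcref{3}\implies\punktcref{4}$ and $\punktcref{4}\implies\punktcref{1}$, and both rely on the following features of expectation-approximants (see \Cref{defn:expectation-approximants:sig:article-stochastic-raj-dahya} and \S{}\ref{sec:stochastic:sig:article-stochastic-raj-dahya}): each $T^{(\alpha_{i})}_{i}$ is a contractive $\Cnought$-semigroup with a \emph{bounded} generator; $\{T^{(\alpha_{i})}_{i}\}_{i=1}^{d}$ is again a commuting family; $T^{(\alpha_{i})}_{i}(t)\to T_{i}(t)$ in the $\topSOT$-topology uniformly for $t$ in compact sets; and, crucially, each operator $T^{(\alpha_{i})}_{i}(t)$ admits a Bochner integral representation $\int_{\Omega_{i}}T_{i}(\tau_{i})\dee\Prob_{i}$ as an average of time-translates of $T_{i}$ against a probability measure on some space $\Omega_{i}$ (for Hille-approximants $\tau_{i}$ is $\lambda^{-1}$ times a Poisson variable, for Yosida-approximants $\lambda^{-1}$ times a compound-Poisson variable with exponential jumps).

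For $\punktcref{3}\implies\punktcref{4}$, fix $\boldsymbol{\alpha}\in\prod_{i=1}^{d}\Lambda_{i}$. Since $\{T^{(\alpha_{i})}_{i}\}_{i=1}^{d}$ is a commuting family of $\Cnought$-semigroups with bounded generators, by \Cref{thm:classification-bounded:poly:sig:article-stochastic-raj-dahya} it has a simultaneous regular unitary dilation as soon as its condition \eqcref{it:3:thm:classification-bounded:poly:sig:article-stochastic-raj-dahya} holds, so it suffices to check the latter. Fix also $K\subseteq\{1,2,\ldots,d\}$ and $\mathbf{t}\in\realsNonNeg^{d}$ and pass to the product probability space $(\Omega,\Prob) = \prod_{i=1}^{d}(\Omega_{i},\Prob_{i})$, carrying independent $\realsNonNeg$-valued random variables $\tau_{1},\ldots,\tau_{d}$ with $T^{(\alpha_{i})}_{i}(t_{i}) = \int_{\Omega}T_{i}(\tau_{i})\dee\Prob$. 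As $p_{K}$ is a $\complex$-linear combination of monomials $\prod_{i}X_{i}^{n_{i}}$ with each $n_{i}\in\{-1,0,1\}$, regular polynomial evaluation turns it into a fixed finite sum of products in which each of the operators $T^{(\alpha_{i})}_{i}(t_{i})$ and its adjoint occurs at most once; because the Bochner integral is linear and commutes with the adjoint, one pulls each of the $d$ integrals through this expression (Fubini across the independent factors) to obtain
$$
    p_{K}\big(T^{(\alpha_{1})}_{1}(t_{1}),\ldots,T^{(\alpha_{d})}_{d}(t_{d})\big)
    \;=\;
    \int_{\Omega} p_{K}\big(T_{1}(\tau_{1}),\ldots,T_{d}(\tau_{d})\big)\,\dee\Prob.
$$
By $\punktcref{3}$ the integrand is a positive operator at every point of $\Omega$, whence the integral is positive. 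As $K$ and $\mathbf{t}$ were arbitrary this is condition \eqcref{it:3:thm:classification-bounded:poly:sig:article-stochastic-raj-dahya}, and $\punktcref{4}$ follows.

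For $\punktcref{4}\implies\punktcref{1}$, direct $\Lambda\colonequals\prod_{i=1}^{d}\Lambda_{i}$ coordinatewise and regard $(\{T^{(\alpha_{i})}_{i}\}_{i=1}^{d})_{\boldsymbol{\alpha}\in\Lambda}$ as a net of commuting families, each — by $\punktcref{4}$ — carrying a simultaneous regular unitary dilation. A telescoping identity,
$$
    \prod_{i=1}^{d}T^{(\alpha_{i})}_{i}(t_{i}) - \prod_{i=1}^{d}T_{i}(t_{i})
    \;=\;
    \sum_{k=1}^{d}
        \Big(\prod_{i<k}T^{(\alpha_{i})}_{i}(t_{i})\Big)
        \big(T^{(\alpha_{k})}_{k}(t_{k}) - T_{k}(t_{k})\big)
        \Big(\prod_{i>k}T_{i}(t_{i})\Big),
$$
together with the contractivity of every factor, the joint strong continuity of the $T_{i}$ (so that $\{(\prod_{i>k}T_{i}(t_{i}))\xi : \mathbf{t}\in L\}$ is compact whenever $L$ is), and the uniform-on-compacts $\topSOT$-convergence $T^{(\alpha_{k})}_{k}\to T_{k}$, shows that $\prod_{i=1}^{d}T^{(\alpha_{i})}_{i}(t_{i})\to\prod_{i=1}^{d}T_{i}(t_{i})$ in the $\topSOT$-topology, uniformly for $\mathbf{t}$ in compact subsets of $\realsNonNeg^{d}$, along $\Lambda$. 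Hence this net witnesses condition \eqcref{it:3:thm:classification:dissipativity:sig:article-stochastic-raj-dahya} of \Cref{thm:classification:dissipativity:sig:article-stochastic-raj-dahya}, and the implication $\eqcref{it:3:thm:classification:dissipativity:sig:article-stochastic-raj-dahya}\implies\eqcref{it:1:thm:classification:dissipativity:sig:article-stochastic-raj-dahya}$ of that theorem, which is valid without the boundedness assumption, gives that $\{T_{i}\}_{i=1}^{d}$ has a simultaneous regular unitary dilation.

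I expect the one genuine obstacle to be the displayed identity in the $\punktcref{3}\implies\punktcref{4}$ step: it requires careful handling of the non-commutative and adjoint bookkeeping built into regular polynomial evaluation, a (routine) verification that $\omega\mapsto p_{K}(T_{1}(\tau_{1}(\omega)),\ldots,T_{d}(\tau_{d}(\omega)))$ is Bochner-measurable and bounded, and — most importantly — the confirmation that the integral representation of \S{}\ref{sec:stochastic:sig:article-stochastic-raj-dahya} (the abstract mechanism behind both Hille's exponential formula and the Yosida approximation) is available for the entire class of expectation-approximants and with the coordinatewise independence needed for Fubini. Everything else is either quoted from \cite{Dahya2023dilation} or reduces to the convergence argument just sketched.
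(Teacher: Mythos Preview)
Your argument is almost exactly the paper's, and the positivity-transfer identity you display in the $\punktcref{3}\implies\punktcref{4}$ step is precisely the content of \Cref{prop:expectation-approximants:poly:sig:article-stochastic-raj-dahya} and \Cref{lemm:positivity-transfer:poly:sig:article-stochastic-raj-dahya}; likewise your $\punktcref{4}\implies\punktcref{1}$ via \Cref{thm:classification:dissipativity:sig:article-stochastic-raj-dahya} is exactly what the paper does (your telescoping computation being the proof of \Cref{prop:expectation-approximants:basic:commuting-contractive-families:sig:article-stochastic-raj-dahya}).

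There is, however, one genuine gap. You assert that every expectation-approximant $T^{(\alpha_{i})}_{i}$ has a \emph{bounded} generator, and you need this to invoke \Cref{thm:classification-bounded:poly:sig:article-stochastic-raj-dahya} for the approximant family. This is not part of \Cref{defn:expectation-approximants:sig:article-stochastic-raj-dahya} and is false in general: the constant net $\Gamma^{(\alpha)}(t)=\delta_{t}$ satisfies all the axioms (mean $t$, variance $0$) and gives $T^{(\alpha)}=T$ itself, whose generator may be unbounded. Hille- and Yosida-approximants happen to have bounded generators by their explicit formulae \eqref{eq:hille-approx-generator:sig:article-stochastic-raj-dahya}--\eqref{eq:yosida-approx-generator:sig:article-stochastic-raj-dahya}, but nothing in \S{}\ref{sec:stochastic:sig:article-stochastic-raj-dahya} establishes this for the abstract class.

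The paper repairs this with a short bootstrap. First run your argument \emph{only} for a net of approximants that does have bounded generators (Hille or Yosida, via \Cref{lemm:classical-examples-expectation-approximants:sig:article-stochastic-raj-dahya}); this already yields $\punktcref{1}\Leftrightarrow\punktcref{2}\Leftrightarrow\punktcref{3}$ in full generality, since these three conditions make no reference to the approximants. Now return to an arbitrary net of expectation-approximants: your positivity-transfer identity still shows that $\{T^{(\alpha_{i})}_{i}\}_{i=1}^{d}$ satisfies condition \punktcref{3}, and since the equivalence $\punktcref{3}\Leftrightarrow\punktcref{1}$ is now available \emph{for any} commuting contractive family (in particular for $\{T^{(\alpha_{i})}_{i}\}_{i=1}^{d}$ itself, regardless of whether its generators are bounded), you conclude $\punktcref{4}$. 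Everything else in your write-up stands.
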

\end{schattierteboxdunn}

By \Cref{thm:classification-unbounded:poly:sig:article-stochastic-raj-dahya},
we shall be able to positively address
    \Cref{%
        qstn:classical-approximants:dilation:sig:article-stochastic-raj-dahya,%
        qstn:polynomial:sig:article-stochastic-raj-dahya%
    }.
Note that the main implication to prove is
    \punktcref{3}{}\ensuremath{\implies}{}\punktcref{4},
as the others have essentially been proved in
\cite{Dahya2023dilation}
(\cf the discussion in \S{}\ref{sec:introduction:dissipativity:sig:article-stochastic-raj-dahya}--\ref{sec:introduction:polynomial:sig:article-stochastic-raj-dahya}).
Now, in the case of bounded generators the spectral-theoretic concept of complete dissipativity
was crucial for the characterisations.
In the unbounded setting, it is unclear how to extend the notion
of dissipation operators and thus of complete dissipativity.
Nonetheless, one possibility and its limitations shall be discussed
(\cf \Cref{rem:complete-dissipativity:sig:article-stochastic-raj-dahya}).
Instead, we make use of stochastic methods,
building on the results in \cite{Chung1962exp}.
Using suitable stochastic processes
and expectations computed strongly via Bochner-integrals,
which we lay out in \S{}\ref{sec:stochastic:sig:article-stochastic-raj-dahya},
we show that expectation-approximants
can be expressed in terms of their original semigroups.
This provides a key ingredient to prove \punktcref{3}{}\ensuremath{\implies}{}\punktcref{4}.
This shall all be covered in
    \S{}\ref{sec:stochastic:sig:article-stochastic-raj-dahya}--\ref{sec:first-results:sig:article-stochastic-raj-dahya}.



\subsection[Special conditions on topological monoids]{Special conditions on topological monoids}
\label{sec:introduction:monoids:sig:article-stochastic-raj-dahya}

\firstparagraph
In the second part of this paper, we concentrate on classical dynamical systems
defined more broadly over topological monoids.
We are particularly interested in topological monoids $M$
which occur as (closed or at least measurable) submonoids of (ideally locally compact) topological groups $G$.

\begin{e.g.}
    Simple examples of locally compact topological groups and closed submonoids thereof
    include any discrete group and submonoid thereof,
    \exempli $(\integers^{d},+,\zerovector)$ and $\naturalsZero^{d}\subseteq\integers^{d}$ for $d\in\naturalsPos$.
    We also consider
        $\realsNonNeg^{d}$
    as a closed submonoid of the locally compact topological group $(\reals^{d},+,\zerovector)$.
\end{e.g.}

\begin{e.g.}
\makelabel{e.g.:heisenberg:sig:article-stochastic-raj-dahya}
    As a non-discrete and non-commutative example,
    consider the Heisenberg group
    $\Heisenberg_{d}$ of order $2d+1$, $d\in\naturalsPos$,
    which can be represented topologically as
    ${\Heisenberg_{d}=\reals^{d}\times\reals^{d}\times\reals}$
    and algebraically via the operation%
    \footnote{%
        There are various different presentations of the Heisenberg group in the literature
        (\cf \cite[\S{}1]{Coburn1999artHeisneberg}, \cite[\S{}10.1]{Deitmar2014bookHarmonicAn}, \cite[\S{}6.7.4]{Folland2015bookHarmonicAnalysis}).
        We choose this particular form for convenience.
    }

        \noparskip
        $$
            (\mathbf{x},\mathbf{p},E)
            (\mathbf{x}^{\prime},\mathbf{p}^{\prime},E^{\prime})
            = \Big(
                \mathbf{x} + \mathbf{x}^{\prime},
                \mathbf{p} + \mathbf{p}^{\prime},
                E + E^{\prime}
                + \frac{1}{2}(
                    \brkt{\mathbf{p}}{\mathbf{x}^{\prime}}
                    - \brkt{\mathbf{p}^{\prime}}{\mathbf{x}}
                )
            \Big)
        $$

    \continueparagraph
    for
        $
            (\mathbf{x},\mathbf{p},E),
            (\mathbf{x}^{\prime},\mathbf{p}^{\prime},E^{\prime})
            \in \Heisenberg_{d}
        $.
    The identity element of $\Heisenberg_{d}$ is clearly $(\zerovector,\zerovector,0)$
    and inverse of $g\in\Heisenberg_{d}$ is given by $g^{-1}=-g$,
    if we view $g$ as a vector in $\reals^{2d+1}$.
    One can readily verify that

        \noparskip
        $$
            \Heisenberg^{+}_{d}
            \colonequals
            \{
                (\mathbf{x},\mathbf{p},E)
                \mid
                \mathbf{x},\mathbf{p}\in\realsNonNeg^{d},
                E\in\reals
            \}
            \subseteq \Heisenberg_{d}
        $$

    \continueparagraph
    is a closed submonoid.
    As a further non-commutative example,
    consider for some antisymmetric matrix $C \in M_{d}(\reals)$
    the closed subgroup
        $
            \Heisenberg_{d,C}
            \colonequals
            \{
                (\mathbf{x},C\mathbf{x},E)
                \mid
                \mathbf{x}\in\reals^{d},
                E\in\reals
            \}
            \subseteq \Heisenberg_{d}
        $
    and the closed submonoid
        $
            \Heisenberg^{+}_{d,C}
            \colonequals
            \{
                (\mathbf{x},C\mathbf{x},E)
                \mid
                \mathbf{x}\in\realsNonNeg^{d},
                E\in\reals
            \}
            \subseteq \Heisenberg_{d,C}
        $.
    Without loss of generality, we can replace the above representations of
        $\Heisenberg_{d,C}$ and $\Heisenberg^{+}_{d,C}$
    by
        $\reals^{d}\times\reals$ and $\realsNonNeg^{d}\times\reals$
    respectively.
    Observe that the group operation satisfies

        \noparskip
        $$
            (\mathbf{x},E)
            (\mathbf{x}^{\prime},E)
            = \Big(
                \mathbf{x} + \mathbf{x}^{\prime},
                E + E^{\prime}
                + \frac{1}{2}(
                    \brkt{C\mathbf{x}}{\mathbf{x}^{\prime}}
                    - \brkt{C\mathbf{x}^{\prime}}{\mathbf{x}}
                )
            \Big)
            = \Big(
                \mathbf{x} + \mathbf{x}^{\prime},
                E + E^{\prime} + \brkt{C\mathbf{x}}{\mathbf{x}^{\prime}}
            \Big)
        $$

    \continueparagraph
    for $(\mathbf{x},E),(\mathbf{x}^{\prime},E^{\prime})\in\Heisenberg_{d,C}$.
    One can thus think of the $i$\textsuperscript{th} and $j$\textsuperscript{th}
    $x$-co-ordinates of elements of $\Heisenberg_{d,C}$
    as being \emph{correlated} with $C_{ij}$ encoding this correlation.
\end{e.g.}

\begin{e.g.}[Non-commuting families, Weyl Form of CCR]
\makelabel{e.g.:non-commuting-family-heisenberg-d-C:sig:article-stochastic-raj-dahya}
    Continuing on from \Cref{e.g.:heisenberg:sig:article-stochastic-raj-dahya},
    let
        $d\in\naturals$,
        $\HilbertRaum$ be a Hilbert space,
        and
        $C\in M_{d}(\reals)$ be an antisymmetric matrix,
    \idest
        $C = D - D^{T}$, where $D \in M_{d}(\reals)$
    is a strictly upper triangular matrix.
    One use of classical dynamical systems defined over $\Heisenberg^{+}_{d,C}$
    is as follows is as follows:
    Consider an arbitrary \topSOT-continuous homomorphism,
        ${T \colon \Heisenberg^{+}_{d,C} \to \BoundedOps{\HilbertRaum}}$.
    Set
        ${
            T_{i} \colonequals (T(t\mathbf{e}_{i},0))_{t\in\realsNonNeg}
        }$
        for each $i\in\{1,2,\ldots,d\}$,
        and
        ${
            U \colonequals (T(\zerovector, E))_{E\in\reals}
        }$,
    whereby each
        $\mathbf{e}_{i} \colonequals (0,0,\ldots,\underset{i}{1},\ldots,0)$
    denotes the canonical $i$\textsuperscript{th} basis vector of $\reals^{d}$.
    One can then readily verify that
        $\{T_{i}\}_{i=1}^{d}$ is a family of $\Cnought$-semigroups on $\HilbertRaum$
        and
        $U$ is a (not necessarily unitary) $\topSOT$-continuous representation
        of $(\reals,+,0)$ on $\HilbertRaum$
        which commutes with each of the $T_{i}$.
    Furthermore the relations

        \noparskip
        \begin{equation}
        \label{eq:commutation-relations:sig:article-stochastic-raj-dahya}
            T_{j}(t) T_{i}(s)
                = U(2stC_{ij}) T_{i}(s) T_{j}(t)
        \end{equation}

    \continueparagraph
    hold for all $i,j\in\{1,2,\ldots,d\}$,
    which is a slight generalisation of
    the semigroup version of the \emph{canonical commutation relations} (CCR)
    in the Weyl form.
    Such systems are of interest in quantum mechanics
    (see \exempli \cite[\S{}5.2.1.2]{Bratteli1997bookQSM}).
    Conversely, one may consider an arbitrary family
        $\{T_{i}\}_{i=1}^{d}$
    of $\Cnought$-semigroups on $\HilbertRaum$
    and an arbitrary $\topSOT$-continuous representation
    $U$ of $(\reals,+,0)$ on $\HilbertRaum$,
    which commutes with each of the $T_{i}$,
    and such that the relations in
    \eqcref{eq:commutation-relations:sig:article-stochastic-raj-dahya} hold.%
    \footnote{%
        For example consider
            $\HilbertRaum = L^{2}(\realsNonNeg^{m})$, $m\in\naturals$.
        Let
            $\mathbf{u}^{(i)}\in\realsNonNeg^{m}$
        and either
            $\lambda\in\iunit\reals$
            and
            $\boldsymbol{\alpha}^{(i)}\in\reals^{m}$,
        or
            $\lambda\in\complex$ with $\Re\lambda < 0$
            and
            $\boldsymbol{\alpha}^{(i)}\in\realsNonNeg^{m}$
        for each $i\in\{1,2,\ldots,d\}$.
        Set
            $U \colonequals (e^{\lambda t}\onematrix)_{t\in\reals}$
        and
            $
                (T_{i}(t)f)(\mathbf{x})
                \colonequals
                e^{
                    \lambda t
                    \brkt{\boldsymbol{\alpha}^{(i)}}{\mathbf{x}}
                }
                f(\mathbf{x} + t\mathbf{u}^{(i)})
            $
        for
            $t\in\realsNonNeg$,
            $f\in L^{2}(\realsNonNeg^{m})$,
            $\mathbf{x}\in\realsNonNeg^{m}$,
            $i\in\{1,2,\ldots,d\}$.
        Then \eqcref{eq:commutation-relations:sig:article-stochastic-raj-dahya} holds with
            $
                C \colonequals (
                    \frac{1}{2}(
                        \brkt{\boldsymbol{\alpha}^{(i)}}{\mathbf{u}^{(j)}}
                        -
                        \brkt{\boldsymbol{\alpha}^{(j)}}{\mathbf{u}^{(i)}}
                    )
                )_{i,j=1}^{d}
                \in M_{d}(\reals)
            $.
    }
    Then, defining
        ${T \colon \Heisenberg^{+}_{d,C} \to \BoundedOps{\HilbertRaum}}$
    via

        \noparskip
        $$
            T(\mathbf{x},E)
            \colonequals
            U(E + \brkt{D\mathbf{x}}{\mathbf{x}})
            \prod_{i=1}^{d}
                T_{i}(x_{i})
        $$

    \continueparagraph
    for each $\mathbf{x}\in\realsNonNeg^{d}$, $E\in\reals$,
    one can verify that $T$ is an \topSOT-continuous homomorphism.
    These two constructions constitute a \onetoone-correspondence
    (this is left as an exercise to the reader)
    between families satisfying
    \eqcref{eq:commutation-relations:sig:article-stochastic-raj-dahya}
    and \topSOT-continuous homomorphisms defined over
    $\Heisenberg^{+}_{d,C}$.
\end{e.g.}

For submonoids of topological groups,
there are natural conditions which will aid us when investigating properties (\viz dilation)
of classical dynamical systems defined over them.
The following definition is due to Mueller \cite[\S{}2]{Mueller1965positiveInIdentity}:%
\footnote{%
    \cf also \cite[Appendix~A]{Dahya2022complmetrproblem},
    where a similar, but stronger, property called \emph{positivity in the identity} is defined.
}

\begin{defn}
\makelabel{defn:monoids:e-jointedness:sig:article-stochastic-raj-dahya}
    Let $G$ be a locally compact topological group
    and $M \subseteq G$ be a measurable submonoid.
    Fix a Haar-measure $\lambda_{G}$ on $G$.
    Say that $M$ is \highlightTerm{$e$-joint}
    if $\lambda_{G}(U \cap M) > 0$
    for all open neighbourhoods $U \subseteq G$ of the identity $e \in G$.%
    \footnote{%
        Note that being a null-set is independent of the particular choice of Haar-measure.
    }
\end{defn}

\begin{e.g.}
\makelabel{e.g.:monoids:e-joint:sig:article-stochastic-raj-dahya}
    Let $d\in\naturals$ and $p\in\mathbb{P}$ be a prime number.

    \begin{kompaktenum}{(i)}
    \item\punktlabel{1}
        Consider $(G,M) = (\reals^{d},\realsNonNeg^{d})$.
        For an open neighbourhood $U \subseteq G$ of $\zerovector$,
        there exists $\eps>0$, such that $U \supseteq (-\eps,\:\eps)^{d}$.
        Thus $M \cap U \supseteq (0,\:\eps)^{d}$,
        so $M \cap U$ is non-null.
        Hence $M$ is an $e$-joint submonoid.
    \item\punktlabel{2}
        Consider
            $
                (G,M)
                = (\Heisenberg_{d},\Heisenberg^{+}_{d})
                = (
                    \reals^{d}\times\reals^{d}\times\reals,
                    \realsNonNeg^{d}\times\realsNonNeg^{d}\times\reals
                )
            $.
        As in \punktcref{1},
        for any open neighbourhood $U \subseteq G$ of $(\zerovector,\zerovector,0)$,
        one has
            $M \cap U \supseteq (0,\:\eps)^{d} \times (0,\:\eps)^{d} \times (-\eps,\:\eps)$
        for some $\eps > 0$,
        so $M \cap U$ is non-null.
        Hence $M$ is an $e$-joint submonoid.
    \item
        Let $C\in M_{d}(\reals)$ be an antisymmetric matrix.
        Similar to \punktcref{2} one can show that
        $\Heisenberg^{+}_{d,C}$ is a (closed) $e$-joint submonoid
        of $\Heisenberg_{d,C}$.
    \item
        The submonoid $\integers_{p} \without \{0\}$ of non-zero $p$-adic integers
        within the locally compact multiplicative group
            $(\rationals_{p} \without \{0\},\cdot,1)$
        of non-zero $p$-adic numbers is clearly $e$-joint,
        since it is a clopen subset.
    \item
        Let $G_{i}$ be a locally compact topological group
        and $M_{i} \subseteq G_{i}$ an $e$-joint measurable submonoid
        for $i\in\{1,2,\ldots,d\}$.
        By simple computations with product measures,
        one can readily verify that
            $\prod_{i=1}^{d}M_{i}$ is a measurable $e$-joint submonoid in $\prod_{i=1}^{d}G_{i}$
        (\cf \cite[Proposition~A.7]{Dahya2022complmetrproblem}).
    \end{kompaktenum}

    \nvraum{1}

\end{e.g.}

\begin{defn}
\makelabel{defn:positivity-structure-monoids:sig:article-stochastic-raj-dahya}
    Let $(G,\cdot,e)$ be a (not necessarly locally compact!) topological group
    and $M \subseteq G$ a submonoid.
    If ${\cdot^{+} \colon G \to M}$ is a continuous function which satisfies

    \begin{kompaktenum}{(i)}
        \item\punktlabel{ax:identity}
            $e^{+} = e$ for the identity element $e \in G$;
        \item\punktlabel{ax:idempotent}
            $x^{++} = x^{+}$ for all $x\in G$,
            \idest $\cdot^{+}$ is idempotent;
            and
        \item\punktlabel{ax:representation}
            $(x^{-})^{-1}x^{+} = x$
            for all $x\in G$,
            where $x^{-} \colonequals (x^{-1})^{+}$,
    \end{kompaktenum}

    \continueparagraph
    then we call $(G,M,\cdot^{+})$
    a \highlightTerm{positivity structure}.
\end{defn}

\begin{e.g.}
\makelabel{e.g.:monoids:positivity:sig:article-stochastic-raj-dahya}
    Let $d\in\naturals$ and $C \in M_{d}(\reals)$ be an antisymmetric matrix.
    The pairs $(G,M)$ of topological groups and submonoids:
        $(\reals^{d},\realsNonNeg^{d})$,
        $(\Heisenberg_{d},\Heisenberg^{+}_{d})$,
        $(\Heisenberg_{d,C},\Heisenberg^{+}_{d,C})$
    admit natural positivity structures.
    Furthermore, if
        $(G_{i},M_{i},\cdot^{+_{i}})$
    are positivity structures,
    then the product
        $(\prod_{i=1}^{d}G_{i},\prod_{i=1}^{d}M_{i})$
    admits a positivity via the pointwise definition.
    The constructions are presented in
        \Cref{table:examples:monoids:positivity:sig:article-stochastic-raj-dahya}
    and left to the reader to verify.
\end{e.g.}

\begin{table}[!htb]
    \begin{tabular}[t]{|p{0.15\textwidth}|p{0.15\textwidth}|p{0.4\textwidth}|}
        \hline
            Group $G$
            &Submonoid $M$
            &Description of ${\cdot^{+} \colon G\to M}$\\
        \hline
        \hline
            $\prod_{i=1}^{d}G_{i}$
                &$\prod_{i=1}^{d}M_{i}$
                &$\mathbf{x} \mapsto (x_{i}^{+_{i}})_{i=1}^{d}$\\
            $(\reals,+,0)$
                &$\realsNonNeg$
                &$t \mapsto \max\{t,0\}$\\
            $(\reals^{d},+,\zerovector)$
                &$\realsNonNeg^{d}$
                    &$\mathbf{t} \mapsto (t_{i}^{+})_{i=1}^{d}$\\
            $\Heisenberg_{d}$
                &$\Heisenberg^{+}_{d}$
                &$(\mathbf{x},\mathbf{p},E)
                    \mapsto
                    (
                        \mathbf{x}^{+},
                        \mathbf{p}^{+},
                        E^{+}
                    )
                $\\
            $\Heisenberg_{d,C}$
                &$\Heisenberg^{+}_{d,C}$
                &$(\mathbf{x},E)
                    \mapsto
                    (
                        \mathbf{x}^{+},
                        E^{+}
                    )
                $\\
        \hline
    \end{tabular}
    \caption{
        Examples of positivity structures $(G,M,\cdot^{+})$.
        Here $d\in\naturals$, $C \in M_{d}(\reals)$ is an antisymmetric matrix,
        and $(G_{i},M_{i},\cdot^{+_{i}})$
        are positivity structures for $i\in\{1,2,\ldots,d\}$.
    }
    \label{table:examples:monoids:positivity:sig:article-stochastic-raj-dahya}
\end{table}

Note that for the subset
    $G^{+} \colonequals \{x^{+}\mid g\in G\} \subseteq M$,
it is neither required that  $G^{+} = M$
nor even that $G^{+}$ be closed under the group operation.
In the case of $\reals^{d}$, this happens to be the case,
but in the case of the Heisenberg group,
neither of these additional properties holds.
We now state some basic facts about positivity structures.

\begin{prop}
    Let $(G,M,\cdot^{+})$ be a positivity structure.
    Then
        $(G^{-})^{-1} \cap G^{+} = (G^{+})^{-1} \cap G^{+} = \{e\}$.
\end{prop}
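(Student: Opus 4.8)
The plan is to reduce the first equality to the second via the observation that $G^{-}=G^{+}$ as subsets of $M$, and then to establish $(G^{+})^{-1}\cap G^{+}=\{e\}$ by a short formal computation with the three axioms of \Cref{defn:positivity-structure-monoids:sig:article-stochastic-raj-dahya}.

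First I would record that inversion $x\mapsto x^{-1}$ is a bijection of $G$ onto itself, so that, since $x^{-}=(x^{-1})^{+}$ by definition,
$$
G^{-}=\{x^{-}\mid x\in G\}=\{(x^{-1})^{+}\mid x\in G\}=\{y^{+}\mid y\in G\}=G^{+}.
$$
Hence $(G^{-})^{-1}=(G^{+})^{-1}$, and the two intersections appearing in the statement are literally the same set; it remains to compute $(G^{+})^{-1}\cap G^{+}$.

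The inclusion $\{e\}\subseteq(G^{+})^{-1}\cap G^{+}$ is immediate from axiom (i): $e^{+}=e$ places $e$ in $G^{+}$, and $e=e^{-1}$ places it in $(G^{+})^{-1}$. For the reverse inclusion the key remark is that every element of $G^{+}$ is a fixed point of $\cdot^{+}$ --- if $w=c^{+}$ then idempotency (axiom (ii)) gives $w^{+}=c^{++}=c^{+}=w$. Given $z\in(G^{+})^{-1}\cap G^{+}$ I would apply this remark twice: to $z\in G^{+}$ to obtain $z^{+}=z$, and to $z^{-1}\in G^{+}$ (note $z\in(G^{+})^{-1}$ means precisely $z^{-1}\in G^{+}$) to obtain $z^{-}=(z^{-1})^{+}=z^{-1}$. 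The representation axiom (iii) applied to the element $z$ reads $(z^{-})^{-1}z^{+}=z$, which after these substitutions becomes $(z^{-1})^{-1}z=z$, i.e.\ $z^{2}=z$, whence $z=e$ by cancellation in the group $G$.

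I do not anticipate any genuine obstacle here: the argument is entirely formal manipulation of the three axioms. The only points needing a little care are noticing that $G^{-}=G^{+}$ (which renders the first claimed equality trivial) and remembering to feed both $z$ and $z^{-1}$ through the idempotency axiom before invoking the representation axiom.
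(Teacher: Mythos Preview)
Your proof is correct and follows essentially the same approach as the paper's: both reduce the first equality to the second via $G^{-}=G^{+}$, then use idempotence to get $z^{+}=z$ and $z^{-}=z^{-1}$ before invoking the representation axiom. The only cosmetic difference is that the paper rearranges axiom~(iii) as $x^{+}=x^{-}x$ to conclude $x=x^{-1}x=e$ directly, whereas you obtain $z^{2}=z$ and cancel.
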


    \begin{proof}
        First observe that
            $
                (G^{-})^{-1} \cap G^{+}
                = ((G^{-1})^{+})^{-1} \cap G^{+}
                = (G^{+})^{-1} \cap G^{+}
            $.
        Thus it suffices to prove that $(G^{+})^{-1} \cap G^{+} = \{e\}$.
        Since $e^{+} = e$, the $\supseteq$-inclusion holds.
        Towards the $\subseteq$-inclusion,
        let $x \in (G^{+})^{-1} \cap G^{+}$ be arbitrary.
        Then $x = y^{+} = (z^{+})^{-1}$
        for some $y, z \in G$.
        By the idempotence axiom,
        one has that
            $
                x^{+}
                = y^{++}
                = y^{+}
                = x
            $
        and
            $
                x^{-}
                = (x^{-1})^{+}
                = z^{++}
                = z^{+}
                = x^{-1}
            $.
        By axiom \eqcref{it:ax:representation:defn:positivity-structure-monoids:sig:article-stochastic-raj-dahya},
        it follows that
            $
                x = x^{+}
                \eqcrefoverset{it:ax:representation:defn:positivity-structure-monoids:sig:article-stochastic-raj-dahya}{=}
                    x^{-}x
                = x^{-1}x
                = e
            $.
    \end{proof}

\begin{prop}
\makelabel{prop:positivity-structure-basic:sig:article-stochastic-raj-dahya}
    Let $(G,\cdot,e)$ be a topological group
    and $M \subseteq G$ a submonoid.
    Let ${\cdot^{+} \colon G \to M}$
    be an arbitrary map satisfying
    axiom \eqcref{it:ax:representation:defn:positivity-structure-monoids:sig:article-stochastic-raj-dahya}
    of \Cref{defn:positivity-structure-monoids:sig:article-stochastic-raj-dahya}.
    Then axiom
        \eqcref{it:ax:idempotent:defn:positivity-structure-monoids:sig:article-stochastic-raj-dahya}
    is equivalent to
    the condition that $x^{+-}=e$ for all $x\in G$.
\end{prop}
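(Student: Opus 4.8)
The plan is to extract from the representation axiom a symmetric pair of identities relating $\cdot^{+}$ and $\cdot^{-}$, and then reduce the claimed equivalence to a single algebraic identity inside the group $G$; no topological input will be needed. First I would rewrite axiom~\eqcref{it:ax:representation:defn:positivity-structure-monoids:sig:article-stochastic-raj-dahya}: the condition $(x^{-})^{-1}x^{+} = x$ rearranges to $x^{+} = x^{-}x$ for every $x\in G$. Substituting $x^{-1}$ for $x$ and using the definitions $(x^{-1})^{+} = x^{-}$ and $(x^{-1})^{-} = \big((x^{-1})^{-1}\big)^{+} = x^{+}$, this becomes the companion identity $x^{-} = x^{+}x^{-1}$ for every $x\in G$ (the two identities are in fact equivalent, and only the second is needed below).

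Next I would apply this companion identity to the element $x^{+}\in G$ in place of $x$, which gives $(x^{+})^{-} = (x^{+})^{+}(x^{+})^{-1}$, i.e. $x^{+-} = x^{++}(x^{+})^{-1}$ for every $x\in G$. This single formula settles the proposition: for a fixed $x$ one has $x^{+-} = e$ if and only if $x^{++}(x^{+})^{-1} = e$, i.e. if and only if $x^{++} = x^{+}$. Quantifying over all $x\in G$, the condition ``$x^{+-} = e$ for all $x$'' is thus literally the same statement as axiom~\eqcref{it:ax:idempotent:defn:positivity-structure-monoids:sig:article-stochastic-raj-dahya}, so both implications are proved simultaneously.

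I do not anticipate any genuine obstacle here; the only point requiring care is the bookkeeping when substituting $x^{-1}$ for $x$, so as to correctly identify $(x^{-1})^{-} = x^{+}$ from the definition $g^{-} \colonequals (g^{-1})^{+}$. Continuity of $\cdot^{+}$, closedness of $M$, and indeed the monoid structure of $M$ play no role whatsoever: the argument lives entirely inside $(G,\cdot,e)$ and uses only axiom~\eqcref{it:ax:representation:defn:positivity-structure-monoids:sig:article-stochastic-raj-dahya}.
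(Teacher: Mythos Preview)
Your proof is correct and follows essentially the same approach as the paper: both arguments amount to applying axiom~\eqcref{it:ax:representation:defn:positivity-structure-monoids:sig:article-stochastic-raj-dahya} to the element $x^{+}$ to obtain the identity $x^{+-} = x^{++}(x^{+})^{-1}$ (equivalently $x^{+} = (x^{+-})^{-1}x^{++}$), from which the equivalence is immediate. The only difference is that the paper substitutes $x^{+}$ directly into axiom~\eqcref{it:ax:representation:defn:positivity-structure-monoids:sig:article-stochastic-raj-dahya}, whereas you first pass through the companion identity $x^{-} = x^{+}x^{-1}$; this detour is harmless but unnecessary.
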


    \begin{proof}
        Let $x \in G$ be arbitrary.
        Then by
            axiom \eqcref{it:ax:representation:defn:positivity-structure-monoids:sig:article-stochastic-raj-dahya}
        one has
            $x^{+} = (x^{+-})^{-1}x^{++}$.
        It follows that
            $x^{++} = x^{+}$
            if and only if
            $x^{+-} = e$.
    \end{proof}



\subsection[Characterisation of unitary approximations]{Characterisation of unitary approximations}
\label{sec:introduction:applications:sig:article-stochastic-raj-dahya}

\firstparagraph
Letting $M$ be a topological monoid and $\HilbertRaum$ a Hilbert space,
a classical dynamical system modelled by a homomorphism,%
\footnote{%
    \idest $T(e) = \onematrix$ and $T(xy) = T(x)T(y)$ for $x,y\in M$.
}
    ${T \colon M \to \BoundedOps{\HilbertRaum}}$,
can be thought of as \emph{reversible}
if it is unitary valued.
As this need not be the case, the question arises whether and in what sense
one can \emph{approximate} $T$ via unitary systems.

In
    \Cref{thm:classification:dissipativity:sig:article-stochastic-raj-dahya}
    and
    \Cref{thm:classification-unbounded:poly:sig:article-stochastic-raj-dahya}
a strong notion of convergence is used for the approximants.
In the literature a similar weak notion of convergence for $\Cnought$-semigroups is studied
(\cf \exempli
    \cite{Krol2009,Eisner2008kato,Eisnersereny2009catThmStableSemigroups,Dahya2022weakproblem,Dahya2022complmetrproblem}
    and
    \cite[\S{}III.6]{Eisner2010buchStableOpAndSemigroups}
    in the case of Hilbert spaces%
).
These notions are defined as follows:
Let
    $d\in\naturals$,
    $\BanachRaum$ be a Banach space,
    $\{T_{i}\}_{i=1}^{d}$
    be a commuting family of $\Cnought$-semigroups on $\BanachRaum$,
    and
    $(\{T^{(\alpha)}_{i}\}_{i=1}^{d})_{\alpha\in\Lambda}$
    be a net of commuting families of $\Cnought$-semigroups on $\BanachRaum$.
We say that
    $(\{T^{(\alpha)}_{i}\}_{i=1}^{d})_{\alpha\in\Lambda}$
    converges to $\{T_{i}\}_{i=1}^{d}$
    \highlightTerm{\wrt the \topSOT-toplogy}
    (\respectively \highlightTerm{\wrt the \topWOT-toplogy})
    \highlightTerm{uniformly on compact subsets of $\realsNonNeg^{d}$},
if
    for all $\xi\in\BanachRaum$
    (\respectively for all $\xi\in\BanachRaum$ and $\eta\in\BanachRaum^{\prime}$)
    and all compact $L\subseteq\realsNonNeg^{d}$
it holds that
        ${
            \sup_{\mathbf{t}\in L}
                \norm{
                    (
                        \prod_{i=1}^{d}
                            T^{(\alpha)}_{i}(t_{i})
                        -
                        \prod_{i=1}^{d}
                            T_{i}(t_{i})
                    )
                    \:\xi
                }
            \underset{\alpha}{\longrightarrow}
            0
        }$
(\respectively
    ${
        \sup_{\mathbf{t}\in L}
            \abs{\brkt{
                (
                    \prod_{i=1}^{d}
                        T^{(\alpha)}_{i}(t_{i})
                    -
                    \prod_{i=1}^{d}
                        T_{i}(t_{i})
                )
                \:\xi
            }{\eta}}
        \underset{\alpha}{\longrightarrow}
        0
    }$
).
For classical systems on Hilbert spaces,
using the concepts in \S{}\ref{sec:introduction:monoids:sig:article-stochastic-raj-dahya}
we may generalise the weak notion of convergence to classical dynamical systems
in general in the following natural manner:

\begin{defn}[Weak topologies]
\makelabel{defn:weak-convergence-over-exact-uniform-ptwise:sig:article-stochastic-raj-dahya}
    Let
        $G$ be a topological group,
        $M \subseteq G$ a submonoid,
        $\HilbertRaum$ a Hilbert space,
        ${T \colon M \to \BoundedOps{\HilbertRaum}}$
        an \topSOT-continuous homomorphism
        and
        ${(T^{(\alpha)} \colon M \to \BoundedOps{\HilbertRaum})_{\alpha\in\Lambda}}$
        a net of \topSOT-continuous homomorphisms.
    We say that
        $(T^{(\alpha)})_{\alpha\in\Lambda}$
        converges to
        $T$

    \begin{kompaktenum}{(i)}
    \item\punktlabel{1}
        \highlightTerm{exactly weakly},
        if for each
            $\xi,\eta\in\HilbertRaum$
        there exists an index $\alpha_{0}\in\Lambda$
        such that for all
            $x \in M$
            and
            $\alpha \geq \alpha_{0}$
            $$
                \brkt{T^{(\alpha)}(x)\xi}{\eta}
                =
                \brkt{T(x)\xi}{\eta};
            $$
    \item\punktlabel{2}
        \highlightTerm{uniformly weakly},
        if for all
            $\xi,\eta\in\HilbertRaum$
            and
            compact $L \subseteq M$
            $$
                \sup_{x \in L}
                    \abs{
                        \brkt{T^{(\alpha)}(x)\xi}{\eta}
                        -
                        \brkt{T(x)\xi}{\eta}
                    }
                \underset{\alpha}{\longrightarrow}
                0;
            $$
    \item\punktlabel{3}
        \highlightTerm{pointwise weakly},
        if for all
            $\xi,\eta\in\HilbertRaum$
            and
            $x \in M$
            $$
                \abs{
                    \brkt{T^{(\alpha)}(x)\xi}{\eta}
                    -
                    \brkt{T(x)\xi}{\eta}
                }
                \underset{\alpha}{\longrightarrow}
                0.
            $$
    \end{kompaktenum}

    \continueparagraph
    If each $T^{(\alpha)} = U^{(\alpha)}\restr{M}$,
    where ${U^{(\alpha)} \colon G \to \BoundedOps{\HilbertRaum}}$
    is an \topSOT-continuous unitary representation of $G$ on $\HilbertRaum$,
    we say that $T$ has an
        \highlightTerm{exact}
        (%
            \respectively
            \highlightTerm{uniform}
            \respectively
            \highlightTerm{pointwise}%
        )
        \highlightTerm{weak unitary approximation},
    if \punktcref{1}
        (%
            \respectively
            \punktcref{2}
            \respectively
            \punktcref{3}%
        )
    holds.
\end{defn}

\begin{defn}[Regular weak topologies]
\makelabel{defn:regular-weak-convergence-over-exact-uniform-ptwise:sig:article-stochastic-raj-dahya}
    Let
        $(G,M,\cdot^{+})$ be a positivity structure,
        $\HilbertRaum$ a Hilbert space,
        ${T \colon M \to \BoundedOps{\HilbertRaum}}$
        an \topSOT-continuous homomorphism
        and
        ${(T^{(\alpha)} \colon M \to \BoundedOps{\HilbertRaum})_{\alpha\in\Lambda}}$
        a net of \topSOT-continuous homomorphisms.
    We say that
        $(T^{(\alpha)})_{\alpha\in\Lambda}$
        converges to
        $T$

    \begin{kompaktenum}{(i)}
    \item\punktlabel{1}
        \highlightTerm{exactly regularly weakly},
        if for each
            $\xi,\eta\in\HilbertRaum$
        there exists an index $\alpha_{0}\in\Lambda$
        such that for all
            $x \in G$
            and
            $\alpha \geq \alpha_{0}$
            $$
                \brkt{T^{(\alpha)}(x^{-})^{\ast}T^{(\alpha)}(x^{+})\xi}{\eta}
                =
                \brkt{T(x^{-})^{\ast}T(x^{+})\xi}{\eta};
            $$
    \item\punktlabel{2}
        \highlightTerm{uniformly regularly weakly},
        if for all
            $\xi,\eta\in\HilbertRaum$
            and
            compact $L \subseteq G$
            $$
                \sup_{x \in L}
                    \abs{
                        \brkt{T^{(\alpha)}(x^{-})^{\ast}T^{(\alpha)}(x^{+})\xi}{\eta}
                        -
                        \brkt{T(x^{-})^{\ast}T(x^{+})\xi}{\eta}
                    }
                \underset{\alpha}{\longrightarrow}
                0;
            $$
    \item\punktlabel{3}
        \highlightTerm{pointwise regularly weakly},
        if for all
            $\xi,\eta\in\HilbertRaum$
            and
            $x \in G$
            $$
                \abs{
                    \brkt{T^{(\alpha)}(x^{-})^{\ast}T^{(\alpha)}(x^{+})\xi}{\eta}
                    -
                    \brkt{T(x^{-})^{\ast}T(x^{+})\xi}{\eta}
                }
                \underset{\alpha}{\longrightarrow}
                0.
            $$
    \end{kompaktenum}

    \continueparagraph
    If each $T^{(\alpha)} = U^{(\alpha)}\restr{M}$,
    where ${U^{(\alpha)} \colon G \to \BoundedOps{\HilbertRaum}}$
    is an \topSOT-continuous unitary representation of $G$ on $\HilbertRaum$,
    we say that $T$ has an
        \highlightTerm{exact}
        (%
            \respectively
            \highlightTerm{uniform}
            \respectively
            \highlightTerm{pointwise}%
        )
        \highlightTerm{regular weak unitary approximation},
    if \punktcref{1}
        (%
            \respectively
            \punktcref{2}
            \respectively
            \punktcref{3}%
        )
    holds.
\end{defn}

Clearly, exact (regular) weak approximations
are uniform (regular) weak approximations,
which in turn are pointwise (regular) weak approximations.
In the case of
    $(G,M)=(\reals^{d},\realsNonNeg^{d})$,
if $d=1$, then
each \emph{regular}-notion of convergence clearly coincides
with the corresponding notion without the \emph{regular} prefix.
In this special case, the following result is known:

\begin{thm}[Kr\'ol, 2009]
\makelabel{thm:unitary-approx:one-param:krol:sig:article-stochastic-raj-dahya}
    Let $T$ be a contractive $\Cnought$-semigroup
    on an infinite dimensional Hilbert space $\HilbertRaum$.%
    \footref{ft:approx-thm:hilbert-space-dim:sig:article-stochastic-raj-dahya}
    Then $T$ has a uniform weak unitary approximation.
\end{thm}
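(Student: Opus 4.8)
The plan is to deduce the statement from the classical one-parameter unitary dilation theorem, exploiting the fact that on an infinite-dimensional Hilbert space the dilation space has no more orthogonal dimensions than $\HilbertRaum$ itself. First I would invoke \cite[Theorem~I.8.1]{Nagy1970}: the contractive $\Cnought$-semigroup $T$ possesses a minimal unitary dilation $(\mathcal{K},U,r)$, where $\mathcal{K}$ is a Hilbert space, $r\colon\HilbertRaum\to\mathcal{K}$ an isometry, $U$ an \topSOT-continuous unitary representation of $\reals$ on $\mathcal{K}$ with $T(t) = r^{\ast}U(t)r$ for all $t\in\realsNonNeg$, and $\mathcal{K} = \overline{\linspann}\bigl(\bigcup_{t\in\reals}U(t)r(\HilbertRaum)\bigr)$. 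Using \topSOT-continuity of $U$, this span equals the closed span of $\{U(q)r\xi : q\in\rationals,\ \xi\in\HilbertRaum\}$, whence $\dim\mathcal{K} \leq \aleph_{0}\cdot\dim\HilbertRaum = \dim\HilbertRaum$ (here infinite-dimensionality of $\HilbertRaum$ is used); together with the trivial reverse inequality this gives $\dim\mathcal{K} = \dim\HilbertRaum$.

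Next I would manufacture the approximating net with index set $\Lambda \colonequals \{F\subseteq\HilbertRaum \mid F \text{ finite}\}$, directed by inclusion. For $F\in\Lambda$ put $V_{F}\colonequals\linspann(F)$, a finite-dimensional subspace of $\HilbertRaum$. Since $V_{F}$ is finite-dimensional while $\mathcal{K}$ and $\HilbertRaum$ are infinite-dimensional, the defect spaces have equal dimension, $\dim(\mathcal{K}\ominus r(V_{F})) = \dim\mathcal{K} = \dim\HilbertRaum = \dim(\HilbertRaum\ominus V_{F})$, so the finite-rank partial isometry $(r\restr{V_{F}})^{-1}\colon r(V_{F})\to V_{F}$ extends to a unitary ${W_{F}\colon\mathcal{K}\to\HilbertRaum}$, which in particular satisfies $W_{F}r\xi = \xi$, hence $W_{F}^{\ast}\xi = r\xi$, for every $\xi\in V_{F}$. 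Define $U^{(F)}(t)\colonequals W_{F}U(t)W_{F}^{\ast}$ for $t\in\reals$; it is routine that $U^{(F)}$ is an \topSOT-continuous unitary representation of $\reals$ on $\HilbertRaum$, and we set $T^{(F)}\colonequals U^{(F)}\restr{\realsNonNeg}$. Then for all $\xi,\eta\in F$ and all $t\in\realsNonNeg$,
$$
    \brkt{U^{(F)}(t)\xi}{\eta}
    = \brkt{U(t)W_{F}^{\ast}\xi}{W_{F}^{\ast}\eta}
    = \brkt{U(t)r\xi}{r\eta}
    = \brkt{r^{\ast}U(t)r\xi}{\eta}
    = \brkt{T(t)\xi}{\eta},
$$
so $T^{(F)}$ agrees with $T$ \emph{exactly} (weakly) on the finite set $F$, for every parameter value.

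It remains to read off the convergence. Given $\xi,\eta\in\HilbertRaum$, for every $F\in\Lambda$ with $F\supseteq\{\xi,\eta\}$ and every $x\in\realsNonNeg$ the displayed identity gives $\brkt{T^{(F)}(x)\xi}{\eta} = \brkt{T(x)\xi}{\eta}$; in particular $\sup_{t\in L}\abs{\brkt{(T^{(F)}(t)-T(t))\xi}{\eta}} = 0$ for every compact $L\subseteq\realsNonNeg$. Hence $(T^{(F)})_{F\in\Lambda}$ converges to $T$ uniformly weakly — indeed exactly weakly — and since each $T^{(F)}$ is the restriction to $\realsNonNeg$ of the \topSOT-continuous unitary representation $U^{(F)}$ of $\reals$, this is precisely a uniform weak unitary approximation of $T$. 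The one step deserving care — and the heart of the matter — is the identity $\dim\mathcal{K} = \dim\HilbertRaum$ together with the consequence that a unitary $\mathcal{K}\to\HilbertRaum$ can be prescribed to carry $r(V)$ back onto $V$ for any given finite-dimensional $V\subseteq\HilbertRaum$; this is the only place where $\dim\HilbertRaum=\infty$ is used and is genuinely needed (for $\HilbertRaum=\complex$ and $T(t)=e^{-t}$ no such approximation exists), and once it is available there is no further obstacle.
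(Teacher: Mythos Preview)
Your argument is correct. The paper does not give its own proof of this statement but simply refers to Kr\'ol \cite[Theorem~2.1 and Remark~2.3]{Krol2009}; however, the paper's proof of the implication \eqcref{it:1:thm:unitary-approx:weak:sig:article-stochastic-raj-dahya}{}\ensuremath{\implies}{}\eqcref{it:2:thm:unitary-approx:weak:sig:article-stochastic-raj-dahya} in \Cref{thm:unitary-approx:weak:sig:article-stochastic-raj-dahya} (which generalises Kr\'ol's result) follows exactly the same strategy as yours: take a dilation, use a density/cardinality argument to arrange that the dilation space has the same dimension as $\HilbertRaum$, and then conjugate by unitaries that agree with $r$ on finite-dimensional subspaces (indexed there by finite-rank projections rather than finite sets, which is cosmetic). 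One minor point: to make the inequality $\dim\mathcal{K}\leq\aleph_{0}\cdot\dim\HilbertRaum$ watertight you should span over $\xi$ ranging through an orthonormal basis of $\HilbertRaum$ rather than all of $\HilbertRaum$, since $\card{\HilbertRaum}$ can exceed $\dim\HilbertRaum$; the paper does this explicitly.
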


For a proof see \cite[Theorem~2.1 and Remark~2.3]{Krol2009}.
The following results add to this picture.

\begin{schattierteboxdunn}[backgroundcolor=leer,nobreak=true]
\begin{thm}[Characterisation of weak unitary approximations]
\makelabel{thm:unitary-approx:weak:sig:article-stochastic-raj-dahya}
    Let
        $G$ be a locally compact topological group
        and
        $M \subseteq G$ be an $e$-joint closed submonoid.
    Further let
        $\HilbertRaum$ be a Hilbert space
        and
        ${T \colon M \to \BoundedOps{\HilbertRaum}}$
        be an \topSOT-continuous homomorphism.
    If $G$ contains a dense subset $D \subseteq G$
    with $\card{D} \leq \dim(\HilbertRaum)$,%
    \footref{ft:approx-thm:hilbert-space-dim:sig:article-stochastic-raj-dahya}
    then the following are equivalent:

    \begin{kompaktenum}{\bfseries (a)}
        \item\punktlabel{1}
            The classical system $T$ has a unitary dilation.
        \item\punktlabel{2}
            The classical system $T$ has an exact weak unitary approximation.
        \item\punktlabel{3}
            The classical system $T$ has a uniform weak unitary approximation.
    \end{kompaktenum}

    \continueparagraph
    Without the above assumption on the dimension of $\HilbertRaum$,
        \punktcref{2}{}\ensuremath{\implies}{}\punktcref{3}{}\ensuremath{\implies}{}\punktcref{1}
    hold.
\end{thm}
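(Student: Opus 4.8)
The plan is to prove the non-trivial content in two blocks: (c)$\Rightarrow$(a), which uses no hypothesis on $\dim(\HilbertRaum)$, and (a)$\Rightarrow$(b), which does; the implication (b)$\Rightarrow$(c) is immediate from the remark preceding the theorem (an exact weak approximation is a uniform weak approximation). Throughout I would lean on the elementary reformulation of (a): a tuple $(\HilbertRaum_{1},U,r)$ is a unitary dilation of $T$ exactly when the map $\Phi\colonequals r^{\ast}U(\cdot)r\colon G\to\BoundedOps{\HilbertRaum}$ is an \topSOT-continuous, contractive, operator-valued positive-definite function (i.e.\ $\sum_{i,j}\brkt{\Phi(x_{j}^{-1}x_{i})\xi_{i}}{\xi_{j}}\geq 0$ for all finite families in $G\times\HilbertRaum$) with $\Phi(e)=\onematrix$ and $\Phi\restr{M}=T$; conversely, any such $\Phi$ has a minimal (cyclic) Naimark/Sz.-Nagy dilation, which inherits \topSOT-continuity from $\Phi$. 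So (a) is equivalent to the existence of such a $\Phi$.

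For (c)$\Rightarrow$(a): let $(U^{(\alpha)})_{\alpha\in\Lambda}$ witness the uniform weak unitary approximation, so in particular $U^{(\alpha)}(x)\to T(x)$ in \topWOT{} for each $x\in M$. First I would fix an ultrafilter $\mathcal{U}$ on $\Lambda$ refining the order filter; since the closed unit ball of $\BoundedOps{\HilbertRaum}$ is \topWOT-compact, the \topWOT-limit of $(U^{(\alpha)}(x))_{\alpha}$ along $\mathcal{U}$ defines $\Phi(x)$ for every $x\in G$. One checks routinely that $\Phi$ is contractive, $\Phi(e)=\onematrix$, $\Phi(x^{-1})=\Phi(x)^{\ast}$ (as \topWOT-limits commute with adjoints and $U^{(\alpha)}(x^{-1})=U^{(\alpha)}(x)^{\ast}$), positive definite (each finite sum $\sum_{i,j}\brkt{U^{(\alpha)}(x_{j}^{-1}x_{i})\xi_{i}}{\xi_{j}}=\normLong{\sum_{i}U^{(\alpha)}(x_{i})\xi_{i}}^{2}\geq 0$ survives the limit), and $\Phi\restr{M}=T$. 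The one real step is \topSOT-continuity of $\Phi$: pass to the minimal Naimark dilation $(\HilbertRaum_{1},U,r)$ of $\Phi$; since the vectors $U(x)r\xi$ span $\HilbertRaum_{1}$ and the $U(g)$ are isometries, it suffices that $U(g)r\xi\to r\xi$ as $g\to e$ for each $\xi$. Now $\norm{U(b)r\xi-r\xi}^{2}=2\norm{\xi}^{2}-2\Re\brkt{T(b)\xi}{\xi}\to 0$ as $M\ni b\to e$, using only that $\Phi\restr{M}=T$ is \topSOT-continuous; hence $U(a^{-1})r\xi,\,U(b^{-1})r\xi\to r\xi$ as $a,b\to e$ in $M$. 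Because $M$ is $e$-joint and $G$ is locally compact, the Steinhaus theorem gives that $(M\cap W)(M\cap W)^{-1}$ is a neighbourhood of $e$ for each neighbourhood $W$ of $e$; writing $g$ near $e$ as $g=ab^{-1}$ with $a,b\in M\cap W$ and $W$ small, and using $\brkt{\Phi(g)\xi}{\xi}=\brkt{U(b^{-1})r\xi}{U(a^{-1})r\xi}$, one gets $\brkt{\Phi(g)\xi}{\xi}\to\norm{\xi}^{2}$, i.e.\ $U(g)r\xi\to r\xi$, as $g\to e$ in $G$. A conjugation argument ($\norm{U(g)U(x)r\xi-U(x)r\xi}=\norm{U(x^{-1}gx)r\xi-r\xi}$) promotes this to \topSOT-continuity of $U$ everywhere, so $\Phi$ is continuous and $(\HilbertRaum_{1},U,r)$ is the required dilation.

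For (a)$\Rightarrow$(b): fix a unitary dilation $(\HilbertRaum_{1},U,r)$ of $T$. For each finite-dimensional subspace $\HilbertRaum_{0}\subseteq\HilbertRaum$ let $\HilbertRaum_{1}^{(0)}\subseteq\HilbertRaum_{1}$ be the cyclic $U$-invariant subspace generated by $r\HilbertRaum_{0}$; by \topSOT-continuity of $U$ and density of $D$ it equals $\overline{\linspann}\{U(x)r\xi : x\in D,\ \xi\in\HilbertRaum_{0}\}$, whence $\dim(\HilbertRaum_{1}^{(0)})\leq\card{D}\cdot\dim(\HilbertRaum_{0})\leq\dim(\HilbertRaum)$ (under the standing assumption on $\dim(\HilbertRaum)$ recorded in the footnote). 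Then I would pick an isometry $w\colon\HilbertRaum_{1}^{(0)}\to\HilbertRaum$ extending $(r\restr{\HilbertRaum_{0}})^{-1}\colon r\HilbertRaum_{0}\to\HilbertRaum_{0}$ (possible precisely because $\dim(\HilbertRaum_{1}^{(0)})\leq\dim(\HilbertRaum)$), and set $U^{(\HilbertRaum_{0})}(x)\colonequals\big(w\,U(x)\restr{\HilbertRaum_{1}^{(0)}}\,w^{\ast}\big)\oplus\onematrix$ on $\HilbertRaum=w\HilbertRaum_{1}^{(0)}\oplus(w\HilbertRaum_{1}^{(0)})^{\perp}$, an \topSOT-continuous unitary representation of $G$ on $\HilbertRaum$. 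Using $w(r\zeta)=\zeta$ for $\zeta\in\HilbertRaum_{0}$ and invariance of $\HilbertRaum_{1}^{(0)}$, for $\xi,\eta\in\HilbertRaum_{0}$ and $x\in M$ one gets $\brkt{U^{(\HilbertRaum_{0})}(x)\xi}{\eta}=\brkt{U(x)r\xi}{r\eta}=\brkt{T(x)\xi}{\eta}$. Thus $(U^{(\HilbertRaum_{0})}\restr{M})$, indexed by the finite-dimensional subspaces of $\HilbertRaum$ directed by inclusion, is an exact weak unitary approximation of $T$ in the sense of \Cref{defn:weak-convergence-over-exact-uniform-ptwise:sig:article-stochastic-raj-dahya} (given $\xi,\eta$, take $\alpha_{0}=\linspann\{\xi,\eta\}$).

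The main obstacle is the continuity step in (c)$\Rightarrow$(a): a \topWOT-limit of continuous unitary representations need not be continuous, and it is exactly the $e$-jointness of $M$ that allows the \topSOT-continuity of $T$ on $M$ to be transferred to the dilation on all of $G$ via the Steinhaus-type decomposition near the identity. Everything else — the compactness/ultrafilter extraction of $\Phi$, preservation of positive definiteness under the limit, the Naimark dilation, and the cardinality bookkeeping in (a)$\Rightarrow$(b) — is routine.
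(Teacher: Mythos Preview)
Your proof is correct. The (a)$\Rightarrow$(b) argument is essentially the paper's: the paper first uses the cardinality hypothesis to replace $\HilbertRaum_{1}$ by a $U$-invariant subspace of dimension $\dim(\HilbertRaum)$ (so that without loss of generality $\HilbertRaum_{1}=\HilbertRaum$), and then conjugates by unitaries $w_{p}$ indexed by finite-rank projections satisfying $w_{p}p=rp$; your cyclic-subspace construction indexed by finite-dimensional $\HilbertRaum_{0}$ is a cosmetic variant of the same idea.

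Your (c)$\Rightarrow$(a), however, takes a genuinely different route. The paper goes through the Phillips--le~Merdy functional calculus $\funcCalcCts$ on $\complex\cdot\delta_{e}+\LpPlusCpct{1}{M}{G}$ and the characterisation in \Cref{lemm:cts-functional-calculus:sig:article-stochastic-raj-dahya}: unitary dilatability of $T$ is equivalent to $\normCb{\funcCalcCts}\leq 1$. Since each $U^{(\alpha)}\restr{M}$ trivially has a unitary dilation, each associated calculus $\funcCalcCts^{(\alpha)}$ satisfies this bound; uniform weak convergence on compacta is then used to pass the Bochner integral over $\quer{\supp}(f)$ to the limit, giving $\funcCalcCts^{(\alpha)}(a)\to\funcCalcCts(a)$ in \topWOT, and the cb-bound survives. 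The $e$-jointness enters only inside the proof of \Cref{lemm:cts-functional-calculus:sig:article-stochastic-raj-dahya}, via the approximate unit of \Cref{prop:func-calc-cts:basic:shift-unit:sig:article-stochastic-raj-dahya}. Your route --- ultrafilter extraction of a positive-definite $\Phi$, Naimark dilation, and an explicit Steinhaus argument for continuity --- is more elementary (no completely bounded maps, no Wittstock--Haagerup) and in fact proves more: you only use \emph{pointwise} \topWOT-convergence of $U^{(\alpha)}(x)$ to $T(x)$ on $M$, not uniform convergence on compacta. The paper's integral argument genuinely needs the uniform hypothesis (dominated convergence is unavailable for nets), so your method would add a fourth equivalent condition --- pointwise weak unitary approximation --- paralleling the regular case in \Cref{thm:unitary-approx:regular-weak:sig:article-stochastic-raj-dahya}. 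The trade-off is that the paper's route establishes \Cref{lemm:cts-functional-calculus:sig:article-stochastic-raj-dahya} as an independent structural tool (a generalisation of le~Merdy's characterisation), whereas yours is self-contained.
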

\end{schattierteboxdunn}

\begin{schattierteboxdunn}[backgroundcolor=leer,nobreak=true]
\begin{thm}[Characterisation of regular weak unitary approximations]
\makelabel{thm:unitary-approx:regular-weak:sig:article-stochastic-raj-dahya}
    Let
        $(G,M,\cdot^{+})$ be a positivity structure,
    where
        $G$ is a topological group
        and
        $M \subseteq G$ is a submonoid.%
        \footref{ft:approx-thm:G-not-locally-compact:sig:article-stochastic-raj-dahya}
    Further let
        $\HilbertRaum$ be a Hilbert space
        and
        ${T \colon M \to \BoundedOps{\HilbertRaum}}$
        be an \topSOT-continuous homomorphism.
    If $\HilbertRaum$ is infinite dimensional
    and $G$ contains a dense subset $D \subseteq G$
    with $\card{D} \leq \dim(\HilbertRaum)$,%
        \footref{ft:approx-thm:hilbert-space-dim:sig:article-stochastic-raj-dahya}
    then the following are equivalent:

    \begin{kompaktenum}{\bfseries (a)}
        \item\punktlabel{1}
            The classical system $T$ has a regular unitary dilation.
        \item\punktlabel{2}
            The classical system $T$ has an exact regular weak unitary approximation.
        \item\punktlabel{3}
            The classical system $T$ has a uniform regular weak unitary approximation.
        \item\punktlabel{4}
            The classical system $T$ has a pointwise regular weak unitary approximation.
    \end{kompaktenum}

    \continueparagraph
    Without the above assumption on the dimension of $\HilbertRaum$,
        \punktcref{2}{}\ensuremath{\implies}{}\punktcref{3}{}\ensuremath{\implies}{}\punktcref{4}{}\ensuremath{\implies}{}\punktcref{1}
    hold.
\end{thm}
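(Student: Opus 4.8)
The plan is to prove the cycle (a)$\Rightarrow$(b)$\Rightarrow$(c)$\Rightarrow$(d)$\Rightarrow$(a). The implications (b)$\Rightarrow$(c)$\Rightarrow$(d) are formal and use no hypothesis: if a net $(T^{(\alpha)})_{\alpha}$ with each $T^{(\alpha)} = U^{(\alpha)}\restr{M}$ converges to $T$ exactly regularly weakly, then for fixed $\xi,\eta$ the functions $x \mapsto \brkt{T^{(\alpha)}(x^{-})^{\ast}T^{(\alpha)}(x^{+})\xi}{\eta}$ and $x \mapsto \brkt{T(x^{-})^{\ast}T(x^{+})\xi}{\eta}$ eventually agree identically on $G$, so the supremum of their difference over any (a fortiori any compact) subset of $G$ is eventually $0$, and uniform convergence over compact subsets specialises to pointwise convergence by taking singletons. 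Thus the content lies in (d)$\Rightarrow$(a) --- which uses neither the infinite-dimensionality of $\HilbertRaum$ nor the bound $\card{D} \leq \dim(\HilbertRaum)$, so the last sentence of the theorem falls out as well --- and in (a)$\Rightarrow$(b), where both are essential.

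For (d)$\Rightarrow$(a): let $U^{(\alpha)}$ be \topSOT-continuous unitary representations of $G$ on $\HilbertRaum$ whose restrictions converge pointwise regularly weakly to $T$. By axioms (i)--(iii) of the positivity structure together with unitarity, $T^{(\alpha)}(x^{-})^{\ast}T^{(\alpha)}(x^{+}) = U^{(\alpha)}((x^{-})^{-1}x^{+}) = U^{(\alpha)}(x)$, so for each $x\in G$ the net $(U^{(\alpha)}(x))_{\alpha}$ converges in the weak operator topology to $S(x) \colonequals T(x^{-})^{\ast}T(x^{+})$. Since $T$ is \topSOT-continuous and $\cdot^{+},\cdot^{-}$ are continuous, $x \mapsto \brkt{S(x)\xi}{\eta} = \brkt{T(x^{+})\xi}{T(x^{-})\eta}$ is continuous on $G$, $S(e) = \onematrix$, and for all finite tuples $(x_{j})_{j=1}^{n}$ in $G$ and $(\xi_{j})_{j=1}^{n}$ in $\HilbertRaum$ one has $\sum_{j,k}\brkt{U^{(\alpha)}(x_{k}^{-1}x_{j})\xi_{j}}{\xi_{k}} = \normLong{\sum_{j}U^{(\alpha)}(x_{j})\xi_{j}}^{2} \geq 0$; passing to the \topWOT-limit gives $\sum_{j,k}\brkt{S(x_{k}^{-1}x_{j})\xi_{j}}{\xi_{k}} \geq 0$. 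Hence $S$ is a \topWOT-continuous, $\BoundedOps{\HilbertRaum}$-valued positive-definite function on $G$ with $S(e) = \onematrix$; by the operator-valued Sz.-Nagy/Naimark dilation theorem --- or, concretely, by forming the Hilbert-space ultrapower $\HilbertRaum_{1} \colonequals \prod_{\mathcal{U}}\HilbertRaum$ along an ultrafilter $\mathcal{U}$ on $\Lambda$ refining the order filter, defining $U(x)[(\xi_{\alpha})_{\alpha}] \colonequals [(U^{(\alpha)}(x)\xi_{\alpha})_{\alpha}]$ and the canonical isometry $r\colon\HilbertRaum\to\HilbertRaum_{1}$, $\xi\mapsto[(\xi)_{\alpha}]$, and restricting $U$ to the cyclic subspace generated by $r\HilbertRaum$, on which $U$ is \topSOT-continuous because $S$ is \topWOT-continuous --- one obtains a triple $(\HilbertRaum_{1},U,r)$ with $T(x^{-})^{\ast}T(x^{+}) = r^{\ast}U(x)r$ for all $x\in G$, \idest a regular unitary dilation of $T$.

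For (a)$\Rightarrow$(b): let $(\HilbertRaum_{1},U,r)$ be a regular unitary dilation of $T$ and fix a dense $D\subseteq G$ with $\card{D}\leq\dim(\HilbertRaum)$. Index the approximating net by the directed family of finite-dimensional subspaces $F\subseteq\HilbertRaum$ ordered by inclusion. Given $F$, put $\mathcal{K}_{F}\colonequals\overline{\linspann}\{U(x)r\xi \mid x\in G,\ \xi\in F\}$, a closed $U$-invariant subspace of $\HilbertRaum_{1}$ containing $rF$; by \topSOT-continuity of $U$ the dense set $D$ may replace $G$ here, so $\mathcal{K}_{F}$ has a dense subset of cardinality $\leq\max(\card{D},\aleph_{0})\leq\dim(\HilbertRaum)$, whence $\dim(\mathcal{K}_{F})\leq\dim(\HilbertRaum)$. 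Since $F$ is finite-dimensional and $\HilbertRaum$ infinite-dimensional, $\dim(\HilbertRaum\ominus F) = \dim(\HilbertRaum)\geq\dim(\mathcal{K}_{F}\ominus rF)$, so the isometry $rF\to F$, $r\xi\mapsto\xi$, extends to a linear isometry $\iota\colon\mathcal{K}_{F}\to\HilbertRaum$; let $P\colonequals\iota\iota^{\ast}$ be the orthogonal projection onto $\iota(\mathcal{K}_{F})$ and set $U^{(F)}(x)\colonequals\iota U(x)\iota^{\ast}+(\onematrix-P)$. Using $\iota^{\ast}\iota=\onematrix_{\mathcal{K}_{F}}$ and $U$-invariance of $\mathcal{K}_{F}$, one verifies that $U^{(F)}$ is an \topSOT-continuous unitary representation of $G$ on $\HilbertRaum$, and that for $\xi,\eta\in F$ (so $\xi=\iota r\xi$ and $\iota^{\ast}\xi=r\xi$) and every $x\in G$, $\brkt{U^{(F)}(x)\xi}{\eta} = \brkt{\iota U(x)r\xi}{\iota r\eta} = \brkt{U(x)r\xi}{r\eta} = \brkt{r^{\ast}U(x)r\xi}{\eta} = \brkt{T(x^{-})^{\ast}T(x^{+})\xi}{\eta}$. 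With $T^{(F)}\colonequals U^{(F)}\restr{M}$ and $(x^{-})^{-1}x^{+}=x$ this yields $\brkt{T^{(F)}(x^{-})^{\ast}T^{(F)}(x^{+})\xi}{\eta} = \brkt{T(x^{-})^{\ast}T(x^{+})\xi}{\eta}$ for all $x\in G$ whenever $\xi,\eta\in F$; hence for prescribed $\xi,\eta\in\HilbertRaum$, taking $F_{0}\colonequals\linspann\{\xi,\eta\}$, equality holds for every $F\supseteq F_{0}$ and all $x\in G$, which is exactly an exact regular weak unitary approximation of $T$.

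I expect the crux to be the bookkeeping in (a)$\Rightarrow$(b): transplanting the cyclic subspace $\mathcal{K}_{F}$ of the --- a priori arbitrarily large --- dilation space back inside $\HilbertRaum$ while pinning the embedded copy of $rF$ precisely onto $F$ is exactly where "$\HilbertRaum$ infinite-dimensional" and "$\card{D}\leq\dim(\HilbertRaum)$" are consumed. A secondary point needing care in both nontrivial implications is the \topSOT-continuity of the representations produced: for (a)$\Rightarrow$(b) it is inherited from that of $U$, whereas for (d)$\Rightarrow$(a) it forces one to restrict the ultrapower representation to the cyclic part, on which \topWOT-continuity of the limit $S$ upgrades to strong continuity.
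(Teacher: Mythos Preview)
Your proof is correct, and in the two nontrivial implications it proceeds by a route genuinely different from the paper's. For (d)$\Rightarrow$(a), the paper invokes its \emph{discrete functional calculus} $\funcCalcDiscr$ (\Cref{lemm:discr-functional-calculus:sig:article-stochastic-raj-dahya}): each $\funcCalcDiscr^{(\alpha)}$ is completely positive because $U^{(\alpha)}\restr{M}$ trivially has a regular unitary dilation, pointwise regular weak convergence gives $\funcCalcDiscr^{(\alpha)}(f)\to\funcCalcDiscr(f)$ in \topWOT for each $f\in c_{00}(G)$, hence $\funcCalcDiscr$ is completely positive, and another appeal to \Cref{lemm:discr-functional-calculus:sig:article-stochastic-raj-dahya} (which in turn uses Arveson extension and Stinespring on $\quer{\ell^{1}(G)}$) yields the dilation. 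You instead bypass the $C^{\ast}$-algebra machinery entirely by observing directly that $S(x)=T(x^{-})^{\ast}T(x^{+})$ is an operator-valued positive-definite function on $G$ (as a \topWOT-limit of the manifestly positive-definite $U^{(\alpha)}$) and then apply Sz.-Nagy/Naimark --- or, in a pleasingly concrete alternative, realise the dilation inside a Hilbert-space ultrapower. Your argument is more self-contained; the paper's buys the reusable ``if and only if'' characterisation of \Cref{lemm:discr-functional-calculus:sig:article-stochastic-raj-dahya}. For (a)$\Rightarrow$(b), the paper first \emph{globally} shrinks the dilation space $\HilbertRaum_{1}$ down to a copy of $\HilbertRaum$ (using the density of $D$ and $\card{D}\leq\dim(\HilbertRaum)$), identifies $\HilbertRaum_{1}$ with $\HilbertRaum$, and then for each finite-rank projection $p$ constructs a unitary $w_{p}\in\BoundedOps{\HilbertRaum}$ with $w_{p}p=rp$ and sets $U^{(p)}=w_{p}^{\ast}Uw_{p}$; you instead perform the dimension count \emph{locally}, building for each finite-dimensional $F$ the $U$-invariant cyclic subspace $\mathcal{K}_{F}$, bounding $\dim(\mathcal{K}_{F})\leq\dim(\HilbertRaum)$ via the dense $D$, and embedding only $\mathcal{K}_{F}$ back into $\HilbertRaum$ so that $rF$ lands on $F$. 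The two organisations are equivalent in spirit; yours avoids fixing a single global identification at the outset, at the price of a representation $U^{(F)}$ that is trivial off $\iota(\mathcal{K}_{F})$.
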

\end{schattierteboxdunn}

\footnotetext[ft:approx-thm:hilbert-space-dim:sig:article-stochastic-raj-dahya]{
    In the case of separable topological groups,
        \exempli $G=\reals^{d}$, $d\in\naturals$,
    this holds as soon as $\HilbertRaum$ is infinite dimensional.
}

\footnotetext[ft:approx-thm:G-not-locally-compact:sig:article-stochastic-raj-dahya]{
    Note that we neither require $G$ to be locally compact
    nor $M$ to be a measurable subset in this theorem!
}

Now, to prove \Cref{thm:unitary-approx:one-param:krol:sig:article-stochastic-raj-dahya},
Kr\'ol constructs unitary approximants directly from a regular unitary dilation of $T$
and further questions whether a proof is possible without reliance upon dilations
(\cf \cite[Remark~2.2]{Krol2009}).
\Cref{%
    thm:unitary-approx:weak:sig:article-stochastic-raj-dahya,%
    thm:unitary-approx:regular-weak:sig:article-stochastic-raj-dahya,%
}
partially address this by showing
that the existence of simultaneous (regular) dilations is necessary.
Thus any dilation-free proof of
    \Cref{thm:unitary-approx:one-param:krol:sig:article-stochastic-raj-dahya}
might necessarily involve some characterisation of (regular) unitary dilations.

Our results furthermore provide a sharp distinction between the two notions of dilation.
By
    \Cref{%
        e.g.:monoids:e-joint:sig:article-stochastic-raj-dahya,%
        e.g.:monoids:positivity:sig:article-stochastic-raj-dahya,%
    }
these results are immediately applicable
to commuting families of $\Cnought$-semigroups
as well as non-commuting families satisfying the \emph{canonical commutation relations} (CCR) in the Weyl form
(see \Cref{e.g.:non-commuting-family-heisenberg-d-C:sig:article-stochastic-raj-dahya}).

As an example, applying these characterisations to
    $(G,M) = (\reals^{d},\realsNonNeg^{d})$
for any $d \geq 2$ and infinite dimensional Hilbert space $\HilbertRaum$,
we shall construct commuting families of $d$ contractive $\Cnought$-semigroups on $\HilbertRaum$
that admit no regular weak unitary approximations
(see \Cref{cor:counter-examples-unitary-approx:sig:article-stochastic-raj-dahya}).



\subsection[Notation]{Notation}
\label{sec:introduction:notation:sig:article-stochastic-raj-dahya}

\firstparagraph
In this paper we fix the following notation and conventions:

\begin{kompaktitem}
    \item
        We write
            $\naturalsPos = \{1,2,\ldots\}$,
            $\naturalsZero = \{0,1,2,\ldots\}$,
            $\realsNonNeg = \{r\in\reals \mid r\geq 0\}$,
            $\realsPos = \{r\in\reals \mid r > 0\}$,
            and
            $\Torus = \{z\in\complex \mid \abs{z} = 1\}$
            (unit circle in the complex plane).
        To distinguish from indices $i$ we use $\iunit$ for the imaginary unit $\sqrt{-1}$.
    \item
        We write elements of product spaces in bold
        and denote their components in light face fonts with appropriate indices,
        \exempli the $i$\textsuperscript{th} components of
            $\mathbf{t} \in \realsNonNeg^{n}$
            and
            $\boldsymbol{\alpha} \in \prod_{i=1}^{n}\Lambda_{i}$
            are denoted
                $t_{i}$ and $\alpha_{i}$
            respectively.
    \item
        In some instances we shall work with concrete constructions of approximants of
        $\Cnought$-semigroups or families thereof
        (\exempli the Hille- and Yosida-approximants).
        In such cases we use
            $\lambda \in \realsPos$
            and
            $\boldsymbol{\lambda} \in \realsPos^{d}$
        to index the approximants.
        In others instances we work with the generalisation: \highlightTerm{expectation-approximants}
        (defined below in \Cref{defn:expectation-approximants:sig:article-stochastic-raj-dahya}).
        To indicate the abstract setting,
            $\alpha \in \Lambda$
            and
            $\boldsymbol{\alpha} \in \prod_{i=1}^{d}\Lambda_{i}$
        are used to index the approximants.
    \item
        For Banach spaces $\BanachRaum,\BanachRaum_{1},\BanachRaum_{2}$,
        the set
            $\BoundedOps{\BanachRaum}$ and $\BoundedOps{\BanachRaum_{1}}{\BanachRaum_{2}}$
        denote the set of bounded linear operators on $\BanachRaum$
        and the set of bounded linear operators from $\BanachRaum_{1}$ to $\BanachRaum_{2}$
        respectively.
    \item
        For bounded operators $S$ over a Banach space,
        $S^{\prime}$ denotes the adjoint (dual) operator.
        For bounded operators $S$ over a Hilbert space,
        $S^{\ast}$ denotes the Hermitian adjoint.
    \item
        For an (unbounded) linear operator ${A \colon \dom(A)\subseteq\BanachRaum\to\BanachRaum}$
        on a Banach space and $\lambda\in\complex$ in the resolvent set of $A$,
            $
                \opResolvent{A}{\lambda}
                = (\lambda\onematrix - A)^{-1}
            $
        denotes the resolvent operator.
    \item
        For a measure (or probability) space $(X,\Sigma,\mu)$,
        a measurable space $(Y,S)$,
        and
        a measurable function ${f \colon X \to Y}$,
        the push-forward measure $\PushForward{f}{\mu}$,
        which we denote $\mu_{f}$,
        is the measure (\respectively probability measure) on $(Y, S)$
        defined by
            $\PushForward{f}{\mu}[B] = \mu[f^{-1}(B)]$
        for all measurable $B \subseteq Y$.
    \item
        For a probability distribution $\Gamma$ over a set $X$
        we write ${\theta \distributedAs \Gamma}$
        to denote that $\theta$ is an $X$-valued random variable (\randomvar)
        with distribution $\Gamma$.
    \item
        For $t\in\reals$ the distribution $\delta_{t}$ denotes the point distribution concentrated in $t$.
    \item
        For $\lambda\in\realsPos$
        we denote with $\DistExp{\lambda}$
        the exponential distribution with \highlightTerm{rate} $\lambda$.
        For ${\theta \distributedAs \DistExp{\lambda}}$
        it holds that
            ${
                \Prob_{\theta}[B] = \int_{B}\lambda e^{\lambda s}\:\dee s
            }$
        for all measurable $B \subseteq \realsNonNeg$.
    \item
        For $\lambda\in\realsNonNeg$ and $c\in\realsPos$
        we denote with $\DistPoissScale{\lambda}{c}$
        the distribution of a Poisson distributed \randomvar scaled by $c$.
        For $\theta \distributedAs \DistPoissScale{\lambda}{c}$
        it holds that
            ${
                \Prob[\theta=cn] = \frac{\lambda^{n}}{n!}e^{-\lambda}
            }$
        for $n\in\naturals$ with the convention that $0^{0} \colonequals 0$.
        In particular, $\DistPoissScale{\lambda}{c}=\delta_{0}$ if $\lambda=0$.
    \item
        For $\lambda\in\realsPos$ and $t\in\realsNonNeg$,
        we denote with $\DistPoissAux{t}{\lambda}$
        an \highlightTerm{auxiliary Poisson distribution}
        with \highlightTerm{rate} $\lambda$
        over a \highlightTerm{time duration} $t$
        (defined below in \S{}\ref{sec:stochastic:distributions:sig:article-stochastic-raj-dahya},
        see also \cite[\S{}4]{Chung1962exp}).
    \item
        For a (unital) {($C^{\ast}$-)algebra} $\mathcal{A}$,
        $M_{n}(\mathcal{A})$ denotes the (unital) matrix {($C^{\ast}$-)algebra}
        of $\mathcal{A}$-valued $n \times n$-matrices
        for $n\in\naturals$.
        If $\mathcal{A}$ is an algebra of operators over some Hilbert space $\HilbertRaum$,
        then the elements of $M_{n}(\mathcal{A})$
        are viewed as operators acting on $\bigoplus_{i=1}^{n}\HilbertRaum$,
        and we have
            $
                \brkt{(a_{ij})_{ij}\oplus_{i=1}^{d}\xi_{i}}{\oplus_{i=1}^{d}\eta_{i}}
                = \sum_{ij}
                    \brkt{a_{ij}\xi_{j}}{\eta_{i}}
            $
        for \usesinglequotes{matrices} $(a_{ij})_{ij} \in M_{n}(\mathcal{A})$
        and vectors
            $
                \oplus_{i=1}^{n}\xi_{i},\oplus_{i=1}^{n}\eta_{i}
                \in \bigoplus_{i=1}^{n}\HilbertRaum
            $.
    \item
        A map ${\Psi \colon \mathcal{A} \to \mathcal{B}}$
        between (subalgebras of) $C^{\ast}$-algebras is called
        \emph{completely bounded} if
            $
                \normCb{\Psi}
                \colonequals
                    \sup_{n\in\naturalsPos}\norm{\Psi \otimes \id_{M_{n}}}
                < \infty
            $,
        and \emph{completely positive} if
            $
                \Psi \otimes \id_{M_{n}}
            $
            is positive for all $n\in\naturals$.
        Here
            ${\Psi \otimes \id_{M_{n}} \colon M_{n}(\mathcal{A}) \to M_{n}(\BoundedOps{\HilbertRaum})}$
        is defined by
            $(\Psi \otimes \id_{M_{n}})((a_{ij})_{ij}) = (\Psi(a_{ij}))_{ij} \in M_{n}(\BoundedOps{\HilbertRaum})$
        for each
            $(a_{ij})_{ij} \in M_{n}(\mathcal{A})$
            and
            $n\in\naturals$
        (%
            see
            \cite[Chapter~1]{Paulsen1986bookCBmapsAndDilations},
            \cite[Chapter~3]{Pisier2001bookCBmaps}%
        ).
\end{kompaktitem}

For a Banach space $\BanachRaum$,
a measure space $(X,\Sigma,\mu)$
and an operator-valued function ${T \colon X \to \BoundedOps{\BanachRaum}}$,
for which ${T(\cdot)\xi \colon X \to \BanachRaum}$ is
\highlightTerm{strongly measurable} for each $\xi\in\BanachRaum$,
the integral ${\sotInt_{X} T\:\dee\mu}$, when it exists,
denotes the unique bounded operator $\tilde{T}\in\BoundedOps{\BanachRaum}$
that satisfies
    ${\tilde{T}\xi = \int_{X} T(\cdot)\xi\:\dee\mu}$
for all $\xi\in\BanachRaum$,
where $\int_{X} T(\cdot)\xi\:\dee\mu$
is computed as a Bochner-integral.
This holds, for example, if $X$ is a locally compact Polish space
(\exempli $\realsNonNeg^{d}$ for some $d\in\naturals$),
and $T$ is contractive and \topSOT-continuous
(\exempli a product of contractive $\Cnought$-semigroups).
If $(\Omega,\Sigma,\Prob)$ is a probability space
and ${\tau \colon \Omega \to X}$ is an $X$-valued \randomvar (\idest a measurable function),
we refer to
    $
        \Expected[T(\theta)]
        = \sotInt_{\omega\in\Omega} T(\theta(\omega))\:\Prob(\dee\omega)
        = \sotInt_{t \in X} T(t)\:\Prob_{\theta}(\dee t)
    $,
when it exists,
as the \highlightTerm{expectation} (computed strongly via Bochner-integrals).
If $X$ is a locally compact Polish space
and $T$ is a contractive \topSOT-continuous function
then the expectation exists.

The existence and properties of Bochner-integrals
(including linearity, convexity and triangle inequalities, Fubini's theorem for products, \etcetera)
as well as the validity of various computations with Bochner-integrals and expectations
used in the rest of this paper
can be found in or readily derived from the literature.
We refer the reader in particular to
    \cite[\S{}3.7, Theorems~3.7.4--6, and Theorems~3.7.12--13]{Hillephillips1957faAndSg},
    \cite[\S{}II.2, Theorem~2, and Theorem~4]{DiestelUhl1977VectorMeasures},
    \cite[\S{}C.1--4]{EngelNagel2000semigroupTextBook},
    and
    \cite[\S{}1.1.c--\S{}1.2.a]{HytNeervanMarkLutz2016bookVol1}.
For example, using Fubini's theorem one can derive that
    $
        \prod_{i=1}^{n}\Expected[T_{i}(\theta_{i})]
        = \Expected[\prod_{i=1}^{n}T_{i}(\theta_{i})]
    $
for $n\in\naturals$,
independent $\realsNonNeg$-valued \randomvar's
    $\theta_{1},\theta_{2},\ldots,\theta_{n}$
and $\Cnought$-semigroups $T_{1},T_{2},\ldots,T_{n}$ on a Banach space $\BanachRaum$
which are uniformly bounded on the essential ranges of
$\theta_{1},\theta_{2},\ldots,\theta_{n}$ respectively
(\exempli contractive semigroups).
We shall take advantage of this computation throughout.
Further fundamental applications of Bochner-integrals
in the context of $\Cnought$-semigroups
can be found \exempli in \cite{Chung1962exp,Dunfordschwartz1988BookLinOpI,Reissig2005abstractresolvent}.




\section[Stochastic presentation of classical approximants]{Stochastic presentation of classical approximants}
\label{sec:stochastic:sig:article-stochastic-raj-dahya}

\firstparagraph
In this section we provide standalone results for $\Cnought$-semigroups
over Banach spaces and then for (commuting) families.
We assume basic knowledge of stochastic processes
as well as Poisson and exponential distributions.
We shall also rely on the theory of Bochner-integrals,
in particular those that occur in the integral representations of
powers of resolvents of unbounded generators of $\Cnought$-semigroups.


\subsection[Semigroups of distributions]{Semigroups of distributions}
\label{sec:stochastic:distributions:sig:article-stochastic-raj-dahya}

\firstparagraph
A family $(\Gamma(t))_{t\in\realsNonNeg}$ of parameterised distributions
of $\reals$-valued random variables (\randomvar's)
shall be called a \highlightTerm{continuous semigroup of distributions} if

\begin{kompaktenum}{(i)}
    \item
        $\Gamma(0)$ is the point distribution concentrated in $0$ (\idest $\delta_{0}$);
    \item
        $\theta_{1} + \theta_{2} \distributedAs \Gamma(s + t)$
        for $s,t\in\realsNonNeg$
        and any independent \randomvar's
            $\theta_{1} \distributedAs \Gamma(s)$
            and
            $\theta_{2} \distributedAs \Gamma(t)$;
        and
    \item
        ${\Gamma(t) \longrightarrow \Gamma(0) = \delta_{0}}$
        weakly for ${\realsPos \ni t \longrightarrow 0}$,
        \idest
        letting $\theta_{t} \distributedAs \Gamma(t)$ for $t\in\realsNonNeg$,
        it holds that
            ${\Expected[f(\theta_{t})] \longrightarrow \Expected[f(\theta_{0})] = f(0)}$
        for all bounded continuous functions $f\in\Cts{\realsNonNeg}$
\end{kompaktenum}

\continueparagraph
(\cf
    \cite[Definition~14.46 and Example~17.7]{Klenke2008probTheory}
    and
    \cite[\S{}17.E]{Kechris1995BookDST}%
).
Simple examples of this include
    $(\delta_{t})_{t\in\realsNonNeg}$,
    $(\DistPoissScale{\lambda t}{c})_{t\in\realsNonNeg}$
    for $c\in\reals$, $\lambda\in\realsPos$,
    and
    $(\mathcal{N}(\mu t,\sigma^{2}t))_{t\in\realsNonNeg}$
    (the normal distributions)
    for $\mu\in\reals$, $\sigma\in\realsNonNeg$
    with the convention that
    $\mathcal{N}(0,0)$ denotes the point distribution $\delta_{0}$
(\cf \cite[Corollary~15.13]{Klenke2008probTheory}).

We now construct a further family of parameterised distributions.
For $\lambda \in (0,\:\infty)$ we construct a random distribution
via two independent homogeneous point Poisson processes (PPP) as follows
(depicted in \Cref{fig:example-auxiliary-poisson:sig:article-stochastic-raj-dahya}):

\begin{kompaktenum}{\bfseries 1.}
    \item
        Let
            $
                \tilde{\tau}_{0}, \tilde{\tau}_{1}, \tilde{\tau}_{2},\ldots,
                \tau_{0}, \tau_{1}, \tau_{2},\ldots
                \distributedAs
                \DistExp{\lambda}
            $
        be independent identically distributed (\iid) \randomvar's.
    \item
        For each $n\in\naturalsZero$ set
            $
                \tau_{<n} \colonequals \sum_{i=0}^{n-1}\tau_{i}
            $,
        with the convention that the empty sum is just
        the \randomvar equal to $0$ \almostSurely.
        As a sum of $n$ exponentially distributed \randomvar's,
        we have
            $
                \Prob[\tau_{<n} \in B]
                = \int_{s \in B}
                    \lambda\frac{(\lambda s)^{n-1}}{(n-1)!}e^{-\lambda s}
                \:\dee s
            $
        for all measurable $B \subseteq \realsNonNeg$,
        provided $n \geq 1$
        (see \exempli \cite[Theorem~15.12 and Corollary~15.13~(ii)]{Klenke2008probTheory}).
    \item
        For each $t\in\realsNonNeg$, let
            $
                N_{t}
                \colonequals \sup_{[0,\:\infty]}\{
                    n\in\naturalsZero \mid \sum_{i=0}^{n-1}\tilde{\tau}_{i} < t
                \}
            $.
        Then
            $N_{t} < \infty$ \almostSurely
        and
            $N_{t} \distributedAs \DistPoissScale{\lambda t}{1}$,
        including when $t=0$,
        since $\DistPoissScale{0}{1}$ is just the point distribution $\delta_{0}$.
        Moreover, by construction each $N_{t}$ is independent of the $\tau_{i}$.
    \item
        Finally, set
            $
                \theta_{t}
                \colonequals
                    \tau_{< N_{t}}
                = \sum_{i=0}^{N_{t} - 1}\tau_{i}
            $
        for $t\in\realsNonNeg$.
\end{kompaktenum}


\begin{figure}[!ht]

    \begin{subfigure}[m]{\textwidth}
        \centering
        \begin{tikzpicture}[node distance=1cm, thick]
            \pgfmathsetmacro\hunit{3}
            \pgfmathsetmacro\vunit{1}

            \draw [draw=none, fill=none]
                ({-0.1 * \hunit}, {0.5 * \vunit})
                -- ({2.6 * \hunit}, {0.5 * \vunit})
                -- ({2.6 * \hunit}, {-0.5 * \vunit})
                -- ({-0.1 * \hunit}, {-0.5 * \vunit})
                -- cycle;

            \draw [draw=none, fill=drawing_light_grey]
                ({0 * \hunit}, {0 * \vunit})
                -- ({2.5 * \hunit}, {0 * \vunit})
                -- ({2.5 * \hunit}, {0.5 * \vunit})
                -- ({0 * \hunit}, {0.5 * \vunit})
                -- cycle;

            \draw [draw=black, line width=0.5pt, fill=none] ({0.16423594284219328 * \hunit}, {0 * \vunit}) -- ({0.16423594284219328 * \hunit}, {0.5 * \vunit});
            \draw [draw=black, line width=0.5pt, fill=none] ({0.30852786449475916 * \hunit}, {0 * \vunit}) -- ({0.30852786449475916 * \hunit}, {0.5 * \vunit});
            \draw [draw=black, line width=0.5pt, fill=none] ({0.4849472124336579 * \hunit}, {0 * \vunit}) -- ({0.4849472124336579 * \hunit}, {0.5 * \vunit});
            \draw [draw=black, line width=0.5pt, fill=none] ({0.5758038383012576 * \hunit}, {0 * \vunit}) -- ({0.5758038383012576 * \hunit}, {0.5 * \vunit});
            \draw [draw=black, line width=0.5pt, fill=none] ({0.6974385112680267 * \hunit}, {0 * \vunit}) -- ({0.6974385112680267 * \hunit}, {0.5 * \vunit});
            \draw [draw=black, line width=0.5pt, fill=none] ({0.9444818148614873 * \hunit}, {0 * \vunit}) -- ({0.9444818148614873 * \hunit}, {0.5 * \vunit});
            \draw [draw=black, line width=0.5pt, fill=none] ({1.1158979979012023 * \hunit}, {0 * \vunit}) -- ({1.1158979979012023 * \hunit}, {0.5 * \vunit});
            \draw [draw=black, line width=0.5pt, fill=none] ({1.6668098229013812 * \hunit}, {0 * \vunit}) -- ({1.6668098229013812 * \hunit}, {0.5 * \vunit});
            \draw [draw=black, line width=0.5pt, fill=none] ({1.7311098678037797 * \hunit}, {0 * \vunit}) -- ({1.7311098678037797 * \hunit}, {0.5 * \vunit});
            \draw [draw=black, line width=0.5pt, fill=none] ({2.147702862254488 * \hunit}, {0 * \vunit}) -- ({2.147702862254488 * \hunit}, {0.5 * \vunit});

            \draw [decorate, decoration={brace, amplitude=5pt}]
                ({0 * \hunit}, {0.55 * \vunit}) -- ({0.9444818148614873 * \hunit}, {0.55 * \vunit})
                node[midway, above=5pt, align=center]{\footnotesize $N_{t}$};

            \draw [draw=black, line width=2pt, fill=none]
                ({0 * \hunit}, {-0.05 * \vunit})
                node[label=below:{\footnotesize $0$}]{}
                -- ({0 * \hunit}, {0.1 * \vunit});

            \draw [draw=black, line width=2pt, fill=none]
                ({1 * \hunit}, {-0.05 * \vunit})
                node[label=below:{\footnotesize $t$}]{}
                -- ({1 * \hunit}, {0.1 * \vunit});

            \draw [->, draw=black, line width=0.5pt, fill=none]
                ({-0.1 * \hunit}, {0 * \vunit}) -- ({2.6 * \hunit}, {0 * \vunit})
                node [below, align=center]{\footnotesize time};
        \end{tikzpicture}
        \caption{%
            1\textsuperscript{st} homogeneous PPP with rate $\lambda$,
            where $N_{t}$ counts the number of \usesinglequotes{events} in $[0,\:t)$.
        }
        \label{fig:example-auxiliary-poisson:a:sig:article-stochastic-raj-dahya}
    \end{subfigure}

    \begin{subfigure}[m]{\textwidth}
        \centering
        \begin{tikzpicture}[node distance=1cm, thick]
            \pgfmathsetmacro\hunit{3}
            \pgfmathsetmacro\vunit{1}

            \draw [draw=none, fill=none]
                ({-0.1 * \hunit}, {0.5 * \vunit})
                -- ({2.6 * \hunit}, {0.5 * \vunit})
                -- ({2.6 * \hunit}, {-0.5 * \vunit})
                -- ({-0.1 * \hunit}, {-0.5 * \vunit})
                -- cycle;

            \draw [draw=none, fill=drawing_light_grey]
                ({0 * \hunit}, {0 * \vunit})
                -- ({2.5 * \hunit}, {0 * \vunit})
                -- ({2.5 * \hunit}, {0.5 * \vunit})
                -- ({0 * \hunit}, {0.5 * \vunit})
                -- cycle;

            \draw [draw=blue, line width=0.5pt, fill=none] ({0.1704947206981121 * \hunit}, {0 * \vunit}) -- ({0.1704947206981121 * \hunit}, {0.5 * \vunit});
            \draw [draw=blue, line width=0.5pt, fill=none] ({0.986362751410455 * \hunit}, {0 * \vunit}) -- ({0.986362751410455 * \hunit}, {0.5 * \vunit});
            \draw [draw=blue, line width=0.5pt, fill=none] ({1.1292144647056632 * \hunit}, {0 * \vunit}) -- ({1.1292144647056632 * \hunit}, {0.5 * \vunit});
            \draw [draw=blue, line width=0.5pt, fill=none] ({1.2332491071729783 * \hunit}, {0 * \vunit}) -- ({1.2332491071729783 * \hunit}, {0.5 * \vunit});
            \draw [draw=blue, line width=0.5pt, fill=none] ({1.4080395043214222 * \hunit}, {0 * \vunit}) -- ({1.4080395043214222 * \hunit}, {0.5 * \vunit});
            \draw [draw=blue, line width=0.5pt, fill=none] ({1.5578881324962115 * \hunit}, {0 * \vunit}) -- ({1.5578881324962115 * \hunit}, {0.5 * \vunit});
            \draw [draw=black, line width=0.5pt, fill=none] ({2.0546540974785676 * \hunit}, {0 * \vunit}) -- ({2.0546540974785676 * \hunit}, {0.5 * \vunit});

            \draw [decorate, decoration={brace, amplitude=5pt}]
                ({0 * \hunit}, {0.55 * \vunit}) -- ({1.5578881324962115 * \hunit}, {0.55 * \vunit})
                node[midway, above=5pt, align=center]{\footnotesize $N_{t}$};

            \draw [draw=black, line width=2pt, fill=none]
                ({0 * \hunit}, {-0.05 * \vunit})
                node [label=below:{\footnotesize $0$}]{}
                -- ({0 * \hunit}, {0.1 * \vunit});

            \draw [draw=blue, line width=1pt, fill=none]
                ({1.5578881324962115 * \hunit}, {-0.05 * \vunit})
                node [label=below:{\footnotesize $\theta$}]{}
                -- ({1.5578881324962115 * \hunit}, {0.5 * \vunit});

            \draw [->, draw=black, line width=0.5pt, fill=none]
                ({-0.1 * \hunit}, {0 * \vunit}) -- ({2.6 * \hunit}, {0 * \vunit})
                node [below, align=center]{\footnotesize time};
        \end{tikzpicture}
        \caption{%
            2\textsuperscript{nd} homogeneous PPP with rate $\lambda$,
            independent of 1\textsuperscript{st} PPP,
            but with $N_{t}$ determined by (a).
        }
        \label{fig:example-auxiliary-poisson:b:sig:article-stochastic-raj-dahya}
    \end{subfigure}

    \caption{%
        Visualisation of the construction of an \highlightTerm{auxiliary Poisson process},
        $\theta \distributedAs \DistPoissAux{t}{\lambda}$.
    }
    \label{fig:example-auxiliary-poisson:sig:article-stochastic-raj-dahya}
\end{figure}


For each $\lambda\in\realsPos$ and $t\in\realsNonNeg$
let $\DistPoissAux{t}{\lambda}$ denote the distribution of $\theta_{t}$
constructed as above.
We refer to any \randomvar distributed as $\DistPoissAux{t}{\lambda}$
as an \highlightTerm{auxiliary Poisson process}
with \highlightTerm{rate} $\lambda$
over a \highlightTerm{time duration} $t$.
We observe some basic properties of auxiliary Poisson processes.

\begin{prop}
\makelabel{prop:characteristic-function:aux-poisson-process:sig:article-stochastic-raj-dahya}
    Let $\lambda\in\realsPos$ and $t\in\realsNonNeg$.
    The characteristic function
    of a $\DistPoissAux{t}{\lambda}$-distributed \randomvar%
    \footnote{%
        For an $\reals$-valued \randomvar $X$
        the characteristic function is defined as the map
        ${\phi \colon \reals \ni \omega \mapsto \Expected[e^{\iunit \omega X}}]$
        (\cf \cite[Definition~15.7]{Klenke2008probTheory}).
    }
    is given by
        $
            \phi_{t, \lambda}(\omega)
            = e^{\frac{\iunit\omega}{\lambda - \iunit\omega} \lambda t}
        $
    for all $\omega \in \reals$.
    Moreover, the mean and variance of
    $\DistPoissAux{t}{\lambda}$-distributed \randomvar's
    are $t$ and $\frac{2t}{\lambda}$ respectively.
\end{prop}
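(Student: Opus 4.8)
The plan is to recognise $\DistPoissAux{t}{\lambda}$ as a \emph{compound Poisson} law and to compute its characteristic function by conditioning on the underlying Poisson count. Recall from the construction preceding the statement that a $\theta_{t} \distributedAs \DistPoissAux{t}{\lambda}$ has the form $\theta_{t} = \sum_{i=0}^{N_{t}-1}\tau_{i}$, where $N_{t} \distributedAs \DistPoissScale{\lambda t}{1}$ is Poisson with mean $\lambda t$, the $\tau_{i} \distributedAs \DistExp{\lambda}$ are iid, and $N_{t}$ is independent of the family $(\tau_{i})_{i}$. First I would record the elementary computation that the characteristic function of an $\DistExp{\lambda}$-distributed \randomvar is
\[
    \chi(\omega) \colonequals \Expected[e^{\iunit\omega\tau_{0}}] = \int_{0}^{\infty}e^{\iunit\omega s}\,\lambda e^{-\lambda s}\,\dee s = \frac{\lambda}{\lambda - \iunit\omega},
\]
and note that $\abs{\chi(\omega)} = \lambda/\sqrt{\lambda^{2}+\omega^{2}} \leq 1$ for all $\omega\in\reals$, a bound that will justify the interchange of summation and expectation below.

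Next I would condition on $N_{t}$. By independence of $N_{t}$ from $(\tau_{i})_{i}$ and mutual independence of the $\tau_{i}$, one has $\Expected[e^{\iunit\omega(\tau_{0}+\cdots+\tau_{n-1})}] = \chi(\omega)^{n}$ for each $n\in\naturalsZero$, so that
\[
    \phi_{t,\lambda}(\omega)
    = \Expected[e^{\iunit\omega\theta_{t}}]
    = \sum_{n=0}^{\infty}\Prob[N_{t}=n]\,\chi(\omega)^{n}
    = \sum_{n=0}^{\infty}\frac{(\lambda t)^{n}}{n!}e^{-\lambda t}\,\chi(\omega)^{n},
\]
the interchange being legitimate since the $n$\textsuperscript{th} summand is dominated in modulus by $\Prob[N_{t}=n]$. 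The last series is the probability generating function of a Poisson$(\lambda t)$ variable evaluated at $\chi(\omega)$, namely $e^{\lambda t(\chi(\omega)-1)}$; substituting $\chi(\omega) = \lambda/(\lambda-\iunit\omega)$ and simplifying $\lambda t(\chi(\omega)-1) = \lambda t\cdot\frac{\lambda - (\lambda - \iunit\omega)}{\lambda - \iunit\omega} = \frac{\iunit\omega}{\lambda - \iunit\omega}\lambda t$ yields the asserted formula. (Alternatively, one could first note that $(\DistPoissAux{t}{\lambda})_{t\in\realsNonNeg}$ is a continuous semigroup of distributions — the increments of the driving Poisson process are independent and the $\tau_{i}$ are shared — and then pin down $\phi_{t,\lambda}$ from the multiplicativity $\phi_{s+t,\lambda}=\phi_{s,\lambda}\phi_{t,\lambda}$ together with the $t\to 0$ behaviour; the direct computation above is shorter.)

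For the mean and variance I would use Wald's identity and the law of total variance for the compound sum, with $\Expected[\tau_{0}] = 1/\lambda$, $\Var(\tau_{0}) = 1/\lambda^{2}$ and $\Expected[N_{t}] = \Var(N_{t}) = \lambda t$:
\[
    \Expected[\theta_{t}] = \Expected[N_{t}]\,\Expected[\tau_{0}] = t,
    \qquad
    \Var(\theta_{t}) = \Expected[N_{t}]\,\Var(\tau_{0}) + \Var(N_{t})\,(\Expected[\tau_{0}])^{2} = \frac{t}{\lambda} + \frac{t}{\lambda} = \frac{2t}{\lambda}.
\]
Equivalently one may differentiate $\phi_{t,\lambda} = e^{f}$ at $\omega=0$, where $f(\omega) = \iunit\omega\lambda t/(\lambda-\iunit\omega)$: a short computation gives $f'(\omega) = \iunit\lambda^{2}t/(\lambda-\iunit\omega)^{2}$ and $f''(\omega) = -2\lambda^{2}t/(\lambda-\iunit\omega)^{3}$, hence $\phi_{t,\lambda}'(0)=\iunit t$ and $\phi_{t,\lambda}''(0)= f''(0)+f'(0)^{2} = -2t/\lambda - t^{2}$, so that $\Expected[\theta_{t}] = -\iunit\,\phi_{t,\lambda}'(0) = t$ and $\Var(\theta_{t}) = -\phi_{t,\lambda}''(0) - t^{2} = 2t/\lambda$.

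There is no serious obstacle: the statement is a textbook computation for compound Poisson laws. The only points needing a word of care are the domination argument for the sum/expectation interchange (immediate from $\abs{\chi(\omega)}\leq 1$) and, if the characteristic-function route is taken for the moments, the differentiation under the integral sign (licit since the $\tau_{i}$ and $N_{t}$ possess finite moments of all orders).
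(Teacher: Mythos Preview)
Your proof is correct and follows essentially the same route as the paper: both condition on $N_{t}$, use the exponential characteristic function $\lambda/(\lambda-\iunit\omega)$, and sum the resulting Poisson-weighted geometric series to obtain $e^{\lambda t(\chi(\omega)-1)}$. For the moments the paper also conditions on $N_{t}$ for the mean (your Wald identity) and differentiates $\phi_{t,\lambda}$ for the variance (your alternative route); your primary use of the law of total variance is a minor and equally valid variation.
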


    \begin{proof}
        Let $\theta \distributedAs \DistPoissAux{t}{\lambda}$.
        Without loss of generality, we may assume that $\theta$
        is given by the above construction, \idest
            $\theta = \theta_{t} = \tau_{<N_{t}}$.
        For $n\in\naturalsPos$
        we have
        $
            \Expected[e^{\iunit\omega\tau_{<n}}]
            = (\frac{\lambda}{\lambda - \iunit\omega})^{n}
        $
        (see \exempli \cite[Theorem~15.12]{Klenke2008probTheory})
        and for $n=0$ clearly
        $
            \Expected[e^{\iunit\omega\tau_{<n}}]
            = \Expected[e^{\iunit\omega\cdot 0}]
            = 1
            \equalscolon (\frac{\lambda}{\lambda - \iunit\omega})^{0}
        $.
        Since by construction, $N_{t}$ is independent of the $\tau_{i}$,
        one thus obtains

            \noparskip
            \begin{eqnarray*}
                \Expected[e^{\iunit\omega\theta}]
                &= &\Expected[\Expected[e^{\iunit\omega\tau_{<N_{t}}}]|N_{t}]\\
                &= &\Expected[(\tfrac{\lambda}{\lambda - \iunit\omega})^{N_{t}}|N_{t}]\\
                &= &\displaystyle
                    \sum_{n=0}^{\infty}
                        e^{-\lambda t}
                        \frac{(\lambda t)^{n}}{n!}
                        \,\Big(
                            \frac{\lambda}{\lambda - \iunit\omega}
                        \Big)^{n}\\
                &= &e^{-\lambda t}e^{\lambda t \, \frac{\lambda}{\lambda - \iunit\omega}}\\
            \end{eqnarray*}

        \continueparagraph
        for all $\omega\in\reals$.
        Hence the characteristic function is as claimed.

        The mean can be computed as follows:
        For $n\in\naturalsPos$
        one has
            $
                \Expected[\tau_{<n}]
                = \sum_{i=1}^{n}\Expected[\tau_{i}]
                = \frac{n}{\lambda}
            $
        and
            $
                \Expected[\tau_{<0}]
                = \Expected[0]
                = 0
            $,
        whence
            $
                \Expected[\theta]
                = \Expected[\Expected[\tau_{<N_{t}} | N_{t}]]
                = \Expected[\frac{N_{t}}{\lambda}]
                = \frac{\lambda t}{\lambda}
                = t
            $.%
        \footnote{%
            Using the characteristic function, one can alternatively compute
            $
                \Expected[\theta]
                = \frac{1}{\iunit}\phi^{\prime}_{t,\lambda}(0)
                =
                    e^{\frac{\iunit\omega}{\lambda - \iunit\omega} \lambda t}
                    \cdot
                    \frac{\lambda^{2}t}{(\lambda - \iunit\omega)^{2}}
                    \rvert_{\:\omega=0}
                = t
            $.
        }
        To compute the variance, we first compute the $2$\textsuperscript{nd} moment of $\theta$.
        Since the characteristic function $\phi_{t,\lambda}$ of $\theta$
        is $2$-times (in fact $\infty$-times) continuously differentiable,
        one may compute (\cf \cite[Theorem~15.34]{Klenke2008probTheory})

            \noparskip
            $$
                \Expected[\theta^{2}]
                = \frac{1}{\iunit^{2}}\phi^{\prime\prime}_{t,\lambda}(0)
                = e^{-\lambda t}e^{\frac{\lambda^{2} t}{\lambda - \iunit\omega}}
                    \Big(
                        \Big(
                            \frac{\lambda^{2} t}{(\lambda - \iunit\omega)^{2}}
                        \Big)^{2}
                        +
                        2\frac{\lambda^{2} t}{(\lambda - \iunit\omega)^{3}}
                    \Big)
                    \Big\rvert_{\:\omega=0}
                = t^{2} + \frac{2t}{\lambda},
            $$

        \continueparagraph
        from which
            $
                \Var(\theta)
                = \Expected[\theta^{2}] - \Expected[\theta]^{2}
                = \frac{2t}{\lambda}
            $
        follows.
    \end{proof}

\begin{prop}
\makelabel{prop:app-semigroup-law:sig:article-stochastic-raj-dahya}
    For each $\lambda\in\realsPos$,
    the family
        $(\DistPoissAux{t}{\lambda})_{t\in\realsNonNeg}$
    of distributions of auxiliary Poisson processes
    constitutes a continuous semigroup of distributions.
\end{prop}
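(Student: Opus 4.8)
The plan is to derive all three defining properties of a continuous semigroup of distributions directly from the explicit characteristic function obtained in \Cref{prop:characteristic-function:aux-poisson-process:sig:article-stochastic-raj-dahya}, namely $\phi_{t,\lambda}(\omega) = e^{\frac{\iunit\omega}{\lambda - \iunit\omega}\lambda t}$ for $\omega\in\reals$, $t\in\realsNonNeg$. Since a probability distribution on $\reals$ is uniquely determined by its characteristic function (\cf \cite{Klenke2008probTheory}), each of the three axioms reduces to an elementary identity or limit for $\phi_{t,\lambda}$, so almost no further work is needed.

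For axiom~(i) I would observe that $\phi_{0,\lambda} \equiv 1$, which is the characteristic function of the point distribution $\delta_{0}$; alternatively, directly from the construction $N_{0} = 0$ almost surely, hence $\theta_{0} = \tau_{<0} = 0$ almost surely. For axiom~(ii), given independent $\theta_{1} \distributedAs \DistPoissAux{s}{\lambda}$ and $\theta_{2} \distributedAs \DistPoissAux{t}{\lambda}$, the characteristic function of $\theta_{1} + \theta_{2}$ is the product $\phi_{s,\lambda}\phi_{t,\lambda}$, and adding exponents gives $\phi_{s,\lambda}(\omega)\phi_{t,\lambda}(\omega) = e^{\frac{\iunit\omega}{\lambda - \iunit\omega}\lambda(s+t)} = \phi_{s+t,\lambda}(\omega)$; hence $\theta_{1} + \theta_{2} \distributedAs \DistPoissAux{s+t}{\lambda}$ by uniqueness. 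For axiom~(iii), as $\realsPos \ni t \longrightarrow 0$ one has $\phi_{t,\lambda}(\omega) \longrightarrow 1$ for every fixed $\omega \in \reals$; since $\omega \mapsto 1$ is the characteristic function of $\delta_{0}$ and is continuous at the origin, L\'evy's continuity theorem (\cf \cite{Klenke2008probTheory}) yields $\DistPoissAux{t}{\lambda} \longrightarrow \delta_{0}$ weakly, which is exactly the stated convergence $\Expected[f(\theta_{t})] \longrightarrow f(0)$ for all bounded continuous $f \in \Cts{\realsNonNeg}$.

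There is essentially no hard step once \Cref{prop:characteristic-function:aux-poisson-process:sig:article-stochastic-raj-dahya} is available; the only point that needs a word of care is that axiom~(iii) is a genuine weak-convergence statement, so one must invoke L\'evy's continuity theorem rather than merely noting pointwise convergence of characteristic functions — but the hypothesis of that theorem (continuity of the limit at $0$) holds trivially here. As an alternative for axiom~(ii), one could argue directly from the two-Poisson-process construction by splitting the first process over $[0,s)$ and $[s,s+t)$ and relabelling the $\tau_{i}$ accordingly; checking that the resulting second block of $\tau_{i}$ is independent of $\theta_{s}$ and is again $\DistPoissAux{t}{\lambda}$-distributed is the only mildly delicate bookkeeping on that route, and the characteristic-function argument avoids it entirely.
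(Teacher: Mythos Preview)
Your proposal is correct. For axioms~(i) and~(ii) you do exactly what the paper does: axiom~(i) via the construction $N_{0}=0$ \almostSurely (you also mention the characteristic-function route, which is equally valid), and axiom~(ii) via the multiplicativity of characteristic functions together with \Cref{prop:characteristic-function:aux-poisson-process:sig:article-stochastic-raj-dahya}.

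The only genuine difference is axiom~(iii). You invoke L\'evy's continuity theorem, observing that $\phi_{t,\lambda}(\omega) \to 1$ pointwise and that the limit is continuous at $0$. The paper instead argues directly from the construction: conditioning on $N_{t}$ and using $\Prob[N_{t}=0]=e^{-\lambda t}\to 1$, it bounds $|\Expected[f(\theta_{t})]-f(0)|$ by $(1-e^{-\lambda t})(|f(0)|+\norm{f}_{\infty})$. Your route is slicker and keeps the whole proof uniformly at the level of characteristic functions, at the cost of importing a nontrivial theorem; the paper's route is more elementary and self-contained, exploiting the specific two-layer Poisson structure rather than treating the distribution as a black box determined by $\phi_{t,\lambda}$. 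Both are perfectly adequate here.
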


    \begin{proof}
        If $\theta \distributedAs \DistPoissAux{0}{\lambda}$,
        then by the above construction,
            $\theta = \tau_{< N_{0}} = \tau_{< 0} = 0$ \almostSurely.
        We now show for independent \randomvar's
            $t_{1},t_{2} \in \realsNonNeg$
        with
            ${\theta_{1} \distributedAs \DistPoissAux{t_{1}}{\lambda}}$
            and
            ${\theta_{2} \distributedAs \DistPoissAux{t_{2}}{\lambda}}$,
        that
            ${\theta_{1} + \theta_{2} \distributedAs \DistPoissAux{t_{1} + t_{2}}{\lambda}}$.
        To this end let
            $\phi_{\theta_{1}}$, $\phi_{\theta_{2}}$, and $\phi_{\theta_{1} + \theta_{2}}$
        denote the characteristic functions of
            $\theta_{1}$, $\theta_{2}$, and  $\theta_{1}+\theta_{2}$
        respectively.
        By \Cref{prop:characteristic-function:aux-poisson-process:sig:article-stochastic-raj-dahya},
        it holds that
            $\phi_{\theta_{1}} = \phi_{t_{1},\lambda}$
            and
            $\phi_{\theta_{2}} = \phi_{t_{2},\lambda}$
        and by independence
        (\cf \exempli \cite[Lemma~15.11]{Klenke2008probTheory})
            $
                \phi_{\theta_{1} + \theta_{2}}(\omega)
                = \phi_{\theta_{1}}(\omega) \cdot \phi_{\theta_{2}}(\omega)
                = \phi_{t_{1},\lambda}(\omega) \cdot \phi_{t_{2},\lambda}(\omega)
                = e^{\frac{\iunit\omega}{\lambda - \iunit\omega} \lambda t_{1}}
                    e^{\frac{\iunit\omega}{\lambda - \iunit\omega} \lambda t_{2}}
                = \phi_{t_{1}+t_{2},\lambda}(\omega)
            $
        for all $\omega\in\reals$.
        Since the characteristic function uniquely determines the distribution of a \randomvar
        (\cf \cite[Theorem~15.8]{Klenke2008probTheory}),
        it follows by \Cref{prop:characteristic-function:aux-poisson-process:sig:article-stochastic-raj-dahya}
        that $\theta_{1} + \theta_{2} \distributedAs \DistPoissAux{t_{1} + t_{2}}{\lambda}$.

        Towards continuity at $0$, let $f\in\Cts{\realsNonNeg}$ be an arbitrary bounded function.
        Let $t\in\realsNonNeg$ and consider ${\theta_{t} \distributedAs \DistPoissAux{t}{\lambda}}$.
        Without loss of generality, assume that $\theta_{t}$ is constructed as above,
        \idest $\theta_{t} = \tau_{<N_{t}}$.
        We need to show that
            ${\Expected[f(\theta_{t})] \longrightarrow f(0)}$
        for ${\realsPos \ni t \longrightarrow 0}$.
        For $n \in \naturalsPos$
        one has
            $
                \abs{\Expected[f(\tau_{n})]}
                \leq \Expected[\abs{f(\tau_{n})}]
                \leq \norm{f}_{\infty}
            $
        and for $n=0$ one has
            $\Expected[f(\tau_{0})] = \Expected[f(0)] = f(0)$.
        Thus

            \noparskip
            \begin{eqnarray*}
                \absLong{\Expected[f(\theta_{t})] - f(0)}
                &= &\absLong{\Expected[\Expected[f(\theta_{t}) \mid N_{t}]] - f(0)}\\
                &= &\absLong{
                        \displaystyle
                        \sum_{n=0}^{\infty}
                            \Prob[N_{t}=n]
                            \,\Expected[f(\theta_{t}) \mid N_{t}=n]
                        -
                        f(0)
                    }\\
                &= &\absLong{
                        \displaystyle
                        \sum_{n=0}^{\infty}
                            \Prob[N_{t}=n]
                            \underbrace{
                                \Expected[f(\tau_{n})]
                            }_{=f(0)~\text{for $n=0$}}
                        -
                        f(0)
                    }\\
                &\leq &(1 - \Prob[N_{t}=0])\abs{f(0)}
                    + \displaystyle
                    \sum_{n=1}^{\infty}
                        \Prob[N_{t}=n]
                        \norm{f}_{\infty}\\
                &= &(1 - \Prob[N_{t}=0])(\abs{f(0)} + \norm{f}_{\infty})\\
                &= &(1 - e^{-\lambda t})(\abs{f(0)} + \norm{f}_{\infty})\\
                &\longrightarrow &0\\
            \end{eqnarray*}

        \continueparagraph
        for ${\realsPos \ni t \longrightarrow 0}$.
    \end{proof}



\subsection[Approximants as expectations]{Approximants as expectations}
\label{sec:stochastic:one-parameter:sig:article-stochastic-raj-dahya}

\firstparagraph
Our goal is to now associate continuous semigroups of distributions
to $\Cnought$-semigroups of operators via the machinery of expectations
computed strongly via Bochner-integrals.

\begin{defn}[Expectation-approximants]
\makelabel{defn:expectation-approximants:sig:article-stochastic-raj-dahya}
    Let $\Lambda$ be a directed index set
    and ${\Gamma^{(\alpha)} \colonequals (\Gamma^{(\alpha)}(t))_{t \in \realsNonNeg}}$
    be a continuous semigroup of distributions
    for each $\alpha\in \Lambda$.
    Further let $T$ be a contractive $\Cnought$-semigroup on a Banach space $\BanachRaum$
    and define

        \noparskip
        $$
            T^{(\alpha)} \colonequals (\Expected[T(\theta^{(\alpha)}_{t})])_{t\in\realsNonNeg}
        $$

    \continueparagraph
    for each $\alpha \in \Lambda$,
    where ${\theta^{(\alpha)}_{t} \distributedAs \Gamma^{(\alpha)}(t)}$.
    Furthermore, letting
        $\mu^{(\alpha)}_{t}$ and $(\sigma^{(\alpha)}_{t})^{2}$
    denote the mean and variance of $\Gamma^{(\alpha)}(t)$-distributed \randomvar's
    respectively, suppose that

        \noparskip
        $$
            \mu^{(\alpha)}_{t}
                \underset{\alpha}{\longrightarrow} t
            \quad\text{and}\quad
            (\sigma^{(\alpha)}_{t})^{2}
                \underset{\alpha}{\longrightarrow} 0
        $$

    \continueparagraph
    uniformly in $t$ on compact subsets of $\realsNonNeg$.
    In this case, we say that
        $(T^{(\alpha)})_{\alpha\in\Lambda}$
    is a net of \highlightTerm{expectation-approximants}
    with \highlightTerm{associated distribution semigroups}
        $(\Gamma^{(\alpha)})_{\alpha\in\Lambda}$.
    We furthermore refer to $T$ as the \highlightTerm{original semigroup}.
\end{defn}

Our motivation in using expectation-approximants
is that they can be expressed in terms of the original $\Cnought$-semigroup.
Before we proceed with using this broad definition,
we provide some concrete examples.
In particular, we show that the Hille- and Yosida-approximants
satisfy the above definition.

\begin{schattierteboxdunn}[backgroundcolor=leer,nobreak=true]
\begin{lemm}[Hille- and Yosida-approximants as expectations, Chung 1962]
\makelabel{lemm:classical-examples-expectation-approximants:sig:article-stochastic-raj-dahya}
    Let $T$ be a contractive $\Cnought$-semigroup on a Banach space $\BanachRaum$
    with generator $A$.
    The Hille- and Yosida-approximants of $T$
    constitute expectation-approximants.
\end{lemm}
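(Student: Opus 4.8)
The plan is to produce, for each of the two families, an explicit continuous semigroup of distributions $(\Gamma^{(\lambda)})_{\lambda}$ whose expectation against $T$ reproduces the approximant, and then to read off the mean/variance requirements of \Cref{defn:expectation-approximants:sig:article-stochastic-raj-dahya} from the moment computations already at hand. I would treat the \emph{Hille-approximants} first. Fixing $\lambda\in\realsPos$ (the index set being $\realsPos$ directed by increasing $\lambda$) and expanding the exponential of the bounded operator $A^{(\lambda)} = \lambda(T(\tfrac{1}{\lambda})-\onematrix)$ from \eqref{eq:hille-approx-generator:sig:article-stochastic-raj-dahya},
$$
e^{tA^{(\lambda)}}
= e^{-\lambda t}\,e^{\lambda t\,T(1/\lambda)}
= e^{-\lambda t}\sum_{n=0}^{\infty}\frac{(\lambda t)^{n}}{n!}\,T(n/\lambda),
$$
the rearrangement being legitimate since $\norm{T(1/\lambda)}\le 1$ forces all series to converge in operator norm. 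As $\Prob[\theta = n/\lambda] = e^{-\lambda t}(\lambda t)^{n}/n!$ precisely when $\theta\distributedAs\DistPoissScale{\lambda t}{1/\lambda}$, the right-hand side equals $\Expected[T(\theta_{t})]$ with $\theta_{t}\distributedAs\DistPoissScale{\lambda t}{1/\lambda}$; hence the associated distribution semigroup is $\Gamma^{(\lambda)}(t) = \DistPoissScale{\lambda t}{1/\lambda}$, one of the scaled-Poisson examples of a continuous semigroup of distributions noted in \S{}\ref{sec:stochastic:distributions:sig:article-stochastic-raj-dahya}. It is $\realsNonNeg$-valued, its mean is exactly $t$, and its variance is $t/\lambda$, which tends to $0$ uniformly for $t$ in any compact set as $\realsPos\ni\lambda\to\infty$, as the definition demands.

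For the \emph{Yosida-approximants} I would argue analogously. Contractivity gives $\growthRate{T}\le 0$, so every $\lambda\in\realsPos$ lies in $(\growthRate{T},\infty)$, the resolvent is defined with $\norm{\opResolvent{A}{\lambda}}\le 1/\lambda$, and the Laplace representation yields $\opResolvent{A}{\lambda}^{n} = \tfrac{1}{(n-1)!}\int_{0}^{\infty}s^{n-1}e^{-\lambda s}\,T(s)\,\dee s$ for $n\ge 1$ as a Bochner integral. Using $A^{(\lambda)} = \lambda^{2}\opResolvent{A}{\lambda} - \lambda\onematrix$ from \eqref{eq:yosida-approx-generator:sig:article-stochastic-raj-dahya} and expanding,
$$
e^{tA^{(\lambda)}}
= e^{-\lambda t}\,e^{\lambda^{2}t\,\opResolvent{A}{\lambda}}
= e^{-\lambda t}\onematrix + e^{-\lambda t}\sum_{n=1}^{\infty}\frac{\lambda^{2n}t^{n}}{n!\,(n-1)!}\int_{0}^{\infty}s^{n-1}e^{-\lambda s}\,T(s)\,\dee s.
$$
Comparing this term by term with the construction of $\theta_{t}\distributedAs\DistPoissAux{t}{\lambda}$ in \S{}\ref{sec:stochastic:distributions:sig:article-stochastic-raj-dahya} — an atom of mass $\Prob[N_{t}=0] = e^{-\lambda t}$ at $0$, and, conditioned on $N_{t}=n\ge 1$, the density $\lambda(\lambda s)^{n-1}e^{-\lambda s}/(n-1)!$ of $\tau_{<n}$ — one sees that the right-hand side is exactly $\Expected[T(\theta_{t})]$. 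Thus the associated distribution semigroup is $\Gamma^{(\lambda)}(t) = \DistPoissAux{t}{\lambda}$, which is a continuous semigroup of distributions by \Cref{prop:app-semigroup-law:sig:article-stochastic-raj-dahya}, and by \Cref{prop:characteristic-function:aux-poisson-process:sig:article-stochastic-raj-dahya} its mean is $t$ and its variance $2t/\lambda$, again vanishing uniformly on compacts as $\lambda\to\infty$.

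The one step requiring genuine care — and the part I expect to be the main obstacle — is justifying that $\Expected[T(\theta_{t})]$, which is by definition computed as a $\topSOT$-Bochner integral of $T$ against the law $\Prob_{\theta_{t}}$, may be evaluated by conditioning on $N_{t}$ and then, within each summand, interchanged with the Bochner integral defining $\opResolvent{A}{\lambda}^{n}$, as well as that the operator-norm-convergent exponential series for $e^{tA^{(\lambda)}}$ may be rearranged against these integrals. Contractivity does all the work here: since $\norm{T(\cdot)}\le 1$, every series is absolutely convergent in operator norm and dominates the relevant integrands, so Fubini's theorem for Bochner integrals and dominated convergence (as cited in \S{}\ref{sec:introduction:notation:sig:article-stochastic-raj-dahya}) license both manipulations and the displayed equalities hold strongly. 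With the two identifications in place, both defining conditions of \Cref{defn:expectation-approximants:sig:article-stochastic-raj-dahya} have been verified for the Hille- and Yosida-approximants, which is the assertion.
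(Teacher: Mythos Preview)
Your proof is correct and follows essentially the same route as the paper: scaled Poisson distributions $\DistPoissScale{\lambda t}{1/\lambda}$ for the Hille-approximants, the auxiliary Poisson distributions $\DistPoissAux{t}{\lambda}$ for the Yosida-approximants, and the moment computations from \Cref{prop:characteristic-function:aux-poisson-process:sig:article-stochastic-raj-dahya} and \Cref{prop:app-semigroup-law:sig:article-stochastic-raj-dahya}. The only cosmetic difference is that the paper packages the Yosida computation via the intermediate identity $(\lambda\opResolvent{A}{\lambda})^{n}=\Expected[T(\tau_{<n})]$ before summing against $\Prob[N_{t}=n]$, whereas you expand $e^{\lambda^{2}t\,\opResolvent{A}{\lambda}}$ directly and match terms with the conditional density of $\tau_{<n}$; the underlying manipulation is identical.
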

\end{schattierteboxdunn}

For our purposes, it will suffice to assume that $T$ is contractive,
although this result holds without this restriction.
This result can be attributed to Chung
(see \cite[Theorem~1 and Theorem~4]{Chung1962exp}).
For completeness and the reader's convenience, we provide a proof.

    \def\beweislabel{lemm:classical-examples-expectation-approximants:sig:article-stochastic-raj-dahya}
    \begin{proof}[of \Cref{\beweislabel}]
        \paragraph{Hille-approximants:}
            The Hille-approximants are given by the net
                $(T^{(\lambda)})_{\lambda\in\realsPos}$,
            ordered by increasing values of $\lambda$,
            where
                $T^{(\lambda)} = (e^{t A^{(\lambda)}})_{t\in\realsNonNeg}$
                and
                $A^{(\lambda)} = \lambda (T(\tfrac{1}{\lambda}) - \onematrix)$
            for each $\lambda\in\realsPos$.
            For $\lambda\in\realsPos$ the parameterised distributions
                ${
                    \Gamma^{(\lambda)}
                    \colonequals
                    (\DistPoissScale{\lambda t}{\tfrac{1}{\lambda}})_{t\in\realsNonNeg}
                }$
            form a continuous semigroup of distributions (see \S{}\ref{sec:stochastic:distributions:sig:article-stochastic-raj-dahya}),
            with means
                ${\mu^{(\lambda)}_{t} = \frac{1}{\lambda} \cdot \lambda t = t}$
            and variances
                ${
                    (\sigma^{(\lambda)}_{t})^{2}
                    = \frac{1}{\lambda^{2}} \cdot \lambda t
                    = \frac{t}{\lambda}
                    \underset{\lambda}{\longrightarrow} 0
                }$
            for ${\realsPos \ni \lambda \longrightarrow \infty}$,
            and this convergence is clearly uniform in $t$
            on compact subsets of $\realsNonNeg$.
            To satisfy the definition of a net of expectation-approximants,
            it thus suffices to show for
                $t\in\realsNonNeg$,
                $\lambda\in\realsPos$,
                and
                $\theta \distributedAs \DistPoissScale{\lambda t}{\tfrac{1}{\lambda}}$
            that
                $T^{(\lambda)}(t) = \Expected[T(\theta)]$.
            Indeed
                $
                    T^{(\lambda)}(t)
                    = e^{t A^{(\lambda)}}
                    = e^{\lambda t (T(\tfrac{1}{\lambda}) - \onematrix)}
                    = e^{-\lambda t}e^{\lambda t \, T(\tfrac{1}{\lambda})}
                $
            and

                \noparskip
                $$
                    e^{-\lambda t}e^{\lambda t \, T(\tfrac{1}{\lambda})}
                    = \sum_{n=0}^{\infty}
                        \frac{e^{-\lambda t}(\lambda t)^{n}}{n!}
                        \,T(\tfrac{1}{\lambda})^{n}
                    = \sum_{n=0}^{\infty}
                        \Prob[\theta=\tfrac{1}{\lambda}n]\,T(\tfrac{1}{\lambda}n)
                    = \Expected[T(\theta)].
                $$

            \continueparagraph
            Hence the net of Hille-approximants for $T$
            forms a net of expectation-approximants for $T$.

        \paragraph{Yosida-approximants:}
            The Yosida-approximants are given by the net
                $(T^{(\lambda)})_{\lambda\in(\omega_{0}(T)^{+},\:\infty)}$,
            ordered by increasing values of $\lambda$,
            where
                $T^{(\lambda)} = (e^{t A^{(\lambda)}})_{t\in\realsNonNeg}$
                and
                ${
                    A^{(\lambda)}
                    = \lambda \cdot (
                        \lambda \opResolvent{A}{\lambda}
                        - \onematrix
                    )
                }$
            for each $\lambda\in(\omega_{0}(T)^{+},\:\infty)$.
            For $\lambda\in(\omega_{0}(T)^{+},\:\infty)$
            the parameterised distributions
                ${
                    \Gamma^{(\lambda)}
                    \colonequals
                    (\DistPoissAux{t}{\lambda})_{t\in\realsNonNeg}
                }$
            form a continuous semigroup of distributions
            (see \Cref{prop:app-semigroup-law:sig:article-stochastic-raj-dahya}),
            with means
                ${\mu^{(\lambda)}_{t} = \frac{1}{\lambda} \cdot \lambda t = t}$
            and variances
                ${
                    (\sigma^{(\lambda)}_{t})^{2}
                    = \frac{2t}{\lambda}
                    \underset{\lambda}{\longrightarrow} 0
                }$
            for ${(\omega_{0}(T)^{+},\:\infty) \ni \lambda \longrightarrow \infty}$
            (see \Cref{prop:characteristic-function:aux-poisson-process:sig:article-stochastic-raj-dahya}),
            and this convergence is clearly uniform in $t$
            on compact subsets of $\realsNonNeg$.
            To demonstrate that
                $(T^{(\lambda)})_{\lambda\in(\omega_{0}(T)^{+},\:\infty)}$
            forms a net of expectation-approximants for $T$,
            it thus suffices to show for
                $t\in\realsNonNeg$,
                $\lambda\in(\omega_{0}(T)^{+},\:\infty)$,
            and
                ${\theta \distributedAs \DistPoissAux{t}{\lambda}}$
            that
                $T^{(\lambda)}(t) = \Expected[T(\theta)]$.

            By the construction of auxiliary Poisson processes
            in \S{}\ref{sec:stochastic:distributions:sig:article-stochastic-raj-dahya},
            it suffices to prove the claim concretely for
                ${\theta \colonequals \theta_{t} = \tau_{<N_{t}}}$.
            For $n\in\naturalsPos$, the \emph{Phillips calculus} applied to $A$
            (see \exempli
                \cite[Lemma~VIII.1.12~{[$\ast$]}]{Dunfordschwartz1988BookLinOpI},
                \cite[Proposition~3.3.5]{Reissig2005abstractresolvent}%
            )
            yields

                \noparskip
                \begin{eqnarray*}
                    (\onematrix + \lambda^{-1}A^{(\lambda)})^{n}
                        &= &(\lambda \opResolvent{A}{\lambda})^{n}\\
                        &= &\lambda^{n}
                            \cdot
                            \displaystyle
                            \sotInt_{s=0}^{\infty}
                                \frac{s^{n-1}}{(n-1)!}
                                e^{-\lambda s}
                                T(s)
                                \:\dee s\\
                        &= &\displaystyle
                            \sotInt_{s=0}^{\infty}
                                T(s)
                                \:
                                \underbrace{
                                    \lambda
                                    \frac{(\lambda s)^{n-1}}{(n-1)!}
                                    e^{-\lambda s}
                                    \cdot
                                    \:\dee s
                                }_{=\Prob_{\tau_{<n}}(\dee s)},\\
                        &= &\displaystyle
                            \sotInt_{s \in \realsNonNeg}
                                T(s)
                                \:\Prob_{\tau_{<n}}(\dee s)\\
                        &\textoverset{defn}{=}
                            &\Expected[T(\tau_{<n})].\\
                \end{eqnarray*}

            \continueparagraph
            Moreover, since $\tau_{<0} = 0$ \almostSurely,
            one has $T(\tau_{<0}) = \onematrix$ \almostSurely
            and thus
                $
                    \Expected[T(\tau_{<0})]
                    = \onematrix
                    = (\onematrix + \lambda^{-1}A^{(\lambda)})^{0}
                $,
            using the convention of $S^{0} \colonequals \onematrix$
            for $S \in \BoundedOps{\BanachRaum}$.
            Thus

                \noparskip
                \begin{equation}
                \label{eq:1:\beweislabel}
                    (\onematrix + \lambda^{-1}A^{(\lambda)})^{n}
                    = \Expected[T(\tau_{<n})]
                \end{equation}

            \continueparagraph
            for all $n\in\naturalsZero$
            (\cf
                \cite[Lemma~2]{Chung1962exp}%
            ).
            Let $t\in\realsNonNeg$ be arbitrary.
            By the boundedness of $A^{(\lambda)}$
            one has
                $
                    T^{(\lambda)}(t)
                    = e^{t A^{(\lambda)}}
                    = e^{\lambda t (-\onematrix + (\onematrix + \lambda^{-1} A^{(\lambda)}))}
                    = e^{-\lambda t} e^{\lambda t (\onematrix + \lambda^{-1} A^{(\lambda)}))}
                $
            and

                \noparskip
                \begin{eqnarray*}
                    e^{-\lambda t} e^{\lambda t (\onematrix + \lambda^{-1} A^{(\lambda)}))}
                        &= &e^{-\lambda t}
                            \displaystyle
                            \sum_{n=0}^{\infty}
                                \frac{(\lambda t)^{n}}{n!}
                                \underbrace{
                                    (\onematrix + \lambda^{-1} A^{(\lambda)})^{n}
                                }_{\text{%
                                    $\colonequals\onematrix$ for $n=0$%
                                }}\\
                        &\eqcrefoverset{eq:1:\beweislabel}{=}
                        &\displaystyle
                            \sum_{n=0}^{\infty}
                                \underbrace{
                                    e^{-\lambda t}
                                    \frac{(\lambda t)^{n}}{n!}
                                }_{=\Prob[N_{t}=n]}
                                \Expected[T(\tau_{<n})]\\
                        &= &\Expected[T(\tau_{<N_{t}})].\\
                \end{eqnarray*}

            \continueparagraph
            Hence
                ${T^{(\lambda)}(t) = \Expected[T(\tau_{<N_{t}})] = \Expected[T(\theta)]}$.
    \end{proof}

\begin{rem}
    The setup of the auxiliary Poisson processes
    used to demonstrate that Yosida-approximants are expectation-approximants
    in \Cref{lemm:classical-examples-expectation-approximants:sig:article-stochastic-raj-dahya}
    bears some resemblance to \emph{expectation semigroups} in \emph{random evolutions}.
    Note that a random evolution involves a \emph{single Markov-process}:
    both
        for the number of events (referred to as \usesinglequotes{jumps})
        that occur in a time interval $[0,\:t)$,
    as well as
        for the durations between each jump used with the semigroup(s),
    which accumulate to a total duration
    which necessarily lies in $[0,\:t)$
    (\cf \cite[\S{}II.15.7]{Goldstein1985semigroups}).
    By contrast, the construction in \S{}\ref{sec:stochastic:distributions:sig:article-stochastic-raj-dahya}
    involves two independent processes:
        $N_{t}$ counts up the number of events that occur in $[0,\:t)$
        for the first PPP,
    whereas the total duration,
        $\theta_{t} = \sum_{i=0}^{N_{t}}\tau_{i}$,
    used as inputs to the semigroup,
    is determined by a second independent PPP
    and may well lie outside $[0,\:t)$.
\end{rem}

We now observe basic facts about expectation-approximants,
which provide alternative unified proofs of facts
we already knew about Hille- and Yosida-approximants
(\cf \S{}\ref{sec:introduction:results:sig:article-stochastic-raj-dahya}).
We first demonstrate that expectation-approximants
are themselves bona fide $\Cnought$-semigroups
and that the nets
approximate the original $\Cnought$-semigroup.
This result is likely well known (\cf \exempli \cite[Lemma~1]{Chung1962exp}).
For the sake of completeness
and the reader's convenience we present the proofs.

\begin{prop}
\makelabel{prop:expectation-approximants:basic:c0-semigroups-converege:sig:article-stochastic-raj-dahya}
    Let $T$ be a contractive $\Cnought$-semigroup on a Banach space $\BanachRaum$
    and
        $(T^{(\alpha)})_{\alpha \in \Lambda}$
    a net of expectation-approximants for $T$.
    Then
        $T^{(\alpha)}$ is a contractive $\Cnought$-semigroup
    for each $\alpha\in\Lambda$.
    Moreover,
        ${
            T^{(\alpha)}(t) \underset{\alpha}{\longrightarrow} T(t)
        }$
    \wrt the \topSOT-topology
    uniformly in $t$
    on compact subsets of $\realsNonNeg$.
\end{prop}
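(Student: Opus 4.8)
The plan is to read off all of the asserted properties directly from the defining formula $T^{(\alpha)}(t) = \Expected[T(\theta^{(\alpha)}_{t})]$ (a strong Bochner expectation, with $\theta^{(\alpha)}_{t} \distributedAs \Gamma^{(\alpha)}(t)$), using repeatedly that $\norm{\Expected[F]} \leq \Expected[\norm{F}]$ for Bochner integrals and that $T$ is contractive. For contractivity I would simply estimate $\norm{T^{(\alpha)}(t)\xi} \leq \Expected[\norm{T(\theta^{(\alpha)}_{t})\xi}] \leq \Expected[\norm{\xi}] = \norm{\xi}$ for all $\xi\in\BanachRaum$, $t\in\realsNonNeg$. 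For the semigroup law, $\Gamma^{(\alpha)}(0) = \delta_{0}$ gives $T^{(\alpha)}(0) = \Expected[T(0)] = \onematrix$; and for $s,t\in\realsNonNeg$ I would pick independent $\theta_{1}\distributedAs\Gamma^{(\alpha)}(s)$ and $\theta_{2}\distributedAs\Gamma^{(\alpha)}(t)$, observe that $\theta_{1}+\theta_{2}\distributedAs\Gamma^{(\alpha)}(s+t)$ by the additivity axiom of a continuous semigroup of distributions (see \Cref{sec:stochastic:distributions:sig:article-stochastic-raj-dahya}), and conclude, using that the strong expectation depends only on the distribution together with the semigroup identity $T(a+b) = T(a)T(b)$ for $a,b\in\realsNonNeg$, that $T^{(\alpha)}(s+t) = \Expected[T(\theta_{1})T(\theta_{2})]$. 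The factorisation $\Expected[T(\theta_{1})T(\theta_{2})] = \Expected[T(\theta_{1})]\Expected[T(\theta_{2})]$ for independent $\realsNonNeg$-valued random variables and contractive semigroups (recalled in \Cref{sec:introduction:notation:sig:article-stochastic-raj-dahya}) then yields $T^{(\alpha)}(s+t) = T^{(\alpha)}(s)T^{(\alpha)}(t)$.

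For strong continuity I would fix $\xi\in\BanachRaum$ and set $h(s) \colonequals \norm{T(s)\xi - \xi}$, which is continuous in $s$ and bounded by $2\norm{\xi}$, hence a bounded continuous function on $\realsNonNeg$. Since $\norm{T^{(\alpha)}(t)\xi - \xi} = \norm{\Expected[T(\theta^{(\alpha)}_{t})\xi - \xi]} \leq \Expected[h(\theta^{(\alpha)}_{t})]$, and $\Gamma^{(\alpha)}(t)\to\delta_{0}$ weakly as $\realsPos\ni t\to 0$ (the continuity axiom in the definition of a continuous semigroup of distributions), testing against $h$ gives $\Expected[h(\theta^{(\alpha)}_{t})]\to h(0) = 0$. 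Thus $T^{(\alpha)}(t)\xi\to\xi$ in norm as $t\to 0^{+}$; combined with the semigroup law and contractivity (writing $T^{(\alpha)}(t)\xi - T^{(\alpha)}(t_{0})\xi = T^{(\alpha)}(t_{0})(T^{(\alpha)}(t-t_{0})\xi - \xi)$ for $t > t_{0}$, and symmetrically for $t < t_{0}$) this upgrades to strong continuity at every $t\in\realsNonNeg$. Hence each $T^{(\alpha)}$ is a contractive $\Cnought$-semigroup.

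For the uniform convergence on compacts, fix $\xi\in\BanachRaum$ and compact $L\subseteq\realsNonNeg$, and keep $h$ as above. The key pointwise estimate I would use is $\norm{T(s)\xi - T(t)\xi} \leq h(\abs{s-t})$ for all $s,t\in\realsNonNeg$, obtained by writing (for $s\geq t$) $T(s)\xi - T(t)\xi = T(t)\,(T(s-t)\xi - \xi)$ and using $\norm{T(t)}\leq 1$, and symmetrically for $t\geq s$. This gives $\norm{T^{(\alpha)}(t)\xi - T(t)\xi} = \norm{\Expected[T(\theta^{(\alpha)}_{t})\xi - T(t)\xi]} \leq \Expected[h(\abs{\theta^{(\alpha)}_{t} - t})]$. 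Given $\eps>0$, I would pick $\delta>0$ with $h(u)<\eps/2$ on $[0,\delta]$ and split the expectation over $\{\abs{\theta^{(\alpha)}_{t}-t}<\delta\}$ and its complement, obtaining $\Expected[h(\abs{\theta^{(\alpha)}_{t}-t})] \leq \eps/2 + 2\norm{\xi}\,\Prob[\abs{\theta^{(\alpha)}_{t}-t}\geq\delta]$. Finally I would bound the tail uniformly in $t\in L$: for $\alpha$ large enough that $\sup_{t\in L}\abs{\mu^{(\alpha)}_{t}-t}<\delta/2$ one has $\{\abs{\theta^{(\alpha)}_{t}-t}\geq\delta\}\subseteq\{\abs{\theta^{(\alpha)}_{t}-\mu^{(\alpha)}_{t}}\geq\delta/2\}$, so Chebyshev's inequality gives $\sup_{t\in L}\Prob[\abs{\theta^{(\alpha)}_{t}-t}\geq\delta] \leq \tfrac{4}{\delta^{2}}\sup_{t\in L}(\sigma^{(\alpha)}_{t})^{2} \to 0$, using that the variances tend to $0$ uniformly on $L$. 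Hence $\sup_{t\in L}\norm{T^{(\alpha)}(t)\xi - T(t)\xi}\to 0$.

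The only non-routine point is this final uniformity: the hypothesis that $\mu^{(\alpha)}_{t}\to t$ and $(\sigma^{(\alpha)}_{t})^{2}\to 0$ hold \emph{uniformly on compact subsets of $\realsNonNeg$} (rather than merely pointwise in $t$) is precisely what makes the Chebyshev tail estimate uniform over $t\in L$, and hence is the load-bearing part of \Cref{defn:expectation-approximants:sig:article-stochastic-raj-dahya}. Everything else is bookkeeping with Bochner integrals and elementary first/second-moment estimates.
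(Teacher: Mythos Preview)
Your proof is correct and follows the same approach as the paper: contractivity and the semigroup law via the Bochner triangle inequality and the axioms for $\Gamma^{(\alpha)}$, strong continuity via weak convergence $\Gamma^{(\alpha)}(t)\to\delta_{0}$, and uniform \topSOT-convergence via a Chebyshev tail bound driven by the uniform mean/variance hypotheses. Your approximation step is in fact slightly more streamlined than the paper's---by using the contractivity estimate $\norm{T(s)\xi - T(t)\xi} \leq h(\abs{s-t})$ you avoid the paper's intermediate passage through $T(\mu^{(\alpha)}_{t})$ and its uniform-continuity-on-$[0,a+1]$ argument---but the underlying mechanism is identical.
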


    \begin{proof}
        Let $(\Gamma^{(\alpha)})_{\alpha\in\Lambda}$
        be the distribution semigroups associated with
        the net of expectation-approximants.
        Further let
            ${\theta^{(\alpha)}_{t} \distributedAs \Gamma^{(\alpha)}(t)}$
        and let
            $\mu^{(\alpha)}_{t}$ and $(\sigma^{(\alpha)}_{t})^{2}$
            be the mean and variance of $\theta^{(\alpha)}_{t}$ respectively
        for each $\alpha \in \Lambda$ and $t\in\realsNonNeg$.

        \paragraph{$\Cnought$-semigroup:}
            Let $\alpha\in \Lambda$ be arbitrary.
            First observe that
                $
                    \norm{T^{(\alpha)}(t)}
                    = \norm{\Expected[T(\theta^{(\alpha)}_{t})]}
                    \leq \Expected[\norm{T(\theta^{(\alpha)}_{t})}]
                    \leq 1
                $
            for each $t\in\realsNonNeg$,
            whence each expectation-approximant is contractive.
            Towards the semigroup law,
            by definition of continuous semigroups of distributions,
                $
                    T^{(\alpha)}(0)
                    = \Expected[T(\theta^{(\alpha)}_{0})]
                    = \Expected[T(0)]
                    = \onematrix
                $
            since $\theta^{(\alpha)}_{0} \distributedAs \delta_{0}$,
            and for each $s,t\in\realsNonNeg$,
            letting
                $\tau_{1} \distributedAs \theta^{(\alpha)}_{s} \distributedAs \Gamma^{(\alpha)}(s)$
                and
                $\tau_{2} \distributedAs \theta^{(\alpha)}_{t} \distributedAs \Gamma^{(\alpha)}(t)$
            be independent,
            one has
                $\tau_{1} + \tau_{2} \distributedAs \theta^{(\alpha)}_{s+t} \distributedAs \Gamma^{(\alpha)}(s+t)$,
            whence
                $
                    T^{(\alpha)}(s)T^{(\alpha)}(t)
                    = \Expected[T(\tau_{1})]\Expected[T(\tau_{2})]
                    = \Expected[T(\tau_{1})T(\tau_{2})]
                    = \Expected[T(\tau_{1}+\tau_{2})]
                    = \Expected[T(\theta^{(\alpha)}_{s+t})]
                    = T^{(\alpha)}(s+t)
                $.
            Towards \topSOT-continuity of $T^{(\alpha)}$,
            it suffices to prove continuity in $0$.
            To this end, let $\xi\in\BanachRaum$ and $\eps > 0$ be arbitrary.
            Since $T$ is \topSOT-continuous,
                ${
                    \sup_{s\in[0,\:\delta]}
                        \norm{(T(s) - \onematrix)\xi}
                    < \eps
                }$
            for some $\delta > 0$.
            Thus

                \noparskip
                \begin{eqnarray*}
                    \norm{(T^{(\alpha)}(t) - T^{(\alpha)}(0))\xi}
                    &= &\normLong{\Big(\Expected[T(\theta^{(\alpha)}_{t})] - \onematrix\Big)\xi}\\
                    &= &\normLong{
                            \displaystyle
                            \int_{s\in\realsNonNeg}
                                (T(s) - \onematrix)\xi
                            \:\Prob_{\theta^{(\alpha)}_{t}}(\dee s)
                        }\\
                    &\leq &\displaystyle
                        \int_{s\in\realsNonNeg}
                            \norm{(T(s) - \onematrix)\xi}
                        \:\Prob_{\theta^{(\alpha)}_{t}}(\dee s)\\
                    &= &\displaystyle
                        \int_{s\in[0,\:\delta]}
                            \underbrace{
                                \normLong{(T(s) - \onematrix)\xi}
                            }_{<\eps}
                        \:\Prob_{\theta^{(\alpha)}_{t}}(\dee s)
                        + \displaystyle
                        \int_{s\in(\delta,\:\infty)}
                            \underbrace{
                                \normLong{(T(s) - \onematrix)\xi}
                            }_{\leq 2\norm{\xi}}
                        \:\Prob_{\theta^{(\alpha)}_{t}}(\dee s)\\
                    &\leq &\eps\,\Prob_{\theta^{(\alpha)}_{t}}[[0,\:\delta]]
                        + 2\norm{\xi}\,\Prob_{\theta^{(\alpha)}_{t}}[(\delta,\:\infty)]\\
                \end{eqnarray*}

            \continueparagraph
            for all $t\in\realsNonNeg$.
            By the Portmanteau theorem (see \cite[Theorem~17.20~v)]{Kechris1995BookDST}) and since by definition
                ${\theta^{(\alpha)}_{t} \underset{t}{\longrightarrow} \delta_{0}}$
            and
                $
                    \delta_{0}(\quer{(\delta,\:\infty)} \without \topInterior{(\delta,\:\infty)}
                    = \delta_{0}(\{\delta\})
                    = 0
                $,
            one has
                ${
                    \Prob_{\theta^{(\alpha)}_{t}}[(\delta,\:\infty)]
                    \longrightarrow
                    \Prob_{\delta_{0}}[(\delta,\:\infty)]
                    = 0
                }$
            for ${\realsPos \ni t \longrightarrow 0}$.
            From the above inequality and since $\eps > 0$ was arbitrarily chosen,
            it follows that
                ${T^{(\alpha)}(t)\xi \longrightarrow T^{(\alpha)}(0)\xi}$
            for ${\realsPos \ni t \longrightarrow 0}$.
            So $T^{(\alpha)}$ constitutes a contractive $\Cnought$-semigroup.

        \paragraph{Approximation:}
            Fix an arbitrary
                $\xi\in\BanachRaum$
                and
                compact subset $L \subseteq \realsNonNeg$.
            Let $\eps > 0$ be arbitrary.
            Without loss of generality, we can assume that $L=[0,\:a]$
            for some $a\in\realsPos$.
            By the \topSOT-continuity of $T$ and compactness of $[0,\:a+1]$,
            there exists $\delta \in (0,\:1)$ such that

                \noparskip
                \begin{equation}
                \label{eq:1:\beweislabel}
                    \sup_{s \in U}
                        \norm{(T(s) - T(t))\xi}
                    < \eps
                \end{equation}

            \continueparagraph
            for all $t \in L$,
            where ${U \colonequals \realsNonNeg \cap (t-\delta,\:t+\delta)}$.
            Since, by definition of expectation-approximants,
                ${\mu^{(\alpha)}_{t} \underset{\alpha}{\longrightarrow} t}$
            uniformly for $t \in L$,
            by \eqcref{eq:1:\beweislabel} it follows that
                ${\sup_{t \in L}\norm{(T(\mu^{(\alpha)}_{t}) - T(t))\xi} \leq \eps}$
            for sufficiently large indices $\alpha \in \Lambda$.
            Furthermore, by the Chebyshev-inequality (see \exempli \cite[Theorem~5.11]{Klenke2008probTheory}), one has that

                \noparskip
                \begin{equation}
                \label{eq:chebyshev:\beweislabel}
                    \Prob_{\theta^{(\alpha)}_{t}}[\realsNonNeg \without U]
                        \leq \delta^{-2}(\sigma^{(\alpha)}_{t})^{2}
                \end{equation}

            \continueparagraph
            for each $t \in L$.
            Since, by definition of expectation-approximants,
                ${(\sigma^{(\alpha)}_{t})^{2} \underset{\alpha}{\longrightarrow} 0}$
            uniformly for $t \in L$,
            by \eqcref{eq:chebyshev:\beweislabel} one has
                ${\sup_{t \in L}\Prob_{\theta^{(\alpha)}_{t}}[\realsNonNeg \without U] \leq \eps}$
            for sufficiently large indices $\alpha \in \Lambda$.
            For sufficiently large indices
                $\alpha \in \Lambda$ and all $t \in L$
            it follows that

                \noparskip
                \begin{eqnarray*}
                    \norm{(T^{(\alpha)}(t) - T(t))\xi}
                        &\leq
                            &\norm{(T(\mu^{(\alpha)}_{t}) - T(t))\xi}
                            + \norm{(T^{(\alpha)}(t) - T(\mu^{(\alpha)}_{t}))\xi}\\
                    &=
                        &\underbrace{
                            \norm{(T(\mu^{(\alpha)}_{t}) - T(t))\xi}
                        }_{\leq\eps}
                        + \normLong{
                            \Big(\Expected[T(\theta^{(\alpha)}_{t})] - T(\mu^{(\alpha)}_{t})\Big)\xi
                        }\\
                    &\leq &\eps + \normLong{
                            \displaystyle
                            \int_{s\in\realsNonNeg}
                                (T(s) - T(\mu^{(\alpha)}_{t}))\xi
                            \:\Prob_{\theta^{(\alpha)}_{t}}(\dee s)
                        }\\
                    &\leq &\eps
                        + \displaystyle
                        \int_{s\in\realsNonNeg}
                            \norm{(T(s) - T(\mu^{(\alpha)}_{t}))\xi}
                        \:\Prob_{\theta^{(\alpha)}_{t}}(\dee s)\\
                    &\leq &\begin{array}[t]{0l}
                            \eps + \displaystyle
                            \int_{s \in U}
                                \underbrace{
                                    \normLong{(T(s) - T(t))\xi}
                                    + \normLong{(T(t) - T(\mu^{(\alpha)}_{t}))\xi}
                                }_{\leq 2\eps}
                            \:\Prob_{\theta^{(\alpha)}_{t}}(\dee s)\\
                            + \displaystyle
                            \int_{s \in \realsNonNeg \without U}
                                \underbrace{
                                    \normLong{(T(s) - T(\mu^{(\alpha)}_{t}))\xi}
                                }_{\leq 2\norm{\xi}}
                            \:\Prob_{\theta^{(\alpha)}_{t}}(\dee s)\\
                        \end{array}\\
                    &\leq &\eps + 2\eps\,\Prob_{\theta^{(\alpha)}_{t}}[U]
                        + 2\norm{\xi}\,\underbrace{
                            \Prob_{\theta^{(\alpha)}_{t}}[\realsNonNeg \without U]
                        }_{\leq\eps}
                    \leq 3\eps + 2\norm{\xi}\eps.\\
                \end{eqnarray*}

            \continueparagraph
            Since $\xi$ and $\eps$ were arbitrarily chosen,
            it follows that
                ${T^{(\alpha)}(t) \longrightarrow T(t)}$
                \wrt the \topSOT-topology
                uniformly in $t$
                on compact subsets of $\realsNonNeg$.
    \end{proof}

\begin{prop}
\makelabel{prop:expectation-approximants:basic:commuting-contractive-families:sig:article-stochastic-raj-dahya}
    Let
        $d\in\naturals$,
        $\BanachRaum$ be a Banach space,
        and
        $\{T_{i}\}_{i=1}^{d}$ be a commuting family
        of contractive $\Cnought$-semigroups on $\BanachRaum$.
    Furthermore, let
        $(T^{(\alpha)}_{i})_{\alpha\in\Lambda_{i}}$
        be expectation-approximants for $T_{i}$
        for each $i\in\{1,2,\ldots,d\}$.
    Then for each
        $\boldsymbol{\alpha} \in \prod_{i=1}^{d} \Lambda_{i}$
    the family of approximants
        $\{T^{(\alpha_{i})}_{i}\}_{i=1}^{d}$
    is a commuting family of contractive $\Cnought$-semigroups.
    Moreover,
        $(\{T^{(\alpha_{i})}_{i}\}_{i=1}^{d})_{\boldsymbol{\alpha}\in\prod_{i=1}^{d} \Lambda_{i}}$
    converges to $\{T_{i}\}_{i=1}^{d}$
    \wrt the \topSOT-topology
    uniformly on compact subsets of $\realsNonNeg^{d}$.
\end{prop}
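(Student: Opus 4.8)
The plan is to deduce the proposition from \Cref{prop:expectation-approximants:basic:c0-semigroups-converege:sig:article-stochastic-raj-dahya}, the expectation representation ${T^{(\alpha_i)}_{i}(t) = \Expected[T_{i}(\theta^{(\alpha_i)}_{t})]}$ of the approximants, and the Fubini identity ${\prod_{i}\Expected[T_{i}(\theta_{i})] = \Expected[\prod_{i}T_{i}(\theta_{i})]}$ for independent $\realsNonNeg$-valued random variables recorded in \S\ref{sec:introduction:notation:sig:article-stochastic-raj-dahya}. Fix $\boldsymbol{\alpha} = (\alpha_{i})_{i=1}^{d} \in \prod_{i=1}^{d}\Lambda_{i}$. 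By \Cref{prop:expectation-approximants:basic:c0-semigroups-converege:sig:article-stochastic-raj-dahya} each $T^{(\alpha_i)}_{i}$ is already a contractive $\Cnought$-semigroup, so it only remains to check commutativity. Letting $(\Gamma^{(\alpha_i)})$ be the associated distribution semigroups and writing ${T^{(\alpha_i)}_{i}(s) = \Expected[T_{i}(\theta^{(\alpha_i)}_{s})]}$ with ${\theta^{(\alpha_i)}_{s} \distributedAs \Gamma^{(\alpha_i)}(s)}$, for $i\neq j$ and $s,t\in\realsNonNeg$ I would take ${\theta^{(\alpha_i)}_{s}}$ and ${\theta^{(\alpha_j)}_{t}}$ independent and compute
\[
    T^{(\alpha_i)}_{i}(s)\,T^{(\alpha_j)}_{j}(t)
    = \Expected[T_{i}(\theta^{(\alpha_i)}_{s})\,T_{j}(\theta^{(\alpha_j)}_{t})]
    = \Expected[T_{j}(\theta^{(\alpha_j)}_{t})\,T_{i}(\theta^{(\alpha_i)}_{s})]
    = T^{(\alpha_j)}_{j}(t)\,T^{(\alpha_i)}_{i}(s),
\]
using the Fubini identity on the outer equalities and the commutativity of $T_{i}$ with $T_{j}$ in the middle. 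The same computation with one of the two averaged semigroups replaced by the original shows that each $T^{(\alpha_i)}_{i}$ commutes with each $T_{j}$ as well, so the finite products below do not depend on the order of the factors; hence $\{T^{(\alpha_i)}_{i}\}_{i=1}^{d}$ is a commuting family of contractive $\Cnought$-semigroups.

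For the convergence claim I would fix $\xi\in\BanachRaum$ and a compact set $L\subseteq\realsNonNeg^{d}$, abbreviate ${A_{i} := T^{(\alpha_i)}_{i}(t_{i})}$ and ${B_{i} := T_{i}(t_{i})}$ (all contractions), and invoke the purely algebraic telescoping identity ${\prod_{i=1}^{d}A_{i} - \prod_{i=1}^{d}B_{i} = \sum_{k=1}^{d}A_{1}\cdots A_{k-1}(A_{k}-B_{k})B_{k+1}\cdots B_{d}}$ together with $\norm{A_{1}\cdots A_{k-1}}\leq 1$ to obtain
\[
    \sup_{\mathbf{t}\in L}
        \normLarge{\Big(\prod_{i=1}^{d}T^{(\alpha_i)}_{i}(t_{i}) - \prod_{i=1}^{d}T_{i}(t_{i})\Big)\xi}
    \leq \sum_{k=1}^{d}
        \sup_{\mathbf{t}\in L}
            \norm{(T^{(\alpha_k)}_{k}(t_{k}) - T_{k}(t_{k}))\,\eta_{k}(\mathbf{t})},
\]
where ${\eta_{k}(\mathbf{t}) := T_{k+1}(t_{k+1})\cdots T_{d}(t_{d})\xi}$ (with $\eta_{d}(\mathbf{t})=\xi$). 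For each $k$, the map ${\mathbf{t}\mapsto\eta_{k}(\mathbf{t})}$ is continuous by \topSOT-continuity of the $T_{j}$, so ${\mathcal{K}_{k} := \{\eta_{k}(\mathbf{t}) : \mathbf{t}\in L\}}$ is a compact subset of $\BanachRaum$ and ${L_{k} := \{t_{k} : \mathbf{t}\in L\}}$ is compact in $\realsNonNeg$; thus it suffices to show ${\sup_{t\in L_{k},\,\eta\in\mathcal{K}_{k}}\norm{(T^{(\alpha_k)}_{k}(t) - T_{k}(t))\eta} \to 0}$ as ${\alpha_{k}\to\infty}$.

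This last reduction is where I expect the only real work to lie, since \Cref{prop:expectation-approximants:basic:c0-semigroups-converege:sig:article-stochastic-raj-dahya} delivers convergence ${T^{(\alpha_k)}_{k}(t)\eta \to T_{k}(t)\eta}$ uniformly in $t$ only for a \emph{fixed} vector $\eta$, whereas the telescoping produces the $\mathbf{t}$-varying vectors $\eta_{k}(\mathbf{t})$; one must upgrade to uniformity over the compact family $\mathcal{K}_{k}$. I would do this by a finite $\eps$-net argument: given $\eps>0$, cover $\mathcal{K}_{k}$ by $\eps$-balls around $\eta_{1},\ldots,\eta_{N}$, choose $\alpha_{0,k}\in\Lambda_{k}$ beyond which ${\sup_{t\in L_{k}}\norm{(T^{(\alpha_k)}_{k}(t) - T_{k}(t))\eta_{m}} < \eps}$ for all $m\leq N$, and for arbitrary $\eta\in\mathcal{K}_{k}$ pick $m$ with $\norm{\eta-\eta_{m}}<\eps$, so that uniform contractivity gives ${\norm{(T^{(\alpha_k)}_{k}(t) - T_{k}(t))\eta} < 3\eps}$ for $\alpha_{k}\geq\alpha_{0,k}$. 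Taking $\boldsymbol{\alpha}_{0} := (\alpha_{0,1},\ldots,\alpha_{0,d})$ then makes the left-hand supremum in the displayed estimate $\leq 3d\eps$ for all $\boldsymbol{\alpha}\geq\boldsymbol{\alpha}_{0}$ in $\prod_{i=1}^{d}\Lambda_{i}$; as $\xi$, $L$ and $\eps$ were arbitrary, this is exactly \topSOT-convergence uniform on compact subsets of $\realsNonNeg^{d}$, completing the proof.
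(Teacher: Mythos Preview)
Your proof is correct and follows essentially the same approach as the paper: both parts (commutativity via Fubini and the originals' commutativity; convergence via a telescoping decomposition combined with an $\eps$-net reduction to finitely many vectors where \Cref{prop:expectation-approximants:basic:c0-semigroups-converege:sig:article-stochastic-raj-dahya} applies) match the paper's argument. The only cosmetic difference is that the paper organises the convergence proof as an induction on $k$ and places the $\eps$-net on the time coordinate $L_{k}$ (using the finitely many vectors $T_{k}(t')\xi$, $t'\in F$), whereas you write out the full telescoping at once and place the $\eps$-net directly on the compact vector set $\mathcal{K}_{k}$; the underlying idea is identical.
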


    \begin{proof}
        \paragraph{Commuting family:}
            By \Cref{prop:expectation-approximants:basic:c0-semigroups-converege:sig:article-stochastic-raj-dahya},
            the approximants $T^{(\alpha_{i})}_{i}$
            are contractive $\Cnought$-semigroups
            for each $i\in\{1,2,\ldots,d\}$.
            Let $\mathbf{t}\in\realsNonNeg^{d}$ be arbitrary.
            Per definition of expectation-approximants
            there exist $\realsNonNeg$-valued \randomvar's
                $\theta_{1},\theta_{1},\ldots,\theta_{d}$
            satisfying
                ${T^{(\alpha_{i})}_{i}(t_{i}) = \Expected[T_{i}(\theta_{i})]}$
            for each $i\in\{1,2,\ldots,d\}$.
            Without loss of generality, we may assume that the $\theta_{i}$ are independent \randomvar's.
            By independence and commutativity
                $
                    \Expected[T_{i}(\theta_{i})]
                    \Expected[T_{j}(\theta_{j})]
                    = \Expected[T_{i}(\theta_{i})T_{j}(\theta_{j})]
                    = \Expected[T_{j}(\theta_{j})T_{i}(\theta_{i})]
                    = \Expected[T_{j}(\theta_{j})]
                    \Expected[T_{i}(\theta_{i})]
                $
            for all $i,j\in\{1,2,\ldots,d\}$ with $i \neq j$.
            Thus $\{T^{(\alpha_{i})}_{i}\}_{i=1}^{d}$
            is a commuting family of contractive $\Cnought$-semigroups.

        \paragraph{Approximation:}
            Let $L \subseteq \realsNonNeg^{d}$ be an arbitrary compact subset.
            Without loss of generality, one may assume $L = \prod_{i=1}^{d}L_{i}$
            for some compact subsets $L_{i} \subseteq \realsNonNeg$, $i\in\{1,2,\ldots,d\}$.
            We prove by induction over $k\in\{1,2,\ldots,d\}$ that

                \noparskip
                \begin{equation}
                \label{eq:1:\beweislabel}
                    \sup_{\mathbf{t} \in \prod_{i=1}^{k}L_{i}}
                    \normLong{
                        \Big(
                            \prod_{i=1}^{k}
                                T^{(\alpha_{i})}_{i}(t_{i})
                            -
                            \prod_{i=1}^{k}
                                T_{i}(t_{i})
                        \Big)
                        \xi
                    }
                    \underset{\boldsymbol{\alpha}}{\longrightarrow} 0
                \end{equation}

            \continueparagraph
            for all $\xi\in\BanachRaum$.
            For $k=1$, this holds by \Cref{prop:expectation-approximants:basic:c0-semigroups-converege:sig:article-stochastic-raj-dahya}.
            Let $1 < k \leq d$ and assume that \eqcref{eq:1:\beweislabel} holds for $k-1$.
            Let $\xi\in\BanachRaum$ and $\eps > 0$ be arbitrary.
            Since $T_{k}$ is \topSOT-continuous and $L_{k}$ is compact,
            there is a finite subset $F \subseteq L_{k}$ such that
                $\min_{t^{\prime} \in F}\norm{(T_{k}(t) - T_{k}(t^{\prime}))\xi} < \eps$
            for each $t \in L_{k}$.
            Let $\mathbf{t} \in L$ be arbitrary
            and let $t^{\prime} \in F$ be such
            that $\norm{(T_{k}(t_{k}) - T_{k}(t^{\prime}))\xi} < \eps$.
            Since the approximants are all contractive,
            one obtains

                \noparskip
                \begin{eqnarray*}
                    \normLong{
                        \Big(
                            \displaystyle
                            \prod_{i=1}^{k}
                                T^{(\alpha_{i})}_{i}(t_{i})
                            -
                            \displaystyle
                            \prod_{i=1}^{k}
                                T_{i}(t_{i})
                        \Big)
                        \xi
                    }
                        &\leq
                            &\normLong{
                                \Big(
                                    \displaystyle
                                    \prod_{i=1}^{k-1}
                                        T^{(\alpha_{i})}_{i}(t_{i})
                                    -
                                    \displaystyle
                                    \prod_{i=1}^{k-1}
                                        T_{i}(t_{i})
                                \Big)
                                T_{k}(t^{\prime})
                                \xi
                            }\\
                            &&+ \underbrace{
                                \normLong{
                                    \displaystyle
                                    \prod_{i=1}^{k-1}
                                        T^{(\alpha_{i})}_{i}(t_{i})
                                    -
                                    \displaystyle
                                    \prod_{i=1}^{k-1}
                                        T_{i}(t_{i})
                                }
                            }_{\leq 2}
                            \underbrace{
                                \normLong{
                                    \Big(
                                        T_{k}(t^{\prime}) - T_{k}(t_{k})
                                    \Big)
                                    \xi
                                }
                            }_{< \eps}\\
                            &&+ \underbrace{
                                \normLong{
                                    \displaystyle
                                    \prod_{i=1}^{k-1}
                                        T^{(\alpha_{i})}_{i}(t_{i})
                                }
                            }_{\leq 1}
                            \normLong{
                                \Big(
                                    T^{(\alpha_{k})}_{k}(t_{k}) - T_{k}(t_{k})
                                \Big)
                                \xi
                            }\\
                        &\leq
                            &\displaystyle
                            \max_{t^{\prime\prime} \in F}
                                \normLong{
                                    \Big(
                                        \displaystyle
                                        \prod_{i=1}^{k-1}
                                            T^{(\alpha_{i})}_{i}(t_{i})
                                        -
                                        \displaystyle
                                        \prod_{i=1}^{k-1}
                                            T_{i}(t_{i})
                                    \Big)
                                    T_{k}(t^{\prime\prime})
                                    \xi
                                }\\
                        &&+ 2\eps + \norm{
                                (T^{(\alpha_{k})}_{k}(t_{k}) - T_{k}(t_{k}))
                                \xi
                            }\\
                \end{eqnarray*}

            \continueparagraph
            for each $\boldsymbol{\alpha} \in \prod_{i=1}^{k}\Lambda_{i}$.
            By
                induction
                (%
                    \eqcref{eq:1:\beweislabel} applied to $k-1$
                    and the finite set of vectors
                    $\{T_{k}(t^{\prime\prime})\xi \mid t^{\prime\prime} \in F\}$%
                )
                and
                \Cref{prop:expectation-approximants:basic:c0-semigroups-converege:sig:article-stochastic-raj-dahya}
                (applied to $T_{k}$),
            by taking $\limsup$ over $\boldsymbol{\alpha}$,
            the right-hand expression is bounded by $0 + 2\eps + 0$.
            Since $\eps > 0$ was arbitrarily chosen, it follows that \eqcref{eq:1:\beweislabel} holds.
            Hence the claim holds by induction.
    \end{proof}

We also obtain the following auxiliary results for simple modifications of $\Cnought$-semigroups:

\begin{prop}
\makelabel{prop:basic-operations:expectation-approximations:sig:article-stochastic-raj-dahya}
    Let
        $\BanachRaum$ be a Banach space
        and
        $T$ be a contractive $\Cnought$-semigroup on $\BanachRaum$.
    Furthermore, let
        $(T^{(\alpha)})_{\alpha \in \Lambda}$
    be a net of expectation-approximants for $T$
    with associated distribution semigroups
        $(\Gamma^{(\alpha)})_{\alpha \in \Lambda}$.
    Then
    for any (equivalently: for an arbitrary) \randomvar
        ${\theta\distributedAs\Gamma^{(\alpha)}(t)}$,
        $t\in\realsNonNeg$,
        and
        $\alpha \in \Lambda$

    \begin{kompaktenum}{\bfseries (i)}
        \item\punktlabel{1}
            $(T^{(\alpha)}(t))^{\prime} = \Expected[T(\theta)^{\prime}]$,
            provided $\BanachRaum$ is reflexive; and
        \item\punktlabel{2}
            $(T^{(\alpha)}(t))^{\ast} = \Expected[T(\theta)^{\ast}]$,
            if $\BanachRaum$ is a Hilbert space.
    \end{kompaktenum}

    \continueparagraph
    Furthermore, in the case of Hille-(\respectively Yosida-)approximants
        $(T^{(\lambda)})_{\lambda \in \realsPos}$
    it holds that

    \begin{kompaktenum}{\bfseries (i)}
    \setcounternach{enumi}{3}
        \item\punktlabel{3}
            $T^{(\lambda)}(rt) = \Expected[T(r\theta)]$;
    \end{kompaktenum}

    \continueparagraph
    for $r\in\realsPos$
    and any (equivalently: for an arbitrary) \randomvar
        ${\theta \distributedAs \DistPoissScale{r\lambda t}{\tfrac{1}{r\lambda}}}$
    (\respectively ${\theta \distributedAs \DistPoissAux{t}{r\lambda}}$).
\end{prop}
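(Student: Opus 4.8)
The plan is to reduce each assertion to the defining identity $T^{(\alpha)}(t)\xi = \int_{s\in\realsNonNeg} T(s)\xi\,\Prob_{\theta}(\dee s)$ (a Bochner integral, $\Prob_{\theta}$ being the law of $\theta$), noting at the outset that $\Expected[T(\theta)]$, and likewise $\Expected[T(\theta)^{\prime}]$, $\Expected[T(\theta)^{\ast}]$ and $\Expected[T(r\theta)]$, depend on $\theta$ only through its distribution, since they are integrals against $\Prob_{\theta}$; this is what justifies the \usesinglequotes{for any (equivalently: for an arbitrary)} phrasing in the statement. For the two adjoint formulas \punktcref{1}--\punktcref{2} I would identify the adjoint operator by testing against vectors and pulling the pairing inside the integral, using the interchange properties of Bochner integrals collected in the preliminaries. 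For the rescaling formula \punktcref{3} I would invoke \Cref{lemm:classical-examples-expectation-approximants:sig:article-stochastic-raj-dahya} together with an elementary scaling computation for the associated distribution semigroups.

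For \punktcref{1}: since $\BanachRaum$ is reflexive, the adjoint semigroup $(T(s)^{\prime})_{s\in\realsNonNeg}$ is again a contractive $\Cnought$-semigroup on $\BanachRaum^{\prime}$ (see \exempli \cite[\S{}II.2]{EngelNagel2000semigroupTextBook}), so $s\mapsto T(s)^{\prime}\eta$ is norm-continuous and bounded for each $\eta\in\BanachRaum^{\prime}$, and hence the \topSOT-integral $S \colonequals \Expected[T(\theta)^{\prime}] = \sotInt_{s\in\realsNonNeg}T(s)^{\prime}\,\Prob_{\theta}(\dee s) \in \BoundedOps{\BanachRaum^{\prime}}$ is well defined, with $S\eta = \int_{s\in\realsNonNeg}T(s)^{\prime}\eta\,\Prob_{\theta}(\dee s)$ (a Bochner integral in $\BanachRaum^{\prime}$). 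For $\xi\in\BanachRaum$ and $\eta\in\BanachRaum^{\prime}$, interchanging the duality pairing with the Bochner integral and then using the definition of $T^{(\alpha)}(t)$ as an \topSOT-integral gives $\brkt{\xi}{S\eta} = \int_{s\in\realsNonNeg}\brkt{\xi}{T(s)^{\prime}\eta}\,\Prob_{\theta}(\dee s) = \int_{s\in\realsNonNeg}\brkt{T(s)\xi}{\eta}\,\Prob_{\theta}(\dee s) = \brkt{T^{(\alpha)}(t)\xi}{\eta} = \brkt{\xi}{(T^{(\alpha)}(t))^{\prime}\eta}$. As $\xi$ ranges over $\BanachRaum$ this forces $S\eta = (T^{(\alpha)}(t))^{\prime}\eta$, and as $\eta$ ranges over $\BanachRaum^{\prime}$ we obtain \punktcref{1}. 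For \punktcref{2}, a Hilbert space is reflexive, so \punktcref{1} applies; relating the Banach adjoint to the Hermitian adjoint through the conjugate-linear Riesz isomorphism $J\colon\HilbertRaum\to\HilbertRaum^{\prime}$ --- which satisfies $S^{\ast} = J^{-1}S^{\prime}J$ for all $S\in\BoundedOps{\HilbertRaum}$ and which commutes with Bochner integrals, being bounded and $\reals$-linear --- yields $(T^{(\alpha)}(t))^{\ast} = J^{-1}\Expected[T(\theta)^{\prime}]J = \Expected[J^{-1}T(\theta)^{\prime}J] = \Expected[T(\theta)^{\ast}]$. (Alternatively, one repeats the computation for \punktcref{1} verbatim with the Hilbert inner product in place of the dual pairing.)

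For \punktcref{3} in the Hille case, \Cref{lemm:classical-examples-expectation-approximants:sig:article-stochastic-raj-dahya} applied with time parameter $rt$ gives $T^{(\lambda)}(rt) = \Expected[T(\vartheta)]$ for $\vartheta \distributedAs \DistPoissScale{\lambda rt}{\tfrac{1}{\lambda}}$; and if $\theta \distributedAs \DistPoissScale{r\lambda t}{\tfrac{1}{r\lambda}}$ then $\Prob[r\theta = \tfrac{1}{\lambda}n] = \Prob[\theta = \tfrac{1}{r\lambda}n] = \tfrac{(r\lambda t)^{n}}{n!}e^{-r\lambda t}$ for $n\in\naturalsZero$, \idest $r\theta \distributedAs \DistPoissScale{\lambda rt}{\tfrac{1}{\lambda}}$, so $\Expected[T(r\theta)] = \Expected[T(\vartheta)] = T^{(\lambda)}(rt)$. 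In the Yosida case the same scheme applies with $\DistPoissAux{\cdot}{\cdot}$ replacing $\DistPoissScale{\cdot}{\cdot}$: reading off the construction in \S{}\ref{sec:stochastic:distributions:sig:article-stochastic-raj-dahya}, if $\theta \distributedAs \DistPoissAux{t}{r\lambda}$ is realised as $\theta = \tau_{<N_{t}}$ with $\tau_{i}\distributedAs\DistExp{r\lambda}$ \iid and $N_{t}\distributedAs\DistPoissScale{r\lambda t}{1}$ independent, then $r\tau_{i}\distributedAs\DistExp{\lambda}$ while $N_{t}\distributedAs\DistPoissScale{\lambda(rt)}{1}$, whence $r\theta = \sum_{i=0}^{N_{t}-1}r\tau_{i} \distributedAs \DistPoissAux{rt}{\lambda}$; equivalently this can be checked from the characteristic functions via \Cref{prop:characteristic-function:aux-poisson-process:sig:article-stochastic-raj-dahya} together with the fact that characteristic functions determine distributions (\cite[Theorem~15.8]{Klenke2008probTheory}). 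Applying \Cref{lemm:classical-examples-expectation-approximants:sig:article-stochastic-raj-dahya} with time parameter $rt$ once more, $\Expected[T(r\theta)] = \Expected[T(\vartheta)] = T^{(\lambda)}(rt)$ for $\vartheta\distributedAs\DistPoissAux{rt}{\lambda}$.

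The one genuinely substantive point, and the intended role of the reflexivity (\respectively Hilbert-space) hypothesis, is the well-definedness of the right-hand sides in \punktcref{1} and \punktcref{2}: the \topSOT-integral $\Expected[T(\theta)^{\prime}]$ only parses once $s\mapsto T(s)^{\prime}\eta$ is strongly measurable for every $\eta$, which reflexivity guarantees through strong continuity of the adjoint semigroup but which may fail in general. Everything else is bookkeeping with the interchange properties of Bochner integrals already assembled in the preliminaries, together with the elementary observations above that pushing the dilation $s\mapsto rs$ through a $\DistPoissScale{\cdot}{\cdot}$- or $\DistPoissAux{\cdot}{\cdot}$-type law produces a law of the same type with correspondingly rescaled parameters.
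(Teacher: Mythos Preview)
Your argument for \punktcref{1} and \punktcref{2} is essentially the paper's: both test against vectors, pull the pairing through the Bochner integral, and invoke reflexivity so that the adjoint semigroup is again a $\Cnought$-semigroup (making the right-hand side well defined). The paper simply states \punktcref{2} is analogous; your Riesz-isomorphism remark and your alternative of repeating the pairing computation are both fine ways to fill that in.

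For \punktcref{3} you take a genuinely different route. The paper introduces the rescaled semigroup $T_{r} \colonequals (T(rt))_{t\in\realsNonNeg}$ with generator $A_{r} = rA$, checks the generator identity $A_{r}^{(r\lambda)} = rA^{(\lambda)}$ in both the Hille and Yosida cases, concludes $T_{r}^{(r\lambda)}(t) = T^{(\lambda)}(rt)$, and then applies \Cref{lemm:classical-examples-expectation-approximants:sig:article-stochastic-raj-dahya} to $T_{r}$ at parameter $r\lambda$ to obtain $T^{(\lambda)}(rt) = \Expected[T_{r}(\theta)] = \Expected[T(r\theta)]$. You instead apply \Cref{lemm:classical-examples-expectation-approximants:sig:article-stochastic-raj-dahya} directly to $T$ at time $rt$ and then verify the distributional scaling identities $r\cdot\DistPoissScale{r\lambda t}{\tfrac{1}{r\lambda}} = \DistPoissScale{\lambda rt}{\tfrac{1}{\lambda}}$ and $r\cdot\DistPoissAux{t}{r\lambda} = \DistPoissAux{rt}{\lambda}$. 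Both are correct. The paper's approach is uniform --- one generator computation handles both cases and never touches the distributions explicitly --- whereas yours stays on the probabilistic side and avoids any generator algebra, at the cost of a separate (though elementary) scaling check for each distribution family.
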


    \begin{proof}
        \paragraph{\punktcref{1}:}
            First observe that
                $(T^{(\alpha)}(t)^{\prime})_{t\in\realsNonNeg}$
                and
                $(T(t)^{\prime})_{t\in\realsNonNeg}$
            are $\Cnought$-semigroups
            (see \exempli \cite[Theorem~I.4.9]{Goldstein1985semigroups}).
            Let $\xi\in\BanachRaum^{\prime\prime} \cong \BanachRaum$
            and $\eta\in\BanachRaum^{\prime}$ be arbitrary.
            It holds that

                \noparskip
                \begin{eqnarray*}
                    \brkt{\xi}{T^{(\alpha)}(t)^{\prime}\eta}
                        &= &\brkt{T^{(\alpha)}(t)\xi}{\eta}\\
                        &= &\brkt{\Expected[T(\theta)]\xi}{\eta}\\
                        &= &\brktLong{
                                \Big(
                                    \displaystyle
                                    \sotInt_{s\in\realsNonNeg}
                                        T(s)
                                    \:\Prob_{\theta}(\dee s)
                                \Big)
                                \xi
                            }{\eta}\\
                        &= &\displaystyle
                            \int_{s\in\realsNonNeg}
                                \brkt{T(s)\xi}{\eta}
                            \:\Prob_{\theta}(\dee s)\\
                        &= &\displaystyle
                            \int_{s\in\realsNonNeg}
                                \brkt{\xi}{T(s)^{\prime}\eta}
                            \:\Prob_{\theta}(\dee s)\\
                        &= &\brktLong{\xi}{
                                \Big(
                                    \displaystyle
                                    \sotInt_{s\in\realsNonNeg}
                                        T(s)^{\prime}
                                    \:\Prob_{\theta}(\dee s)
                                \Big)
                                \eta
                            }\\
                        &= &\brkt{\xi}{\Expected[T(\theta)^{\prime}]\eta}.\\
                \end{eqnarray*}

            \continueparagraph
            It follows that $T^{(\alpha)}(t)^{\prime} = \Expected[T(\theta)^{\prime}]$.
            The expression in \punktcref{2} can be proved analogously.

        \paragraph{\punktcref{3}:}
            First observe that
                ${T_{r} \colonequals (T(rt^{\prime}))_{t^{\prime}\in\realsNonNeg}}$
            is a $\Cnought$-semigroup with generator $A_{r} = r A$.
            Let $\lambda\in\realsPos$.
            In the case of Hille-approximants, one has

                \noparskip
                $$
                    A_{r}^{(r\lambda)}
                    = (r\lambda) \cdot (
                        T_{r}(\tfrac{1}{r\lambda}) - \onematrix
                    )
                    = (r\lambda) \cdot (
                        T(\tfrac{1}{\lambda}) - \onematrix
                    )
                    = r A^{(\lambda)}
                $$

            \continueparagraph
            and in the case of Yosida-approximants

                \noparskip
                $$
                    A_{r}^{(r\lambda)}
                    = (r\lambda) \cdot (
                        (r\lambda) \opResolvent{A_{r}}{r\lambda}
                        - \onematrix
                    )
                    = r\lambda \cdot (
                        (r\lambda) \opResolvent{rA}{r\lambda}
                        - \onematrix
                    )
                    = rA^{(\lambda)}.
                $$

            \continueparagraph
            Thus in both cases
                $
                    T^{(r\lambda)}_{r}(t)
                    = e^{t A_{r}^{(r\lambda)}}
                    = e^{t r A^{(\lambda)}}
                    = T^{(\lambda)}(rt)
                $
            holds.
            Applying the properties of the
                $(r\lambda)$\textsuperscript{th}-Hille-approximant
            (\respectively $(r\lambda)$\textsuperscript{th}-Yosida-approximant)
            of $T_{r}$ thus yields

                \noparskip
                $$
                    T^{(\lambda)}(rt)
                    = T^{(r\lambda)}_{r}(t)
                    = \Expected[T_{r}(\theta)]
                    = \Expected[T(r\theta)]
                $$

            \continueparagraph
            for any (equivalently: for an arbitrary) \randomvar
                ${\theta \distributedAs \DistPoissScale{r\lambda t}{\tfrac{1}{r\lambda}}}$
            (\respectively ${\theta \distributedAs \DistPoissAux{t}{r\lambda}}$).
    \end{proof}



\subsection[Families of approximants as expectations]{Families of approximants as expectations}
\label{sec:stochastic:multi-parameter:sig:article-stochastic-raj-dahya}

\firstparagraph
In oder to prove
    \eqcref{it:3:thm:classification-unbounded:poly:sig:article-stochastic-raj-dahya}{}\ensuremath{\implies}{}\eqcref{it:4:thm:classification-unbounded:poly:sig:article-stochastic-raj-dahya}
    of
    \Cref{thm:classification-unbounded:poly:sig:article-stochastic-raj-dahya},
we shall rely on results about the expectations of products.

\begin{prop}
\makelabel{prop:expectation-approximants:product:sig:article-stochastic-raj-dahya}
    Let
        $d\in\naturals$,
        $\BanachRaum$ be a Banach space,
        and
        $\{T_{i}\}_{i=1}^{d}$ be a (not necessarily commuting) family of
        contractive $\Cnought$-semigroups on $\BanachRaum$.
    Further let
        $(T^{(\alpha)}_{i})_{\alpha \in \Lambda_{i}}$
    be a net of expectation-approximants for $T_{i}$
    with associated distribution semigroups
        $(\Gamma^{(\alpha)}_{i}(t))_{t\in\realsNonNeg,\alpha \in \Lambda_{i}}$
    for each $i \in \{1,2,\ldots,d\}$.
    Then for
        $\boldsymbol{\alpha}\in\prod_{i=1}^{d}\Lambda_{i}$,
        and
        $\boldsymbol{t} \in \realsNonNeg^{d}$
    it holds that

        \noparskip
        \begin{equation}
        \label{eq:yosida-stoch:prod:sig:article-stochastic-raj-dahya}
            \displaystyle
            \prod_{i=1}^{d}
                T^{(\alpha_{i})}_{i}(t_{i})
            = \Expected\Big[
                \displaystyle
                \prod_{i=1}^{d}
                    T_{i}(\theta_{i})
            \Big],
        \end{equation}

    \continueparagraph
    for any (equivalently: for an arbitrary) family of
    independent \randomvar's
        ${\theta_{i} \distributedAs \Gamma^{(\alpha_{i})}_{i}(t)}$,
        $i\in\{1,2,\ldots,d\}$.
\end{prop}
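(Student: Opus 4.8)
The plan is to deduce \eqcref{eq:yosida-stoch:prod:sig:article-stochastic-raj-dahya} directly from the definition of expectation-approximants together with the Fubini-type identity for Bochner integrals recorded in \S{}\ref{sec:introduction:notation:sig:article-stochastic-raj-dahya}, proceeding by induction on $d$. The first observation is that $\Expected[\prod_{i=1}^{d}T_{i}(\theta_{i})]$ depends only on the joint distribution of $(\theta_{1},\ldots,\theta_{d})$ --- it equals $\sotInt_{\mathbf{s}\in\realsNonNeg^{d}}\prod_{i=1}^{d}T_{i}(s_{i})\:\Prob_{(\theta_{1},\ldots,\theta_{d})}(\dee\mathbf{s})$ --- and for an independent family this joint law is the product $\bigotimes_{i=1}^{d}\Gamma^{(\alpha_{i})}_{i}(t_{i})$; this makes the \emph{any}/\emph{arbitrary} equivalence in the statement immediate. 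So I would fix, once and for all, a family $(\theta_{i})_{i=1}^{d}$ of independent $\realsNonNeg$-valued \randomvar's on a common probability space (\exempli a suitable product space) with $\theta_{i}\distributedAs\Gamma^{(\alpha_{i})}_{i}(t_{i})$, and record that $T^{(\alpha_{i})}_{i}(t_{i}) = \Expected[T_{i}(\theta_{i})]$ by \Cref{defn:expectation-approximants:sig:article-stochastic-raj-dahya}.

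Next I would verify that the hypotheses for interchanging the integrals hold: each $T_{i}$ is contractive, hence uniformly bounded by $1$ on all of $\realsNonNeg$, in particular on the essential range of $\theta_{i}$; each $\omega\mapsto T_{i}(\theta_{i}(\omega))\xi$ is strongly measurable, being the composition of the $\topSOT$-continuous map $s\mapsto T_{i}(s)\xi$ with the measurable map $\theta_{i}$; and the $\theta_{i}$ are independent, so their joint law is a product measure. Now induct on $d$; the base case $d=1$ is the defining identity above. For $d>1$, write $\prod_{i=1}^{d}T^{(\alpha_{i})}_{i}(t_{i}) = S\,R$ with $S \colonequals \prod_{i=1}^{d-1}T^{(\alpha_{i})}_{i}(t_{i}) = \Expected[\prod_{i=1}^{d-1}T_{i}(\theta_{i})]$ (induction hypothesis) and $R \colonequals T^{(\alpha_{d})}_{d}(t_{d}) = \Expected[T_{d}(\theta_{d})]$. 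For fixed $\xi\in\BanachRaum$ I would pull the bounded operator $S$ inside the strong integral defining $R\xi$, then pull each $T_{d}(\theta_{d}(\omega'))\xi$ back through $S$ via its own integral representation, and apply Fubini for Bochner integrals over the product probability space, obtaining
\[
    S R \xi
    = \Expected\Big[\Big(\prod_{i=1}^{d-1}T_{i}(\theta_{i})\Big)T_{d}(\theta_{d})\Big]\xi
    = \Expected\Big[\prod_{i=1}^{d}T_{i}(\theta_{i})\Big]\xi,
\]
where independence of $\theta_{d}$ from $(\theta_{1},\ldots,\theta_{d-1})$ is used to recognise the integral over the product as the expectation on the right. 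As $\xi$ was arbitrary, \eqcref{eq:yosida-stoch:prod:sig:article-stochastic-raj-dahya} follows.

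I do not expect a genuine obstacle; the only two points demanding some care are (a) keeping the factors $T_{i}$ in the fixed order $1,\dots,d$ on both sides throughout --- since the $T_{i}$ need not commute, the induction must peel off the \emph{last} factor, not an arbitrary one --- and (b) the Fubini interchange of the nested Bochner integrals, which is precisely the computation cited in \S{}\ref{sec:introduction:notation:sig:article-stochastic-raj-dahya} and available from the references listed there (\exempli \cite[\S{}II.2]{DiestelUhl1977VectorMeasures} and \cite[\S{}1.1.c--\S{}1.2.a]{HytNeervanMarkLutz2016bookVol1}). Contractivity renders every integrand bounded, so integrability is never in question and all the strong (\topSOT-)integrals involved are well-defined.
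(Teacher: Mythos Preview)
Your proposal is correct and follows essentially the same approach as the paper: the definition of expectation-approximants gives $T^{(\alpha_{i})}_{i}(t_{i})=\Expected[T_{i}(\theta_{i})]$, and independence together with Fubini collapses the product of expectations into the expectation of the product. The paper's proof is a two-line appeal to the Fubini identity $\prod_{i}\Expected[T_{i}(\theta_{i})]=\Expected[\prod_{i}T_{i}(\theta_{i})]$ already recorded in \S{}\ref{sec:introduction:notation:sig:article-stochastic-raj-dahya}, whereas you unpack that identity explicitly via the induction on $d$; your additional remarks on the \emph{any}/\emph{arbitrary} equivalence and on preserving the factor order in the non-commuting case are correct and simply make explicit what the paper leaves implicit.
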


    \begin{proof}
        By definition of expectation-approximants we have
            $
                T^{(\alpha_{i})}_{i}(t_{i})
                = \Expected[
                    T_{i}(\theta_{i})
                ]
            $
        for each $i\in\{1,2,\ldots,d\}$
        and thus by independence
            $
                \prod_{i=1}^{d}
                    T^{(\alpha_{i})}_{i}(t_{i})
                = \prod_{i=1}^{d}
                    \Expected[T_{i}(\theta_{i})]
                = \Expected[
                    \prod_{i=1}^{d}
                        T_{i}(\theta_{i})
                ]
            $.
    \end{proof}

Restricting to the context of semigroups over Hilbert spaces yields a result,
which can be utilised with regular polynomial evaluations.
For $p \in \complex[X_{1},X_{1}^{-1},X_{2},X_{2}^{-1},\ldots,X_{d},X_{d}^{-1}]$
let the \highlightTerm{absolute degree of $p$}
denote the largest $n\in\naturalsZero$,
such that $X_{i}^{n}$ or $X_{i}^{-n}$ occurs in some monomial in $p$,
or else $0$ if $p=0$.
In particular, the absolute degree of $p$ is at most $1$
if and only if
the only powers of the $X_{i}$ that occur in monomials in $p$ are $\pm 1$.

\begin{prop}
\makelabel{prop:expectation-approximants:poly:sig:article-stochastic-raj-dahya}
    Let
        $d\in\naturals$,
        $\HilbertRaum$ be a Hilbert space,
        and
        $\{T_{i}\}_{i=1}^{d}$
        be a commuting family of contractive $\Cnought$-semigroups on $\HilbertRaum$.
    Further let
        $(T^{(\alpha)}_{i})_{\alpha\in\Lambda_{i}}$
    be a net of expectation-approximants for $T_{i}$
    with associated distribution semigroups
        $(\Gamma^{(\alpha)}_{i})_{\alpha\in\Lambda_{i}}$
    for each $i \in \{1,2,\ldots,d\}$.
    Then for
        $\boldsymbol{\alpha}\in\prod_{i=1}^{d}\Lambda_{i}$,
        $\boldsymbol{t} \in \realsNonNeg^{d}$
    and any regular polynomial
        $p \in \complex[X_{1},X_{1}^{-1},X_{2},X_{2}^{-1},\ldots,X_{d},X_{d}^{-1}]$
    with absolute degree at most $1$

        \noparskip
        \begin{equation}
        \label{eq:yosida-stoch:poly:sig:article-stochastic-raj-dahya}
            p(
                T^{(\lambda_{1})}_{1}(t_{1}),
                T^{(\lambda_{2})}_{2}(t_{2}),
                \ldots,
                T^{(\lambda_{d})}_{d}(t_{d})
            )
            = \Expected[
                p(
                    T_{1}(\theta_{1}),
                    T_{2}(\theta_{2}),
                    \ldots,
                    T_{d}(\theta_{d})
                )
            ]
        \end{equation}

    \continueparagraph
    for any (equivalently: for an arbitrary) family of
    independent \randomvar's
        ${\theta_{i} \distributedAs \Gamma^{(\alpha_{i})}_{i}(t_{i})}$,
        $i\in\{1,2,\ldots,d\}$.
\end{prop}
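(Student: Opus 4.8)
The plan is to reduce the identity to the case of a single monomial and then to assemble it from \Cref{prop:basic-operations:expectation-approximations:sig:article-stochastic-raj-dahya,prop:expectation-approximants:product:sig:article-stochastic-raj-dahya}. Write $p$ in the form $p = \sum_{\mathbf{n}}c_{\mathbf{n}}\,X_{1}^{n_{1}}X_{2}^{n_{2}}\cdots X_{d}^{n_{d}}$ with $c_{\mathbf{n}}\in\complex$ and the sum ranging over finitely many $\mathbf{n}\in\{-1,0,1\}^{d}$; this is precisely the assumption that the absolute degree of $p$ is at most $1$. Since regular polynomial evaluation is linear in $p$, and the expectation is a Bochner integral, hence linear and commuting with finite sums, it suffices to prove \eqcref{eq:yosida-stoch:poly:sig:article-stochastic-raj-dahya} for a single monomial $m = X_{1}^{n_{1}}\cdots X_{d}^{n_{d}}$ with $\mathbf{n}\in\{-1,0,1\}^{d}$. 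Setting $C_{1}\colonequals\supp(\mathbf{n}^{-})$ and $C_{2}\colonequals\supp(\mathbf{n}^{+})$, these are disjoint subsets of $\{1,2,\ldots,d\}$, and by definition of regular polynomial evaluation one has $m(S_{1},\ldots,S_{d}) = \big(\prod_{i\in C_{1}}S_{i}\big)^{\ast}\prod_{j\in C_{2}}S_{j}$ for any commuting contractions $\{S_{i}\}_{i=1}^{d}$.

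Next, I would observe that for each $i$ the adjoint family $T_{i}(\cdot)^{\ast}$ is again a contractive $\Cnought$-semigroup on $\HilbertRaum$ (the space being reflexive; see \cite[Theorem~I.4.9]{Goldstein1985semigroups}), and that by \Cref{prop:basic-operations:expectation-approximations:sig:article-stochastic-raj-dahya} (the identity $(T^{(\alpha_{i})}_{i}(t_{i}))^{\ast} = \Expected[T_{i}(\theta_{i})^{\ast}]$ valid over Hilbert spaces) the net $((T^{(\alpha_{i})}_{i}(\cdot))^{\ast})_{\alpha_{i}\in\Lambda_{i}}$ is a net of expectation-approximants for $T_{i}(\cdot)^{\ast}$ with the \emph{same} associated distribution semigroup $\Gamma^{(\alpha_{i})}_{i}$. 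Fixing independent $\realsNonNeg$-valued \randomvar's $\theta_{i}\distributedAs\Gamma^{(\alpha_{i})}_{i}(t_{i})$ for $i\in C_{1}\cup C_{2}$ and applying \Cref{prop:expectation-approximants:product:sig:article-stochastic-raj-dahya} to the (not necessarily commuting) auxiliary family consisting of $T_{i}(\cdot)^{\ast}$ for $i\in C_{1}$ followed by $T_{j}(\cdot)$ for $j\in C_{2}$ yields
$$
\prod_{i\in C_{1}}(T^{(\alpha_{i})}_{i}(t_{i}))^{\ast}\cdot\prod_{j\in C_{2}}T^{(\alpha_{j})}_{j}(t_{j})
= \Expected\Big[\prod_{i\in C_{1}}T_{i}(\theta_{i})^{\ast}\cdot\prod_{j\in C_{2}}T_{j}(\theta_{j})\Big].
$$
Since commuting operators have commuting adjoints, the left-hand side equals $\big(\prod_{i\in C_{1}}T^{(\alpha_{i})}_{i}(t_{i})\big)^{\ast}\prod_{j\in C_{2}}T^{(\alpha_{j})}_{j}(t_{j}) = m(T^{(\alpha_{1})}_{1}(t_{1}),\ldots,T^{(\alpha_{d})}_{d}(t_{d}))$, and the same rearrangement inside the expectation identifies the integrand with $m(T_{1}(\theta_{1}),\ldots,T_{d}(\theta_{d}))$, which is exactly \eqcref{eq:yosida-stoch:poly:sig:article-stochastic-raj-dahya} for $m$. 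The "for any (equivalently: for an arbitrary)" clause holds because a Bochner-integral expectation depends only on the joint law, and the latter is pinned down by the prescribed marginals together with the independence requirement.

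The argument has no deep obstacle; the two points requiring care are (i) that the auxiliary family of adjoint semigroups really has $(T^{(\alpha_{i})}_{i}(t_{i}))^{\ast}$ as its expectation-approximants \emph{with the same distribution semigroups}, so that \Cref{prop:expectation-approximants:product:sig:article-stochastic-raj-dahya} applies verbatim --- this is immediate from \Cref{prop:basic-operations:expectation-approximations:sig:article-stochastic-raj-dahya} --- and (ii) the precise role of the absolute-degree hypothesis. The latter is what guarantees that, in each monomial, every index $i$ contributes at most one factor (either $T_{i}(\theta_{i})$ or $T_{i}(\theta_{i})^{\ast}$, never both and never a power $\geq 2$), so that a single family $\{\theta_{i}\}_{i\in C_{1}\cup C_{2}}$ of mutually independent \randomvar's suffices for the Fubini step encapsulated in \Cref{prop:expectation-approximants:product:sig:article-stochastic-raj-dahya}; for a monomial containing $X_{i}^{2}$ or $X_{i}X_{i}^{-1}$ one would need two independent copies of $\theta_{i}$, which an expectation over a single \randomvar does not supply, and the identity fails in that generality. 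This is the one place where the hypothesis is genuinely used.
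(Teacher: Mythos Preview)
Your proof is correct and follows essentially the same route as the paper: reduce by linearity to a single monomial with exponents in $\{-1,0,1\}$, split the indices into $C_{1}=\supp(\mathbf{n}^{-})$ and $C_{2}=\supp(\mathbf{n}^{+})$, and then invoke \Cref{prop:basic-operations:expectation-approximations:sig:article-stochastic-raj-dahya,prop:expectation-approximants:product:sig:article-stochastic-raj-dahya}. The paper's proof is terser---it simply states that the displayed identity ``follows by applying'' those two propositions---whereas you spell out the mechanism (the adjoint semigroups together with the original ones form an auxiliary, not necessarily commuting, family to which \Cref{prop:expectation-approximants:product:sig:article-stochastic-raj-dahya} applies verbatim), and you add a helpful explanation of why the absolute-degree hypothesis is genuinely needed, matching the paper's subsequent remark.
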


    \begin{proof}
        By linearity, it suffices to simply consider monomials of the form
            $p=\prod_{i=1}^{n}X_{i}^{n_{i}}$
        where $\mathbf{n} \in \{0, \pm 1\}^{d}$.
        Let
            $C_{1} \colonequals \supp(\mathbf{n}^{-})$,
            $C_{2} \colonequals \supp(\mathbf{n}^{+})$,
            and
            $K \colonequals \supp(\mathbf{n}) \subseteq \{1,2,\ldots,d\}$,
        where
            $\mathbf{n}^{-} \colonequals (n_{i}^{-})_{i=1}^{d}$
            and
            $\mathbf{n}^{+} \colonequals (n_{i}^{+})_{i=1}^{d}$.
        Then $\isPartition{(C_{1},C_{2})}{K}$.
        By taking regular polynomial evaluations, we thus have to prove that

            \noparskip
            $$
                \prod_{i \in C_{1}}T^{(\lambda_{i})}_{i}(t_{i})^{\ast}
                \cdot
                \prod_{j \in C_{2}}T^{(\lambda_{j})}_{j}(t_{j})
                =
                \Expected\Big[
                    \prod_{i \in C_{1}}T_{i}(\theta_{i})^{\ast}
                    \cdot
                    \prod_{j \in C_{2}}T_{j}(\theta_{j})
                \Big]
            $$

        \continueparagraph
        for any (equivalently: for an arbitrary) family of
        independent \randomvar's
            ${\theta_{i} \distributedAs \Gamma^{(\alpha_{i})}_{i}(t)}$,
            $i \in K$.
        This follows by applying
            \Cref{%
                prop:basic-operations:expectation-approximations:sig:article-stochastic-raj-dahya,%
                prop:expectation-approximants:product:sig:article-stochastic-raj-dahya,%
            }.
    \end{proof}

\begin{rem}
    For this paper we only need the statement in
        \Cref{prop:expectation-approximants:poly:sig:article-stochastic-raj-dahya}
    to hold for regular polynomials of absolute degree at most $1$.
    If we nonetheless sought to widen the scope of this result,
    observe that limitations arise for powers $n$ of $X_{i}$ with $\abs{n}\geq 2$,
    since then the application of
        \Crefit{prop:basic-operations:expectation-approximations:sig:article-stochastic-raj-dahya}{3}
    leads to random variables with different parameterisations.
\end{rem}

As an immediate consequence of \Cref{prop:expectation-approximants:poly:sig:article-stochastic-raj-dahya},
we obtain:

\begin{schattierteboxdunn}[backgroundcolor=leer,nobreak=true]
\begin{lemm}[Transfer result]
\makelabel{lemm:positivity-transfer:poly:sig:article-stochastic-raj-dahya}
    Let
        $d\in\naturals$,
        $\HilbertRaum$ be a Hilbert space,
        and
        $\{T_{i}\}_{i=1}^{d}$
        be a commuting family of contractive $\Cnought$-semigroups on $\HilbertRaum$.
    Further for each $i \in \{1,2,\ldots,d\}$ let
        $(T^{(\alpha)}_{i})_{\alpha \in \Lambda_{i}}$
    be a net of expectation-approximants for $T_{i}$.
    Then for any regular polynomial
        $p \in \complex[X_{1},X_{1}^{-1},X_{2},X_{2}^{-1},\ldots,X_{d},X_{d}^{-1}]$
    with absolute degree at most $1$,
    if
        $p(T_{1}(t_{1}),T_{1}(t_{2}),\ldots,T_{d}(t_{d}))$
        is a positive operator
        for all $\mathbf{t} \in \realsNonNeg^{d}$,
    then
        $p(T^{(\alpha_{1})}_{1}(t_{1}),T^{(\alpha_{1})}_{1}(t_{2}),\ldots,T^{(\alpha_{d})}_{d}(t_{d}))$
        is positive
    for all
        $\boldsymbol{\alpha} \in \prod_{i=1}^{d}\Lambda_{i}$
        and
        $\mathbf{t} \in \realsNonNeg^{d}$.
\end{lemm}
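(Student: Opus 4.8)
The plan is to deduce the positivity of the approximant polynomial directly from \Cref{prop:expectation-approximants:poly:sig:article-stochastic-raj-dahya} together with the elementary observation that Bochner-integration preserves positivity of operators. Essentially all the content of the transfer will already be packaged inside \Cref{prop:expectation-approximants:poly:sig:article-stochastic-raj-dahya}; what remains is a short measure-theoretic argument.

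First I would fix $\boldsymbol{\alpha}\in\prod_{i=1}^{d}\Lambda_{i}$ and $\mathbf{t}\in\realsNonNeg^{d}$, and — writing $(\Gamma^{(\alpha)}_{i})$ for the distribution semigroups associated with the given nets of expectation-approximants — choose, on a common probability space $(\Omega,\Sigma,\Prob)$, independent $\realsNonNeg$-valued random variables $\theta_{i}\distributedAs\Gamma^{(\alpha_{i})}_{i}(t_{i})$ for $i\in\{1,2,\ldots,d\}$. Since $p$ has absolute degree at most $1$, \Cref{prop:expectation-approximants:poly:sig:article-stochastic-raj-dahya} applies and yields
\[
    p(T^{(\alpha_{1})}_{1}(t_{1}),\ldots,T^{(\alpha_{d})}_{d}(t_{d}))
    = \Expected\big[p(T_{1}(\theta_{1}),\ldots,T_{d}(\theta_{d}))\big]
    = \sotInt_{\omega\in\Omega} p(T_{1}(\theta_{1}(\omega)),\ldots,T_{d}(\theta_{d}(\omega)))\:\Prob(\dee\omega).
\]
For each $\omega\in\Omega$ the vector $(\theta_{1}(\omega),\ldots,\theta_{d}(\omega))$ lies in $\realsNonNeg^{d}$, so the hypothesis of the lemma gives that $p(T_{1}(\theta_{1}(\omega)),\ldots,T_{d}(\theta_{d}(\omega)))$ is a positive operator; hence the integrand is an operator-valued function with values in the positive operators, strongly measurable and uniformly bounded (the $T_{i}$ being \topSOT-continuous and contractive and the $\theta_{i}$ measurable), so the $\sotInt$-integral exists by the remarks of \S\ref{sec:introduction:notation:sig:article-stochastic-raj-dahya}.

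Then I would invoke the defining property of $\sotInt$ recalled in \S\ref{sec:introduction:notation:sig:article-stochastic-raj-dahya}: for every $\xi\in\HilbertRaum$,
\[
    \brkt{p(T^{(\alpha_{1})}_{1}(t_{1}),\ldots,T^{(\alpha_{d})}_{d}(t_{d}))\xi}{\xi}
    = \int_{\omega\in\Omega}
        \brkt{p(T_{1}(\theta_{1}(\omega)),\ldots,T_{d}(\theta_{d}(\omega)))\xi}{\xi}
    \:\Prob(\dee\omega)
    \geq 0,
\]
the integrand being pointwise nonnegative. As $\xi\in\HilbertRaum$ was arbitrary this shows that $p(T^{(\alpha_{1})}_{1}(t_{1}),\ldots,T^{(\alpha_{d})}_{d}(t_{d}))$ is positive, and since $\boldsymbol{\alpha}$ and $\mathbf{t}$ were arbitrary the lemma follows. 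The only point needing any care is the interchange of the inner product with the Bochner-integral, but this is immediate from the defining equation $\tilde{T}\xi=\int T(\cdot)\xi\,\dee\mu$ for $\sotInt$-integrals together with continuity of the inner product, applied to the vectors $\xi$; I do not expect any genuine obstacle here.
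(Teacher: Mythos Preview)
Your proposal is correct and matches the paper's approach: the paper states the lemma as an ``immediate consequence'' of \Cref{prop:expectation-approximants:poly:sig:article-stochastic-raj-dahya} without giving an explicit proof, and you have supplied precisely the short positivity-preservation argument for the Bochner integral that the paper leaves implicit.
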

\end{schattierteboxdunn}




\section[First main result]{First main result: Expectation-approximants and polynomial bounds}
\label{sec:first-results:sig:article-stochastic-raj-dahya}


\firstparagraph
We can now prove \Cref{thm:classification-unbounded:poly:sig:article-stochastic-raj-dahya}.

\def\beweislabel{thm:classification-unbounded:poly:sig:article-stochastic-raj-dahya}

\begin{proof}[of \Cref{\beweislabel}]
    First observe by
        \Cref{prop:expectation-approximants:basic:commuting-contractive-families:sig:article-stochastic-raj-dahya}
    that the expectation-approximants constitute commuting families of $\Cnought$-semigroups,
    which converge \wrt the \topSOT-topology uniformly on compact subsets of $\realsNonNeg^{d}$
    to the original family $\{T_{i}\}_{i=1}^{d}$.
    In particular, the implication
        \punktcref{4}{}\ensuremath{\implies}{}\punktcref{1}
    holds by \Cref{thm:classification:dissipativity:sig:article-stochastic-raj-dahya}.
    The implications
        \punktcref{1}{}\ensuremath{\implies}{}\punktcref{2}{}\ensuremath{\implies}{}\punktcref{3}
    hold by \Cref{thm:classification-bounded:poly:sig:article-stochastic-raj-dahya},
    since these do not require the assumption of bounded generators.
    It remains to prove \punktcref{3}{}\ensuremath{\implies}{}\punktcref{4}.
    We first prove this under the assumption
    that the expectation-approximants have bounded generators.

    \paragraph{\punktcref{3}{}\ensuremath{\implies}{}\punktcref{4}, under boundedness assumption on approximants:}
        Let
            $
                \boldsymbol{\alpha}
                \in \prod_{i=1}^{d}\Lambda_{i}
            $
        be arbitrary.
        Let $K \subseteq \{1,2,\ldots,d\}$ be arbitrary.
        By assumption,

            \noparskip
            \begin{eqnarray*}
                p_{K}(T_{1}(t_{1}),T_{2}(t_{2}),\ldots,T_{d}(t_{d}))
                = \displaystyle
                    \sum_{\mathclap{
                        \isPartition{(C_{1},C_{2})}{K}
                    }}
                        \displaystyle
                        \prod_{i \in C_{1}}T_{i}(t_{i})^{\ast}
                        \displaystyle
                        \prod_{j \in C_{2}}T_{j}(t_{j})
                \geq \zeromatrix
            \end{eqnarray*}

        \continueparagraph
        for all
            $\mathbf{t} \in \realsNonNeg^{d}$.
        By \Cref{lemm:positivity-transfer:poly:sig:article-stochastic-raj-dahya},
        this positivity can be transferred to the family of expectation-approximants,
        \idest
            ${
                p_{K}(T^{(\alpha_{1})}_{1}(t_{1}),T^{(\alpha_{2})}_{2}(t_{2}),\ldots,T^{(\alpha_{d})}_{d}(t_{d}))
                \geq \zeromatrix
            }$
        for all
            $\mathbf{t} \in \realsNonNeg^{d}$.
        Since this holds for all $K \subseteq \{1,2,\ldots,n\}$
        and since by assumption the semigroups in
            $\{T^{(\alpha_{i})}_{i}\}_{i=1}^{d}$
        have bounded generators,
        by \Cref{thm:classification-bounded:poly:sig:article-stochastic-raj-dahya}
            $\{T^{(\alpha_{i})}_{i}\}_{i=1}^{d}$
        has a simultaneous regular unitary dilation.

        We have thus established the equivalence of
            \punktcref{1}, \punktcref{2}, \punktcref{3}, \punktcref{4}
        under the assumption that the expectation-approximants have bounded generators.
        Since by \Cref{lemm:classical-examples-expectation-approximants:sig:article-stochastic-raj-dahya}
        one can always use the Hille- or Yosida-approximants (which by construction have bounded generators),
        the equivalences
            \punktcref{1}{}\ensuremath{\iff}{}\punktcref{2}{}\ensuremath{\iff}{}\punktcref{3}
        hold in general. (\textdagger)

        \paragraph{\punktcref{3}{}\ensuremath{\implies}{}\punktcref{4}, without boundedness assumption on approximants:}
        By the above,
            \punktcref{4}{}\ensuremath{\implies}{}\punktcref{1}{}\ensuremath{\implies}{}\punktcref{2}{}\ensuremath{\implies}{}\punktcref{3}
        continue to hold.
        Exactly as argued above, \punktcref{3} implies that
            ${
                p_{K}(T^{(\alpha_{1})}_{1}(t_{1}),T^{(\alpha_{2})}_{2}(t_{2}),\ldots,T^{(\alpha_{d})}_{d}(t_{d}))
                \geq \zeromatrix
            }$
        for all
            $\mathbf{t} \in \realsNonNeg^{d}$,
            $K\subseteq\{1,2,\ldots,d\}$,
            and
            $\boldsymbol{\alpha} \in \prod_{i=1}^{d}\Lambda_{i}$.
        Let $\boldsymbol{\alpha} \in \prod_{i=1}^{d}\Lambda_{i}$ be arbitrary.
        By the general validity of \punktcref{3}{}\ensuremath{\implies}{}\punktcref{1} (see (\textdagger))
        applied to $\{T^{(\alpha_{i})}_{i}\}_{i=1}^{d}$,
        it follows that
            $\{T^{(\alpha_{i})}_{i}\}_{i=1}^{d}$
        has a simultaneous regular unitary dilation.
        Returning to the current context, this means that \punktcref{4} holds.
\end{proof}



\def\beweislabel{thm:classification-unbounded:poly:sig:article-stochastic-raj-dahya}
\begin{rem}
\label{rem:explicit-requirement-contractive-not-needed:sig:article-stochastic-raj-dahya}
    In \Cref{thm:classification-unbounded:poly:sig:article-stochastic-raj-dahya},
    since \punktcref{1}, \punktcref{2}, and \punktcref{3}
    each imply that the $T_{i}$ are contractive,
    for the equivalences
        \punktcref{1}{}\ensuremath{\iff}{}\punktcref{2}{}\ensuremath{\iff}{}\punktcref{3}
    it is not necessary to explicitly demand that the $T_{i}$ are contractive.
    To see that \punktcref{2} implies that the $T_{i}$ are contractive,
    consider polynomial bounds applied to the regular polynomials
    $X_{i}$ for each $i\in\{1,2,\ldots,d\}$.
    To see that \punktcref{3} implies that the $T_{i}$ are contractive,
    let $i\in\{1,2,\ldots,d\}$ be arbitrary.
    By considering the polynomial $p_{\{i\}}$,
    we have that
        $(\onematrix-T_{i}(t)^{\ast}) + (\onematrix-T_{i}(t)) \geq \zeromatrix$
    for all $t\in\realsNonNeg$.
    It readily follows that the generator $A_{i}$ of $T_{i}$
    is dissipative and thus that $T_{i}$ is contractive.
\end{rem}

\def\beweislabel{thm:classification-unbounded:poly:sig:article-stochastic-raj-dahya}
\begin{rem}
\label{rem:significance-of-result-other-dilation:sig:article-stochastic-raj-dahya}
    If we replace \emph{\usesinglequotes{simultaneous regular unitary dilations}}
    by \emph{\usesinglequotes{simultaneous unitary dilations}},
    classifications via bounds of algebraic expressions
    include \cite[Theorem~2.2]{LeMerdy1996DilMultiParam} in the continuous setting,
    and \cite[Corollaries~4.9]{Pisier2001bookCBmaps} in the discrete setting (\idest for tuples of commuting contractions).
    In the discrete setting,
    the existence of a simultaneous \emph{power dilation}
    is characterised by the \emph{complete boundedness}
    of polynomials defined on the operators.
    In the continuous setting,
    le~Merdy characterised the existence of a simultaneous unitary dilation
    by the complete boundedness of a certain functional calculus map
    generated by the resolvents of the semigroups
    and defined on an algebra of holomorphic functions.
    Adding to this picture, our result in
    \Cref{thm:classification-unbounded:poly:sig:article-stochastic-raj-dahya}
    (see also \Cref{rem:explicit-requirement-contractive-not-needed:sig:article-stochastic-raj-dahya})
    answers \Cref{qstn:polynomial:sig:article-stochastic-raj-dahya} positively
    and we now know in full generality
    that finite commuting families of $\Cnought$-semigroups over Hilbert spaces
    have a simultaneous regular unitary dilation if and only if
    they satisfy regular polynomial bounds.
    (In \S{}\ref{sec:functional-calculus:sig:article-stochastic-raj-dahya} a further characterisation
    of regular unitary dilations via the complete positivity of a functional calculus shall be presented,
    which is more general than the regular polynomial bounds and further adds to this picture.)
\end{rem}

\def\beweislabel{thm:classification-unbounded:poly:sig:article-stochastic-raj-dahya}
\begin{rem}
\label{rem:theorem-positivity-in-a-neighbourhood:sig:article-stochastic-raj-dahya}
    Consider a neighbourhood $U \subseteq \realsNonNeg^{d}$ of $\zerovector$
    and let (\punktref{3}') be the assertion of the positivity of the operators in \punktcref{3}
    for $\mathbf{t} \in U$ instead of for all $\mathbf{t}\in\realsNonNeg^{d}$.
    We show that \Cref{thm:classification-unbounded:poly:sig:article-stochastic-raj-dahya}
    holds with \punktcref{3} replaced by (\punktref{3}').
    For this it suffices to show that (\punktref{3}') implies \punktcref{1}
    (\idest that $\{T_{i}\}_{i=1}^{d}$ has a simultaneous regular unitary dilation).

    To this end, fix some ${a\in\realsPos}$ such that ${U \supseteq [0,\:a)^{d}}$
    and consider the subnet of Hille-approximants:
        $(T^{(\lambda)}_{i})_{\lambda\in(a^{-1},\:\infty)}$ for $T_{i}$
        for each ${i\in\{1,2,\ldots,d\}}$.
    Clearly, taking subnets does not affect the fact that these are expectation-approximants.
    Thus applying \Cref{thm:classification-unbounded:poly:sig:article-stochastic-raj-dahya}
    yields that $\{T_{i}\}_{i=1}^{d}$ has a simultaneous regular unitary dilation,
    if and only if
        $\{T^{(\lambda_{i})}_{i}\}_{i=1}^{d}$
    has a simultaneous regular unitary dilation
    for each ${\boldsymbol{\lambda}\in(a^{-1},\:\infty)^{d}}$.
    Since the Hille-approximants have bounded generators,
    by \Cref{thm:classification-bounded:poly:sig:article-stochastic-raj-dahya}
    this holds if and only if the generators
        $\{A^{(\lambda_{i})}_{i}\}_{i=1}^{d}$
    of the Hille-approximants are completely dissipative
    for each ${\boldsymbol{\lambda}\in(a^{-1},\:\infty)^{d}}$.
    By construction of the Hille-approximants, this holds if and only if

        \noparskip
        $$
            (-\tfrac{1}{2})^{\card{K}}
            \sum_{\isPartition{(C_{1},C_{2})}{K}}
                \Big(
                    \prod_{i\in C_{1}}
                        \Big(
                            \lambda_{i}
                            (T_{i}(\tfrac{1}{\lambda_{i}})-\onematrix)
                        \Big)
                \Big)^{\ast}
                \prod_{j\in C_{2}}
                    \Big(
                        \lambda_{j}
                        (T_{j}(\tfrac{1}{\lambda_{j}})-\onematrix)
                    \Big)
            \geq \zeromatrix
        $$

    \continueparagraph
    for all ${\boldsymbol{\lambda}\in(a^{-1},\:\infty)^{d}}$
    and all $K \subseteq \{1,2,\ldots,d\}$.
    This holds if and only if
        ${p_{K}(T_{1}(t_{1}), T_{1}(t_{2}),\ldots,T_{d}(t_{d})) \geq \zeromatrix}$
    for all ${\mathbf{t}\in(0,\:a)^{d}}$
    and all $K \subseteq \{1,2,\ldots,d\}$.
    This in turn is clearly implied by  (\punktref{3}').
\end{rem}

\def\beweislabel{thm:classification-unbounded:poly:sig:article-stochastic-raj-dahya}
\begin{rem}
\label{rem:other-approximants:sig:article-stochastic-raj-dahya}
    By
        \punktcref{1}{}\ensuremath{\iff}{}\punktcref{4}
        of
        \Cref{thm:classification-unbounded:poly:sig:article-stochastic-raj-dahya}
    as well as
        \Cref{lemm:classical-examples-expectation-approximants:sig:article-stochastic-raj-dahya},
    we have answered
        \Cref{qstn:classical-approximants:dilation:sig:article-stochastic-raj-dahya}
    positively
    for a large class of naturally definable examples:
    The simultaneous regular unitary dilatability
        of a commuting contractive family
    is characterised by
    the simultaneous regular unitary dilatability
        of families of semigroups
    in any given net of expectation-approximants,
    \exempli the nets of Hille- and Yosida-approximants.
    It would be interesting to know whether this characterisation holds
    for other classically defined approximants,
    such as the approximants that occur in Kendall's formula
    and the semigroup version of the Post-Widder theorem
    (\cf
        \cite[Theorem~10.4.3~and~11.6.6]{Hillephillips1957faAndSg},
        \cite[Theorems~2--3]{Chung1962exp}%
    ).
    In these cases, in place of the stochastic methods used in the present paper,
    other techniques such as product formulae used with path integrals,
    \exempli Chernoff approximations
    (see \cite{Chernoff1968article,Chernoff1974book,Butko2020chernoff}),
    may be better suited.
\end{rem}

\def\beweislabel{thm:classification-unbounded:poly:sig:article-stochastic-raj-dahya}
\begin{rem}
\label{rem:complete-dissipativity:sig:article-stochastic-raj-dahya}
    Let $\{A_{i}\}_{i=1}^{d}$ be the generators of a commuting family
        $\{T_{i}\}_{i=1}^{d}$
    of $\Cnought$-semigroups on a Hilbert space $\HilbertRaum$.
    Consider now the subset
        $D \subseteq \HilbertRaum$,
    of elements $\xi$ which lie in the domain of
        $A_{k_{n}} \cdot \ldots \cdot A_{k_{2}} \cdot A_{k_{1}}$
    and such that the value of
        ${(A_{k_{n}} \cdot \ldots \cdot A_{k_{2}} \cdot A_{k_{1}})\xi}$
    does not depend on the order of the $k_{i}$
    for injective sequences
        $(k_{i})_{i=1}^{n} \subseteq \{1,2,\ldots,d\}$,
        $n\in\{1,2,\ldots,d\}$.
    It can be shown that $D$ is a dense linear subspace of $\HilbertRaum$
    (see \Cref{prop:dense-subspace-multi-param:sig:article-stochastic-raj-dahya}).
    One may thus extend the notion of complete dissipativity to
    families of generators $\{A_{i}\}_{i=1}^{d}$
    (without the boundedness assumption),
    by demanding that

        \noparskip
        $$
            (-\tfrac{1}{2})^{\card{K}}
            \sum_{\isPartition{(C_{1},C_{2})}{K}}
                \brktLong{
                    \Big(
                    \prod_{j\in C_{2}}
                        A_{j}
                    \Big)
                    \xi
                }{
                    \Big(
                        \prod_{i\in C_{1}}
                            A_{i}
                    \Big)
                    \xi
                }
            \geq 0
        $$

    \continueparagraph
    for all ${\xi \in D}$ and ${K \subseteq \{1,2,\ldots,d\}}$.

    Appealing to condition \eqcref{it:3:thm:classification-unbounded:poly:sig:article-stochastic-raj-dahya}
    of \Cref{thm:classification-unbounded:poly:sig:article-stochastic-raj-dahya},
    if $\{T_{i}\}_{i=1}^{d}$ has a simultaneous regular unitary dilation,
    then by taking limits of the positive expressions
        $
            \frac{1}{2^{K}\prod_{i=1}^{d}t_{i}}
            \brktLong{
                p_{K}(
                    T_{1}(t_{1}),
                    T_{2}(t_{2}),
                    \ldots,
                    T_{d}(t_{d})
                )\xi
            }{\xi}
        $
    for ${\realsPos \ni t_{i} \longrightarrow 0}$
    successively for each ${i \in \{1,2,\ldots,d\}}$
    and for each ${\xi \in D}$,
    we obtain that $\{A_{i}\}_{i=1}^{d}$ is completely dissipative
    by the above definition.
    However, it is unclear whether the reverse implication holds.
    The approach used in \cite[Theorem~1.1]{Dahya2023dilation}
    to link complete dissipativity to
    a previously known condition%
    \footnote{%
        \emph{Brehmer positivity},
        see
        \cite[Theorem~3.2]{Ptak1985}.
    }
    which characterises the existence of simultaneous regular unitary dilations in general,
    relies on asymptotic expressions,
    which in turn rely on the boundedness of the generators.
    Hence an alternative approach is needed for the unbounded setting.
    It would thus be of interest to know whether the above (or an alternative)
    definition of complete dissipativity can be shown to be equivalent
    to any (and thus all) of the conditions in \Cref{thm:classification-unbounded:poly:sig:article-stochastic-raj-dahya}.
\end{rem}




\section[Functional calculi associated with dilations]{Functional calculi associated with dilations}
\label{sec:functional-calculus:sig:article-stochastic-raj-dahya}

\firstparagraph
We now leave the setting of commuting families and turn our attention
to classical dynamical systems modelled by
\topSOT-continuous homomorphisms between topological monoids
and bounded operators over a Hilbert space.
In this section we provide characterisations of unitary and regular unitary dilations
via \emph{functional calculi} defined on (subalgebras of) certain $C^{\ast}$-algebras
related to topological groups.
We thus begin in \S{}\ref{sec:functional-calculus:harmonic-analysis:sig:article-stochastic-raj-dahya}
by recalling the \onetoone-correspondence between ${}^{\ast}$\=/representations of group $C^{\ast}$-algebras
and unitary representations of topological groups.
Then in \S{}\ref{sec:functional-calculus:continuous:sig:article-stochastic-raj-dahya}
we shall use the Wittstock-Haagerup result,
which involves extending and then dilating \emph{completely bounded} maps.
We build on this to generalise le~Merdy's approach to characterise unitary dilatability.
Finally, in \S{}\ref{sec:functional-calculus:discrete:sig:article-stochastic-raj-dahya}
we shall use Averson's result and Stinespring's theorem,
which involves extending and then dilating \emph{completely positive} maps.
We build on this to obtain a characterisation of regular unitary dilations
similar to that of Sz.-Nagy and Foias
(see \cite[Theorem~I.7.1~b]{Nagy1970}).

Throughout this section, $(G,\cdot,e)$ (or simply: $G$)
shall denote a locally compact topological group%
\footnote{%
    Since we are only concerned with continuous maps between $G$
    and other Hausdorff topological groups
    (\exempli the group of unitaries on a Hilbert space under the $\topSOT$-topology),
    it is not important to assume that $G$ be Hausdorff
    (\cf \cite[\S{}1.2]{Deitmar2014bookHarmonicAn}).
    Nonetheless, all of our examples
    (see \Cref{sec:introduction:monoids:sig:article-stochastic-raj-dahya})
    are Hausdorff.
}
and $M$ shall denote a (closed) submonoid of $G$
so that
    $(M,\cdot,e)$ (or simply $M$),
equipped with the relative topology,
comprises a (locally compact) topological monoid.
We furthermore let $\lambda_{G}$ denote a left-invariant Haar-measure on $G$
and express integrals of $G$ via ${\int_{x \in G}\:\cdot\:\dee x}$.


\subsection[Abstract harmonic analysis]{Abstract harmonic analysis}
\label{sec:functional-calculus:harmonic-analysis:sig:article-stochastic-raj-dahya}

\firstparagraph
In order to study dilations of classical dynamical systems on Hilbert spaces,
we shall make use of a fundamental relationship between unitary representations
and ${}^{\ast}$\=/representations of $C^{\ast}$-algebras.
We recall these facts here.
For a detailed exposition,
see \exempli
    \cite[\S{}3.2 and \S{}7.1]{Folland2015bookHarmonicAnalysis},
    \cite[\S{}3.3]{Deitmar2014bookHarmonicAn}.
Let $G$ be a locally compact group
(for which we can fix a left-invariant Haar measure)
and let ${\Delta(\cdot) \colon G\to(\realsPos,\cdot,1)}$
be the \emph{modular function},
which is a continuous homomorphism.
Then $L^{1}(G)$ forms a Banach ${}^{\ast}$\=/algebra
under the convolution operation (viewed as \usesinglequotes{multiplication})
and involution defined by

    \noparskip
    $$
        (f_{1}\ast f_{2})(x)
        \colonequals
        \int_{y \in G}
            f_{1}(y)f_{2}(y^{-1}x)
        \:\dee y
    $$

\continueparagraph
and

    \noparskip
    $$
        f^{\ast}(x)
        \colonequals
            (f(x^{-1}))^{\ast}
            \Delta(x^{-1})
    $$

\continueparagraph
respectively,
for $f, f_{1},f_{2}\in L^{1}(G)$, $x\in G$.
It can then be shown, for any Hilbert space $\HilbertRaum$,
that there is a natural \onetoone-correspondence between
    non-degenerate%
    \footnote{%
        \idest there is no $\xi\in\HilbertRaum\without\{\zerovector\}$
        such that
            $\pi(f)\xi=\zerovector$
        for all $f \in L^{1}(G)$.
        This is the case, \exempli for irreducible representations.
    }
    ${}^{\ast}$\=/representations $\pi$ of $(L^{1}(G),\ast,{}^{\ast})$
    on $\HilbertRaum$
and \topSOT-continuous unitary representations $U$ of $G$
    on $\HilbertRaum$
given by

    \noparskip
    \begin{equation}
    \label{eq:correspondence-unitary-to-star-repr:shift:sig:article-stochastic-raj-dahya}
        U(x)\pi(f) = \pi(L_{x}f) = \pi(f(x^{-1}\cdot ))
    \end{equation}

\continueparagraph
and

    \noparskip
    \begin{equation}
    \label{eq:correspondence-unitary-to-star-repr:inner-product:sig:article-stochastic-raj-dahya}
        \brkt{\pi(f)\xi}{\eta}
        = \int_{x \in G}
            f(x)\brkt{U(x)\xi}{\eta}
        \:\dee x
    \end{equation}

\continueparagraph
for $f\in L^{1}(G)$, $x\in G$, $\xi,\eta\in\HilbertRaum$.

Let $\hat{G}$ denote the set of all
    irreducible ${}^{\ast}$\=/representations of $L^{1}(G)$
    (equivalently: irreducible unitary representations of $G$),
    up to unitary equivalence.
One can then equip
    $(L^{1}(G),\ast,{}^{\ast})$
with the norm

    \noparskip
    $$
        \norm{f}_{\ast}
        \colonequals
            \sup_{[\pi]\in\hat{G}}
                \norm{\pi(f)}
    $$

\continueparagraph
for $f\in L^{1}(G)$.
This renders $(L^{1}(G),\ast,{}^{\ast})$
a dense ${}^{\ast}$\=/subalgebra of a $C^{\ast}$-algebra,
which is referred to as the
\highlightTerm{group $C^{\ast}$-algebra} for $G$,
and is denoted $C^{\ast}(G)$.
Note also that $\norm{f}_{\ast} \leq \norm{f}$
holds for all $f \in L^{1}(G)$.

\begin{conv}
    For simplicity we shall view $L^{1}(G)$ as a subset of $C^{\ast}(G)$.
    We shall interchangeably denote the group $C^{\ast}$-algebra for $G$
    as $C^{\ast}(G)$ and as $\quer{L^{1}(G)}$.
    Equipping $G$ with the discrete topology yields a locally compact group
    with the counting measure as the Haar measure.
    In this case, the $L^{1}$-space is simply $\ell^{1}(G)$.
    We shall thus denote the group $C^{\ast}$-algebra associated
    with the discretised $G$ via $\quer{\ell^{1}(G)}$.
\end{conv}

As a $C^{\ast}$-algebra, $C^{\ast}(G)$ has a unique unital extension.
Note that $L^{1}(G)$ contains a unital element
if and only if $G$ is discrete,
in which case, the unit is $\delta_{e}$.
For this reason,
we shall denote the unital extension of $C^{\ast}(G)$ by

    \noparskip
    $$
        \complex\cdot\delta_{e} + C^{\ast}(G)
    $$

\continueparagraph
in case $G$ is continuous.
If $G$ is discrete, we have that
    $\complex\cdot\delta_{e} + C^{\ast}(G) = C^{\ast}(G)$.
Otherwise the above extension can be understood as
    $\complex\cdot\delta_{e} \oplus C^{\ast}(G)$.

Relying on the density of $L^{1}(G)$ in $C^{\ast}(G)$,
the above correspondence can be restated.
The following is a slightly adapted version of the proofs in \cite{Folland2015bookHarmonicAnalysis}:

\begin{prop}[Correspondence between ${}^{\ast}$- and unitary representations]
\makelabel{prop:correspondence-unitary-to-star:sig:article-stochastic-raj-dahya}
    Let
        $G$ be a locally compact topological group
        and
        $\HilbertRaum$ a Hilbert space.
    Then for every
        \topSOT-continuous unitary representation $U$ of
        $G$ on $\HilbertRaum$
    there exists a
        non-degenerate ${}^{\ast}$\=/representation $\pi = \pi_{U}$ of
        $C^{\ast}(G)$ on $\HilbertRaum$,
    such that
        \eqcref{eq:correspondence-unitary-to-star-repr:shift:sig:article-stochastic-raj-dahya}
        and
        \eqcref{eq:correspondence-unitary-to-star-repr:inner-product:sig:article-stochastic-raj-dahya}
    hold.
    And for every
        non-degenerate ${}^{\ast}$\=/representation $\pi$ of
        $C^{\ast}(G)$ on $\HilbertRaum$,
    there exists an
        \topSOT-continuous unitary representation $U = U_{\pi}$ of
        $G$ on $\HilbertRaum$,
    such that
        \eqcref{eq:correspondence-unitary-to-star-repr:inner-product:sig:article-stochastic-raj-dahya}
    holds.
    The constructions
        ${U\mapsto\pi_{U}}$ and ${\pi\mapsto U_{\pi}}$
    establish a \onetoone-correspondence
    between \topSOT-continuous unitary representations of $G$ on $\HilbertRaum$
    and non-degenerate ${}^{\ast}$\=/representations of $C^{\ast}(G)$ on $\HilbertRaum$.
\end{prop}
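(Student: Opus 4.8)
The plan is to construct the two assignments $U \mapsto \pi_U$ and $\pi \mapsto U_\pi$ explicitly and then check that they are mutually inverse. Throughout I would fix a two-sided approximate identity $(e_i)_{i \in I}$ for the Banach ${}^{\ast}$\=/algebra $L^1(G)$ with $\|e_i\|_1 = 1$ and with supports shrinking to $\{e\}$, and I would rely on the routine Fubini and approximate-identity computations of abstract harmonic analysis, for which one may simply cite \cite{Folland2015bookHarmonicAnalysis} and adapt.

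\textbf{Construction of $\pi_U$.} Given an \topSOT-continuous unitary representation $U$ of $G$ on $\HilbertRaum$, I would set $\pi_U(f) \colonequals \sotInt_{x \in G} f(x)\,U(x)\:\dee x \in \BoundedOps{\HilbertRaum}$ for $f \in L^1(G)$; this strong integral exists since $U$ is contractive and \topSOT-continuous, it satisfies $\|\pi_U(f)\| \le \|f\|_1$, and \eqcref{eq:correspondence-unitary-to-star-repr:inner-product:sig:article-stochastic-raj-dahya} holds by construction. A Fubini computation using left-invariance of $\lambda_{G}$ gives $\pi_U(f_1 \ast f_2) = \pi_U(f_1)\pi_U(f_2)$, and the definition of the involution together with the modular function $\Delta$ gives $\pi_U(f^{\ast}) = \pi_U(f)^{\ast}$. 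Hence $\pi_U$ is a ${}^{\ast}$\=/representation of the Banach ${}^{\ast}$\=/algebra $L^1(G)$, so it is bounded with respect to the enveloping norm $\|\cdot\|_{\ast}$ and extends uniquely to a ${}^{\ast}$\=/representation of $C^{\ast}(G)$. Non-degeneracy follows from $\pi_U(e_i)\xi \to \xi$ for every $\xi \in \HilbertRaum$, which in turn uses the \topSOT-continuity of $U$ near $e$. Finally the substitution $z = xy$ in the defining integral yields \eqcref{eq:correspondence-unitary-to-star-repr:shift:sig:article-stochastic-raj-dahya}.

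\textbf{Construction of $U_\pi$.} Given a non-degenerate ${}^{\ast}$\=/representation $\pi$ of $C^{\ast}(G)$, I would restrict it to $L^1(G)$ and attempt to define $U_\pi(x)$ on the dense linear span of $\{\pi(f)\xi : f \in L^1(G),\ \xi \in \HilbertRaum\}$ by $U_\pi(x)\,\pi(f)\xi \colonequals \pi(L_x f)\xi$, where $(L_x f)(y) = f(x^{-1}y)$. The step I expect to be the \emph{main obstacle} is showing this prescription is well defined and contractive: a priori $\sum_j \pi(f_j)\xi_j = \zerovector$ does not obviously force $\sum_j \pi(L_x f_j)\xi_j = \zerovector$. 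I would resolve this via the identity $L_x(g \ast h) = (L_x g)\ast h$ and the approximate identity: $\pi(L_x f_j)\xi_j = \lim_i \pi\bigl(L_x(e_i \ast f_j)\bigr)\xi_j = \lim_i \pi(L_x e_i)\,\pi(f_j)\xi_j$, so that $\sum_j \pi(L_x f_j)\xi_j = \lim_i \pi(L_x e_i)\bigl(\sum_j \pi(f_j)\xi_j\bigr)$, which simultaneously proves well-definedness and, since $\|\pi(L_x e_i)\| \le \|L_x e_i\|_{\ast} \le \|L_x e_i\|_1 = 1$, gives $\|U_\pi(x)\| \le 1$. Then $U_\pi(e) = \onematrix$ and $U_\pi(xy) = U_\pi(x)U_\pi(y)$ are immediate from $L_{xy} = L_x L_y$, so $U_\pi(x)$ has the contraction $U_\pi(x^{-1})$ as two-sided inverse and is therefore unitary. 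Finally, \topSOT-continuity reduces, via the uniform bound $\|U_\pi(x)\| \le 1$ and density, to continuity of $x \mapsto \pi(L_x f)\xi$, which follows from continuity of translation $x \mapsto L_x f$ in $L^1(G)$ and boundedness of $\pi$.

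\textbf{Formula \eqcref{eq:correspondence-unitary-to-star-repr:inner-product:sig:article-stochastic-raj-dahya} for $U_\pi$, and the bijection.} To obtain \eqcref{eq:correspondence-unitary-to-star-repr:inner-product:sig:article-stochastic-raj-dahya}, I would put $\tilde\pi(f) \colonequals \sotInt_{x \in G} f(x)\,U_\pi(x)\:\dee x$ and pull $\pi$ through the Bochner integral $f \ast g = \int_{x \in G} f(x)\,L_x g\:\dee x$ (valid since $\pi$ is bounded and $x \mapsto L_x g$ is Bochner integrable in $L^1(G)$), obtaining $\tilde\pi(f)\,\pi(g) = \pi(f \ast g) = \pi(f)\pi(g)$ for all $g$; non-degeneracy then forces $\tilde\pi(f) = \pi(f)$, which is exactly \eqcref{eq:correspondence-unitary-to-star-repr:inner-product:sig:article-stochastic-raj-dahya} for $U_\pi$. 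For bijectivity, $U_{\pi_U} = U$ because both representations satisfy $V(x)\,\pi_U(f) = \pi_U(L_x f)$ on a dense set — the first by construction of $U_{\pi_U}$, the second by \eqcref{eq:correspondence-unitary-to-star-repr:shift:sig:article-stochastic-raj-dahya} for $U$ — and $\pi_{U_\pi} = \pi$ because $\pi_{U_\pi}(f) = \sotInt_{x \in G} f(x)\,U_\pi(x)\:\dee x = \tilde\pi(f) = \pi(f)$ on $L^1(G)$, hence on all of $C^{\ast}(G)$ by density. This establishes the \onetoone-correspondence, completing the argument.
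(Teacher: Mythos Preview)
Your proposal is correct and follows essentially the same standard abstract-harmonic-analysis route as the paper; the paper simply cites \cite[Theorems~3.9~and~3.11]{Folland2015bookHarmonicAnalysis} for the two constructions you spell out explicitly, and then verifies the bijection via \eqcref{eq:correspondence-unitary-to-star-repr:inner-product:sig:article-stochastic-raj-dahya} (testing against indicator functions $\einser_{K}$ for the $U$-direction) rather than via the shift relation \eqcref{eq:correspondence-unitary-to-star-repr:shift:sig:article-stochastic-raj-dahya} as you do. The only substantive difference is the passage from $L^{1}(G)$ to $C^{\ast}(G)$: you appeal directly to boundedness of any ${}^{\ast}$\=/representation by the enveloping $C^{\ast}$-norm, whereas the paper decomposes $\HilbertRaum$ via Zorn's lemma into $\pi$-invariant cyclic subspaces in order to reduce to the defining supremum over $\hat{G}$ --- both are standard and your phrasing is arguably cleaner.
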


    \begin{proof}
        For the first claim, by
            \cite[Theorem~3.9]{Folland2015bookHarmonicAnalysis},
        there exists a non-degenerate ${}^{\ast}$\=/representation $\pi$ of
            $L^{1}(G)$ on $\HilbertRaum$
        satisfying
            \eqcref{eq:correspondence-unitary-to-star-repr:shift:sig:article-stochastic-raj-dahya}
            and
            \eqcref{eq:correspondence-unitary-to-star-repr:inner-product:sig:article-stochastic-raj-dahya}.
        Using Zorn's lemma, one can decompose
            $\HilbertRaum$
        into closed $\pi$-invariant subspaces with cyclic vectors
            $\HilbertRaum=\bigoplus_{i}\HilbertRaum_{i}$,
        thereby obtaining irreducible ${}^{\ast}$\=/representations $\pi_{i}$
        of $L^{1}(G)$ on each $\HilbertRaum_{i}$.
        By construction of the $\norm{\cdot}_{\ast}$-norm,
        each $\pi_{i}$ and thus $\pi$ itself are contractive.
        It follows that $\pi$ can be extended
        to a bounded linear operator between
            $\quer{L^{1}(G)} = C^{\ast}(G)$
        and $\BoundedOps{\HilbertRaum}$,
        which we may also call $\pi$.
        Since the algebraic operations in $C^{\ast}(G)$ are continuous,
        $\pi$ remains a ${}^{\ast}$\=/representation.

        Towards the second claim, by density, $\pi$ is a non-degenerate ${}^{\ast}$\=/representation
        of $L^{1}(G)$ on $\HilbertRaum$,
        and thus by \cite[Theorem~3.11]{Folland2015bookHarmonicAnalysis},
        there exists an \topSOT-continuous unitary representation $U$ of
            $G$ on $\HilbertRaum$,
        satisfying
            \eqcref{eq:correspondence-unitary-to-star-repr:inner-product:sig:article-stochastic-raj-dahya}.

        Towards the final claim, we first show that $\pi_{U_{\pi}} = \pi$
        for each ${}^{\ast}$\=/representation $\pi$ of $C^{\ast}(G)$ on $\HilbertRaum$.
        Using
            \eqcref{eq:correspondence-unitary-to-star-repr:inner-product:sig:article-stochastic-raj-dahya}
        yields
            $
                \brkt{\pi_{U_{\pi}}(f)xi}{\eta}
                = \int_{x \in G}f(x)\brkt{U_{\pi}(x)\xi}{\eta}\:\dee x
                = \brkt{\pi(f)\xi}{\eta}
            $
        for all $f\in L^{1}(G)$ and $\xi,\eta\in\HilbertRaum$.
        By the density of $L^{1}(G)$ in $C^{\ast}(G)$
        and continuity of $\pi$, $\pi_{U_{\pi}}$,
        it follows that $\pi_{U_{\pi}} = \pi$.
        To show that $U_{\pi_{U}} = U$
        for each \topSOT-continuous unitary representation $U$ of
            $G$ on $\HilbertRaum$,
        using
            \eqcref{eq:correspondence-unitary-to-star-repr:inner-product:sig:article-stochastic-raj-dahya}
        yields
            $
                \int_{K}\brkt{U_{\pi_{U}}(x)\xi}{\eta}\:\dee x
                = \brkt{\pi_{U}(\einser_{K})xi}{\eta}
                = \int_{K}\brkt{U(x)\xi}{\eta}\:\dee x
            $
        for all compact $K \subseteq G$
        and all $\xi,\eta\in\HilbertRaum$.
        By the \topWOT-continuity of $U$, $U_{\pi_{U}}$,
        it follows that $U_{\pi_{U}} = U$.
    \end{proof}



\subsection[The Phillips--le~Merdy functional calculus]{The Phillips--le~Merdy functional calculus}
\label{sec:functional-calculus:continuous:sig:article-stochastic-raj-dahya}

\firstparagraph
For a locally compact topological group $G$ and closed submonoid $M \subseteq G$,
we consider

    \noparskip
    $$
        \LpPlusCpct{1}{M}{G}
        \colonequals
        \{
            f \in L^{1}(G)
            \mid
            \quer{\supp}(f) \subseteq M,
            ~\text{compact}
        \},
    $$

\continueparagraph
which is a subalgebra of the convolution algebra $(L^{1}(G),\ast)$
and thus of the unital $C^{\ast}$-algebra ${\complex\cdot\delta_{e} + C^{\ast}(G)}$.
For an \topSOT-continuous homomorphism
    ${T \colon M \to \BoundedOps{\HilbertRaum}}$
on $\HilbertRaum$, consider the map
    ${\funcCalcCts \colon \complex\cdot\delta_{e} + \LpPlusCpct{1}{M}{G} \to \BoundedOps{\HilbertRaum}}$
defined by

    \noparskip
    \begin{equation}
    \label{eq:functional-calc:cts:sig:article-stochastic-raj-dahya}
        \funcCalcCts(c\delta_{e} + f)
            = c\onematrix
            + \sotInt_{x \in \quer{\supp}(f)}
                f(x)T(x)
            \:\dee x
    \end{equation}

\continueparagraph
for $f\in\LpPlusCpct{1}{M}{G}$, $c\in\complex$.%
\footnote{%
    Note that since $T$ is $\topSOT$-continuous,
    one has that $T(\cdot)\restr{K}\xi$
    has a norm-compact and thus separable image
    for any compact subset $K \subseteq M$.
    Thus
        ${\quer{\supp}(f) \ni x \mapsto f(x)T(x)\xi \in \HilbertRaum}$
    is Bochner-integrable for each $f\in\LpPlusCpct{1}{M}{G}$.
    In particular, we do not need to demand the separability of $\HilbertRaum$.
}
It is easy to verify that $\funcCalcCts$ is a linear unital map
which satisfies
    $
        \funcCalcCts(f \ast g)
        = \funcCalcCts(f)
        \funcCalcCts(g)
    $
for $f,g\in\LpPlusCpct{1}{M}{G}$.
We shall refer to this unital homomorphism
as the \highlightTerm{Phillips--le~Merdy calculus}
associated with $T$.

\begin{rem}
\makelabel{rem:continuous-functional-calculus-le-merdy-phillips:sig:article-stochastic-raj-dahya}
    Consider the case $(G,M)=(\reals^{d},\realsNonNeg^{d})$.
    Let
        $\HilbertRaum$ be a Hilbert space
        and
        ${T \colon \realsNonNeg^{d} \to \BoundedOps{\HilbertRaum}}$
        be an \topSOT-continuous contractive homomorphism on $\HilbertRaum$.
    Further let
        $\{T_{i}\}_{i=1}^{d}$
    be the corresponding commuting family of contractive $\Cnought$-semigroups.
    Then \eqcref{eq:functional-calc:cts:sig:article-stochastic-raj-dahya}
    can be naturally extended
    to
        ${
            \mathcal{B}
            \colonequals
            \complex\cdot\delta_{\zerovector} \oplus \mathcal{B}_{0}
        }$,
    where
        ${
            \mathcal{B}_{0}
            \colonequals
            \{f\in L^{1}(\reals^{d}) \mid \supp(f) \subseteq \realsNonNeg^{d}\}
        }$.
    This extension of $\funcCalcCts$ to $\mathcal{B}$
    is essentially a restriction of the \emph{Phillips calculus}
    (see \exempli
        \cite[Lemma~VIII.1.12~{[$\ast$]}]{Dunfordschwartz1988BookLinOpI},
        \cite[Proposition~3.3.5]{Reissig2005abstractresolvent}%
    )
    applied to the generators $\{A_{i}\}_{i=1}^{d}$ of $\{T_{i}\}_{i=1}^{d}$.
    Consider now arbitrary
        $\mathbf{n}\in\naturalsPos^{d}$
    and
        $\boldsymbol{\lambda}\in\realsPos^{d}$,
    and let
        $f\in \mathcal{B}_{0}$
    be defined by
    ${
        f(\mathbf{t})
        \colonequals
        \prod_{i=1}^{d}
            \lambda_{i}\frac{(\lambda_{i}t_{i})^{n_{i}-1}}{(n-1)!}
            e^{-\lambda_{i}t_{i}}
    }$
    for $\mathbf{t}\in\realsNonNeg^{d}$.
    One has
        ${
            (\Fourier f)(\boldsymbol{\omega})
            = \prod_{i=1}^{d}
                (\frac{\lambda_{i}}{\lambda_{i} + \iunit\omega_{i}})^{n_{i}}
        }$
    for $\boldsymbol{\omega}\in\reals^{d}$,
    where ${\Fourier \colon L^{1}(\reals^{d}) \to \Cts_{0}{\reals^{d}}}$
    denotes the Fourier transform.
    Then
        ${
            \funcCalcCts(f)
            = \prod_{i=1}^{d}
                \sotInt_{s=0}^{\infty}
                    \lambda_{i}\frac{(\lambda_{i}s)^{n_{i}-1}}{(n-1)!}
                    e^{-\lambda_{i}s}
                    T_{i}(s)
                    \:\dee s
            = \prod_{i=1}^{d}
                (\lambda_{i} \opResolvent{A_{i}}{\lambda_{i}})^{n_{i}}
        }$.
    Thus, in the case of $(G,M)=(\reals^{d},\realsNonNeg^{d})$,
    the Phillips calculus is a common extension of $\funcCalcCts$
    and (upon application of the Fourier transform)
    the one defined by le~Merdy in \cite[Definition~2.1]{LeMerdy1996DilMultiParam}.
\end{rem}

The main result in this subsection is thus inspired by le~Merdy's characterisation of simultaneous unitary dilations.
By simplifying his approach and utilising results from abstract harmonic analysis we obtain a generalisation.
Before presenting this, we need the following approximation.

\begin{prop}
\makelabel{prop:func-calc-cts:basic:shift-unit:sig:article-stochastic-raj-dahya}
    Let $G$ be a locally compact topological group and
    $M \subseteq G$ a closed submonoid.
    Further let
        ${T \colon M \to \BoundedOps{\HilbertRaum}}$
    be an \topSOT-continuous homomorphism.
    If $M$ is $e$-joint,
    then there exists a net
        $(f_{i})_{i} \subseteq \LpPlusCpct{1}{M}{G}$
    such that
        $\norm{f_{i}}_{\ast} \leq 1$ for all $i\in I$,
        ${\norm{f_{i} \ast g - g}_{\ast} \underset{i}{\longrightarrow} 0}$
    for all $g \in L^{1}(G)$,
    and
        $\funcCalcCts(f_{i}) \underset{i}{\longrightarrow} \onematrix$
    \wrt the \topSOT-topology.
\end{prop}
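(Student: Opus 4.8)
The idea is to take a standard bounded approximate identity for $L^1(G)$ and truncate it to $M$, using $e$-jointness to make sure the truncated functions are still genuine probability densities concentrating at $e$. Concretely, I would fix a neighbourhood basis $(U_i)_{i\in I}$ of the identity $e\in G$ consisting of relatively compact open sets, and direct $I$ by reverse inclusion. Since $M$ is $e$-joint we have $\lambda_G(U_i\cap M)>0$, and since $U_i$ is relatively compact this is also finite; so I can define
\[
    f_i\colonequals\frac{1}{\lambda_G(U_i\cap M)}\,\einser_{U_i\cap M}\in L^1(G),
\]
a non-negative function with $\int_G f_i\,\dee\lambda_G=1$. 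Because $M$ is closed, $\quer{\supp}(f_i)\subseteq\quer{U_i\cap M}\subseteq\quer{U_i}\cap M$, a compact subset of $M$; hence each $f_i$ lies in $\LpPlusCpct{1}{M}{G}$. The norm bound is then immediate from the inequality $\norm{\cdot}_\ast\leq\norm{\cdot}_{L^1}$ recorded above: $\norm{f_i}_\ast\leq\norm{f_i}_{L^1}=1$.

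For the approximate-identity property I would again dominate the $\norm{\cdot}_\ast$-norm by the $L^1$-norm and run the classical computation. Using $\int f_i=1$ one writes, for $g\in L^1(G)$,
\[
    (f_i\ast g-g)(x)=\int_{y\in G}f_i(y)\bigl(g(y^{-1}x)-g(x)\bigr)\,\dee y,
\]
whence $\norm{f_i\ast g-g}_{L^1}\leq\int_G f_i(y)\,\norm{L_y g-g}_{L^1}\,\dee y\leq\sup_{y\in U_i}\norm{L_y g-g}_{L^1}$. Strong continuity of the left-translation action of $G$ on $L^1(G)$, together with the fact that $(U_i)$ is a neighbourhood basis of $e$, then yields $\norm{f_i\ast g-g}_\ast\to 0$ for every $g\in L^1(G)$.

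For the $\topSOT$-convergence $\funcCalcCts(f_i)\to\onematrix$, fix $\xi\in\HilbertRaum$. By definition $\funcCalcCts(f_i)\xi=\int_{x\in G}f_i(x)\,T(x)\xi\,\dee x$, a Bochner integral which is well defined since $T$ is $\topSOT$-continuous and $\quer{\supp}(f_i)$ is compact. As $\int f_i=1$ and $T(e)=\onematrix$,
\[
    \norm{\funcCalcCts(f_i)\xi-\xi}
    =\normLong{\int_{x\in G}f_i(x)\,(T(x)\xi-\xi)\,\dee x}
    \leq\sup_{x\in U_i\cap M}\norm{(T(x)-\onematrix)\xi},
\]
and the right-hand side tends to $0$ because $x\mapsto T(x)\xi$ is continuous at $e$ and the $U_i$ shrink to $e$. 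This gives $\topSOT$-convergence.

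The only genuinely non-routine point is the interplay between the two competing requirements on $(f_i)$: the functions must be supported in the (possibly thin, possibly non-open) set $M$, yet still be normalisable and still concentrate their mass at $e$. This is exactly what $e$-jointness delivers — it guarantees $\lambda_G(U_i\cap M)>0$, so that $\einser_{U_i\cap M}$ can be rescaled to a probability density. Once that is secured, everything else is the textbook approximate-identity estimate, so I expect no further obstacle.
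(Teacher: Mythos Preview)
Your proposal is correct and follows essentially the same approach as the paper: both take $f_i$ to be the normalised indicator $\frac{1}{\lambda_G(U_i\cap M)}\einser_{U_i\cap M}$ for a shrinking neighbourhood base at $e$, use $\norm{\cdot}_\ast\leq\norm{\cdot}_{L^1}$ for the norm bound and the approximate-identity estimate, and bound $\norm{\funcCalcCts(f_i)\xi-\xi}$ by $\sup_{x\in U_i\cap M}\norm{(T(x)-\onematrix)\xi}$. The only cosmetic differences are that the paper indexes by compact neighbourhoods rather than relatively compact open ones, and reduces the convolution estimate to $g\in C_c(G)$ by density rather than invoking strong continuity of translation on all of $L^1(G)$ directly.
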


    \begin{proof}
        Let $\mathcal{N}$ be the filter of all compact neighbourhoods of
        the group identity $e \in G$.
        By compactness and $e$-jointedness,
            $0 < \lambda_{G}(K \cap M) \leq \lambda_{G}(K) < \infty$
        for all $K \in \mathcal{N}$.
        Thus
            $f_{K} \colonequals \frac{1}{\lambda_{G}(K \cap M)}\einser_{K \cap M}$
        is a well-defined element of $L^{1}(G)$
        with $\quer{\supp}(f) = K \cap M \subseteq M$
        for each $K\in\mathcal{N}$,
        and moreover
            $
                \norm{f}_{\ast}
                \leq \norm{f}_{1}
                = 1
            $.
        Hence it suffices to consider the net
            $(f_{K})_{K\in\mathcal{N}}$
        directly ordered by reverse inclusion.

        Let $g \in L^{1}(G)$ be arbitrary.
        Then
            $\norm{f_{K} \ast g - g}_{\ast} \leq \norm{f_{K} \ast g - g}_{1}$
        for each $h\in\realsPos$.
        Since convolution is $L^{1}$-continuous
        (see \exempli \cite[Proposition~2.40a)]{Folland2015bookHarmonicAnalysis}) and $\Cts_{c}{G}$ is dense in $L^{1}(G)$,
        it suffices to prove that
            ${\norm{f_{K} \ast g - g}_{1} \underset{K}{\longrightarrow}  0}$
        for each $g\in\Cts_{c}{G}$.
        This is a straightforward matter that can be derived
        using uniform continuity arguments.

        Towards the final claim, one has
            $
                \norm{\funcCalcCts(f_{K})\xi - \xi}
                = \normLong{
                    \Big(
                        \frac{1}{\lambda_{G}(K \cap M)}
                        \sotInt_{x \in K \cap M}
                            T(x)
                        \:\dee x
                    \Big)
                    \xi
                    -
                    \xi
                }
                \leq
                    \frac{1}{\lambda_{G}(K \cap M)}
                    \int_{x \in K \cap M}
                        \norm{T(x)\xi - T(e)\xi}
                    \:\dee x
                \leq
                    \sup_{x \in K \cap M}
                        \norm{(T(x) - T(e))\xi}
            $
        for each $\xi\in\HilbertRaum$,
        whereby the latter expression
        converges to $0$ since $T$ is \topSOT-continuous.
    \end{proof}

\begin{schattierteboxdunn}[backgroundcolor=leer,nobreak=true]
\begin{lemm}[Generalisation of le~Merdy's characterisation of dilations]
\makelabel{lemm:cts-functional-calculus:sig:article-stochastic-raj-dahya}
    Let $G$ be a locally compact topological group and
    $M \subseteq G$ an $e$-joint closed submonoid.
    Further let
        ${T \colon M \to \BoundedOps{\HilbertRaum}}$
    be an \topSOT-continuous homomorphism.
    Then $T$ has a unitary dilation if and only if $\funcCalcCts$ is completely bounded
    with $\normCb{\funcCalcCts} \leq 1$.
\end{lemm}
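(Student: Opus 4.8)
The plan is to route both implications through the unital $C^{\ast}$-algebra $\mathcal{A} \colonequals \complex\cdot\delta_{e} + C^{\ast}(G)$, inside which $\mathcal{A}_{0} \colonequals \complex\cdot\delta_{e} + \LpPlusCpct{1}{M}{G}$ is a unital operator subspace carrying $\funcCalcCts$, using \Cref{prop:correspondence-unitary-to-star:sig:article-stochastic-raj-dahya} to pass between \topSOT-continuous unitary representations of $G$ and non-degenerate ${}^{\ast}$\=/representations of $C^{\ast}(G)$. For the forward implication, given a unitary dilation $(\HilbertRaum_{1},U,r)$ of $T$, I would take the non-degenerate ${}^{\ast}$\=/representation $\pi=\pi_{U}$ of $C^{\ast}(G)$ on $\HilbertRaum_{1}$ from \Cref{prop:correspondence-unitary-to-star:sig:article-stochastic-raj-dahya}, extend it to the unital ${}^{\ast}$\=/representation $\tilde{\pi}$ of $\mathcal{A}$ with $\tilde{\pi}(\delta_{e})=\onematrix$, and set $\Phi \colonequals r^{\ast}\tilde{\pi}(\cdot)r$. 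As a compression of a unital ${}^{\ast}$\=/representation by an isometry, $\Phi$ is unital completely positive, hence completely contractive; moreover \eqcref{eq:correspondence-unitary-to-star-repr:inner-product:sig:article-stochastic-raj-dahya} together with $T(x)=r^{\ast}U(x)r$ for $x\in M$ --- the support condition $\quer{\supp}(f)\subseteq M$ being exactly what lets one replace $U$ by $T$ --- gives $\brkt{r^{\ast}\pi(f)r\xi}{\eta}=\int_{x\in G}f(x)\brkt{T(x)\xi}{\eta}\:\dee x$ for $f\in\LpPlusCpct{1}{M}{G}$, so $\Phi\restr{\mathcal{A}_{0}}=\funcCalcCts$ by \eqcref{eq:functional-calc:cts:sig:article-stochastic-raj-dahya}; therefore $\normCb{\funcCalcCts}\leq\normCb{\Phi}\leq 1$.

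For the converse, assume $\funcCalcCts$ is completely bounded with $\normCb{\funcCalcCts}\leq 1$; it is unital since $\funcCalcCts(\delta_{e})=\onematrix$. First I would invoke the Wittstock--Haagerup extension theorem to obtain a completely bounded $\tilde{\Psi}\colon\mathcal{A}\to\BoundedOps{\HilbertRaum}$ extending $\funcCalcCts$ with $\normCb{\tilde{\Psi}}\leq 1$; being unital and completely contractive into $\BoundedOps{\HilbertRaum}$, $\tilde{\Psi}$ is unital completely positive, so Stinespring's theorem supplies a Hilbert space $\HilbertRaum_{1}$, a unital ${}^{\ast}$\=/representation $\rho$ of $\mathcal{A}$ on $\HilbertRaum_{1}$, and an isometry $r\colon\HilbertRaum\to\HilbertRaum_{1}$ with $\tilde{\Psi}(a)=r^{\ast}\rho(a)r$. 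Write $\pi\colonequals\rho\restr{C^{\ast}(G)}$ and let $P$ be the orthogonal projection onto the essential subspace of $\pi$. The crucial step is to show $r\HilbertRaum\subseteq P\HilbertRaum_{1}$: applying \Cref{prop:func-calc-cts:basic:shift-unit:sig:article-stochastic-raj-dahya} --- where $e$-jointedness is used, via $\lambda_{G}(K\cap M)>0$ --- produces a net $(f_{K})\subseteq\LpPlusCpct{1}{M}{G}$ with $\norm{f_{K}}_{\ast}\leq 1$ and $\funcCalcCts(f_{K})\to\onematrix$ in the \topSOT-topology, whence $r^{\ast}\pi(f_{K})r=\tilde{\Psi}(f_{K})=\funcCalcCts(f_{K})\to\onematrix$; since $\norm{\pi(f_{K})}\leq 1$ and $r$ is an isometry this forces $\norm{\pi(f_{K})r\xi}\to\norm{r\xi}$ for every $\xi$, and then $\pi(f_{K})r\xi=\pi(f_{K})Pr\xi$ with $\norm{\pi(f_{K})}\leq 1$ forces $\norm{Pr\xi}=\norm{r\xi}$, i.e. $(\onematrix-P)r\xi=0$.

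Granting $r\HilbertRaum\subseteq P\HilbertRaum_{1}$, the subrepresentation $\pi'\colonequals\pi\restr{P\HilbertRaum_{1}}$ is non-degenerate, so \Cref{prop:correspondence-unitary-to-star:sig:article-stochastic-raj-dahya} provides an \topSOT-continuous unitary representation $U$ of $G$ on $P\HilbertRaum_{1}$ with $\brkt{\pi'(f)\zeta}{\zeta'}=\int_{x\in G}f(x)\brkt{U(x)\zeta}{\zeta'}\:\dee x$ for $f\in L^{1}(G)$. For $f\in\LpPlusCpct{1}{M}{G}$ one then has simultaneously $\brkt{\funcCalcCts(f)\xi}{\eta}=\int_{x\in G}f(x)\brkt{T(x)\xi}{\eta}\:\dee x$ by \eqcref{eq:functional-calc:cts:sig:article-stochastic-raj-dahya} and $\brkt{\funcCalcCts(f)\xi}{\eta}=\brkt{\tilde{\Psi}(f)\xi}{\eta}=\brkt{\pi'(f)r\xi}{r\eta}=\int_{x\in G}f(x)\brkt{U(x)r\xi}{r\eta}\:\dee x$. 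Specialising $f$ to the left translates $L_{x_{0}}f_{K}$, $x_{0}\in M$, of the neighbourhood-averages $f_{K}=\lambda_{G}(K\cap M)^{-1}\einser_{K\cap M}$ from the proof of \Cref{prop:func-calc-cts:basic:shift-unit:sig:article-stochastic-raj-dahya} (these lie in $\LpPlusCpct{1}{M}{G}$ since $x_{0}M\subseteq M$, $M$ is closed, and $\lambda_{G}$ is left-invariant) and letting $K$ shrink to $\{e\}$, the continuity of $x\mapsto\brkt{T(x)\xi}{\eta}$ and $x\mapsto\brkt{U(x)r\xi}{r\eta}$ forces $\brkt{T(x_{0})\xi}{\eta}=\brkt{U(x_{0})r\xi}{r\eta}$ for all $x_{0}\in M$ and $\xi,\eta\in\HilbertRaum$, that is $T(x)=r^{\ast}U(x)r$ on $M$; thus $(P\HilbertRaum_{1},U,r)$ is a unitary dilation of $T$.

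The main obstacle is the converse, and within it the inclusion $r\HilbertRaum\subseteq P\HilbertRaum_{1}$: it is what makes \Cref{prop:correspondence-unitary-to-star:sig:article-stochastic-raj-dahya} applicable and what pins the dilation identity down to the elements of $M$, and it is the only place the $e$-jointedness hypothesis enters, through the approximate identity of \Cref{prop:func-calc-cts:basic:shift-unit:sig:article-stochastic-raj-dahya}. A lesser point to watch is that the Wittstock--Haagerup extension stays unital (automatic, since it extends a unital map) and completely contractive, so that, as a unital completely contractive map into $\BoundedOps{\HilbertRaum}$, it is completely positive and Stinespring's theorem can be applied.
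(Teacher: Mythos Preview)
Your argument is correct, and the necessity direction is essentially the paper's. In the sufficiency direction, however, you take a genuinely different route. The paper first realises $\mathcal{A}$ inside some $\BoundedOps{\HilbertRaum_{0}}$ via GNS and then invokes the Wittstock--Haagerup--Paulsen dilation theorem for completely bounded unital \emph{homomorphisms} on a subalgebra (Pisier, Theorem~4.8), which produces a ${}^{\ast}$\=/representation $\pi$ and the factorisation $\funcCalcCts = r^{\ast}\pi(\cdot)r$ directly, using multiplicativity of $\funcCalcCts$ in an essential way. You instead use only the linear structure: Wittstock extension to a unital completely contractive map on all of $\mathcal{A}$, then the implication \emph{unital completely contractive $\Rightarrow$ unital completely positive}, then Stinespring. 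This avoids the GNS embedding and the homomorphism-specific dilation theorem. Your handling of non-degeneracy via the essential-subspace projection $P$ and the norm argument $\norm{\pi(f_{K})r\xi}\to\norm{r\xi}$ is also different from the paper's cutdown to $\quer{\pi(\mathcal{A})r\HilbertRaum}$, though both serve the same purpose and both use \Cref{prop:func-calc-cts:basic:shift-unit:sig:article-stochastic-raj-dahya} (and hence $e$-jointedness) at exactly this point. For the final identification $T(x)=r^{\ast}U(x)r$, the paper exploits the covariance relation $U(x)\pi(f)=\pi(L_{x}f)$ to obtain $r^{\ast}U(x)\pi(f)r=T(x)\funcCalcCts(f)$ and then argues that $\pi(f_{i})r\to r$ in \topWOT; your specialisation of the two integral formulae to the translates $L_{x_{0}}f_{K}$ and passage to the limit $K\to\{e\}$ is more direct and arguably cleaner. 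Both approaches are valid; yours trades the homomorphism-dilation machinery for the more elementary extension--Stinespring chain, at the cost of an extra (but short) argument to locate $r\HilbertRaum$ inside the essential subspace.
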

\end{schattierteboxdunn}

    \begin{proof}
        Via the GNS-construction, we may view
        the unital $C^{\ast}$-algebra,
            ${\mathcal{A} \colonequals \complex\cdot\delta_{e} + C^{\ast}(G)}$
        as a unital $C^{\ast}$-subalgebra of $\BoundedOps{\HilbertRaum_{0}}$
        for some Hilbert space $\HilbertRaum_{0}$.

        \paragraph{Necessity:}
            Suppose that $(\HilbertRaum_{1},U,r)$ is a unitary dilation of $T$.
            By the correspondence between unitary representations
            and non-degenerate ${}^{\ast}$\=/representations
            in abstract harmonic analysis
            (see \Cref{prop:correspondence-unitary-to-star:sig:article-stochastic-raj-dahya}),
            there exists a non-degenerate ${}^{\ast}$\=/representation
                ${
                    \pi \colon
                    C^{\ast}(G)
                    \to
                    \BoundedOps{\HilbertRaum_{1}}
                }$
            such that \eqcref{eq:correspondence-unitary-to-star-repr:inner-product:sig:article-stochastic-raj-dahya} holds.
            Now, $\pi$ can be extended to
                ${
                    \tilde{\pi} \colon
                    \complex\cdot\delta_{e} + C^{\ast}(G)
                    \to
                    \BoundedOps{\HilbertRaum_{1}}
                }$
            defined by
                ${\tilde{\pi}(c\delta_{e} + f) \colonequals c\onematrix + \pi(f)}$
                for $f\in C^{\ast}(G)$ and $c\in\complex$.
            By non-degeneracy, $\tilde{\pi}$ is a well-defined%
            \footnote{%
                in particular, if $C^{\ast}(G)$ already contains the $\delta_{e}$,
                then by non-degeneracy $\pi(\delta_{e})=\onematrix$
                must hold.
            }
            unital ${}^{\ast}$\=/algebra representation.
            Using the unitary dilation and the ${}^{\ast}$\=/representation,
            one obtains

                \noparskip
                \begin{eqnarray*}
                    \brkt{\funcCalcCts(c\delta_{e} + f)\xi}{\eta}
                    &= &\brktLong{
                            \Big(
                                c\onematrix
                                +
                                \displaystyle
                                \sotInt_{x \in \quer{\supp}(f)}
                                    f(x)T(x)
                                \:\dee x
                            \Big)
                            \xi
                        }{\eta}\\
                    &= &c\brkt{\xi}{\eta}
                        +
                        \displaystyle
                        \int_{x\in\quer{\supp}(f)}
                            f(x)
                            \underbrace{
                                \brkt{T(x)\xi}{\eta}
                            }_{=\brkt{r^{\ast}U(x)r\xi}{\eta}}
                        \:\dee x\\
                    &= &c\brkt{r\xi}{r\eta}
                        +
                        \displaystyle
                        \int_{x\in\quer{\supp}(f)}
                            f(x)
                            \brkt{U(x)r\xi}{r\eta}
                        \:\dee x\\
                    &\eqcrefoverset{eq:correspondence-unitary-to-star-repr:inner-product:sig:article-stochastic-raj-dahya}{=}
                        &\brkt{c\onematrix\,r\xi}{r\eta}
                        +
                        \brkt{\pi(f)r\xi}{r\eta}\\
                    &= &\brkt{r^{\ast}\tilde{\pi}(c\cdot\delta_{e} + f)r\xi}{\eta}
                \end{eqnarray*}

            \continueparagraph
            for all
                $f \in \LpPlusCpct{1}{M}{G} \subseteq L^{1}(G)$,
                $c\in\complex$,
                $\xi,\eta\in\HilbertRaum$.
            Thus $\funcCalcCts(a) = r^{\ast}\tilde{\pi}(a)r$
            for all $a\in \complex\cdot\delta_{e} + \LpPlusCpct{1}{M}{G}$.
            Let $n\in\naturals$ and $\mathbf{a} = (a_{ij})_{ij} \in M_{n}(\complex\cdot\delta_{e} + \LpPlusCpct{1}{M}{G})$.
            Then
                $
                    (\funcCalcCts\otimes\id_{M_{n}})(\mathbf{a})
                    = (\funcCalcCts(a_{ij}))_{ij}
                    = (r^{\ast}\tilde{\pi}(a_{ij})r)_{ij}
                    = (r \otimes \onematrix_{M_{n}})^{\ast}
                        (\tilde{\pi} \otimes \id_{M_{n}})(\mathbf{a})
                    (r \otimes \onematrix_{M_{n}})
                $.
            Now, since $\tilde{\pi} \otimes \id_{M_{n}}$
            is a unital ${}^{\ast}$\=/algebra representation,
            it is necessarily contractive.
            It follows that $\funcCalcCts\otimes\id_{M_{n}}$ is contractive for each $n\in\naturals$.
            Thus $\normCb{\funcCalcCts} \leq 1$.

        \paragraph{Sufficiency:}
            If $\funcCalcCts$ is completely bounded with $\normCb{\funcCalcCts} \leq 1$,
            then, since $\funcCalcCts$ is a (contractive!) unital homomorphism
            defined on the unital subalgebra
                ${
                    \complex\cdot\delta_{e} + \LpPlusCpct{1}{M}{G}
                    \subseteq \mathcal{A}
                    \subseteq \BoundedOps{\HilbertRaum_{0}}
                }$,
            one may apply the dilation theorem in \cite[Theorem~4.8]{Pisier2001bookCBmaps}%
            \footnote{%
                This result is based on the factorisation of completely bounded maps
                (see \cite[Theorem~4.8]{Pisier2001bookCBmaps}),
                which builds on Stinespring's theorem.
                Pisier attributes this to the independent work of
                    Wittstock and Haagerup as well as Paulsen,
                and cites
                \cite{Wittstock1981ArticleDilationDe,Wittstock1984inCollDilationEn}
                as well as an unpublished work from Haagerup
                (\cf the comments before Theorem~3.6 in \cite{Pisier2001bookCBmaps}).
            }
            and obtain
                a Hilbert space $\HilbertRaum_{1}$,
                an isometry $r\in\BoundedOps{\HilbertRaum}{\HilbertRaum_{1}}$,
                and
                a ${}^{\ast}$\=/algebra representation
                ${\pi \colon \BoundedOps{\HilbertRaum_{0}} \to \BoundedOps{\HilbertRaum_{1}}}$
            such that

                \noparskip
                \begin{equation}
                \label{eq:1:\beweislabel}
                    \funcCalcCts(a) = r^{\ast}\,\pi(a)\,r
                \end{equation}

            \continueparagraph
            for all $a\in\complex\cdot\delta_{e} + \LpPlusCpct{1}{M}{G}$.
            Since $\mathcal{A}$ is unital,
            we can replace $\HilbertRaum_{1}$
            with the $\pi$-invariant subspace $\quer{\pi(\mathcal{A})r\HilbertRaum}$,
            which contains $r\HilbertRaum$.
            In particular, one can assume that $\pi$ is a unital
            (and thus non-degenerate) ${}^{\ast}$\=/representation
            of $\mathcal{A}$ on $\HilbertRaum_{1}$
            and that $\pi(\mathcal{A})r\HilbertRaum$ is dense in $\HilbertRaum_{1}$.
            Since $L^{1}(G)$ is $\norm{\cdot}_{\ast}$-dense in $C^{\ast}(G)$,
            it follows that
                $(\complex\cdot\onematrix + \pi(L^{1}(G)))r\HilbertRaum$
            is dense in $\HilbertRaum_{1}$.

            By the correspondence between unitary representations
            and non-degenerate ${}^{\ast}$\=/representations
            in abstract harmonic analysis
            (see \Cref{prop:correspondence-unitary-to-star:sig:article-stochastic-raj-dahya}),
            there exists a (unique) \topSOT-continuous unitary representation
                ${U \colon G \to \BoundedOps{\HilbertRaum_{1}}}$
            such that

                \noparskip
                \begin{equation}
                \label{eq:2:\beweislabel}
                    U(x)\pi(f) = \pi(L_{x}f) = \pi(f(x^{-1}\cdot))
                \end{equation}

            \continueparagraph
            for all $x \in G$ and all $f \in L^{1}(G)$.
            Our goal is to show that $(\HilbertRaum_{1},U,r)$ is a unitary dilation of $T$.

            Let $x \in M$ and $f \in \LpPlusCpct{1}{M}{G}$ be arbitrary.
            Then
                $L_{x}f = f(x^{-1}\cdot) \in \LpPlusCpct{1}{M}{G}$,
            since
                $\quer{\supp}(f(x^{-1}\cdot)) = x \cdot \quer{\supp}(f) \subseteq M$.
            Applying the construction of $\funcCalcCts$ yields

            \noparskip
            \begin{eqnarray*}
                r^{\ast}\,U(x)\,\pi(f)\,r
                    &\eqcrefoverset{eq:2:\beweislabel}{=}
                        &r^{\ast}\,\pi(\Fourier (f(x^{-1}\cdot)))\,r\\
                    &\eqcrefoverset{eq:1:\beweislabel}{=}
                        &\funcCalcCts(f(x^{-1}\cdot))\\
                    &= &\displaystyle
                        \sotInt_{y \in \quer{\supp}(f(x^{-1}\cdot))}
                            f(x^{-1}y) T(y)
                            \:\dee y\\
                    &= &\displaystyle
                        \sotInt_{y \in \quer{\supp}(f)}
                            f(y) T(xy)
                            \:\dee y\\
                    &= &T(x)
                        \cdot
                        \displaystyle
                        \sotInt_{y \in \quer{\supp}(f)}
                            f(y) T(y)
                            \:\dee y\\
                    &= &T(x)\funcCalcCts(f).\\
            \end{eqnarray*}

            Consider now the net
                $(f_{i})_{i \in I} \subseteq \LpPlusCpct{1}{M}{G}$
            constructed
            in \Cref{prop:func-calc-cts:basic:shift-unit:sig:article-stochastic-raj-dahya},
            for which it holds that
                ${\funcCalcCts(f_{i}) \underset{i}{\longrightarrow} \onematrix}$
            \wrt the \topSOT-topology.
            By construction,
                $\norm{f_{i}}_{\ast} \leq 1$
            for each $i\in I$,
            and since $\pi$ is a ${}^{\ast}$\=/algebra representation,
            it follows that
                $\norm{\pi(f_{i})} \leq \norm{f_{i}}_{\ast} \leq 1$
            for all $i\in I$.
            We now claim that
                ${\pi(f_{i})r \underset{i}{\longrightarrow} r}$
            \wrt the \topWOT-topology.
            Since $(\pi(f_{i}))_{i \in I}$ is uniformly bounded
            and ${(\complex\cdot\onematrix + \pi(L^{1}(G)))r\HilbertRaum}$
            is dense in $\HilbertRaum_{1}$,
            it suffices to show that
                ${
                    \brkt{\pi(f_{i})r\xi}{ar\eta}
                    \underset{i}{\longrightarrow}
                    \brkt{r\xi}{ar\eta}
                }$
            for
                $a\in\complex\cdot\onematrix + \pi(L^{1}(G))$
                and
                $\xi,\eta\in\HilbertRaum$.
            To this end, consider
                an arbitrary $\eta\in\HilbertRaum$
            and
                $a = c\cdot\onematrix + \pi(g)$
            for arbitrary $c\in\complex$, $g\in L^{1}(G)$.
            By the properties of the construction in
                \Cref{prop:func-calc-cts:basic:shift-unit:sig:article-stochastic-raj-dahya}
            one has
                ${\funcCalcCts(f_{i}) \underset{i}{\longrightarrow} \onematrix}$
                \wrt the \topSOT-topology
            as well as
                ${\norm{g^{\ast} \ast f_{i} - g^{\ast}}_{\ast} \underset{i}{\longrightarrow} 0}$
            and thus
                ${\pi(g^{\ast} \ast f_{i}) \underset{i}{\longrightarrow} \pi(g^{\ast}) = \pi(g)^{\ast}}$
            in norm.
            Hence

                \noparskip
                \begin{eqnarray*}
                    \brkt{\pi(f_{i})r\xi}{ar\eta}
                        &= &c^{\ast}\brkt{\pi(f_{i})r\xi}{r\eta}
                            + \brkt{\pi(f_{i})r\xi}{\pi(g)r\eta}\\
                        &= &c^{\ast}\brkt{r^{\ast}\pi(f_{i})r\xi}{\eta}
                            + \brkt{\pi(g)^{\ast}\pi(f_{i})r\xi}{r\eta}\\
                        &\eqcrefoverset{eq:1:\beweislabel}{=}
                            &c^{\ast}\brkt{\funcCalcCts(f_{i})\xi}{\eta}
                            + \brkt{\pi(g^{\ast} \ast f_{i})r\xi}{r\eta}\\
                        &\underset{i}{\longrightarrow}
                            &c^{\ast}\brkt{\onematrix\,\xi}{\eta}
                            + \brkt{\pi(g)^{\ast}r\xi}{r\eta}
                        = \brkt{r\xi}{ar\eta},\\
                \end{eqnarray*}

            \continueparagraph
            from which the claim follows.
            Taking weak limits in the above computation
            applied to the $f_{i}$ thus yields
                $
                    r^{\ast}\,U(x)\,r = T(x)\cdot\onematrix
                $
            for all $x \in M$.
            Hence $(\HilbertRaum_{1},U,r)$ is a unitary dilation of $T$.
    \end{proof}



\subsection[The discrete functional calculus]{The discrete functional calculus}
\label{sec:functional-calculus:discrete:sig:article-stochastic-raj-dahya}

\firstparagraph
For a (not necessarily locally compact!) topological group $G$, we consider

    \noparskip
    $$
        c_{00}(G)
        =
        \{
            f \in \ell^{1}(G)
            \mid
            \supp(f)~\text{finite}
        \},
    $$

\continueparagraph
which is a ${}^{\ast}$\=/subalgebra of the convolution algebra $(\ell^{1}(G),\ast)$
and thus of the unital $C^{\ast}$-algebra $\quer{\ell^{1}(G)}$.
Let $M \subseteq$ be an arbitrary submonoid
and suppose that $(G,M,\cdot^{+})$ is a positivity structure
(see \Cref{defn:positivity-structure-monoids:sig:article-stochastic-raj-dahya}).
For a (not necessarily \topSOT-continuous) homomorphism
    ${T \colon M \to \BoundedOps{\HilbertRaum}}$
on $\HilbertRaum$, consider the map
    ${\funcCalcDiscr \colon c_{00}(G) \to \BoundedOps{\HilbertRaum}}$
defined by

    \noparskip
    \begin{equation}
    \label{eq:functional-calc:discr:sig:article-stochastic-raj-dahya}
        \funcCalcDiscr(f)
            = \sum_{x \in \supp(f)}
                f(x)T(x^{-})^{\ast}T(x^{+})
    \end{equation}

\continueparagraph
for $f\in c_{00}(G)$.
One can readily check that $\funcCalcDiscr$ is a
linear self-adjoint unital map
which satisfies
    $
        \funcCalcDiscr(f \ast g)
        = \funcCalcDiscr(f)
        \funcCalcDiscr(g)
    $
for $f,g\in c_{00}(G)$.
We shall refer to this linear self-adjoint unital map
as the \highlightTerm{discrete functional calculus}
associated with $T$.

\begin{schattierteboxdunn}[backgroundcolor=leer,nobreak=true]
\begin{lemm}[Characterisation of regular dilations \`{a} la Sz.-Nagy]
\makelabel{lemm:discr-functional-calculus:sig:article-stochastic-raj-dahya}
    Let $(G,M,\cdot^{+})$ be a positivity structure
    where $G$ is a topological group
    and $M \subseteq G$ is a submonoid.%
    \footref{ft:disc-func-calc:G-not-locally-compact:sig:article-stochastic-raj-dahya}
    Further let
        ${T \colon M \to \BoundedOps{\HilbertRaum}}$
    be an \topSOT-continuous homomorphism.
    Then $T$ has a regular unitary dilation if and only if $\funcCalcDiscr$ is completely positive.
\end{lemm}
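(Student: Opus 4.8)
The plan is to run the argument in close parallel with the proof of \Cref{lemm:cts-functional-calculus:sig:article-stochastic-raj-dahya}, but with \emph{complete positivity} replacing \emph{complete boundedness}: in place of the Wittstock--Haagerup factorisation of completely bounded maps one uses Arveson's extension theorem together with Stinespring's dilation theorem, and the role played there by $\funcCalcCts$ being a (contractive) unital homomorphism is here played by $\funcCalcDiscr$ being a unital self-adjoint map. The key structural observation, which makes the regular case cleaner than the continuous one, is that for every $x \in G$ the point mass $\delta_{x}$ already lies in $c_{00}(G)$ and, by the very definition \eqcref{eq:functional-calc:discr:sig:article-stochastic-raj-dahya}, $\funcCalcDiscr(\delta_{x}) = T(x^{-})^{\ast}T(x^{+})$; so the entire dilation identity will come out of evaluating a Stinespring representation on the single elements $\delta_{x}$. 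Throughout, $\quer{\ell^{1}(G)}$ denotes the group $C^{\ast}$-algebra of the discretisation of $G$, a unital $C^{\ast}$-algebra with unit $\delta_{e}$, and $c_{00}(G) \subseteq \quer{\ell^{1}(G)}$ is a unital operator system (self-adjoint unital subspace), norm-dense in $\quer{\ell^{1}(G)}$.

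\emph{Necessity.} Suppose $(\HilbertRaum_{1},U,r)$ is a regular unitary dilation, i.e.\ $T(x^{-})^{\ast}T(x^{+}) = r^{\ast}U(x)r$ for all $x \in G$. First I would pass from $U$ to a ${}^{\ast}$-representation. Applying \Cref{prop:correspondence-unitary-to-star:sig:article-stochastic-raj-dahya} to the discretisation of $G$ (locally compact with the counting measure as Haar measure), one obtains a non-degenerate, hence unital, ${}^{\ast}$-representation $\pi$ of $\quer{\ell^{1}(G)}$ on $\HilbertRaum_{1}$ with $\pi(\delta_{x}) = U(x)$ for all $x$. Then for $f \in c_{00}(G)$ one computes directly $\funcCalcDiscr(f) = \sum_{x \in \supp(f)} f(x)\,r^{\ast}U(x)r = r^{\ast}\pi(f)r$. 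Since $\pi$ is a ${}^{\ast}$-representation, $\pi \otimes \id_{M_{n}}$ is positive for each $n\in\naturals$, and compressing by $r \otimes \onematrix_{M_{n}}$ preserves positivity; hence $\funcCalcDiscr$ is completely positive.

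\emph{Sufficiency.} Conversely, assume $\funcCalcDiscr$ is completely positive. Since $e^{+} = e^{-} = e$ we have $\funcCalcDiscr(\delta_{e}) = T(e)^{\ast}T(e) = \onematrix$, so $\funcCalcDiscr$ is a unital completely positive map from the operator system $c_{00}(G)$ inside the unital $C^{\ast}$-algebra $\mathcal{A} \colonequals \quer{\ell^{1}(G)}$ into $\BoundedOps{\HilbertRaum}$. By Arveson's extension theorem it extends to a unital completely positive map $\Phi \colon \mathcal{A} \to \BoundedOps{\HilbertRaum}$, and Stinespring's dilation theorem (see \exempli \cite{Paulsen1986bookCBmapsAndDilations}) then produces a Hilbert space $\HilbertRaum_{1}$, a ${}^{\ast}$-representation $\pi \colon \mathcal{A} \to \BoundedOps{\HilbertRaum_{1}}$, and an isometry $r \in \BoundedOps{\HilbertRaum}{\HilbertRaum_{1}}$ (an isometry precisely because $\Phi$ is unital, so $r^{\ast}r = \Phi(\onematrix) = \onematrix$) with $\Phi(a) = r^{\ast}\pi(a)r$ for all $a \in \mathcal{A}$; passing to the minimal dilation we may assume $\pi$ is unital and $\pi(\mathcal{A})r\HilbertRaum$ is dense in $\HilbertRaum_{1}$. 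Now set $U(x) \colonequals \pi(\delta_{x})$ for $x \in G$. Because $\delta_{x} \ast \delta_{y} = \delta_{xy}$, $\delta_{e}$ is the unit, and $\delta_{x}^{\ast} = \delta_{x^{-1}}$, the map $U$ is a representation of $G$ (as an abstract group) by unitaries, and since $\delta_{x} \in c_{00}(G)$ one gets immediately
$$
    r^{\ast}U(x)r = r^{\ast}\pi(\delta_{x})r = \funcCalcDiscr(\delta_{x}) = T(x^{-})^{\ast}T(x^{+})
    \qquad (x \in G).
$$

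\emph{The hard part} is to upgrade $U$ to an \topSOT-continuous representation of $G$ with its \emph{original} topology: Arveson--Stinespring only "sees" $G$ as a discrete group, so continuity must be recovered afterwards from the continuity of $T$ and of the maps $\cdot^{+},\cdot^{-}$. Here I would argue as follows. The set $\{U(y)r\xi \mid y \in G,\ \xi \in \HilbertRaum\}$ is total in $\HilbertRaum_{1}$ (its span contains $\pi(c_{00}(G))r\HilbertRaum$, which is dense by minimality and norm-density of $c_{00}(G)$ in $\mathcal{A}$), and the $U(x)$ are uniformly bounded; hence by a $3\varepsilon$-argument it suffices to show $x \mapsto \brkt{U(x)U(y)r\xi}{U(z)r\eta}$ is continuous for all $y,z \in G$ and $\xi,\eta \in \HilbertRaum$. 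This quantity equals $\brkt{T(w^{-})^{\ast}T(w^{+})\xi}{\eta} = \brkt{T(w^{+})\xi}{T(w^{-})\eta}$ with $w = z^{-1}xy$; since $x \mapsto z^{-1}xy$ is continuous $G \to G$, the maps $\cdot^{+},\cdot^{-} \colon G \to M$ are continuous, and $T$ is \topSOT-continuous, both $w \mapsto T(w^{+})\xi$ and $w \mapsto T(w^{-})\eta$ are norm-continuous, so their inner product depends continuously on $x$. This gives \topWOT-continuity of $U$ on all of $\HilbertRaum_{1}$, which for a unitary representation is equivalent to \topSOT-continuity. Combining with the displayed identity, $(\HilbertRaum_{1},U,r)$ is a regular unitary dilation of $T$, completing the equivalence. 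Everything apart from this continuity step is routine bookkeeping mirroring \Cref{lemm:cts-functional-calculus:sig:article-stochastic-raj-dahya}.
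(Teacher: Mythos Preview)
Your proposal is correct and follows essentially the same route as the paper: Arveson extension plus Stinespring for sufficiency, compression of a ${}^{\ast}$-representation for necessity, and the continuity of $U$ recovered afterwards by testing against the total set $\{U(y)r\xi\}$ and reducing to the \topWOT-continuity of $x\mapsto T((z^{-1}xy)^{-})^{\ast}T((z^{-1}xy)^{+})$. The only cosmetic difference is that the paper phrases the continuity step via the joint \topSOT-to-\topWOT continuity of $(R,S)\mapsto S^{\ast}R$, whereas you write out the inner product $\brkt{T(w^{+})\xi}{T(w^{-})\eta}$ directly; these are the same argument.
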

\end{schattierteboxdunn}

\footnotetext[ft:disc-func-calc:G-not-locally-compact:sig:article-stochastic-raj-dahya]{
    Note that we neither require $G$ to be locally compact
    nor $M$ to be a measurable subset in this theorem!
}

Parts of the proof of \Cref{lemm:discr-functional-calculus:sig:article-stochastic-raj-dahya}
are similar to \cite[Theorem~I.7.1~b)]{Nagy1970}.
However, there are two main differences.
Firstly, Sz.-Nagy works with extensions of $T$ to all of $G$,
without explicitly defining this
(except in the special cases of $G=\reals^{d}$ and $G=\integers^{d}$).
Secondly, our approach relies on Stinespring's dilation theorem for $C^{\ast}$-algebras,
whilst Sz.-Nagy's approach is more directly connected the theory of unitary representations.

    \def\beweislabel{lemm:discr-functional-calculus:sig:article-stochastic-raj-dahya}
    \begin{proof}[of \Cref{\beweislabel}]
        Let $\mathcal{A} \colonequals \quer{\ell^{1}(G)}$
        be the (unital) group $C^{\ast}$-algebra
        for the discretised version of $G$.
        Note that $c_{00}(G)$ is unital and self-adjoint,
        and thus constitutes an \emph{operator system}
        (\cf \cite[Chapter~2, p.~9]{Paulsen1986bookCBmapsAndDilations}).

        \paragraph{Necessity:}
            Suppose that $(\HilbertRaum_{1},U,r)$ is a regular unitary dilation of $T$.
            By the correspondence between unitary representations
            and non-degenerate ${}^{\ast}$\=/representations
            in abstract harmonic analysis
            (see \Cref{prop:correspondence-unitary-to-star:sig:article-stochastic-raj-dahya}),
            there exists a non-degenerate ${}^{\ast}$\=/representation
                ${
                    \pi \colon
                    \mathcal{A}
                    \to
                    \BoundedOps{\HilbertRaum_{1}}
                }$
            such that \eqcref{eq:correspondence-unitary-to-star-repr:inner-product:sig:article-stochastic-raj-dahya} holds.
            Using the regular unitary dilation and the ${}^{\ast}$\=/representation,
            one obtains

                \noparskip
                \begin{eqnarray*}
                    \brkt{\funcCalcDiscr(f)\xi}{\eta}
                    &= &\brktLong{
                            \Big(
                                \displaystyle
                                \sum_{x \in \supp(f)}
                                    f(x)
                                    \underbrace{
                                        T(x^{-})^{\ast}T(x^{+})
                                    }_{=r^{\ast}U(x)r}
                            \Big)
                            \xi
                        }{\eta}\\
                    &= &\displaystyle
                        \sum_{x \in \supp(f)}
                            f(x)
                            \brkt{U(x)r\xi}{r\eta}\\
                    &\eqcrefoverset{eq:correspondence-unitary-to-star-repr:inner-product:sig:article-stochastic-raj-dahya}{=}
                        &\brkt{\pi(f)r\xi}{r\eta}\\
                    &= &\brkt{r^{\ast}\,\pi(f)\,r\xi}{\eta}
                \end{eqnarray*}

            \continueparagraph
            for all
                $f \in c_{00}(G)$,
                $\xi,\eta\in\HilbertRaum$.
            Thus $\funcCalcDiscr(a) = r^{\ast}\,\pi(a)\,r$
            for all $a\in c_{00}(G)$.
            For $n\in\naturals$ and positive matrices
                $\mathbf{a} = (a_{ij})_{ij} \in M_{n}(c_{00}(G))$
            it follows that
                $
                    (\funcCalcDiscr \otimes \id_{M_{n}})(\mathbf{a})
                        = (\funcCalcDiscr(a_{ij}))_{ij}
                        = \Big(
                            r^{\ast}\,\pi(a_{ij})\,r
                        \Big)_{ij}
                        = (r \otimes \onematrix_{M_{n}})^{\ast}
                            (\pi \otimes \id_{M_{n}})(\mathbf{a})
                            (r \otimes \onematrix_{M_{n}})
                $,
            which is positive,
            since
                ${\pi \otimes \id_{M_{n}}}$
            is a ${}^{\ast}$\=/representation of
            the $C^{\ast}$-algebra $M_{n}(\mathcal{A})$
            and thus positive.
            Thus $\funcCalcDiscr$ is completely positive.

        \paragraph{Sufficiency:}
            Since $c_{00}(G) \subseteq \mathcal{A}$ is an operator system,
            Averson's extension theorem
            (see \cite[Theorem~7.5]{Paulsen1986bookCBmapsAndDilations})
            yields an extension of $\funcCalcDiscr$ to a completely positive map
            between $\mathcal{A}$ and $\BoundedOps{\HilbertRaum}$.
            Stinespring's dilation theorem
            (see \cite[Theorem~1 and \S{}3.~Remarks]{Stinespring1955dilation})
            applied to this yields
                a Hilbert space $\HilbertRaum_{1}$,
                an isometry $r\in\BoundedOps{\HilbertRaum}{\HilbertRaum_{1}}$,
                and
                a unital ${}^{\ast}$\=/representation ${\pi \colon \mathcal{A}\to\BoundedOps{\HilbertRaum_{1}}}$,
            such that

                \noparskip
                \begin{equation}
                \label{eq:1:\beweislabel}
                    \funcCalcDiscr(a) = r^{\ast}\,\pi(a)\,r
                \end{equation}

            \continueparagraph
            holds for all $a\in c_{00}(G)$.
            Since $c_{00}(G)$ is a dense, unital ${}^{\ast}$\=/subalgebra of $\mathcal{A}$,
            we can replace $\HilbertRaum_{1}$ by the $\pi$-invariant closed subspace
                $\quer{c_{00}(G)r\HilbertRaum}$,
            which contains $r\HilbertRaum$.

            Since for all $x,y \in G$
            one has
                $\delta_{x}\ast\delta_{y} = \delta_{xy}$
            and
                $\delta_{x}\in c_{00}(G)\subseteq\mathcal{A}$
            are unitary,%
            \footnote{%
                since
                    $\delta_{x}^{\ast}\ast\delta_{x} = \delta_{x^{-1}x} = \delta_{e}$
                and
                    $\delta_{x}\ast\delta_{x}^{\ast} = \delta_{xx^{-1}} = \delta_{e}$.
            }
            and since $\pi$ is a unital ${}^{\ast}$\=/representation,
            it follows that ${U \colon G \to \BoundedOps{\HilbertRaum}}$
            defined by $U(x) \colonequals \pi(\delta_{x})$
            is a unitary homomorphism of $G$ on $\HilbertRaum$.
            Moreover by \eqcref{eq:1:\beweislabel}

                \noparskip
                \begin{equation}
                \label{eq:2:\beweislabel}
                    T(x^{-})^{\ast}T(x^{+})
                    = \funcCalcDiscr(\delta_{x})
                    = r^{\ast}\,\pi(\delta_{x})\,r
                    = r^{\ast}\,U(x)\,r
                \end{equation}

            \continueparagraph
            for all $x \in G$.
            To show that $(\HilbertRaum_{1},U,r)$ is a regular unitary dilation of $T$,
            it thus remains to show that $U$ is $\topSOT$-continuous.

            To this end, first note that
                ${
                    (\BoundedOps{\HilbertRaum},\topSOT)
                    \times (\BoundedOps{\HilbertRaum},\topSOT)
                    \ni (R, S)
                    \mapsto
                    S^{\ast}R \in (\BoundedOps{\HilbertRaum},\topWOT)
                }$
            is continuous.%
            \footnote{%
                This follows directly from the
                observation that
                    ${
                        (\BoundedOps{\HilbertRaum},\topSOT)^{2}
                        \ni (R,S)
                        \mapsto
                        \brkt{S^{\ast}R\xi}{\eta}
                        = \brkt{R\xi}{S\eta}
                        \in\complex
                    }$
                is continuous for each $\xi,\eta\in\HilbertRaum$,
                which in turn holds,
                since the inner product
                    ${
                        \brkt{}{}
                        \colon (\HilbertRaum,\norm{\cdot})^{2} \to \complex
                    }$
                is continuous (by the Cauchy-Schwarz inequality).
                Note that we do not need to restrict the operators to bounded subsets
                of $\BoundedOps{\HilbertRaum}$
                (\cf \cite[Lemma~3.1]{Ptak1985}).
            }
            So since ${T \colon M \to (\BoundedOps{\HilbertRaum},\topSOT)}$
            and
            $\cdot^{+}$ (and thus $\cdot^{-}$)
            are continuous,
            it follows that
                ${
                    G \ni x
                    \mapsto
                    r^{\ast}U(x)r
                    \eqcrefoverset{eq:2:\beweislabel}{=}
                    T(x^{-})^{\ast}T(x^{+})
                    \in \BoundedOps{\HilbertRaum}
                }$
            is \topWOT-continuous.
            This implies that

                \noparskip
                $$
                    G \ni x
                    \mapsto
                    \begin{array}[t]{0l}
                        \brkt{
                            U(x)
                            \pi(\delta_{y})r\xi
                        }{
                            \pi(\delta_{z})r\eta
                        }\\
                        = \brkt{
                                U(x)
                                U(y)
                                r\xi
                            }{
                                U(z)
                                r\eta
                            }\\
                        = \brkt{
                                r^{\ast}
                                U(z^{-1}xy)
                                r\xi
                            }{
                                r\eta
                            }\\
                    \end{array}
                $$

            \continueparagraph
            is continuous
            for all $\xi,\eta\in\BoundedOps{\HilbertRaum}$, $y,z\in G$,
            which in turn entails the continuity of
                ${
                    G \ni x \mapsto \brkt{U(x)\pi(f)r\xi}{\pi(g)r\eta}
                }$
            for all $\xi,\eta\in\BoundedOps{\HilbertRaum}$, $f,g \in c_{00}(G)$.
            Since $U$ is unitary-valued and $\pi(c_{00}(G))r\HilbertRaum$ is dense in $\HilbertRaum_{1}$,
            it follows that $U$ is a \topWOT- and thus indeed an \topSOT-continuous
            unitary representation of $G$ on $\HilbertRaum_{1}$.
    \end{proof}

\begin{rem}
    The \topSOT-continuity of $T$ was only used
    to prove the \topSOT-continuity of the unitary representation.
    Without this assumption, the above proof shows that
    $T$ has a (not necessarily \topSOT-continuous) regular unitary dilation
    if and only if $\funcCalcDiscr$ is completely positive.
\end{rem}




\section[Second main result]{Second main results: Unitary approximants}
\label{sec:second-results:sig:article-stochastic-raj-dahya}

\firstparagraph
The functional calculi presented in \S{}\ref{sec:functional-calculus:sig:article-stochastic-raj-dahya}
to characterise unitary and regular unitary dilations respectively,
provide us the means to study topological approximations of classical dynamical systems.
We exploit these results to prove
\Cref{%
    thm:unitary-approx:weak:sig:article-stochastic-raj-dahya,%
    thm:unitary-approx:regular-weak:sig:article-stochastic-raj-dahya,%
}.


\firstparagraph
\def\beweislabel{thm:unitary-approx:weak:sig:article-stochastic-raj-dahya}
\begin{proof}[of \Cref{\beweislabel}]
    The implications \punktcref{2}{}\ensuremath{\implies}{}\punktcref{3} are clear
    irrespective of the assumptions on $\dim(\HilbertRaum)$.

    \paragraph{\punktcref{3}{}\ensuremath{\implies}{}\punktcref{1}:}
        Let
            $(U^{(\alpha)})_{\alpha\in\Lambda}$,
        be a net of \topSOT-continuous unitary representations of $G$ on $\HilbertRaum$.
        Suppose that $(U^{(\alpha)}\restr{M})_{\alpha\in\Lambda}$
        approximates $T$ in the \emph{uniform weak} sense.
        We make use of the \emph{Phillips--le~Merdy calculi}
            ${
                \funcCalcCts,\funcCalcCts^{(\alpha)}
                \colon
                \LpPlusCpct{1}{M}{G} \to \BoundedOps{\HilbertRaum}
            }$
        associated with $T$
        and each $U^{(\alpha)}\restr{M}$ respectively
        (see \S{}\ref{sec:functional-calculus:continuous:sig:article-stochastic-raj-dahya}).
        By the characterisation in
            \Cref{lemm:cts-functional-calculus:sig:article-stochastic-raj-dahya},
            $\normCb{\funcCalcCts^{(\alpha)}} \leq 1$
        for each $\alpha\in\Lambda$,
        and, in order to show that $T$ has a unitary dilation,
        it suffices to show that
            $\funcCalcCts$
        is completely bounded
        with $\normCb{\funcCalcCts} \leq 1$.
        To this end, first observe that for each
            $
                a = c\cdot\delta_{e} + f
                \in \complex\cdot\delta_{e} + \LpPlusCpct{1}{M}{G}
                \equalscolon \mathcal{A}
            $,
        uniform weak convergence yields

            \noparskip
            \begin{eqnarray*}
                \brkt{
                    \funcCalcCts^{(\alpha)}(a)
                    \xi
                }{\eta}
                &= &\brktLong{
                        \Big(
                            c\onematrix
                            +
                            \displaystyle
                            \sotInt_{x \in \quer{\supp}(f)}
                                f(x)
                                U^{(\alpha)}(x)
                            \:\dee x
                        \Big)
                        \xi
                    }{\eta}\\
                &= &c\brkt{\xi}{\eta}
                    +
                    \displaystyle
                    \int_{x \in \quer{\supp}(f)}
                        f(x)
                        \brkt{U^{(\alpha)}(x)\xi}{\eta}
                    \:\dee x\\
                &\underset{\alpha}{\longrightarrow}
                    &c\brkt{\xi}{\eta}
                    +
                    \displaystyle
                    \int_{x \in \quer{\supp}(f)}
                        f(x)
                        \brkt{T(x)\xi}{\eta}
                    \:\dee x\\
                &= &\brkt{
                    \funcCalcCts(a)
                    \xi
                }{\eta}\\
            \end{eqnarray*}

        \continueparagraph
        for all $\xi,\eta\in\HilbertRaum$.
        Thus
            ${
                \funcCalcCts^{(\alpha)}(a)
                \underset{\alpha}{\longrightarrow}
                \funcCalcCts(a)
            }$
        \wrt the $\topWOT$-topology
        for each $a\in \mathcal{A}$.
        It follows that
            ${
                (\funcCalcCts^{(\alpha)} \otimes \id_{M_{n}})(\mathbf{a})
                \underset{\alpha}{\longrightarrow}
                (\funcCalcCts \otimes \id_{M_{n}})(\mathbf{a})
            }$
        \wrt the $\topWOT$-topology
        for
            $n\in\naturals$
            and matrices
            $\mathbf{a} = (a_{ij})_{ij} \in M_{n}(\mathcal{A})$.
        Since
            $\funcCalcCts^{(\alpha)} \otimes \id_{M_{n}}$
        is a contraction for each $\alpha\in\Lambda$ and each $n\in\naturals$,
        it follows that
            $\funcCalcCts \otimes \id_{M_{n}}$
        is a contraction for each $n\in\naturals$.
        Thus $\funcCalcCts$ is completely bounded
        with $\normCb{\funcCalcCts} \leq 1$.

    \paragraph{\punktcref{1}{}\ensuremath{\implies}{}\punktcref{2}, under cardinality assumption:}
        By assumption, $G$ contains a dense subset $D \subseteq G$
        and $\dim(\HilbertRaum) \geq \max\{\aleph_{0},\card{D}\}$.
        Without loss of generality, one can replace $D$ by a dense subgroup of $G$.
        Let
            $(\HilbertRaum_{1},U,r)$
        be a regular unitary dilation of $T$.
        Let $B \subseteq \HilbertRaum$ be an orthonormal basis (ONB) for $\HilbertRaum$
        and
            $\kappa \colonequals \card{B} = \dim(\HilbertRaum) \geq \max\{\aleph_{0},\card{D}\}$
        and consider

            \noparskip
            $$
                \HilbertRaum_{0} \colonequals \quer{\linspann}\{
                    U(x)r\xi
                    \mid
                    x \in D,
                    \xi \in B
                \},
            $$

        \continueparagraph
        which is a $U$-invariant subspace.
        By the above cardinality assumptions
        and elementary computations with infinite cardinals
        (see \exempli \cite[\S{}I.10]{Kunen2011BookSetTh}),
        one has cardinality
            $\card{\HilbertRaum_{0}} = \card{B} = \kappa$.
        It follows that
            $(\HilbertRaum_{0},U\restr{\HilbertRaum_{0}},r)$
        is a regular unitary dilation of $T$.
        Furthermore, since $r$ is isometric, we have
            $
                \kappa
                = \dim(\HilbertRaum)
                = \dim(\ran(r))
                \leq \dim(\HilbertRaum_{0})
                = \kappa
            $.
        Thus $\dim(\HilbertRaum_{0}) = \kappa = \dim(\HilbertRaum)$.
        So without loss of generality, one may assume that $\HilbertRaum_{0} = \HilbertRaum$.
        It follows that there exists
            an isometry ${r\in\BoundedOps{\HilbertRaum}}$
            and
            an $\topSOT$-continuous
            unitary representation $U$ of $G$ on $\HilbertRaum$
        such that

            \noparskip
            \begin{equation}
            \label{eq:dil:\beweislabel}
                T(x)
                =
                r^{\ast}\,U(x)\,r
            \end{equation}

        \continueparagraph
        for all $x \in M$.

        Now let
            $P \subseteq \BoundedOps{\HilbertRaum}$
        be the index set consisting of finite projections on $\HilbertRaum$,
        directly ordered by $p \succeq q$ $:\Leftrightarrow$ $\ran(p) \supseteq \ran(q)$.
        Let $p \in P$ be arbitrary
        and let $F_{p} \subseteq \HilbertRaum$ be a finite ONB for $\ran(p)$.
        Since $r$ is an isometry,
            $\tilde{F}_{p} \colonequals \{r\,e \mid e\in F_{p}\}$
        is also a finite orthonormal family of vectors.
        Let
            $B_{p},\tilde{B}_{p}^{\prime} \subseteq \HilbertRaum$
        be ONBs extending $F_{p},\tilde{F}_{p}$ respectively.
        Since $\HilbertRaum$ is infinite dimensional
        and $F_{p},\tilde{F}_{p}$ are finite,
        one has
            $
                \card{B_{p} \setminus F_{p}}
                = \dim(\HilbertRaum)
                = \card{\tilde{B}_{p} \setminus \tilde{F}_{p}}
            $.
        Thus there exists a bijection
            ${f \colon B_{p} \setminus F_{p} \to \tilde{B}_{p} \setminus \tilde{F}_{p}}$.
        Thus
            $g \colonequals r\restr{F_{p}} \cup f$
        is a bijection between $B_{p}$ and $\tilde{B}_{p}$.
        This extends uniquely to a unitary operator
            $w_{p} \in \BoundedOps{\HilbertRaum}$.
        By construction,
            ${w_{p}\restr{F_{p}} = r\restr{F_{p}}}$
        and thus by linearity

            \noparskip
            \begin{equation}
            \label{eq:unitary-squash:\beweislabel}
                w_{p}p = rp
            \end{equation}

        \continueparagraph
        for each $p \in P$.
        Finally, set

            \noparskip
            $$
                U^{(p)} \colonequals w_{p}^{\ast}\,U(\cdot)\,w_{p}
            $$

        \continueparagraph
        for each $p \in P$,
        which are clearly \topSOT-continuous unitary representations of $G$ on $\HilbertRaum$.
        We demonstrate that the net
            $(U^{(p)}\restr{M})_{p \in P}$
        of \topSOT-continuous homomorphisms,
        is an \emph{exact weak approximation} of $T$.
        To this end, let
            $\xi,\eta\in\HilbertRaum$ be arbitrary.
        Let $p_{0}\in\BoundedOps{\HilbertRaum}$
        be the projection onto $\linspann\{\xi,\eta\}$.
        For each $p \in P$ with $p \succeq p_{0}$ one has
        that
            $p\xi = \xi$
            and
            $p\eta = \eta$.
        By \eqcref{eq:unitary-squash:\beweislabel},
            $
                w_{p}^{\ast}r \xi
                = w_{p}^{\ast} r p \xi
                = w_{p}^{\ast} w_{p} p \xi
                = p \xi
                = \xi
            $
        and similarly
            $
                w_{p}^{\ast}r \eta
                = \eta
            $.
        The dilation yields

            \noparskip
            \begin{eqnarray*}
                \brkt{
                    T(x)
                    \xi
                }{\eta}
                    &\eqcrefoverset{eq:dil:\beweislabel}{=} &\brkt{
                            r^{\ast}\,U(x)\,r
                            \xi
                        }{\eta}\\
                    &= &\brkt{
                            r^{\ast}
                            w_{p}
                            U^{(p)}(x)
                            w_{p}^{\ast}
                            r
                            \xi
                        }{\eta}\\
                    &= &\brkt{
                            U^{(p)}(x)
                            w_{p}^{\ast}
                            r
                            \xi
                        }{w_{p}^{\ast}r\eta}\\
                    &= &\brkt{
                            U^{(p)}(x)
                            \xi
                        }{\eta}\\
            \end{eqnarray*}

        \continueparagraph
        for all $x \in M$
        and $p \succeq p_{0}$.
        Hence
            $(U^{(p)}\restr{M})_{p \in P}$
        is an exact weak approximation of $T$.
\end{proof}

\def\beweislabel{thm:unitary-approx:regular-weak:sig:article-stochastic-raj-dahya}
\begin{proof}[of \Cref{\beweislabel}]
    The implications \punktcref{2}{}\ensuremath{\implies}{}\punktcref{3}{}\ensuremath{\implies}{}\punktcref{4} are clear
    irrespective of the assumptions on $\dim(\HilbertRaum)$.

    \paragraph{\punktcref{4}{}\ensuremath{\implies}{}\punktcref{1}:}
        Let
            $(U^{(\alpha)})_{\alpha\in\Lambda}$,
        be a net of \topSOT-continuous unitary representations of $G$ on $\HilbertRaum$.
        Suppose that $(U^{(\alpha)}\restr{M})_{\alpha\in\Lambda}$
        approximates $T$ in the \emph{pointwise regular weak} sense.
        We now make use of the \emph{discrete functional calculi}
            ${
                \funcCalcDiscr,\funcCalcDiscr^{(\alpha)}
                \colon
                c_{00}(G) \to \BoundedOps{\HilbertRaum}
            }$
        associated with $T$
        and each $U^{(\alpha)}\restr{M}$ respectively
        (see \S{}\ref{sec:functional-calculus:discrete:sig:article-stochastic-raj-dahya}).
        By the characterisation in
            \Cref{lemm:discr-functional-calculus:sig:article-stochastic-raj-dahya},
        each
            $\funcCalcDiscr^{(\alpha)}$
        is completely positive
        and, in order to show that $T$ has a regular unitary dilation,
        it suffices to show that
            $\funcCalcDiscr$
        is completely positive.
        To this end, first observe for $f\in c_{00}(G)$,
        that pointwise regular weak convergence yields

            \noparskip
            \begin{eqnarray*}
                \brkt{
                    \funcCalcDiscr^{(\alpha)}(f)
                    \xi
                }{\eta}
                &= &\displaystyle
                    \sum_{x \in \supp(f)}
                        f(x)
                        \brkt{
                            U^{(\alpha)}(x^{-})^{\ast}U^{(\alpha)}(x^{+})
                            \xi
                        }{\eta}\\
                &\underset{\alpha}{\longrightarrow}
                    &\displaystyle
                    \sum_{x \in \supp(f)}
                        f(x)
                        \brkt{
                            T(x^{-})^{\ast}T(x^{+})
                            \xi
                        }{\eta}\\
                &= &\brkt{
                    \funcCalcDiscr(f)
                    \xi
                }{\eta}\\
            \end{eqnarray*}

        \continueparagraph
        for all $\xi,\eta\in\HilbertRaum$.
        Thus
            ${
                \funcCalcDiscr^{(\alpha)}(f)
                \underset{\alpha}{\longrightarrow}
                \funcCalcDiscr(f)
            }$
        \wrt the $\topWOT$-topology
        for each $f\in c_{00}(G)$.
        It follows that
            ${
                (\funcCalcDiscr^{(\alpha)} \otimes \id_{M_{n}})(\mathbf{a})
                \underset{\alpha}{\longrightarrow}
                (\funcCalcDiscr \otimes \id_{M_{n}})(\mathbf{a})
            }$
        \wrt the $\topWOT$-topology
        for
            $n\in\naturals$
            and matrices
            $\mathbf{a} = (a_{ij})_{ij} \in M_{n}(c_{00}(G))$.
        Since
            $\funcCalcDiscr^{(\alpha)} \otimes \id_{M_{n}}$
        is positive for each $\alpha$ and each $n\in\naturals$,
        it follows that
            $\funcCalcDiscr \otimes \id_{M_{n}}$
        is positive for each $n\in\naturals$.
        Thus $\funcCalcDiscr$ is completely positive.

    \paragraph{\punktcref{1}{}\ensuremath{\implies}{}\punktcref{2}, under cardinality assumption:}
        The proof is analogous to proof of
            \eqcref{it:1:thm:unitary-approx:weak:sig:article-stochastic-raj-dahya}{}\ensuremath{\implies}{}\eqcref{it:2:thm:unitary-approx:weak:sig:article-stochastic-raj-dahya}
            of
            \Cref{thm:unitary-approx:weak:sig:article-stochastic-raj-dahya}.
        Relying on the cardinality assumptions, the same arguments as above
        yield a regular unitary dilation of $T$ of the form $(\HilbertRaum,U,r)$,
        \idest

            \noparskip
            \begin{equation}
            \label{eq:dil:\beweislabel}
                T(x^{-})^{\ast}T(x^{+})
                = r^{\ast}\,U(x)\,r
            \end{equation}

        \continueparagraph
        for all $x \in G$.
        The net
            $(w_{p})_{p \in P}$
            of unitary operators
        and the net
            $(U^{(p)} \colonequals w_{p}^{\ast}U(\cdot)w_{p})_{p \in P}$
            of \topSOT-continuous unitary representations of $G$ on $\HilbertRaum$
        are constructed as above.
        For $\xi,\eta\in\HilbertRaum$,
        letting $p_{0}$ be the projection onto $\linspann\{\xi,\eta\}$,
        one has again
            $w_{p}^{\ast}r\xi=\xi$
            and
            $w_{p}^{\ast}r\eta=\eta$
        for $p \succeq p_{0}$.
        The regular dilation yields

            \noparskip
            \begin{eqnarray*}
                \brkt{
                    T(x^{-})^{\ast}
                    T(x^{+})
                    \xi
                }{\eta}
                    &\eqcrefoverset{eq:dil:\beweislabel}{=}
                        &\brkt{
                            r^{\ast}\,U(x)\,r
                            \xi
                        }{\eta}\\
                    &= &\brkt{
                            r^{\ast}
                            w_{p}
                            U^{(p)}(x)
                            w_{p}^{\ast}
                            r
                            \xi
                        }{\eta}\\
                    &= &\brkt{
                            U^{(p)}(x)
                            w_{p}^{\ast}
                            r
                            \xi
                        }{w_{p}^{\ast}r\eta}\\
                    &= &\brkt{
                            U^{(p)}(x)
                            \xi
                        }{\eta}\\
            \end{eqnarray*}

        \continueparagraph
        for all $x \in G$
        and $p \succeq p_{0}$.
        Hence
            $(U^{(p)}\restr{M})_{p \in P}$
        is an \emph{exact regular weak approximation} of $T$.
\end{proof}



As an immediate application of \Cref{thm:unitary-approx:regular-weak:sig:article-stochastic-raj-dahya},
we demonstrate an infinite class of commuting systems
which admit no regular weak unitary approximations.
The following examples demonstrate in particular,
that the problem of unitary approximability (in the \emph{regular} case)
of a commuting system cannot be reduced to the unitary approximability of strict subsystems.

\begin{cor}
\makelabel{cor:counter-examples-unitary-approx:sig:article-stochastic-raj-dahya}
    Let
        $d\in\naturals$ with $d \geq 2$
        and
        $\HilbertRaum$ be an infinite dimensional Hilbert space.
    Then there exists an infinite class
    of commuting families
        $\{T_{i}\}_{i=1}^{d}$
    of contractive $\Cnought$-semigroups on $\HilbertRaum$
    whose generators have strictly negative spectral bounds,%
    \footnote{%
        The \highlightTerm{spectral bound} of a linear operator
            ${A \colon \dom(A)\subseteq\HilbertRaum\to\HilbertRaum}$
        is given by
            $\sup\{\Re \lambda \mid \lambda\in\opSpectrum{A}\}$
        (\cf \cite[Definition~1.12]{EngelNagel2000semigroupTextBook}).
    }
    such that
        $\{T_{i}\}_{i \in C}$
    has an exact regular weak unitary approximation
    for each $C \subsetneq \{1,2,\ldots,d\}$,
    whilst
        $\{T_{i}\}_{i=1}^{d}$
    has no pointwise regular weak unitary approximation.
\end{cor}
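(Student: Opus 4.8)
The plan is to combine \Cref{thm:unitary-approx:regular-weak:sig:article-stochastic-raj-dahya} with the characterisation of simultaneous regular unitary dilations via complete dissipativity (\Cref{thm:classification:dissipativity:sig:article-stochastic-raj-dahya}, and in the bounded-generator case also \Cref{thm:classification-bounded:poly:sig:article-stochastic-raj-dahya}), and thereby reduce the whole statement to an explicit computation with $2\times 2$ matrices. \textbf{First I would reduce to dilations.} Work with $(G,M)=(\reals^{d},\realsNonNeg^{d})$ and, for each proper $C\subsetneq\{1,\ldots,d\}$, with the corresponding coordinate pair $(\reals^{C},\realsNonNeg^{C})$; all of these carry the natural positivity structure (\Cref{e.g.:monoids:positivity:sig:article-stochastic-raj-dahya}), and each $\reals^{C}$ is separable, hence contains a countable dense subset of cardinality $\le\aleph_{0}\le\dim(\HilbertRaum)$ since $\HilbertRaum$ is infinite dimensional. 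Applying \Cref{thm:unitary-approx:regular-weak:sig:article-stochastic-raj-dahya} over $\reals^{d}$ gives that $\{T_{i}\}_{i=1}^{d}$ has a pointwise regular weak unitary approximation iff it has a simultaneous regular unitary dilation; applying it over each $(\reals^{C},\realsNonNeg^{C})$ gives that $\{T_{i}\}_{i\in C}$ has an \emph{exact} regular weak unitary approximation as soon as it has a simultaneous regular unitary dilation. So it suffices to produce, for every $d\ge 2$, an infinite class of commuting families $\{T_{i}\}_{i=1}^{d}$ of contractive $\Cnought$-semigroups on $\HilbertRaum$ whose generators have strictly negative spectral bound, such that $\{T_{i}\}_{i\in C}$ has a simultaneous regular unitary dilation for every proper $C$ but $\{T_{i}\}_{i=1}^{d}$ does not.

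\textbf{Next I would reduce to complete dissipativity.} I would use semigroups with bounded generators $A_{1},\ldots,A_{d}$ throughout, so that \Cref{thm:classification:dissipativity:sig:article-stochastic-raj-dahya} applies to each subfamily: $\{T_{i}\}_{i\in C}$ has a simultaneous regular unitary dilation iff $\{A_{i}\}_{i\in C}$ is completely dissipative, i.e.\ iff every dissipation operator $D_{K}$ with $K\subseteq C$ is positive. It therefore suffices to exhibit commuting bounded dissipative operators $A_{1},\ldots,A_{d}$ on $\HilbertRaum$ with $\sup\{\Re\lambda\mid\lambda\in\opSpectrum{A_{i}}\}<0$, with $D_{K}\ge\zerovector$ whenever $\card{K}\le d-1$, and with $D_{\{1,\ldots,d\}}\not\ge\zerovector$.

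\textbf{Then I would carry out the construction.} Fix $\epsilon,\mu\in\realsPos$ with $\tfrac{4}{d}\epsilon^{2}<\mu^{2}<\tfrac{4}{d-1}\epsilon^{2}$ (a non-empty interval for $d\ge 2$), let $A_{0}\colonequals\left(\begin{smallmatrix}-\epsilon&\mu\\0&-\epsilon\end{smallmatrix}\right)$, and set $A\colonequals A_{0}\otimes\onematrix_{\HilbertRaum}$ on $\complex^{2}\otimes\HilbertRaum\cong\HilbertRaum$ (an isometric isomorphism since $\HilbertRaum$ is infinite dimensional). Put $A_{1}=\cdots=A_{d}=A$ and $T_{i}(t)\colonequals e^{tA}$. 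Then $\opSpectrum{A}=\{-\epsilon\}$, so the spectral bound is $-\epsilon<0$; and $A+A^{\ast}\le\zerovector$ (the matrix $A_{0}+A_{0}^{\ast}$ has eigenvalues $-2\epsilon\pm\mu\le 0$ as $\mu<2\epsilon$), so by Lumer--Phillips each $T_{i}$ is a contractive $\Cnought$-semigroup, and the constant (hence commuting) family has generators $\{A\}_{i=1}^{d}$. For this diagonal family every dissipation operator depends only on $k=\card{K}$, namely $D_{K}=\big((-\tfrac12)^{k}\sum_{j=0}^{k}\binom{k}{j}(A_{0}^{\ast})^{j}A_{0}^{k-j}\big)\otimes\onematrix_{\HilbertRaum}$, which is positive iff the $2\times 2$ matrix in parentheses is. Using $A_{0}=-\epsilon\onematrix+\mu E$ with $E^{2}=0$, a direct computation shows that this matrix equals $\left(\begin{smallmatrix}\epsilon^{k} & -\tfrac{k}{2}\epsilon^{k-1}\mu\\ -\tfrac{k}{2}\epsilon^{k-1}\mu & \epsilon^{k}+\tfrac{k(k-1)}{4}\epsilon^{k-2}\mu^{2}\end{smallmatrix}\right)$, with positive trace and determinant $\epsilon^{2k-2}\big(\epsilon^{2}-\tfrac{k}{4}\mu^{2}\big)$, hence $D_{K}\ge\zerovector\iff\mu^{2}\le\tfrac{4}{k}\epsilon^{2}$. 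By the choice of $(\epsilon,\mu)$ this holds for all $k\le d-1$ and fails for $k=d$, which is exactly what the previous step demands. Finally, letting $(\epsilon,\mu)$ vary over the admissible region yields infinitely many such families (already the spectral bounds $-\epsilon$ distinguish infinitely many of them).

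\textbf{The main obstacle} is the computation in the last step: evaluating the dissipation operators of the diagonal family and verifying the monotone threshold $D_{K}\ge\zerovector\iff\mu^{2}\le\tfrac{4}{\card{K}}\epsilon^{2}$, so that a single parameter choice separates order $d$ from all lower orders — this is where $d\ge 2$, i.e.\ the strict nesting $\tfrac4d<\tfrac4{d-1}$, is used. The remaining ingredients (that $(\reals^{C},\realsNonNeg^{C})$ meets the hypotheses of \Cref{thm:unitary-approx:regular-weak:sig:article-stochastic-raj-dahya}, and that the amplification $A_{0}\otimes\onematrix_{\HilbertRaum}$ transports all spectral and positivity data from the $2\times 2$ model, including the reduction $X\otimes\onematrix_{\HilbertRaum}\ge\zerovector\iff X\ge\zerovector$) are routine.
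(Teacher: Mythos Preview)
Your proof is correct and follows essentially the same route as the paper: reduce via \Cref{thm:unitary-approx:regular-weak:sig:article-stochastic-raj-dahya} to (non-)existence of simultaneous regular unitary dilations, then via \Cref{thm:classification:dissipativity:sig:article-stochastic-raj-dahya} to complete dissipativity, and finally use a $2\times 2$ upper-triangular model $-\epsilon\onematrix+\mu E$ with the threshold parameter chosen in $(\tfrac{2}{\sqrt{d}},\tfrac{2}{\sqrt{d-1}})$ --- precisely the paper's construction (which takes $\epsilon=1$, $\mu=2\alpha$, and allows distinct isometries $V_i$ in the off-diagonal block, whereas you take the diagonal case $A_1=\cdots=A_d$). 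The only substantive difference is that you carry out the dissipation-operator computation explicitly, while the paper outsources it to \cite[Proposition~5.3]{Dahya2023dilation}; your self-contained verification of $D_K\ge\zerovector\iff\mu^2\le\tfrac{4}{|K|}\epsilon^2$ and the extra scaling parameter $\epsilon$ (giving distinct spectral bounds) are a clean way to exhibit the infinite class.
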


The class of semigroups can be constructed as in
\cite[Proposition~5.3]{Dahya2023dilation}.
For the reader's convenience, we sketch the construction.
We apply the characterisation in
\Cref{thm:unitary-approx:regular-weak:sig:article-stochastic-raj-dahya}
to $(G,M) = (\reals^{d},\realsNonNeg^{d})$
(see \Cref{e.g.:monoids:positivity:sig:article-stochastic-raj-dahya})
as well as the \onetoone-correspondence between
between \topSOT-continuous homomorphisms defined over $\realsNonNeg^{d}$
and commuting families of $\Cnought$-semigroups
discussed in \S{}\ref{sec:introduction:sig:article-stochastic-raj-dahya}.

    \begin{proof}[of \Cref{cor:counter-examples-unitary-approx:sig:article-stochastic-raj-dahya}]
        By assumption, one can find orthonormal closed subspaces
            $\HilbertRaum_{1},\HilbertRaum_{2}\subseteq\HilbertRaum$
        with
            $0 < \dim(\HilbertRaum_{2}) \leq \dim(\HilbertRaum_{1})$
        such that
            $\HilbertRaum=\HilbertRaum_{1}\bigoplus\HilbertRaum_{2}$.
        Working \wrt this partition,
        and letting
            $\alpha \in (\frac{1}{\sqrt{d}},\:\frac{1}{\sqrt{d-1}})$
        be arbitrary,
        we consider for each
            $i\in\{1,2,\ldots,d\}$
        bounded operators of the form

            \noparskip
            $$
                A_{i} = -\onematrix
                +
                \begin{smatrix}
                    \zeromatrix &2\alpha V_{i}\\
                    \zeromatrix &\zeromatrix\\
                \end{smatrix}
            $$

        \continueparagraph
        where the
            $V_{i} \in \BoundedOps{\HilbertRaum_{2}}{\HilbertRaum_{1}}$
        can be chosen to be any isometries.
        One can show that $\{A_{i}\}_{i=1}^{d}$ is a commuting family of
        dissipative operators whose spectra are each given by $\{-1\}$.
        Thus, $\{T_{i} \colonequals (e^{t A_{i}})_{t\in\realsNonNeg}\}_{i=1}^{d}$
        is a commuting family of contractive $\Cnought$-semigroups,
        whose (bounded) generators have strictly negative spectral bounds.
        As in \cite[Proposition~5.3]{Dahya2023dilation},
        it can be shown that
            $\{A_{i}\}_{i \in C}$ is completely dissipative
            for each $C \subsetneq \{1,2,\ldots,d\}$,
            whilst
            $\{A_{i}\}_{i=1}^{d}$ is not completely dissipative.
        By the characterisation for semigroups with bounded generators
            (\Cref{thm:classification:dissipativity:sig:article-stochastic-raj-dahya}),
        it follows that
            $\{T_{i}\}_{i \in C}$ has a simultaneous regular unitary dilation
            for each $C \subsetneq \{1,2,\ldots,d\}$,
            whilst
            $\{T_{i}\}_{i=1}^{d}$ does not.
        By \Cref{thm:unitary-approx:regular-weak:sig:article-stochastic-raj-dahya},
        the claim follows.
    \end{proof}

\begin{rem}
\label{rem:conjecture-ordinary-unitary-dilation:sig:article-stochastic-raj-dahya}
    It is well known that commuting systems of $d=2$ contractive $\Cnought$-semigroups
    have simultaneous unitary dilations
    (see
        \cite{Slocinski1974},
        \cite[Theorem~2]{Slocinski1982},
        and
        \cite[Theorem~2.3]{Ptak1985}%
    ).
    Hence by
        \Cref{%
            thm:unitary-approx:weak:sig:article-stochastic-raj-dahya,%
            thm:unitary-approx:regular-weak:sig:article-stochastic-raj-dahya,%
        }
    and \Cref{cor:counter-examples-unitary-approx:sig:article-stochastic-raj-dahya},
    the topologies defined by
        exact weak
        (\respectively uniform weak)
        convergence
    are in general%
    \footnote{%
        \viz for commuting families of $d \geq 2$ semigroups
        over infinite dimensional Hilbert spaces.
    }
    strictly weaker than their \emph{regular} counterparts.
    In particular,
    the characterisations in
        \Cref{%
            thm:unitary-approx:weak:sig:article-stochastic-raj-dahya,%
            thm:unitary-approx:regular-weak:sig:article-stochastic-raj-dahya,%
        }
    now provide a sharp topological distinction
    between \emph{unitary dilations}
    and \emph{regular unitary dilations}.
\end{rem}

At the start of this paper we mentioned two different ways to treat irreversible systems:
via embeddings into or approximations by reversible systems.
The characterisations in
    \Cref{%
        thm:unitary-approx:weak:sig:article-stochastic-raj-dahya,%
        thm:unitary-approx:regular-weak:sig:article-stochastic-raj-dahya,%
    },
demonstrate that, under modest conditions,
these are in fact equivalent
for respective choices of dilations and approximations.
This holds for the commutative setting
and for a large class of classical dynamical systems,
including dynamical systems consisting of
families of $\Cnought$-semigroups on Hilbert spaces
satisfying a semigroup version of the CCR in Weyl-form
(%
    see
    \Cref{e.g.:non-commuting-family-heisenberg-d-C:sig:article-stochastic-raj-dahya},
    \Cref{e.g.:monoids:e-joint:sig:article-stochastic-raj-dahya},
    and
    \Cref{e.g.:monoids:positivity:sig:article-stochastic-raj-dahya}%
).




\setcounternach{section}{1}
\appendix



\section[Elements in the domains of generators]{Elements in the domains of generators}
\label{app:generators:sig:article-stochastic-raj-dahya}

\firstparagraph
In this appendix we construct elements in the domains of generators of $\Cnought$-semigroups,
and then extend this to the situation of commuting families of $\Cnought$-semigroups.
For the main ideas here, we refer the reader to
    \cite[Proposition G.2.5]{HytNeervanMarkLutz2016bookVol2},
    \cite[Theorem~I.5.8]{EngelNagel2000semigroupTextBook},
    \cite[Theorem~A.8]{Dahya2022complmetrproblem}.

Let $u_{t}$ denote a \randomvar concentrated and uniformly distributed on $[0,\:t]$,
or shorter: $u_{t} \distributedAs U([0,\:t])$ for each $t\in\realsNonNeg$.
Consider a $\Cnought$-semigroup $T$ over a Banach space.
By \topSOT-continuity and the uniform boundedness principle,
we have that $\{T(t) \mid t \in V\}$ is uniformly norm-bounded
for compact subsets $V \subseteq \realsNonNeg$ of $0$.
It follows that the expectation
    $\Expected[T(u_{t})]$
exists and satisfies the bounds
    $
        \norm{\Expected[T(u_{t})]}
        \leq \sup_{s\in[0,\:t]}\norm{T(s)}
        < \infty
    $
for all $t\in\realsPos$.
Hence we also have that
    $\{\Expected[T(u_{t})] \mid t \in V\}$
is uniformly norm-bounded
for bounded subsets $V \subseteq \realsNonNeg$.


\begin{prop}
\makelabel{prop:app:dense-subspace-one-param:sig:article-stochastic-raj-dahya}
    Let $T$ be a $\Cnought$-semigroup on a Banach space $\BanachRaum$
    with generator $A$.
    Further let $S \in \BoundedOps{\BanachRaum}$ commute with $T(t)$ for all $t\in\realsNonNeg$.
    Then
        $
            S\,\Expected[T(u_{t})] \xi \in \opDomain{A}
        $
    with
        $
            A\,S\,\Expected[T(u_{t})]\xi = S \,\frac{1}{t}(T(t) - \onematrix)\xi
        $
    for all $\xi\in\BanachRaum$ and $t\in\realsPos$.
\end{prop}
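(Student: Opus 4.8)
The plan is to reduce the statement to the classical fact that time-averages of a $\Cnought$-semigroup land in the domain of its generator, and then to commute the bounded operator $S$ past both the Bochner-integral and the generator.

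First I would unwind the definition. Since $u_{t} \distributedAs U([0,\:t])$, one has $\Expected[T(u_{t})]\xi = \frac{1}{t}\int_{0}^{t}T(s)\xi\:\dee s$ for $\xi\in\BanachRaum$, a $\BanachRaum$-valued Bochner-integral which exists by the uniform boundedness of $\{T(s)\mid s\in[0,\:t]\}$ noted in the preamble to this appendix. The key step is to show $\int_{0}^{t}T(s)\xi\:\dee s \in \opDomain{A}$ with $A\int_{0}^{t}T(s)\xi\:\dee s = T(t)\xi - \xi$ \emph{for every} $\xi\in\BanachRaum$. For this, fix $h\in(0,\:t)$ and compute, pulling the bounded operator $T(h)$ through the Bochner-integral and changing variables,
\[
    \tfrac{1}{h}(T(h) - \onematrix)\int_{0}^{t}T(s)\xi\:\dee s
    = \tfrac{1}{h}\int_{t}^{t+h}T(s)\xi\:\dee s - \tfrac{1}{h}\int_{0}^{h}T(s)\xi\:\dee s .
\]
By the \topSOT-continuity of $T$, the right-hand side converges in $\BanachRaum$ to $T(t)\xi - T(0)\xi = T(t)\xi - \xi$ as $h\to 0^{+}$; hence the difference quotient on the left converges, which by definition of the generator gives $\int_{0}^{t}T(s)\xi\:\dee s\in\opDomain{A}$ together with the claimed identity. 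Dividing by $t$ yields $\Expected[T(u_{t})]\xi\in\opDomain{A}$ and $A\Expected[T(u_{t})]\xi = \frac{1}{t}(T(t) - \onematrix)\xi$.

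Next I would bring in $S$ via two standard commutation observations. Since $S$ is bounded and commutes with each $T(s)$, it passes through the Bochner-integral; more importantly, $S$ maps $\opDomain{A}$ into itself with $AS\eta = SA\eta$ for $\eta\in\opDomain{A}$ — indeed $\frac{1}{h}(T(h) - \onematrix)S\eta = S\,\frac{1}{h}(T(h) - \onematrix)\eta \longrightarrow SA\eta$ as $h\to 0^{+}$, so the difference quotient for $S\eta$ converges in $\BanachRaum$. Applying this with $\eta = \Expected[T(u_{t})]\xi\in\opDomain{A}$ from the previous step gives $S\,\Expected[T(u_{t})]\xi\in\opDomain{A}$ and
\[
    A\,S\,\Expected[T(u_{t})]\xi
    = S\,A\,\Expected[T(u_{t})]\xi
    = S\,\tfrac{1}{t}(T(t) - \onematrix)\xi ,
\]
which is precisely the assertion.

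There is no serious obstacle here; the argument is entirely classical (\cf the references cited at the start of this appendix). The only points requiring (routine) care are the justification of the change of variables and of pulling $S$ and $T(h)$ through the Bochner-integral — both immediate from the standard properties of Bochner-integrals recalled in \S\ref{sec:introduction:notation:sig:article-stochastic-raj-dahya} — and the observation that the difference-quotient limits above are genuine strong limits in $\BanachRaum$, which follows directly from \topSOT-continuity of $T$.
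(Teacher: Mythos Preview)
Your proof is correct and follows essentially the same classical route as the paper: both compute the difference quotient $\frac{1}{h}(T(h)-\onematrix)$ applied to the time-average via a telescoping/change-of-variables identity and pass to the limit using \topSOT-continuity. The only cosmetic difference is that the paper packages the telescoping as the operator identity $T(h)\,\Expected[T(u_{t})] = \Expected[T(u_{t})] + \tfrac{1}{t}(T(t)-\onematrix)\cdot h\,\Expected[T(u_{h})]$ and handles $S$ in one stroke by commuting it past $T(h)$, whereas you first treat $S=\onematrix$ and then invoke the separate (and equally standard) observation that $S$ preserves $\opDomain{A}$ with $AS=SA$ on $\opDomain{A}$.
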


    \begin{proof}
        Relying on basic properties of Bochner-integrals,
        one can show for $h\in(0,\:t)$ that

            $$
                T(h)\,\Expected[T(u_{t})]
                =
                \Expected[T(u_{t})]
                + \frac{1}{t}(T(t) - \onematrix)
                    \cdot
                    h\,\Expected[T(u_{h})]
            $$

        \continueparagraph
        (see also the proof of \cite[Proposition 1.1.4]{Butzer1967semiGrApproximationsBook}).
        Thus by commutativity

            $$
                \frac{1}{h}(T(h)-\onematrix) \cdot S\,\Expected[T(u_{t})]
                = S\,\frac{1}{h}(T(h)\,\Expected[T(u_{t})] - \Expected[T(u_{t})])
                = S\,\frac{1}{t}(T(t) - \onematrix)
                    \,\Expected[T(u_{h})]
            $$

        \continueparagraph
        for sufficiently small $h\in\realsPos$.
        Now, since $T$ is $\topSOT$-continuous and $T(0)=\onematrix$,
        one has
            ${
                \Expected[T(u_{h})]
                \overset{\tinytopSOT}{\longrightarrow}
                \onematrix
            }$
        for ${\realsPos \ni h \longrightarrow 0}$
        (\cf the subsequent paragraph below (1.1.7) in \cite{Butzer1967semiGrApproximationsBook}).
        It follows that
            ${
                \frac{1}{h}(T(h)-\onematrix)
                \,S\,\Expected[T(u_{t})]
                    \overset{\tinytopSOT}{\longrightarrow}
                        S\,\frac{1}{t}(T(t) - \onematrix)
            }$
        for ${\realsPos \ni h\longrightarrow 0}$.
        This proves the claim.
    \end{proof}

\begin{prop}
\makelabel{prop:dense-subspace-multi-param:sig:article-stochastic-raj-dahya}
    Let $d\in\naturals$,
    and $\{T_{i}\}_{i=1}^{d}$ be
    a commuting family of $\Cnought$-semigroups
    over a Banach space $\BanachRaum$
    with generators $\{A_{i}\}_{i=1}^{d}$.
    For each
        $\mathbf{t} \in \realsPos^{d}$
    and $\xi\in\BanachRaum$
    define

        $$
            \tilde{\xi}_{\mathbf{t}}
            \colonequals
                \Expected[T_{1}(u_{t_{1}})]
                \Expected[T_{2}(u_{t_{2}})]
                \ldots
                \Expected[T_{d}(u_{t_{d}})]
                \xi.
        $$

    \continueparagraph
    Then
        $
            D_{0} \colonequals \{
                \tilde{\xi}_{\mathbf{t}}
                \mid
                \xi \in \BanachRaum,
                \mathbf{t}\in\realsNonNeg^{d}
            \}
        $
    spans a dense linear subspace $D$ of $\BanachRaum$.
    Furthermore, for injective sequences
        $(k_{i})_{i=1}^{n} \subseteq \{1,2,\ldots,d\}$,
        $n\in\{1,2,\ldots,d\}$
    one has
        $
            D \subseteq \opDomain{
                A_{k_{n}} \cdot \ldots \cdot A_{k_{2}} \cdot A_{k_{1}}
            }
        $.
    And for each $\xi \in D$, the value of
        ${(A_{k_{n}} \cdot \ldots \cdot A_{k_{2}} \cdot A_{k_{1}})\xi}$
    does not depend on the order of the $k_{i}$.
\end{prop}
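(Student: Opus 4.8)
The plan is to bootstrap everything from \Cref{prop:app:dense-subspace-one-param:sig:article-stochastic-raj-dahya}, applied one coordinate at a time. Throughout, fix $\mathbf{t} \in \realsPos^{d}$ and abbreviate $P_{i} := \Expected[T_{i}(u_{t_{i}})]$; these are bounded operators, they commute with one another, and, using the commutativity of the family together with the linearity of Bochner-integrals, each $P_{i}$ commutes with $T_{j}(s)$ for all $j$ and $s \in \realsNonNeg$ (indeed $P_{i}T_{j}(s) = \Expected[T_{i}(u_{t_{i}})T_{j}(s)] = \Expected[T_{j}(s)T_{i}(u_{t_{i}})] = T_{j}(s)P_{i}$). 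In particular the factors in $\tilde{\xi}_{\mathbf{t}} = P_{1}P_{2}\cdots P_{d}\xi$ may be written in any order. For the domain statement we work with strictly positive $\mathbf{t}$, as the $\tfrac{1}{t}$ in \Cref{prop:app:dense-subspace-one-param:sig:article-stochastic-raj-dahya} requires; density is recovered by letting the coordinates shrink to $0$.

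For density, fix $\xi \in \BanachRaum$ and consider $\tilde{\xi}_{(h,h,\ldots,h)}$ for $h \in \realsPos$. By \topSOT-continuity of each $T_{i}$ one has $\Expected[T_{i}(u_{h})] \overset{\tinytopSOT}{\longrightarrow} \onematrix$ as $\realsPos \ni h \to 0$, and since the operators $\Expected[T_{i}(u_{h})]$, $i \in \{1,2,\ldots,d\}$, are uniformly bounded for $h$ in a bounded set, a telescoping estimate gives $\tilde{\xi}_{(h,\ldots,h)} = \prod_{i=1}^{d}\Expected[T_{i}(u_{h})]\,\xi \longrightarrow \xi$ as $h \to 0$. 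Hence $D_{0}$ is dense in $\BanachRaum$, and therefore so is $D = \linspann(D_{0})$.

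For the domain and order-independence claims, fix an injective sequence $(k_{i})_{i=1}^{n} \subseteq \{1,2,\ldots,d\}$ and show by induction on $j \in \{1,2,\ldots,n\}$ that $\tilde{\xi}_{\mathbf{t}}$ lies in $\opDomain{A_{k_{j}}\cdots A_{k_{1}}}$ with
$$
    A_{k_{j}}\cdots A_{k_{1}}\tilde{\xi}_{\mathbf{t}}
    = \Big(\prod_{i \notin \{k_{1},\ldots,k_{j}\}}P_{i}\Big)
        \prod_{\ell=1}^{j}\tfrac{1}{t_{k_{\ell}}}\big(T_{k_{\ell}}(t_{k_{\ell}}) - \onematrix\big)\,\xi .
$$
At step $j$ one rewrites the vector $A_{k_{j-1}}\cdots A_{k_{1}}\tilde{\xi}_{\mathbf{t}}$ (understood as $\tilde{\xi}_{\mathbf{t}}$ when $j=1$) in the form $S\,\Expected[T_{k_{j}}(u_{t_{k_{j}}})]\,\xi$, where $S := \big(\prod_{i \notin \{k_{1},\ldots,k_{j}\}}P_{i}\big)\prod_{\ell=1}^{j-1}\tfrac{1}{t_{k_{\ell}}}(T_{k_{\ell}}(t_{k_{\ell}}) - \onematrix)$; the factor $\Expected[T_{k_{j}}(u_{t_{k_{j}}})] = P_{k_{j}}$ is available precisely because $k_{j} \notin \{k_{1},\ldots,k_{j-1}\}$ by injectivity. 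Since $S$ is a bounded operator built only from $P_{i}$ with $i \neq k_{j}$ and from semigroup operators $T_{k_{\ell}}(\cdot)$ with $k_{\ell} \neq k_{j}$ (and scalars), it commutes with $T_{k_{j}}(s)$ for all $s$, so \Cref{prop:app:dense-subspace-one-param:sig:article-stochastic-raj-dahya} applies and yields both membership in $\opDomain{A_{k_{j}}}$ and the displayed formula for step $j$. Taking $j = n$ shows $\tilde{\xi}_{\mathbf{t}} \in \opDomain{A_{k_{n}}\cdots A_{k_{1}}}$; since this domain is a linear subspace containing $D_{0}$, it contains $D$. Finally, the right-hand side of the displayed identity with $j = n$ is a product of mutually commuting operators determined only by the \emph{set} $\{k_{1},\ldots,k_{n}\}$, hence is unchanged under any reordering of the $k_{i}$; this gives the asserted order-independence on $D_{0}$, and then on $D$ by linearity.

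The argument is essentially routine once \Cref{prop:app:dense-subspace-one-param:sig:article-stochastic-raj-dahya} is in hand. The only point requiring care is the bookkeeping that keeps the auxiliary operator $S$ free of the index $k_{j}$ at each step — this is exactly why the coordinates are peeled off in the prescribed order and injectivity of $(k_{i})_{i=1}^{n}$ is used — together with the mild consistency remark that the domain statement is proved for $\mathbf{t} \in \realsPos^{d}$ while density is obtained separately as a limit.
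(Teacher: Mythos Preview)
Your proof is correct and follows essentially the same approach as the paper's: both prove density via the \topSOT-convergence $\Expected[T_{i}(u_{h})] \to \onematrix$ combined with uniform boundedness, and both obtain the domain and order-independence claims by inductively applying \Cref{prop:app:dense-subspace-one-param:sig:article-stochastic-raj-dahya} one coordinate at a time, arriving at the same explicit formula whose right-hand side depends only on the set $\{k_{1},\ldots,k_{n}\}$. The paper phrases the commutativity arguments in terms of independent random variables $\tau_{i}\distributedAs U([0,t_{i}])$ rather than your operators $P_{i}$, but this is purely notational; your remark that the domain argument requires $\mathbf{t}\in\realsPos^{d}$ (whereas the statement writes $\realsNonNeg^{d}$ in the definition of $D_{0}$) is also implicitly how the paper proceeds.
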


    \begin{proof}
        As explained in the proof of \Cref{prop:app:dense-subspace-one-param:sig:article-stochastic-raj-dahya},
        one has for each $i\in\{1,2,\ldots,d\}$
        that
            $
                \Expected[T_{i}(u_{t})]
                = \frac{1}{t}\cdot\sotInt_{s\in[0,\:t]}T(s)\:\dee s
                \overset{\tinytopSOT}{\longrightarrow}
                \onematrix
            $
        for ${\realsPos \ni t \longrightarrow 0}$.
        As explained above,
            $\{ \Expected[T_{i}(u_{t})] \mid t \in (0,\:h)\}$
        is uniformly bounded for all $h\in\realsPos$.
        Since multiplication is \topSOT-continuous on bounded subsets of $\BoundedOps{\BanachRaum}$,
        it follows that
            ${
                \tilde{\xi}_{\mathbf{t}}
                = \Expected[T_{1}(u_{t_{1}})]
                    \Expected[T_{2}(u_{t_{2}})]
                    \ldots
                    \Expected[T_{d}(u_{t_{d}})]
                    \xi
                \longrightarrow
                    \xi
            }$
        in norm for
            ${\realsPos^{d} \ni \mathbf{t} \longrightarrow \zerovector}$.
        Thus $D_{0}$ is dense in $\BanachRaum$.

        Towards the final claim, it suffices to consider elements of $D_{0}$.
        So consider
            $\tilde{\xi}_{\mathbf{t}} \in D_{0}$
        for some
            $\mathbf{t} \in \realsPos^{d}$
            and
            $\xi\in\BanachRaum$
        and let
            $(k_{i})_{i=1}^{n} \subseteq \{1,2,\ldots,d\}$
            be an injective sequence
            for some
            $n\in\{1,2,\ldots,d\}$.
        Letting
            $\tau_{i} \distributedAs U([0,\:t_{i}])$, $i\in\{1,2,\ldots,d\}$
            be independent \randomvar's,
        one has
            $
                \tilde{\xi}_{\mathbf{t}}
                = \Expected[T_{1}(\tau_{1})]
                \Expected[T_{2}(\tau_{2})]
                \ldots
                \Expected[T_{d}(\tau_{d})]
                    \xi
            $.
        Also, by independence and commutativity one obtains
            $
                \Expected[T_{i}(\tau_{i})]
                \Expected[T_{j}(\tau_{j})]
                = \Expected[T_{i}(\tau_{i})T_{j}(\tau_{j})]
                = \Expected[T_{j}(\tau_{j})T_{i}(\tau_{i})]
                = \Expected[T_{j}(\tau_{j})]
                \Expected[T_{i}(\tau_{i})]
            $
        as well as
            $
                \Expected[T_{i}(\tau_{i})]
                    \,\frac{1}{t_{j}}(T(t_{j}) - \onematrix)
                = \Expected[
                        T_{i}(\tau_{i})\frac{1}{t_{j}}
                        \,(T(t_{j}) - \onematrix)
                    ]
                = \Expected[
                        \frac{1}{t_{j}}(T(t_{j}) - \onematrix)\,T_{i}(\tau_{i})
                    ]
                = \frac{1}{t_{j}}(T(t_{j}) - \onematrix)
                    \,\Expected[T_{i}(\tau_{i})]
            $
        for all $i,j\in\{1,2,\ldots,d\}$ with $i \neq j$.
        Since these operators commute, one may apply
        \Cref{prop:app:dense-subspace-one-param:sig:article-stochastic-raj-dahya}
        and obtain by induction over $n$ that
            $
                \tilde{\xi}_{\mathbf{t}}
                =
                    \Expected[T_{1}(\tau_{1})]
                    \Expected[T_{2}(\tau_{2})]
                    \ldots
                    \Expected[T_{d}(\tau_{d})]
                        \xi
                \in
                \opDomain{A_{k_{n}} \cdot \ldots \cdot A_{k_{2}} \cdot A_{k_{1}}}
            $
        with

            \noparskip
            \begin{equation}
            \label{eq:1:\beweislabel}
                (A_{k_{n}} \cdot \ldots \cdot A_{k_{2}} \cdot A_{k_{1}})
                \tilde{\xi}_{\mathbf{t}}
                = \Big(
                    \displaystyle
                    \prod_{i=1}^{d}
                    \begin{cases}
                        \frac{1}{t_{i}}(T(t_{i}) - \onematrix)
                            &\colon  &i\in\{k_{1},k_{2},\ldots,k_{n}\}\\
                        \Expected[T_{i}(u_{i})]
                            &\colon &\text{otherwise}\\
                    \end{cases}
                \Big)
                \:\xi.
            \end{equation}

        \continueparagraph
        In particular, by \eqcref{eq:1:\beweislabel} the value is independent of the order of the $k_{i}$.
    \end{proof}







\firstparagraph
\paragraph{Acknowledgement.}
The author is grateful to
    Tanja Eisner
    for her patient and detailed feedback,
to
    Markus Haase
    for pointing out useful literature on Bochner-integrals,
to
    Ulrich Groh
    for discussions about positive operators,
and to the referee for their constructive feedback.


\bibliographystyle{siam}

\begin{thebibliography}{10}

\bibitem{Abdelaziz1983commutingSemigroups}
{\sc N.~H. Abdelaziz}, {\em {On the product of class~\(A\)-semigroups of linear
  operators}}, Proc. Amer. Math. Soc., 88 (1983), pp.~517--522.

\bibitem{Ando1963pairContractions}
{\sc T.~And\^{o}}, {\em {On a pair of commutative contractions}}, Acta Sci.
  Math. (Szeged), 24 (1963), pp.~88--90.

\bibitem{Averson2010DilationOverview}
{\sc W.~Arveson}, {\em {Dilation theory yesterday and today}}, in {A glimpse at
  Hilbert space operators}, vol.~207 of {Oper. Theory Adv. Appl.},
  Birkh\"{a}user Verlag, Basel, 2010, pp.~99--123.

\bibitem{Bratteli1997bookQSM}
{\sc O.~Bratteli and D.~W. Robinson}, {\em {Operator algebras and quantum
  statistical mechanics. 2}}, {Texts and Monographs in Physics},
  Springer-Verlag, Berlin, second~ed., 1997.
\newblock Equilibrium states. Models in quantum statistical mechanics.

\bibitem{Butko2020chernoff}
{\sc Y.~A. Butko}, {\em {The method of Chernoff approximation}}, in {Semigroups
  of operators---theory and applications}, vol.~325 of {Springer Proc. Math.
  Stat.}, Springer, Cham, 2020, pp.~19--46.

\bibitem{Butzer1967semiGrApproximationsBook}
{\sc P.~L. Butzer and H.~Berens}, {\em {Semi-groups of operators and
  approximation}}, {Die Grundlehren der mathematischen Wissenschaften, Band
  145}, Springer-Verlag New York, Inc., New York, 1967.

\bibitem{Chernoff1968article}
{\sc P.~R. Chernoff}, {\em {Note on product formulas for operator semigroups}},
  J. Functional Analysis, 2 (1968), pp.~238--242.

\bibitem{Chernoff1974book}
\leavevmode\vrule height 2pt depth -1.6pt width 23pt, {\em {Product formulas,
  nonlinear semigroups, and addition of unbounded operators}}, {Memoirs of the
  American Mathematical Society, No. 140}, American Mathematical Society,
  Providence, R.I., 1974.

\bibitem{Chung1962exp}
{\sc K.~l. Chung}, {\em {On the exponential formulas of semi-group theory}},
  Math. Scand., 10 (1962), pp.~153--162.

\bibitem{Coburn1999artHeisneberg}
{\sc L.~A. Coburn}, {\em {The measure algebra of the Heisenberg group}}, J.
  Funct. Anal., 161 (1999), pp.~509--525.

\bibitem{Dahya2022complmetrproblem}
{\sc R.~Dahya}, {\em {On the complete metrisability of spaces of contractive
  semigroups}}, Arch. Math. (Basel), 118 (2022), pp.~509--528.

\bibitem{Dahya2022weakproblem}
\leavevmode\vrule height 2pt depth -1.6pt width 23pt, {\em {The space of
  contractive \(C_{0}\)-semigroups is a Baire space}}, J. Math. Anal. Appl.,
  508 (2022).
\newblock Paper No. 125881, 12.

\bibitem{Dahya2023dilation}
\leavevmode\vrule height 2pt depth -1.6pt width 23pt, {\em {Dilations of
  commuting \(C_{0}\)-semigroups with bounded generators and the von Neumann
  polynomial inequality}}, J. Math. Anal. Appl., 523 (2023).
\newblock Paper No. 127021.

\bibitem{Deitmar2014bookHarmonicAn}
{\sc A.~Deitmar and S.~Echterhoff}, {\em {Principles of harmonic analysis}},
  {Universitext}, Springer, Cham, second~ed., 2014.

\bibitem{DiestelUhl1977VectorMeasures}
{\sc J.~Diestel and J.~J. Uhl, Jr.}, {\em {Vector measures}}, {Mathematical
  Surveys, No. 15}, American Mathematical Society, Providence, R.I., 1977.
\newblock With a foreword by B. J. Pettis.

\bibitem{Dunfordschwartz1988BookLinOpI}
{\sc N.~Dunford and J.~T. Schwartz}, {\em {Linear operators. Part I}}, {Wiley
  Classics Library}, John Wiley \& Sons, Inc., New York, 1988.
\newblock General theory, With the assistance of William G. Bade and Robert G.
  Bartle, Reprint of the 1958 original, A Wiley-Interscience Publication.

\bibitem{Eisner2010buchStableOpAndSemigroups}
{\sc T.~Eisner}, {\em {Stability of operators and operator semigroups}},
  vol.~209 of {Operator Theory: Advances and Applications}, Birkh\"{a}user
  Verlag, Basel, 2010.

\bibitem{Eisnersereny2009catThmStableSemigroups}
{\sc T.~Eisner and A.~Ser\'{e}ny}, {\em {Category theorems for stable
  semigroups}}, Ergodic Theory Dynam. Systems, 29 (2009), pp.~487--494.

\bibitem{Eisner2008kato}
\leavevmode\vrule height 2pt depth -1.6pt width 23pt, {\em {On the weak
  analogue of the Trotter-Kato theorem}}, Taiwanese J. Math., 14 (2010),
  pp.~1411--1416.

\bibitem{EngelNagel2000semigroupTextBook}
{\sc K.-J. Engel and R.~Nagel}, {\em {One-parameter semigroups for linear
  evolution equations}}, vol.~194 of {Graduate Texts in Mathematics},
  Springer-Verlag, New York, 2000.

\bibitem{Folland2015bookHarmonicAnalysis}
{\sc G.~B. Folland}, {\em {A course in abstract harmonic analysis}}, no.~29 in
  {Textbooks in mathematics}, CRC Press, 2~ed., 2015.

\bibitem{Goldstein1985semigroups}
{\sc J.~A. Goldstein}, {\em {Semigroups of linear operators \& applications}},
  {Oxford Mathematical Monographs}, The Clarendon Press, Oxford University
  Press, New York, 1985.

\bibitem{Halmos1950dilation}
{\sc P.~R. Halmos}, {\em {Normal dilations and extensions of operators}}, Summa
  Brasil. Math., 2 (1950), pp.~125--134.

\bibitem{Hillephillips1957faAndSg}
{\sc E.~Hille and R.~S. Phillips}, {\em {Functional analysis and semi-groups}},
  vol.~31 of {American Mathematical Society Colloquium Publications}, American
  Mathematical Society, Providence, R.I., rev.~ed., 1957.

\bibitem{HytNeervanMarkLutz2016bookVol1}
{\sc T.~Hyt\"onen, J.~van Neerven, M.~Veraar, and L.~Weis}, {\em {Analysis in
  Banach spaces. Vol. I. Martingales and Littlewood-Paley theory}}, vol.~63 of
  {Ergebnisse der Mathematik und ihrer Grenzgebiete. 3. Folge. A Series of
  Modern Surveys in Mathematics}, Springer, Cham, 2016.

\bibitem{HytNeervanMarkLutz2016bookVol2}
\leavevmode\vrule height 2pt depth -1.6pt width 23pt, {\em {Analysis in Banach
  spaces. Vol. II. Probabilistic Methods and Operator Theory}}, vol.~67 of
  {Ergebnisse der Mathematik und ihrer Grenzgebiete. 3. Folge. A Series of
  Modern Surveys in Mathematics}, Springer, Cham, 2017.

\bibitem{Kechris1995BookDST}
{\sc A.~S. Kechris}, {\em {Classical descriptive set theory}}, vol.~156 of
  {Graduate Texts in Mathematics}, Springer-Verlag, New York, 1995.

\bibitem{Klenke2008probTheory}
{\sc A.~Klenke}, {\em {Probability theory}},  (2008), pp.~xii+616.
\newblock Translated from the 2006 German original.

\bibitem{Krol2009}
{\sc S.~Kr\'{o}l}, {\em {A note on approximation of semigroups of contractions
  on Hilbert spaces}}, Semigroup Forum, 79 (2009), pp.~369--376.

\bibitem{Kunen2011BookSetTh}
{\sc K.~Kunen}, {\em {Set theory}}, vol.~34 of {Studies in Logic (London)},
  College Publications, London, 2011.

\bibitem{LeMerdy1996DilMultiParam}
{\sc C.~Le~Merdy}, {\em {On dilation theory for \(c_{0}\)-semigroups on Hilbert
  space}}, Indiana Univ. Math. J., 45 (1996), pp.~945--959.

\bibitem{Mueller1965positiveInIdentity}
{\sc B.~J. Mueller}, {\em {Three results for locally compact groups connected
  with Haar measure density theorem}}, Proc. Amer. Math. Soc., 16 (1965),
  pp.~1414--1416.

\bibitem{Paulsen1986bookCBmapsAndDilations}
{\sc V.~I. Paulsen}, {\em {Completely bounded maps and dilations}}, vol.~146 of
  {Pitman Research Notes in Mathematics Series}, Longman Scientific \&
  Technical, Harlow; John Wiley \& Sons, Inc., New York, 1986.

\bibitem{Pisier2001bookCBmaps}
{\sc G.~Pisier}, {\em {Similarity problems and completely bounded maps}},
  vol.~1618 of {Lecture Notes in Mathematics}, Springer-Verlag, Berlin,
  expanded~ed., 2001.

\bibitem{Ptak1985}
{\sc M.~Ptak}, {\em {Unitary dilations of multi-parameter semi-groups of
  operators}}, Annales Polonici Mathematici, XLV (1985), pp.~237--243.

\bibitem{Reissig2005abstractresolvent}
{\sc M.~Reissig and B.-W. Schulze}, eds., {\em {New trends in the theory of
  hyperbolic equations}}, vol.~159 of {Operator Theory: Advances and
  Applications}, Birkh\"{a}user Verlag, Basel, 2005.

\bibitem{Shalit2021DilationBook}
{\sc O.~M. Shalit}, {\em {Dilation theory: a guided tour}}, in {Operator
  theory, functional analysis and applications}, vol.~282 of {Oper. Theory Adv.
  Appl.}, Birkh\"{a}user/Springer, Cham, 2021, pp.~551--623.

\bibitem{Shamovich2017dilationsMultiParam}
{\sc E.~Shamovich and V.~Vinnikov}, {\em {Dilations of semigroups of
  contractions through vessels}}, Integral Equations Operator Theory, 87
  (2017), pp.~45--80.

\bibitem{Slocinski1974}
{\sc M.~S\l{}oci\'{n}ski}, {\em {Unitary dilation of two-parameter semi-groups
  of contractions}}, Bull. Acad. Polon. Sci. S\'{e}r. Sci. Math. Astronom.
  Phys., 22 (1974), pp.~1011--1014.

\bibitem{Slocinski1982}
\leavevmode\vrule height 2pt depth -1.6pt width 23pt, {\em {Unitary dilation of
  two-parameter semi-groups of contractions II}}, Zeszyty Naukowe Uniwersytetu
  Jagiellońskiego, 23 (1982), pp.~191--194.

\bibitem{Stinespring1955dilation}
{\sc W.~F. Stinespring}, {\em {Positive functions on \(C^{\ast}\)-algebras}},
  Proc. Amer. Math. Soc., 6 (1955), pp.~211--216.

\bibitem{Nagy1953}
{\sc B.~Sz.-Nagy}, {\em {Sur les contractions de l'espace de Hilbert}},
  vol.~15, 1953.

\bibitem{Nagy1970}
{\sc B.~Sz.-Nagy and C.~Foia\c{s}}, {\em {Harmonic analysis of operators on
  Hilbert space}}, North-Holland Publishing Co., Amsterdam-London; American
  Elsevier Publishing Co., Inc., New York; Akad\'{e}miai Kiad\'{o}, Budapest,
  1970.
\newblock Translated from the French and revised.

\bibitem{Wittstock1981ArticleDilationDe}
{\sc G.~Wittstock}, {\em {Ein operatorwertiger Hahn-Banach Satz}}, J.
  Functional Analysis, 40 (1981), pp.~127--150.

\bibitem{Wittstock1984inCollDilationEn}
\leavevmode\vrule height 2pt depth -1.6pt width 23pt, {\em {On matrix order and
  convexity}}, in {Functional analysis: surveys and recent results, III
  (Paderborn, 1983)}, vol.~90 of {North-Holland Math. Stud.}, North-Holland,
  Amsterdam, 1984, pp.~175--188.

\end{thebibliography}
\def\bibname{References}
\bgroup
\footnotesize

\egroup


\addresseshere
\end{document}
